\documentclass[11pt]{article}

\usepackage{graphicx}
\graphicspath{{figs/}{tikz/}{placeholder-figs/}}
\usepackage{xcolor}
\usepackage{enumerate}
\usepackage{color}
\usepackage{tcolorbox}
\definecolor{darkblue}{rgb}{0.0,0.0,0.2}
\definecolor{gray}{gray}{0.5}
\definecolor{lightgray}{gray}{0.9}
\definecolor{lightred}{rgb}{1,0.6,0.6}
\definecolor{darkgreen}{rgb}{0,0.5,0}
\definecolor{DARKGREEN}{rgb}{0,0.5,0} 
\definecolor{BLUE}{rgb}{0,0,0.5} 
\definecolor{WHITE}{rgb}{1,1,1} 
\definecolor{BLACK}{rgb}{0,0,0} 
\usepackage[colorlinks=true,breaklinks=true,bookmarks=true,urlcolor=darkblue,
citecolor=darkblue,linkcolor=darkblue,bookmarksopen=false,draft=false]{hyperref}

\usepackage{pgfplots}
\pgfplotsset{compat=1.17}
\usepackage{tikz} 
\usetikzlibrary{matrix, shapes, fit, arrows.meta, decorations.pathreplacing, calc, positioning} 

\usepackage{pifont}

\newcommand{\Comments}{0}
\newcommand{\ifgeq}[3]{\ifnum#1>#2 {#3}\else\ifnum#1=#2 {#3}\fi\fi}
\usepackage[colorinlistoftodos,textsize=tiny]{todonotes}

\ifnum\Comments>0               
\usepackage[left=0.7in,right=2.1in,top=1in,bottom=1in]{geometry}
\setlength{\marginparsep}{0.2in}
\setlength{\marginparwidth}{1.5in}
\else
\usepackage[top=1in,bottom=1in,left=1.1in,right=1.1in]{geometry}
\fi

\usepackage[font={small}]{caption}
\usepackage[numbers,sort,compress]{natbib}
\newcommand{\citetsafe}[2]{\citet[#1]{#2}}
\usepackage[shortlabels]{enumitem}

\usepackage{etoolbox,xparse,xspace,etex}
\usepackage{array,multirow,booktabs} 
\usepackage{amsmath,amssymb}
\usepackage{centernot}
\usepackage{mathtools}
\newcommand\numberthis{\addtocounter{equation}{1}\tag{\theequation}}
\newcommand{\forceindent}{\leavevmode{\parindent=2em\indent}}

\usepackage[ruled,vlined,resetcount]{algorithm2e}
\renewcommand{\algorithmcfname}{Protocol}
\newcommand{\protocol}{\algorithmcfname\xspace}
\SetNlSty{}{}{}

\usepackage{amsthm}

\newtheorem{theorem}{Theorem}[section]
\newtheorem{proposition}[theorem]{Proposition}
\newtheorem{nontheorem}[theorem]{Non-Theorem}

\newtheorem{lemma}[theorem]{Lemma}
\newtheorem{corollary}[theorem]{Corollary}

\newtheorem{condition}[theorem]{Condition}

\newtheorem{definition}[theorem]{Definition}
\theoremstyle{definition}
\newtheorem{remark}[theorem]{Remark}
\newtheorem{example}[theorem]{Example}

\let\oldS\S
\renewcommand{\S}{\oldS\!}

\let\emptyset\varnothing

\newcommand{\A}{\mathcal{A}}
\newcommand{\B}{\mathcal{B}}
\newcommand{\F}{\mathcal{F}}
\newcommand{\G}{\mathcal{G}}

\newcommand{\N}{\mathbb{N}}
\renewcommand{\P}{\mathcal{P}}
\newcommand{\Sc}{\mathcal{S}}

\newcommand{\U}{\mathcal{U}}
\newcommand{\W}{\mathcal{W}}
\newcommand{\X}{\mathcal{X}}
\newcommand{\Y}{\mathcal{Y}}
\newcommand{\Z}{\mathcal{Z}}

\newcommand{\Ht}{\ensuremath{\mathsf{H}}\xspace}
\newcommand{\Tt}{\ensuremath{\mathsf{T}}\xspace}

\newcommand{\dcl}{\mathrm{dcl}}
\newcommand{\cdcl}[1]{\overline{\dcl(#1)}}
\newcommand{\mdcl}{\mathrm{mdcl}}
\newcommand{\bb}{\mathrm{bb}}
\newcommand{\ba}{\mathrm{ba}}

\newcommand{\conv}{\mathrm{conv}}

\usepackage{dsfont}
\newcommand{\ones}{\mathds{1}}

\newcommand{\reals}{\mathbb{R}}

\newcommand{\extreals}{\overline{\mathbb{R}}}
\newcommand{\infreals}{{\reals\cup\{\infty\}}}

\newcommand{\argmin}{\mathop{\mathrm{argmin}}}
\newcommand{\argmax}{\mathop{\mathrm{argmax}}}

\newcommand{\argsup}{\mathop{\mathrm{argsup}}}

\newcommand{\inprod}[2]{\left\langle #1, #2 \right\rangle}

\newcommand{\Eg}{\textsf{\textbf{E}}}
\newcommand{\Egu}{\overline{\Eg}}
\newcommand{\Egl}{\underline{\Eg}}
\newcommand{\Pg}{\textsf{\textbf{P}}}
\newcommand{\Pgu}{\overline{\Pg}}
\newcommand{\Pgl}{\underline{\Pg}}

\newcommand{\Ep}{\mathbb{E}}
\newcommand{\E}{\Ep}
\newcommand{\Epu}{\overline{\Ep}}
\newcommand{\Epl}{\underline{\Ep}}
\newcommand{\Pp}{\mathbb{P}}

\newcommand{\Epucons}{\Epu^0}
\newcommand{\Epuseq}{\Epu^*}
\newcommand{\Epseq}{\Ep^*}

\newcommand{\Evar}{\mathcal{E}}

\newcommand{\emptystring}{{\boldsymbol{\varepsilon}}}

\newcommand{\ext}{\mathrm{ext}}
\newcommand{\res}{\mathrm{res}}

\newcommand{\Reg}{\mathrm{Reg}}
\newcommand{\Alg}{\mathrm{Alg}}
\newcommand{\Rel}{\mathrm{Rel}}
\newcommand{\Eop}{\mathop{\E}}

\newcommand{\ALLN}{A_\text{\scalebox{0.6}{LLN}}}

\interfootnotelinepenalty=10000

\title{Minimax Duality in Game-Theoretic Probability\thanks{Work in progress: mistakes are likely, and substantial revisions may follow.  Feedback is welcomed!}}
\author{Rafael Frongillo\thanks{Department of Computer Science, University of Colorado Boulder, USA. Email: raf@colorado.edu.}}

\begin{document}

\maketitle

\begin{abstract}
  Game-theoretic probability uses the structure of gambles to define a concept like probability, but which is more flexible and robust.
  We show that results in game-theoretic probability can be thought of as minimax theorems for specific zero-sum games between two players, Gambler and World.
  The traditional measure-theoretic versions arise when World must play first.
  This perspective suggests the possibility of a more general minimax theorem from which a wide array of game-theoretic results would follow.
  After developing a new framing of game-theoretic probability via gamble spaces, we prove such a theorem for finite time.
  Applying this minimax theorem to games derived from existing measure-theoretic statements, we prove several existing and novel game-theoretic statements.
  This general minimax theorem can be thought of as a composite Ville's theorem, as we discuss along with future directions.
\end{abstract}

\newpage
\tableofcontents
\newpage

\section{Introduction}
\label{sec:introduction}

The origins of the field of probability are often traced to a conversation between Pascal and Fermat in 1654~\citep{hald2005history}, on how to divide the prize in a contest that ends prematurely (Fig.~\ref{fig:pascal-fermat}).
Pascal proposed a ``game-theoretic'' approach to calculate the division, reasoning about the stakes of a gamble on the eventual winner in terms of the stakes for gambles on each individual match.
In response, Fermat proposed a ``measure-theoretic'' approach, wherein one enumerates the set of possible joint outcomes of all matches, and then calculates the prize division combinatorially.

\begin{figure}[p]
  \centering
  \colorlet{gamblercolor}{blue!80!black}
  \colorlet{naturecolor}{green!30!black}
  \colorlet{Xcolor}{orange!80!black}

  \begin{tikzpicture}[yscale=0.8, grow=right, level distance=3.5cm,
    level 1/.style={sibling distance=4cm},
    level 2/.style={sibling distance=2cm},
    level 3/.style={sibling distance=1cm},
    every node/.style={font=\small}]
    
    \node (root) {$\emptystring$}
    child {node (T) {$W$}
      child {node (TT) {$WW$}}
      child {node (TH) {$WL$}}
    }
    child {node (H) {$L$}
      child {node (HT) {$LW$}}
      child {node (HH) {$LL$}}
    };
    
    \node[color=gamblercolor] (eps-bet) [above= 2pt of root] {\$25 on $W$};
    \node[color=gamblercolor] (H-bet)   [above= 2pt of H]    {\$0 on $W$};
    \node[color=gamblercolor] (T-bet)   [above= 2pt of T]    {\$50 on $W$};

    \foreach \name/\val in {TT/\$100, TH/\$0, HT/\$0, HH/\$0} {
      \node (\name-val) [color=Xcolor, above=2pt of \name] {\val};
      \node (\name-prob) [color=naturecolor, right=1.4cm of \name.center, anchor=east] {1/4};
    }

    \node[color=Xcolor] (X) [above= 2pt of HH-val] {$X$};

    \node[draw=gamblercolor!40, thick, rounded corners, inner sep=2pt,
    fit=(eps-bet)(H-bet)(T-bet)] (gamblerbubble) {};
    \node[draw=naturecolor!40, thick, rounded corners, inner sep=2pt,
    fit=(TT-prob)(TH-prob)(HT-prob)(HH-prob)] (naturebubble) {};

    \node (psi)  [color=gamblercolor, above=2pt of gamblerbubble.north] {$\psi^*$};
    \node (Pnode)[color=naturecolor,  above=2pt of naturebubble.north]  {$P$};

    \draw[<->, thick, dashed]
    (psi) to[out=70, in=100, looseness=.6]
    node[midway, above]{minimax duality}
    (Pnode);

    \node (curr-val) [below= 2pt of root, fill=white, inner sep=2pt] {Current value: \$25};
  \end{tikzpicture}
  
  \caption{Example derivations from the famous conversation between Pascal and Fermat.
    \\
    \forceindent
    Depicted is a best-of-3 contest between Alexa and Calen, with a prize of \$100, where Alexa already lost the first match.
    Thus the only way for Alexa to win the prize is to win both remaining games ($WW$).
    Pascal reasoned that were Alexa to win the next game, she would now be on even footing with Calen, and it would be natural for them to earn an even split of the prize.
    Thus the amount owed to Alexa in state $W$ should be \$50.
    As Alexa cannot win the prize after losing the next match (and is thus owed \$0 in state $L$), similar reasoning suggests that the amount owed Alexa now is \$25.
    Fermat instead proposed dividing the money according to the possible ways for Alexa to win the prize, in this case 1 out of 4, also concluding that Alexa is owed \$25 now.
    \\
    \forceindent
    These two derivations can be seen as the two optimal strategies in a zero-sum game---not the one between Alexa and Calen, but between two external players, Gambler and World, which respectively bet on and choose the match outcomes.
    Gambler chooses a gambling strategy $\psi$, which places a bet (under even odds) on the next match based on the history so far.
    World chooses a probability measure $P$ on the sequence of outcomes $\{W,L\}^2$ from the initial situation.
    Let $X:\{W,L\}^2\to\reals$ encode the value of the final outcome to Alexa, namely $X(WW) = 100$, $X(WL) = X(LW) = X(LL) = 0$.
    The payoff to World in the game, $u(P,\psi)$, is the expected difference between $X$ and the winnings of Gambler.
    Specifically, identifying $\{W,L\}$ with $\{-1,1\}$ for convenience, we may write $u(P,\psi) = \E_P[X(Y_1Y_2) - \psi(\emptystring)Y_1 - \psi(Y_1)Y_2]$, where $Y_1,Y_2 \in \{-1,1\}$ represent the outcomes.
    \\
    \forceindent
    When Gambler must play first, the optimal strategy $\psi^*$, depicted in blue, bets \$25 on $W$ in the initial situation, and either refrains from betting if Alexa loses or bets another \$50 on $W$ if she wins.
    This strategy is exactly what Pascal derived by backward induction.
    (More precisely, it is the amount staked in each state, which in this case is equal to the value owed at that state.)
    On the other hand, when World must play first, the optimal strategy is the uniform distribution $P$ on $\{W,L\}^2$, which is exactly Fermat's approach.
    Both optimal strategies are unique, and as minimax duality holds in this game, both give the same payoff of \$25 to World.
    The conclusion in both cases is again that Alexa's standing $X$ is ``worth'' \$25.
    \\
    \forceindent
    As a final point, imagine that the contest was best-of-4, and Alexa and Calen had each already won a game.
    If we split the prize upon a tie, we have $X = (\$0,\$50,\$50,\$100)$ top to bottom.
    Interestingly, Fermat's weights do not change: we still have $P = (1/4,1/4,1/4,1/4)$.
    But Pascal's derivation would change, starting with \$50 but placing no bet, and regardless of the first match outcome, placing a \$25 bet on $W$ in the next round.
    In this sense, the game-theoretic approach is a refinement of the measure-theoretic (\S~\ref{sec:intro-composite-ville}).
  }
  \label{fig:pascal-fermat}
\end{figure}

Clearly, the latter, measure-theoretic foundation of probability, has become the dominant formalism, and philosophical perspective, in the field.
Yet as \citet{shafer2019game,shafer2001probability} elegantly demonstrate, following Ville and many others, one can derive an equally rich theory of probability entirely from game-theoretic principles.
This game-theoretic perspective has several advantages of robustness and versatility (\S~\ref{sec:case-for-gtp}).
For example, as is appreciated in the literature in online machine learning, game-theoretic statements hold without any stochastic assumption on data-generating process, nor even an assumption that such a ``process'' exists.
Aspects of this robustness and versatility also underpin the emerging field of game-theoretic statistics and e-values, an appealing alternative to traditional hypothesis testing and p-values.

These two theories of probability often agree, just as in the initial discussion between Pascal and Fermat.
Yet the precise general relationship between the two theories has remained largely unexplored.
Establishing a stronger connection between game-theoretic and measure-theoretic probability would deepen our understanding of probability.
Moreover, showing broad conditions under which the two align would allow one to ``lift'' measure-theoretic statements to the stronger, worst-case versions of game-theoretic probability.
Such a connection would also clarify the extent to which testing and inference techniques in game-theoretic statistics, based on measure-theoretic supermartingales and variations, are truly ``game-theoretic'', i.e., strategies in a game.
Finally, a better understanding of this duality would clarify the relationship between game-theoretic probability and other ``robust'' testing and inference approaches, such as robust optimization and robust representations of financial risk measures.

Building on \citet{shafer2001probability,shafer2019game}, this article forges a new bridge between measure-theoretic and game-theoretic probability through the lens of minimax duality.
Our central observation is that \emph{results in game-theoretic probability can be thought of as minimax theorems}: the statement that neither player has an advantage to playing second in certain zero-sum game.
More broadly, the game-theoretic and measure-theoretic expectations agree precisely when minimax duality holds.
One can see this minimax duality already in the conversation between Pascal and Fermat (Fig.~\ref{fig:pascal-fermat}).
This perspective allows us to significantly strengthen the connection between the two theories of probability (\S~\ref{sec:contributions}), though several fundamental open questions remain (\S~\ref{sec:discussion}).

To develop the theory needed to establish these connections, we will see a new framing of game-theoretic probability based on \emph{gamble spaces}.
This formalism highlights connections to measure-theoretic probability, in a way which clarifies the relationship between various core definitions.
After developing this formalism (\S~\ref{sec:definitions},~\ref{sec:prices-probability}), we move to minimax theorems (\S~\ref{sec:minimax-theorems}) and their application (\S~\ref{sec:translating-mtp-gtp}).
For now, let us see a motivating example of how minimax duality arises.

\subsection{Motivating example}
\label{sec:motivating-example}

Consider a simple result from probability theory, the law of large numbers (LLN) for bounded martingale difference sequences:
if $Y_1,Y_2,\ldots$ is a sequence of random variables taking values in $[-1,1]$ with joint law $P$ such that $\E_P[Y_t \mid Y_{1..t-1}] = 0$ almost surely, and $\ALLN = \{y \in [-1,1]^\infty : \lim_{n\to\infty} \frac 1 n \sum_{t=1}^n y_t = 0\}$ is the set of realizations satisfying the law of large numbers, then $P(Y_{1..\infty} \in \ALLN) = 1$.

In a typical game-theoretic version of this statement, e.g.\ \citet[Proposition 1.2]{shafer2019game}, we suppose one is allowed to \emph{gamble} on the values $y_t$ of the $Y_t$.
In each round $t$, one can place a wager $\beta_t\in\reals$ and receive $\beta_t y_t$ when it is revealed that $Y_t = y_t$.
(See \protocol~\ref{alg:bounded-lln}.)
We can formalize the strategy in a function $\psi:[-1,1]^* \to \reals$ which specifies the bet $\beta_{t} = \psi(y_{1..t-1})$ given a sequence $y_{1..t-1} \in [-1,1]^{t-1}$ of partial outcomes.
The total earnings on the full outcome sequence $y\in[-1,1]^\infty$ are given by $Z^\psi(y) := \liminf_{t\to\infty} \sum_{i=1}^t \psi(y_{1..i-1}) y_i$, where the limit infimum takes a conservative view.
The game-theoretic result is then the following: there exists a gambling strategy $\psi^*$ that never risks bankruptcy from an initial capital of \$1, and becomes infinity rich when the law of large numbers fails.
Formally, there is some strategy $\psi^*$ such that $1+Z^{\psi^*}(y) \geq 0$ for all $y\in[-1,1]^\infty$, a condition equivalent to no-bankruptcy in this setting,
\footnote{No bankruptcy clearly implies $1+Z^{\psi^*}(y) \geq 0$ by the definition of $\liminf$.  For the converse, if $1 + \sum_{i=1}^t \psi(y_{1..i-1}) y_i < 0$ for some $y_{1..t}$, then taking $y_i = 0$ for all $i > t$ would violate the condition.
  While the two conditions coincide here, the limit infimum is generally the correct notion;
  see \S~\ref{sec:multiplicative} and Remark~\ref{rem:no-bankruptcy}.}
and $Z^{\psi^*}(y) = \infty$ for all $y\notin \ALLN$.

\begin{algorithm}[t]
  \caption{Sequential bets on a bounded outcome}
  \For{$t = 1, 2, \ldots $}{
    Gambler chooses $\beta_t \in \reals$ \\
    World chooses $y_t \in [-1,1]$ \\
    Gambler receives $\beta_t y_t$ \\
  }
  \label{alg:bounded-lln}
\end{algorithm}

The central observation of this article is that such a game-theoretic statement, like the above for the law of large numbers, can be thought of as a minimax theorem in a particular zero-sum game between Gambler, choosing the strategy $\psi$, and World,
\footnote{\citet{shafer2001probability,shafer2019game} often use the name Reality for the player that chooses the outcomes.
  The name Nature is perhaps better suited to describe one level further removed from the actual outcome, such as the laws governing the outcome.
  We adopt World as a compromise: in this version of the game, World is choosing the outcomes, but in the minimax dual game, World will choose a probability measure over outcomes.}
choosing the outcomes $y_t$.
In the game for the law of large numbers above, Gambler chooses a strategy $\psi$ with $\inf Z^\psi > -\infty$ (no bankruptcy with respect to some finite initial capital), World chooses a probability measure $P$ over $[-1,1]^\infty$ equipped with the Borel $\sigma$-algebra,
and the payoff to World is
\begin{align}
  u(P,\psi) := \E_P\left[X - Z^\psi\right]~,
\end{align}
where we set $X = \ones_{(\ALLN)^c}$.
The payoff to Gambler is $-u(P,\psi)$.
\footnote{Technically, for $u$ to be defined, $\psi$ must be measurable and the relevant expectations defined.  In some sense we can side-step both of these issues (\S~\ref{sec:prices-probability}).  This extension is important as typical game-theoretic statements do not require measurability, since they consider the case where Gambler must play first and World may choose $\omega\in\Omega$ directly (see below).}
Phrased in this way, one can see $u(P,\psi)$ as the \emph{replication cost} of the variable $X = \ones_{(\ALLN)^c}$ with respect to the probability measure $P$ and strategy $\psi$: the infimum over $\alpha \in \reals$ such that $\alpha + Z^\psi \geq X$ (see Fig.~\ref{fig:replication}).
The quantity $\inf_\psi \sup_P u(P,\psi)$ is then the best worst-case replication cost.

Let us see why the game-theoretic law of large numbers amounts to a minimax theorem in this game.
First consider the version of this game where World must play first, which corresponds to the typical situation in probability theory when the measure $P$ is known.
In this case, if $\E_P[Y_t \mid Y_{1..t-1}] = 0$ fails with positive probability, then Gambler can choose $\psi$ to capitalize on this bias, forcing $u(P,\psi) \to -\infty$.
\footnote{Specifically, for sufficiently small $\epsilon>0$, and any $\alpha > 0$, Gambler can set $\psi(y_{1..t-1}) = \alpha/\E_P[Y_t \mid Y_{1..t-1} = y_{1..t-1}]$ on the first round when the absolute value of the denominator is at least $\epsilon$.  Then Gambler risks a bounded amount ($\alpha/\epsilon$) and earns $\alpha$ in expectation on a round where the condition is violated.  By taking $\alpha\to\infty$, Gambler earns an infinite expected profit if the condition is violated with positive probability.  See Proposition~\ref{prop:consistent-implies-seq-consistent} and Theorem~\ref{thm:ep0-equals-ep}.}
Otherwise, $P$ is a martingale measure and we have $\E_P Z^\psi \leq 0$ by Fatou's lemma.
Thus, the best payoff that World can guarantee is
\begin{align}
  \sup_P \inf_\psi u(P,\psi)
  &= \sup_{P : \E_P[Y_t \mid Y_{1..t-1}] = 0\; P\text{-a.s.}} \inf_\psi u(P,\psi)
  \\
  &= \sup_{P : \E_P[Y_t \mid Y_{1..t-1}] = 0\; P\text{-a.s.}} \inf_\psi \E_P\left[\ones_{(\ALLN)^c} - Z^\psi\right]
  \\
  &= \sup_{P : \E_P[Y_t \mid Y_{1..t-1}] = 0\; P\text{-a.s.}} \E_P[\ones_{(\ALLN)^c}]
  \\
  &= 0~,
\end{align}
where we apply the (measure-theoretic) law of large numbers in the final equality.

Now consider the case where Gambler must play first.
Here we may restrict World to point measures without loss of generality, as World cannot gain by randomizing.
By the game-theoretic result discussed above, there exists a gambling strategy $\psi^*$ such that $Z^{\psi^*} \geq -1$ and $Z^{\psi^*}(y) = \infty$ when $y\notin \ALLN$.
Thus,
\begin{align}
  \inf_\psi \sup_P u(P,\psi)
  &= \inf_\psi \sup_y \ones\{y\notin \ALLN\} - Z^\psi(y)
    \label{eq:intro-gambler-force-consistent}
  \\
  &\leq \inf_{\lambda \geq 0} \sup_y \ones\{y\notin \ALLN\} - Z^{\lambda\psi^*}(y)
  \\
  &= \inf_{\lambda \geq 0} \sup_y \ones\{y\notin \ALLN\} - \lambda Z^{\psi^*}(y)
  \\
  &\leq \inf_{\lambda \geq 0} \sup_y
    \begin{cases}
      \lambda & y\in \ALLN\\
      -\infty & y\notin \ALLN
    \end{cases}
  \\
  &=0~.
\end{align}

Recall that we always have $\sup_P \inf_\psi u(P,\psi) \leq \inf_\psi \sup_P u(P,\psi)$, i.e., it is weakly better to play second in a zero-sum game.
\footnote{In this particular setting, we could also observe $\inf_\psi \sup_P u(P,\psi) \geq 0$ directly via the point measure $P$ on the all-zero outcome sequence.}
The existence of the gambling strategy $\psi^*$ thus implies \emph{minimax duality}:
\begin{align}
  \label{eq:minimax-duality-intro}
  \sup_P \inf_\psi u(P,\psi) = \inf_\psi \sup_P u(P,\psi)~.
\end{align}
In other words, one could regard the game-theoretic result, i.e., the derivation of $\psi^*$, as a constructive minimax theorem for this particular game.
In fact, if one were able to show minimax duality for this game, perhaps using a general noncontructive minimax theorem, it would imply the existence of such a gambling strategy, at least in the limit: a sequence of gambling strategies starting from \$1 which become unboundedly rich when $\ALLN$ fails.
(See Corollary~\ref{cor:prob-one-iff}.)

\subsection{Consistency and composite Ville}
\label{sec:intro-composite-ville}

Ville's celebrated theorem~\citep{ville1939etude}, and variations, roughly states that for a fixed probability measure $P$, and event $A$ with $P(A)=0$, there is a nonnegative $P$-supermartingale starting at 1 which becomes infinite on $A$.
More generally, for any $A$, there are nonnegative $P$-supermartingales starting at 1 that become arbitrarily close to $1/P(A)$ on $A$.

In our example above, we may consider any martingale measure $P$, and the event $A = (\ALLN)^c$ where the LLN fails.
The capital process from the strategy $\psi^*$, given by $X_t = 1 + \sum_{i=1}^t \psi^*(Y_{1..i-1}) Y_i$, is exactly a witness to Ville's Theorem: a nonnegative $P$-supermartingale starting at 1 that becomes infinitely rich on $A$.
In fact, we can say something more, since the same $\{X_t\}_t$ is a $P$-supermartingale for all martingale measures $P$ \emph{simultaneously}, and likewise we have $P(A) = 0$ for all such $P$.
One could therefore consider $\psi^*$ a \emph{composite} strategy for Gambler, which establishes a composite version of the LLN.

More broadly, let us define $\Delta_0$ to be the set of probability measures $P$ ``consistent'' with the gambles, meaning the set of $P$ making them nonprofitable conditioned on the outcomes $y_{1..t}$ so far.
\footnote{We will refer to this condition in \S~\ref{sec:prices-probability} and beyond as \emph{sequential consistency}, as opposed to (global) consistency, which says that any net gamble $Z^\psi$ must have $\E_P Z^\psi \leq 0$.}
In \S~\ref{sec:motivating-example}, $\Delta_0$ is the set of martingale measures.
From similar arguments to those in \S~\ref{sec:motivating-example}, then, one can see that the following chain of inequalities will always hold in settings like this one:
\begin{equation}
  \label{eq:intro-price-inequalities}
  \begin{tikzpicture}[baseline]
    \matrix [matrix of math nodes,row sep=0.1cm,column sep=0.1cm] (m) {
      \text{(i)} && \text{(ii)} && \text{(iii)}
      \\
      {\displaystyle \sup_{P\in\Delta_0} P(A)} & \leq
      &\displaystyle \sup_P \inf_\psi u(P,\psi)
      &\leq
      &\displaystyle \inf_\psi \sup_P u(P,\psi)~.
      \\[-6pt]
      \rotatebox{-90}{$\to$}&&\rotatebox{-90}{$\to$}&&\rotatebox{-90}{$\to$}
      \\[-4pt]
      \Epuseq X && \Epu X && \Egu X
      \\ };
  \end{tikzpicture}
\end{equation}
where $A = (\ALLN)^c$ in \S~\ref{sec:motivating-example}.
One can easily see the composite measure-theoretic LLN from this chain of inequalities, as the strategy $\psi^*$ witnesses the right-hand side of eq.~\eqref{eq:intro-price-inequalities} being $0$, giving $P((\ALLN)^c) = 0$ for all $P\in\Delta_0$ from the left-hand side.
When gambles are scalable as they are in this example, the first inequality is always an equality, which is why the second inequality, minimax duality, is the key understanding when measure-theoretic and game-theoretic probability agree.
These three quantities are central to our theory, and correspond to the ``prices'' $\Epuseq X$, $\Epu X$, $\Egu X$ we will define in \S~\ref{sec:definitions} and \S~\ref{sec:prices-probability}.
See Theorem~\ref{thm:chain-of-price-inequalities} and Corollary~\ref{cor:sequential-price-inequalities} for the chain of inequalities, and Theorem~\ref{thm:ep0-equals-ep} and Corollary~\ref{cor:ep0star-equals-ep} for the first equality.

Putting these ideas together, we can see that a general minimax theorem, which would establish (ii)\;$=$\;(iii) in eq.~\eqref{eq:intro-price-inequalities} for a wide variety of games $u(P,\psi)$ and variables $X$, would imply a composite version of Ville's Theorem.
To see this implication, let us start with a desired set $\P$ of probability measures and set $A$.
We set up a game with scalable gambles such that the consistent probability measures are exactly $\Delta_0 = \P$, and take $X = \ones_A$.
By scalability and minimax duality, we have (i)\;$=$\;(iii), which means that the maximum likelihood of $A$, given by $p = \sup_{P\in\P} P(A)$, is equal to $\inf_\psi \sup_P u(P,\psi)$, the best worst-case replication cost of $X$.
As a result, for any $p' > p$, there is a strategy $\psi$ with $\sup_P u(P,\psi) < p'$.
Equivalently, $\tfrac 1 {p'} \psi$ can take an initial capital of \$1 and exceed \$$\tfrac 1 {p'}$ on $A$, which is arbitrarily close to $\tfrac 1 p$.

A key condition in this setup is the consistency $\Delta_0 = \P$, which heavily restricts the sets $\P$ one can consider in a theorem of this type.
In particular, given any $P,P' \in \P$ with filtration $\F_t = \sigma(Y_1,Y_2,\ldots,Y_t)$ generated by the per-round outcomes, at time $t$ we must be able to switch from following $P$ to following $P'$, a condition similar to stability under pasting~\citep{bartl2020conditional} or fork-convexity~\citep{ramdas2022testing}.
Typically, this condition means $\P$ must contain a certain rich class of martingale measures.
As we will see in \S~\ref{sec:translating-mtp-gtp}, an implication is that, in order to lift measure-theoretic statements to game-theoretic ones, the measure-theoretic statement must allow for martingales, or processes with similarly permissive conditional structure.

As a final remark, let us consider the relationship between optimal strategies for both players.
Again taking the game in \S~\ref{sec:motivating-example} for concreteness, note that for any event $A$ (indeed any $X$), the optimal strategy $P$ for World when playing first is always an element of $\Delta_0$, by eq.~\eqref{eq:intro-gambler-force-consistent}.
Since $P \in \Delta_0$ neutralizes Gambler's winnings, and the only remaining utility is in $P(A)$, the optimal strategies are precisely $\argsup_{P\in\Delta_0} P(A)$.
When $A$ is a probability zero event for all $P\in\Delta_0$, like the complement of the LLN or LIL (law of the iterated logarithm), every $P\in\Delta_0$ is optimal for World.
Yet even among probability zero events $A$, it is clear that the optimal strategy $\psi^*$ for Gambler depends heavily on $A$.
In that sense, as observed by \citet[\S~1]{ruf2023composite}, one can consider game-theoretic probability to be a \emph{constructive refinement} of measure-theoretic probability.

\subsection{The case for game-theoretic probability}
\label{sec:case-for-gtp}

What are the benefits, if any, to working with game-theoretic probability, as opposed to the standard measure-theoretic framework?
Here we list five, expanded on from \citet{vovk2009merging,vovk2017measurability,shafer2019game}.

\begin{enumerate}
\item Stronger guarantees: worst-case, ``pathwise'' statements.
  As appreciated in finance and online machine learning, statements in game-theoretic probability hold for all possible outcome sequences.
  As such, no assumption is needed on how the outcome sequence is generated, stochastic or otherwise.
  The guarantees are stronger in the sense that they still readily imply their stochastic counterparts, a point we clarify further in \S~\ref{sec:prices-probability} and \S~\ref{sec:gtp-to-mtp-always}.
  
\item Philosophical appeal: relevance to the real world.
  Beyond their mathematical strength, game-theoretic statements operate in a model of the world that more readily accommodates the data we currently apply probabilitistic methods to.
  As Kolmogorov famously lamented about the use of probability to analyze literature~\citep{vitanyi2013tolstoy,kolmogorov1965three}, we rarely perform inference on or test hypotheses from data which are truly generated by a stochastic process.
  Arguably, the more common situation is that the existence of such a stochastic process would be proposterous.
  
\item Deeper, more constructive understanding of probability.
  As demonstrated in Fig.~\ref{fig:pascal-fermat} and the final paragraph of \S~\ref{sec:intro-composite-ville}, game-theoretic probability can be considered a constructive refinement of measure-theoretic probability.
  To understand an event in game-theoretic probability requires more than determining its probability: it requires a strategy to test it, to bet against it.
  As put by \citet{ruf2023composite}, ``Different measure-zero events---for example, sequences violating the strong law of large numbers (SLLN) and those violating the LIL---obviously have the same probability, but they result in different betting strategies.''
  Thus, even if, as we conjecture in \S~\ref{sec:future-work}, the two theories align in the vast majority of useful cases, in that they assign the same ``price'' to each variable, it is still fruitful to study replication strategies that give those prices.
  
\item Clarity of assumptions.
  One commonly touted benefit of game-theoretic probability is that one does not need to specify the expected value (``price'') of every variable $X$.
  Yet in many cases, this is not a fair criticism of measure-theoretic probability: many measure-theoretic statements also refrain from specifying $\E X$ for every $X$, and thus hold for a set of probability measures $\P$ as in \S~\ref{sec:intro-composite-ville}.
  For example, the bounded measure-theoretic (martingale) LLN only assumes the underlying $P$ satisfies $\E_P[Y_{t+1} \mid Y_{1..t}] = 0$ $P$-a.s., and thus the result holds for the set $\P$ of such $P$.
  Perhaps a more accurate criticism, then, is that measure-theoretic results do not make this set $\P$ explicit.
  Indeed, it can be challenging to take statements like the bounded LLN and identify the set $\P$ for which the statement holds.
  (And for other statements, such a $\P$ need not exist, as in Non-Theorem~\ref{thm:potential-gtp-clt}.)

\item Ease of sequential constructions.
  As discussed in~\citet[\S~9.1]{shafer2019game}, the game-theoretic approach to sequential settings like discrete-time processes starts from the local and moves to the global.
  That is, a protocol specifies the gambles available in each round, and one deduces global properties of the game from there.
  This approach lends itself to considerable flexibility and ease relative to the measure-theoretic approach, where now measurability presents a nontrivial technical barrier.
  This barrier manifests in this article when trying to establish the tower property of the measure-theoretic upper expectation in \S~\ref{sec:tower-properties-sequential-minimax}.
  Despite the fact that the game-theoretic and measure-theoretic tower properties have essentially the same underlying logic, while the proof is short and intuitive for the game-theoretic tower property (Proposition~\ref{prop:tower-property-egu}), the measure-theoretic proof (Lemma~\ref{lem:usa-facts}) is much more involved.
  The essential difficulity is that one needs to find a suitable class of functions that is stable under iterated upper expectations, and Borel measurable functions are not such a class (Example~\ref{ex:seq-meas-fail}).
  Yet the game-theoretic framework is unincumbered by such considerations, at least in discrete time~\citep{vovk2017measurability}, all while being more flexible and giving stronger results.
\end{enumerate}

\subsection{Contributions}
\label{sec:contributions}

The LLN example in \S~\ref{sec:motivating-example} begs a question:
could one prove a \emph{general} minimax theorem for many such probability games of interest?
While there is a vast literature on minimax theorems~\citep{simons1995minimax}, these theorems do not obviously apply to eq.~\eqref{eq:minimax-duality-intro} in all cases of interest (see \S~\ref{sec:minimax-theorems}).
Yet the proliferation of game-theoretic results that match their measure-theoretic counterparts suggests that such a general theorem should be possible.

In this article, we give such a general minimax theorem for finite time horizons, which we use to recover many existing game-theoretic results as well as establish new ones.
(The infinite-horizon case, including a theorem covering the LLN example above, remains open (\S~\ref{sec:future-work}.))
This minimax theorem in turn requires a suite of new results to connect game-theoretic probability to measure-theoretic probability.
In total, the technical contributions of this article are as follows.
\begin{enumerate}
\item A reframing of game-theoretic probability by defining and building upon \emph{gamble spaces}, as an analog of probability spaces (Definition~\ref{def:gamble-space}).
  Compared to \citet{shafer2001probability,shafer2019game}, gamble spaces satisfy fewer axioms (\S~\ref{sec:axioms-fa}).
  This simple reframing provides a foundation upon which several general results can stand, and clarifies the relationship to related disciplines like online machine learning (\S~\ref{sec:online-learning}) and financial risk measures (\S~\ref{sec:financial-risk-measures}).
  
\item The observation that when trying to replicate a measurable variable $X$, Gambler may without loss of generality restrict to measurable strategies (Proposition~\ref{prop:dcl-wlog}).
\item The series of price inequalities analogous to eq.~\eqref{eq:intro-price-inequalities}, which considers the cases when World plays first, and is additionally consistent with the gambles (Theorem~\ref{thm:chain-of-price-inequalities}, Corollary~\ref{cor:sequential-price-inequalities}).
  A corollary is that measure-theoretic expectations always lower bound the game-theoretic version.
\item The observation discussed in \S~\ref{sec:intro-composite-ville} that a minimax theorem implies a composite version of Ville's Theorem (\S~\ref{sec:seq-price-equal}).
\item A new minimax theorem for finite time horizons (Theorem~\ref{thm:finite-sequential-minimax}), which gives rise to a general way to convert measure-theoretic statements into game-theoretic ones (Corollary~\ref{cor:gtp-to-mtp-sequential}).
  This result extends a backward induction argument from online machine learning~\citep{abernethy2009stochastic}, which bears some resemblance to Pascal's derivation above.
\item Several new game-theoretic results, to illustrate the minimax theorem (\S~\ref{sec:translating-mtp-gtp}).
  As discussed in \S~\ref{sec:case-for-gtp}, these results show the existence of gambling strategies to replicate the relevant quantities, but do so nonconstructively.
\item A new connection to finitely additive measures, namely that minimax duality \emph{always} holds for convex gamble spaces when relaxing countable additivity (\S~\ref{sec:axioms-fa}).
  In a strong sense, then, game-theoretic probability in its full generality is a finitely additive theory of probability.
\end{enumerate}

We leave several directions for future work; in particular, we conjecture that an even more general minimax theorem can be established for countably infinite time (\S~\ref{sec:future-work}).

\subsection{Relationship to the literature}
\label{sec:related-work}

We briefly review the connection and relevance to several disciplines.

\paragraph{Game-theoretic probability}

The literature on game-theoretic probability is well summarized by \citet{shafer2001probability,shafer2019game}.
As discussed in \S~\ref{sec:definitions}, many of the basic definitions and results in this article are identical to or straightforward extensions of those found in \citet{shafer2019game}, while others are novel in their generality.
Adding to this literature, our main minimax result (Theorem~\ref{thm:finite-sequential-minimax}) is the first to apply to nontrivial composite settings where the per-round outcome set $\Y$ can have infinite cardinality.
As we discuss in \S~\ref{sec:existing-minimax}, \citet{shafer2019game} also give two minimax theorems, one (their Theorem 9.3) a game-theoretic extension of Ville's Theorem, which is not composite, and the other (their Theorem 9.7) which requires $\Y$ to be finite.
Both of these results hold for infinite time horizons, however, whereas ours is a finite-time result.

\paragraph{Game-theoretic statistics}

While we do not focus on testing protocols per se, the framework and results presented are intimately connected to game-theoretic statistics~\citep{ramdas2023game}.
Of particular relevance are Ville-like results for composite settings, such as \citet{ramdas2022testing,ruf2023composite}.
We also make use of a recent characterization of e-variables due to \citet{larsson2025variables} for our finite-time minimax result.

\paragraph{Online machine learning}

The price inequalities in \S~\ref{sec:prices-probability} were inspired by a series of works in the online learning literature applying minimax duality to upper and lower bound the regret of various algorithms and settings~\citep{koolen2014efficient,abernethy2012minimax,abernethy2009stochastic}.
Particularly relevant are \citet{abernethy2009stochastic} and \citet{abernethy2012minimax}, as both explicitly apply minimax duality to relate the ``prices'' defined in \S~\ref{sec:price-definitions} in a more restricted setting.
The backward induction argument in our main minimax result, Theorem~\ref{thm:finite-sequential-minimax}, is a generalization of the argument of \citet[Theorem 1]{abernethy2009stochastic}.
The present framework was particularly inspired by the works of \citet{rakhlin2012relax,foster2018online,rakhlin2017equivalence}.

\paragraph{Finance}

There are deep connections between game-theoretic probability and two subfields of mathematical finance.
The first and perhaps least surprising is the literature on pathwise hedging equalities and inequalities~\citep{beiglboeck2016pathwise,beiglboeck2014martingale,beiglboeck2015pathwise,nutz2013constructing,bouchard2013arbitrage}.
Perhaps closest to our setting are \citet{nutz2013constructing} and \citet{bouchard2013arbitrage}.
The latter introduces a very similar setup and minimax result, in the special case where the gambles available in each round are linear, i.e., stock portfolio returns of the form $Z:y\mapsto\inprod{\beta}{y}$ where $\beta,y\in\reals^d$.

The second is the literature on financial risk measures~\citep{follmer2016stochastic, delbaen2002coherent, kratschmer2005robust, kratschmer2006sigma}.
As also noted in \citet{shafer2019game}, game-theoretic upper expectations satisfying their axioms (see \S~\ref{sec:axioms-fa}) are closely related to coherent financial risk measures.
We observe that removing essentially all of these axioms still preserves the two defining properties of financial risk measures, translation and monotonicity.
The literature on time-consistent risk measures is particularly relevant, as sequential gamble spaces are essentially always time-consistent.
We leverage this connection and the elegant work of \citet{bartl2020conditional} to prove our sequential minimax result.

\subsection{Acknowledgements}

I am deeply indebted to Aaditya Ramdas and Peter Gr\"unwald for numerous conversations, invitations, and specific feedback and ideas which appear throughout the article.
I also am grateful for the conversations with
Tobias Fissler,
Wouter Koolen,
Martin Larsson,
Manuel Lladser,
Ryan Martin,
Nishant Mehta,
Johannes Ruf,
Glenn Shafer,
Karthik Sridharan,
Zachary van Oosten,
Volodya Vovk,
Bo Waggoner,
Ruodu Wang,
and
Johanna Ziegel.
Finally, I thank
the other participants at the 2024 \emph{Game-theoretic statistical inference} workshop at Mathematisches Forschungsinstitut Oberwolfach,
and students at the University of Colorado Boulder in \emph{Online Machine Learning, Forecasting, and e-Values} (Fall 2023) and \emph{Game-theoretic Probability, Statistics, and Machine Learning} (Fall 2025),
for discussions, feedback, references, insights, and encouragement.

\section{Game-theoretic probability via gamble spaces}
\label{sec:definitions}

To state general results about game-theoretic probability, it will be useful to work within a unifying framework that we dub \emph{gamble spaces}.
In this framework, there are only two players, World and Gambler.

Gamble spaces essentially represent the ``offers'' view of \citet[\S~6]{shafer2019game} but without the axioms.
One advantage of the unifying framework is the ability to prove general statements about certain types of gamble spaces, rather than reiterating similar statements for each protocol.
Despite this shift in framing, much of the notation, terminology, and results in this section already appear in, or draw inspiration from, the books of \citet{shafer2001probability,shafer2019game}.

\subsection{Basic definitions}
\label{sec:basic-definitions}

Throughout, the action space of World is $\Omega$, an arbitrary set representing an \emph{outcome} space.
Gambler has access to some set $\Z$ of \textsl{gambles} $Z:\Omega\to\extreals$, where throughout we let $\extreals := \reals\cup\{\infty,-\infty\}$ be the extended reals.
Upon outcome $\omega\in\Omega$, a gamble $Z\in\Z$ has a payoff $Z(\omega)$ to Gambler.
Elements of $\Z$ can be thought of as contracts, algorithms to buy/sell stocks, insurance policies, etc.
Finally, we will call any function $X:\Omega\to\extreals$ a \emph{variable}; all gambles are variables.
For variables $X,Y$, we write $X \geq Y$ to mean a pointwise inequality, i.e., $X(\omega)\geq Y(\omega)$ for all $\omega\in\Omega$.
When $\Omega$ is clear from context, we write $\sup X := \sup_{\omega\in\Omega} X(\omega)$ and similarly for $\inf X$.

\begin{definition}[Gamble space]
  \label{def:gamble-space}
  A \emph{gamble space} is a pair $(\Omega,\Z)$, where $\Omega$ is a set and $\Z \subseteq (\Omega\to\extreals)$.
\end{definition}

Given some variable $X$ and set of gambles $\Z$, one defines the upper and lower game-theoretic probability of $X$ with respect to $\Z$ as follows.
The definition allows for expressions of the form $\infty - \infty$ and $(-\infty) - (-\infty)$; to be pessimistic, we take both of these to equal $\infty$.
See \S~\ref{sec:infinite-cases}.

\begin{figure}[t]
  \hspace*{1cm}
  \includegraphics{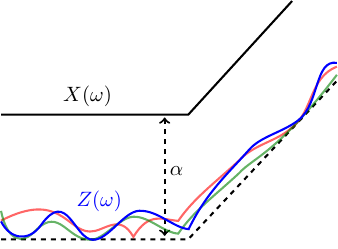}
  \hfill
  \includegraphics{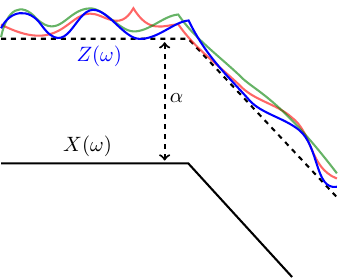}
  \hspace*{1cm}
  \caption{The replication cost to sell or buy $X$, respectively, relative to existing gambles $\Z$.  This cost is exactly $\alpha$ if the depicted gamble $Z$ achieves the infimum in eq.~\eqref{eq:prop-upper-ex-rep-cost}.}
  \label{fig:replication}
\end{figure}

\begin{definition}[Game-theoretic upper expectation]
  \label{def:egu}
  
  Let $(\Omega,\Z)$ be a gamble space, and $X:\Omega\to\extreals$ a variable.
  Then the \emph{upper game-theoretic expectation of $X$}, with respect to $(\Omega,\Z)$, is given by the following equivalent definitions (see Proposition~\ref{prop:egu-equiv-defs}).
  \begin{align}
    \Egu X
    &:= \inf_{Z\in\Z} \sup_{\omega\in\Omega} X(\omega) - Z(\omega)
      \label{eq:formal-egu}
    \\
    &\phantom{:}= \inf\{\alpha\in\reals \mid \exists Z\in\Z \text{ s.t. } Z + \alpha > X\}~.
      \label{eq:prop-upper-ex-rep-cost}
  \end{align}
  We define the \emph{lower game-theoretic expectation of $X$}, with respect to $(\Omega,\Z)$ by $\Egl X := - \Egu(-X)$.
  When $\Egu X = \Egl X$, we write $\Eg X := \Egu X = \Egl X$.
\end{definition}
When $X$ is real-valued, the strict inequality in eq.~\eqref{eq:prop-upper-ex-rep-cost} can be replaced with a weak one; see Proposition~\ref{prop:egu-equiv-defs} and Remark~\ref{rem:egu-rep-cost-strict}.
When working with multiple gamble spaces, we sometimes write $\Egu_\Z X$ to explicitly note the dependence on $\Z$.

As discussed in \S~\ref{sec:introduction}, one can think of eq.~\eqref{eq:formal-egu} as a zero-sum game between Gambler and World, where Gambler plays first.
From eq.~\eqref{eq:prop-upper-ex-rep-cost}, one can also think of $\Egu X$ as a \emph{replication cost} with respect to $\Z$ (Fig.~\ref{fig:replication}).
This replication cost formulation can be useful when thinking of $\alpha$ as the initial capital replication strategy: successfully replicating $X$ starting with $\alpha$ would show $\Egu X \leq \alpha$.

We define the upper and lower game-theoretic probability of $A \subseteq \Omega$ as follows,
\begin{align}
  \Pgu A &:= \Egu \ones_A~,
  \\
  \Pgl A &:= \Egl \ones_A~,
\end{align}
where $\ones_A:\Omega\to\{0,1\}$ is the indicator variable with $\ones_A(\omega) = 1$ if and only if $\omega \in A$.
From eq.~\eqref{eq:prop-upper-ex-rep-cost} we also have
\begin{align}\label{eq:pgu-rep-cost}
  \Pgu A &= \inf\{\alpha\in\reals \mid \exists Z\in\Z \text{ s.t. } Z + \alpha > \ones_A\}~.
\end{align}
If equal, we write $\Pg A := \Pgu A = \Pgl A$.
When convenient we will write $\Pgl A = 1$ as ``$A$ g.t.a.s.'' (game-theoretically almost surely).

\begin{proposition}\label{prop:egu-equiv-defs}
  The two definitions of $\Egu$ in Definition~\ref{def:egu} are equivalent.
  If $X(\omega)=Z(\omega)=c \implies c\in\reals$ for all $Z\in\Z$, then we further have
  \begin{align}
    \Egu X
    &= \inf\{\alpha\in\reals \mid \exists Z\in\Z \text{ s.t. } Z + \alpha \geq X\}~.
      \label{eq:egu-rep-cost-weak}
  \end{align}
  In particular, eq.~\eqref{eq:egu-rep-cost-weak} holds if $X\in\reals$ or $\Z\subseteq(\Omega\to\reals)$.
\end{proposition}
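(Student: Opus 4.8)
The plan is to reduce everything to a single bookkeeping fact about extended-real arithmetic and then assemble the three equalities from it. Throughout I would write $s_Z := \sup_{\omega}\big(X(\omega) - Z(\omega)\big)$, so that the first definition reads $\Egu X = \inf_{Z\in\Z} s_Z$; recall the convention $\infty-\infty = (-\infty)-(-\infty) = \infty$. The key observation to isolate first is that for a \emph{real} number $\alpha$ and any $\omega$,
\[
  Z(\omega) + \alpha > X(\omega) \iff \alpha > X(\omega) - Z(\omega).
\]
I would verify this by a short case split on whether $Z(\omega)$ is finite, $+\infty$, or $-\infty$ (and within each on the value of $X(\omega)$); both sides agree in every case, and the convention $\infty-\infty=\infty$ is exactly what forces agreement (both sides false) when $X(\omega)=Z(\omega)=+\infty$. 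Quantifying over $\omega$ then gives $Z + \alpha > X \iff \alpha > X(\omega)-Z(\omega)$ for all $\omega$.

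Given this, the asserted equality of \eqref{eq:formal-egu} and \eqref{eq:prop-upper-ex-rep-cost}, i.e.\ $\inf_Z s_Z = B$ with $B := \inf\{\alpha\in\reals : \exists Z\in\Z,\, Z+\alpha>X\}$, follows from two inequalities. For $\inf_Z s_Z \le B$: any admissible real $\alpha$, witnessed by some $Z$, is a strict (hence weak) upper bound of $\{X(\omega)-Z(\omega)\}_\omega$, so $\alpha \ge s_Z \ge \inf_{Z'} s_{Z'}$, and taking the infimum over admissible $\alpha$ yields the bound. For $B \le \inf_Z s_Z$: fix $Z$; if $s_Z=+\infty$ the inequality $B\le s_Z$ is vacuous, while if $s_Z<+\infty$ then every real $\alpha>s_Z$ is admissible for $Z$, so $B\le s_Z+\epsilon$ for all $\epsilon>0$ (and $B=-\infty$ when $s_Z=-\infty$). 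In all cases $B\le s_Z$, and taking the infimum over $Z$ finishes.

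For the weak-inequality formula \eqref{eq:egu-rep-cost-weak} under the hypothesis, set $C := \inf\{\alpha\in\reals : \exists Z\in\Z,\, Z+\alpha\ge X\}$. Since $Z+\alpha>X$ implies $Z+\alpha\ge X$, the admissible set for $B$ is contained in that for $C$, giving $C \le B = \Egu X$ unconditionally. The reverse inequality is where the hypothesis enters: I would prove the weak analogue of the arithmetic fact, namely that for real $\alpha$,
\[
  Z(\omega)+\alpha \ge X(\omega) \iff \alpha \ge X(\omega)-Z(\omega),
\]
via the same case split. The hypothesis $X(\omega)=Z(\omega)=c \implies c\in\reals$ excludes exactly the offending cases $X(\omega)=Z(\omega)=+\infty$ and $X(\omega)=Z(\omega)=-\infty$: in the former, without the hypothesis the left side is true but, by the convention, the right side $\alpha\ge\infty$ is false, breaking the equivalence. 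With this in hand, any real $\alpha$ admissible for $C$ via $Z$ satisfies $\alpha\ge s_Z\ge\Egu X$, so $\Egu X\le C$, and hence $\Egu X = C$.

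I expect the case analysis underlying the two arithmetic equivalences to be the only genuine obstacle; it is routine but must be carried out carefully, and the weak-inequality version is precisely where the stated hypothesis is indispensable rather than cosmetic. The closing ``in particular'' clause is then immediate: if either $X$ is real-valued or every $Z\in\Z$ is real-valued, any common value $c=X(\omega)=Z(\omega)$ is automatically real, so the hypothesis holds and \eqref{eq:egu-rep-cost-weak} applies.
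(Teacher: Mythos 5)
Your proposal is correct and follows essentially the same route as the paper: the paper's proof also isolates the pointwise arithmetic equivalences $z+\alpha>x \iff \alpha>x-z$ (and the weak version under the hypothesis ruling out $x=z=\pm\infty$) via the same case analysis on infinite values, and then assembles the infima. Your two-sided inequality bookkeeping is just a slightly more explicit rendering of the paper's one-line chain of equalities using $\sup(X-Z)=\inf\{\alpha\in\reals \mid \alpha > X-Z\}$, so nothing is missing or materially different.
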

\begin{proof}
  We will use the following claim about extented real numbers.
  \begin{quote}
    For $\alpha\in\reals$ and $x,z\in\extreals$, we have $z + \alpha > x \iff \alpha > x - z$.\\
    If $x \neq z$ or $x,z\in\reals$, we have $z + \alpha \geq x \iff \alpha \geq x - z$.
  \end{quote}
  If $x,z\in\reals$ the claim is trivial.
  It remains only to check cases with at least one infinite value.
  If $x = z = \infty$ or $x = z = -\infty$, both strict inequalities are false by our convention $\infty - \infty = (-\infty) - (-\infty) = \infty$.
  Otherwise, $x$ and $z$ obey the usual rules of arithmetic and both strict and weak equivalences hold.

  By applying the strict part of the claim pointwise, we have $Z + \alpha > X \iff \alpha > X - Z$.
  Thus
  \begin{align*}
    \inf_{Z\in\Z} \sup(X-Z)
    &= \inf_{Z\in\Z} \inf\{\alpha \in \reals \mid \alpha > X - Z\}
    \\
    &= \inf\{\alpha \in \reals \mid \exists Z\in\Z \text{ s.t. } \alpha > X - Z\}
    \\
    &= \inf\{\alpha \in \reals \mid \exists Z\in\Z \text{ s.t. } Z + \alpha > X\}~.
  \end{align*}
  By the second part of the claim, when $X(\omega)=Z(\omega)=c \implies c\in\reals$, the same logic shows the same statement with weak inequality.
\end{proof}

\begin{remark}\label{rem:egu-rep-cost-strict}
  As the proof of Proposition~\ref{prop:egu-equiv-defs} shows, the strict inequality in eq.~\eqref{eq:prop-upper-ex-rep-cost} is crucial for the equivalence between the two definitions when $X$ or $Z$ take on infinite values.
  If we replaced it with a weak inequality, then for $X=\infty$ and $\Z=\{\infty\}$ we would have $\Egu X = \infty$ under eq.~\eqref{eq:formal-egu} but $\Egu X = -\infty$ under eq.~\eqref{eq:prop-upper-ex-rep-cost}.
  For much of the applications and examples, however, $X$ will be real-valued or even bounded (as in the case of $\Pgu$), and we may safely use the weak inequality version.
  
\end{remark}

\begin{remark}\label{rem:upper-expectation-philosophy}
  To think of $\Egu$ as an ``upper expectation'' and $\Pgu$ as an ``upper probability'', one might wish them to satisfy certain properties, such as $\Egu X \geq \Egl X$, $\Egu X \leq \sup X$, $\Egu[X + Y] \geq \Egu X + \Egu Y$, $\Pgu A \in [0,1]$, $\Pgu A + \Pgu A^c \geq 1$, etc.
  In general, as various counterexamples in \S~\ref{sec:conditions-upper-lower} show, all of these properties can fail without further assumptions.
  (For a simple example, when $\Z = \{\omega \mapsto 1\}$ we have $\Egu 0 = -1 < 1 = \Egl 0$.)
  In this article, we will most commonly assume that gambles are arbitrage-free and positive linear (a convex cone; Definition~\ref{def:gamble-space-conditions}), from which these and many other familiar properties follow.
  \citet{shafer2019game} reserve the term ``upper expectation'' solely for gamble spaces satisfying these conditions and an additional continuity condition (Axiom E5 in \S~\ref{sec:axioms-fa}), from which stronger properties follow like monotone convergence.

  Yet, despite straining the philosophy, we may fruitfully continue studying $\Egu$ and $\Pgu$ without these additional conditions.
  It turns out that several important properties do not require any conditions whatsoever, such as the chain of price inequalities (Theorem~\ref{thm:chain-of-price-inequalities}) and tower property (Proposition~\ref{prop:tower-property-egu}), and others require only scalability, such as the Ville-like characterization of $\Pgu A$ as the smallest $\alpha$ where a gambling strategy can start at 1 and reach any level below $1/\alpha$ on $A$ (\S~\ref{sec:multiplicative}).
  In some settings like online machine learning, we will not even have scalability, making $\Pgu$ less interpretable but also less interesting: rather than trying to replicate  $X=\ones_A$ for some event $A$, we will be more interested in replicating quantities like the performance of the best action in hindsight (\S~\ref{sec:online-learning}).
  While not the focus of the present work, capturing online machine learning was a key motivation for the generality of our definitions.
\end{remark}

\subsection{Multiplicative gambles and the capital process}
\label{sec:multiplicative}

An important case of interest is when the variable $X$ to be replicated is nonnegative, such as an indicator $\ones_A$.
In this case, it is natural to write gambles as \emph{multiplicative} rather than additive, and think of them as investments of capital.
This view is especially fruitful in sequential settings, when one can think of Gambler as reinvesting her capital repeatedly over many rounds:
the resulting \emph{capital process} can be written as the product of multiplicative gambles.
This process is even simpler when Gambler is allowed to scale gambles up or down, as in \S~\ref{sec:motivating-example}.
Since this view is so fundamental to many framings of this theory, for example in game-theoretic statistics, let us develop the high-level connection, even before we have seen several core definitions (e.g.\ sequential gambles, Definition~\ref{def:sequential-gamble-space}).

As a first observation, consider any gamble $Z$ and any initial capital $\alpha > 0$.
Then we could think of Gambler playing $Z$ as a one-round capital process, starting with $\alpha$ and ending with $\alpha + Z$.
We can equivalently think of this process as multiplicative: letting $E = 1 + (1/\alpha)Z$, we start with $\alpha$ and multiply our capital by $E$, ending with $\alpha E = \alpha + Z$.
(The notation $E$ is suggestive of e-variables; see below.)
The converse is straightforward: we simply define $Z = \alpha(E - 1)$.
This conversion makes perfect sense for any gamble space and any $\alpha > 0$.

In our game, Gambler is trying to replicate the variable $X$, and must choose $\alpha$ and $Z$ so that $\alpha + Z \geq X$.
Thus, the above conversion will only be satisfactory for Gambler when $\Egu X \geq 0$, meaning one can approach $\Egu X$ from above with a sequence of strictly positive initial capitals $\alpha > 0$.
When gambles are sufficiently arbitrage-free in the sense that the only way to prevent a loss is to refrain from betting ($Z\geq 0 \implies Z = 0$),
\footnote{This notion is slightly stronger than the notion of arbitrage-free in Definition~\ref{def:gamble-space-conditions}.} the condition $\Egu X \geq 0$ is implied by $X \geq 0$ (Propositions~\ref{prop:basic-facts-no-assumptions},~\ref{prop:basic-facts-zero}), hence the focus on nonnegative variables.

Let us now assume $X\geq 0$ and consider sequential setting where in each round gambles are arbitrage-free in the sense above.
In each round $t = 1,2,\ldots$, Gambler now selects some $Z_t$, and World selects an outcome for that round.
Starting from capital $C_0 \geq 0$, Gambler's capital process is $C_t = C_0 + \sum_{i\leq t} Z_{i}$, culminating in $C_T \geq X$.
That is, Gambler succesfully replicates $X$ starting with $C_0 \geq 0$.
Following \citet{ville1939etude} and \citet{shafer2001probability,shafer2019game}, we refer to $\{C_t\}_t$ as a \emph{game-theoretic supermartigale} (Definition~\ref{def:supermartingale}), and show that (when measurable) it is always a measure-theoretic supermartingale as well (Proposition~\ref{prop:supermartingale-to-measure}).

Under our assumptions, one can always convert the additive process $\{C_t\}_t$ to a multiplicative one.
If $C_t > 0$, then we may write $C_{t+1} = C_t E_{t+1}$ where $E_{t+1} = 1 + (1/C_t)Z_{t+1}$ as above.
If $C_t \leq 0$, then it must be the case that $C_t = 0$ and there exists some $Z_{t+1} \geq 0$.
Otherwise, for every choice of $Z_{t+1}$, World can force $C_{t+1} < 0$, and since gambles are arbitrage-free, World can keep the subsequent capital process bounded below 0, contradicting $C_T \geq X \geq 0$.
Thus, by our assumption, we must have $Z_{t+1} = 0$ and thus $C_{t+1} = 0$ as well, and can safely define $E_{t+1} = 0$.
In all cases, then, we may write $C_t = C_0 \prod_{i\leq t} E_i$.
One can interpret this multiplicative process as \emph{reinvesting} the current capital in a new gamble $E_t$.

This multiplicative representation is especially nice when the gambles $\Z$ in each round are \emph{scalable}, meaning $Z\in\Z, c\geq 0 \implies c Z \in \Z$ (Definition~\ref{def:gamble-space-conditions}).
In this case, if $C_t > 0$ and $Z_t$ is a gamble keeping $C_t + Z_t \geq 0$, we have $Z'_t := (1/C_t)Z_t \in \Z$ and $E_t = 1 + (1/C_t)Z_t = 1 + Z'_t$.
Thus, we have a bijection between $Z_t\in\Z$ such that $C_t + Z_t \geq 0$, and $E_t \in \Evar(\Z) := \{1 + Z \mid Z\in\Z, Z\geq -1\}$.
This definition more directly aligns with the usual definition of e-variables: a ``fair'' multiplicative gamble $E$ taking values in $[0,\infty]$ with $\Egu E \leq 1$.
\footnote{Technically we have $\Egu E = \infty$ when $E$ takes on value $\infty$, but we can approximate such an $E$ arbitrarily closely with $[0,\infty)$-valued e-variables.}
The key advantage of the multiplicative framing is that the no-bankcruptcy constraint $C_t + Z_{t+1} \geq 0$, which depends on $C_t$ and thus on the entire outcome sequence thus far, is replaced by $E_{t+1} \in \Evar(\Z)$, a constraint which is invariant across rounds.
In other words, when the gambles $\Z$ are available in every round, the multiplicative gambles available in each round are simply $\Evar(\Z)$.
Without scalability, the multiplicative gambles on round $t$ would generally depend on $C_t$.

A natural process that arises from this viewpoint is the log capital sequence $\{L_t\}_t$ given by $L_t = \log C_t \in \extreals$, which captures the exponential growth rate of capital.
Given scalable additive gambles $\Z$, and multiplicative gambles $\Evar(\Z)$, we may naturally define $\Z^{\log} = \{\log E \mid E\in\Evar(\Z)\}$, which turns the study of exponential growth back into additive gambles: $L_t = \log C_0 + \sum_{i\leq t} \log E_i$, where now each $\log E_i \in \Z^{\log}$; see Example~\ref{ex:lln-gambles-multiplicative} and \S~\ref{sec:online-learning}.
This exponential growth viewpoint was introduced by \citet{kelly1956new}, now commonly called ``Kelly betting'', and is the dominant measure of power in game-theoretic statistics~\citep{ramdas2023game,grunwald2024safe}.

\begin{example}[Multiplicative LLN strategy]
  \label{ex:lln-gambles-multiplicative}
  To make this discussion concrete, consider the setting in \S~\ref{sec:motivating-example} with $\Y = [-1,1]$ and $\Z = \{y \mapsto \beta y \mid \beta \in \reals\}$.
  Here the ``additive'' gambles $Z_t$ are parameterized by the choice $\beta_t$.
  As $\Z$ is scalable, the corresponding set of multiplicative gambles is simply $\Evar(\Z) := \{y\mapsto 1 + \alpha y \mid \alpha \in [-1,1]\}$.
  For a nonnegative capital process $\{C_t\}_t$, we can define the multiplicative ``reinvestment'' $\alpha_t := \beta_t / C_t$, so that $E_t = 1 + (1/C_t) Z_t = 1 + \alpha_t y_t$ and again $C_t = \prod_{i\leq t} E_i$.
  (We set $\alpha_t = 0$ when $C_t = 0$.)
  As $C_{t+1} \geq 0$, we must have $|\beta_t|\leq C_t$, and thus $\alpha_t \in [-1,1]$ and $E_t \in \Evar(\Z)$.
  Strategies, especially log optimal strategies, can be more natural to state multiplicatively.
  For example, one choice of $\psi^*$ in \S~\ref{sec:motivating-example} is the Krichevsky--Trofimov estimator which simply sets $\alpha_t = \tfrac{1}{t} \sum_{i=1}^{t-1} y_i$~\citep{krichevsky1981sequential,orabona2016coin}.
  The log capital $L_t = \sum_{i\leq t} \log(1+\alpha_ty_t)$, itself the capital process on gamble space $\Z^{\log} = \{y\mapsto \log(1+\alpha y) \mid \alpha\in[-1,1]\}$, appears
again in \S~\ref{sec:online-learning}, where we will see how to derive such strategies via online machine learning.
\end{example}

As in \S~\ref{sec:motivating-example} and Example~\ref{ex:lln-gambles-multiplicative}, a particularly common source of nonnegative variables $X$ are indicators $\ones_A$.
When gambles are scalable, the replication representation of $\Pgu A$ (using Proposition~\ref{prop:egu-equiv-defs}, as $\ones_A\in\reals^\Omega$) simplifies to an important form,
\begin{align}\label{eq:upper-prob-scalable}
  \Pgu A
  &= \inf\,\{\alpha > 0 : \exists Z\in\Z \text{ s.t. } \alpha+Z \geq  \ones_A\}
  \\
  &= \inf\left\{\alpha > 0 : \exists Z\in\Z \text{ s.t. } 1+Z \geq \frac 1 \alpha \ones_A\right\}
  \\
  &= \inf\left\{\alpha > 0 : \exists E\in\Evar(\Z) \text{ s.t. } E \geq \frac 1 \alpha \text{ on } A\right\}~.
\end{align}
In other words, Gambler may gamble without risking bankruptcy, and scale her initial capital by (arbitrarily close to) $1/\Pgu A$ when $A$ occurs.
When $\Pgu A = 0$, Gambler can make her capital grow arbitrarily large.
The expressions above establish the converse of these statements as well (see \S~\ref{sec:prob-one}).
The existence of $\psi^*$ above thus exhibits $\Pgu (\ALLN)^c = 0$, i.e., $\ALLN$ holds g.t.a.s.

\subsection{Examples}
\label{sec:examples}

As discussed in \S~\ref{sec:intro-composite-ville}, given a measurable gamble space $(\Omega,\Z)$, meaning each $Z\in\Z$ is measurable, we may define the set of \emph{consistent} probability measures by $\Delta_0(\Z) = \{P\in\Delta(\Omega) \mid \E_P Z \leq 0 \;\forall Z\in\Z\}$.
When we define $\Delta_0$ in \S~\ref{sec:consistency-seq-def}, we will lift the restriction that $\Z$ be measurable.

\begin{example}[Fair coin]\label{ex:coin-formal}
  Let us see how to represent a ``fair coin'', via the gamble space $(\Omega,\Z)$ given by outcomes $\Omega = \{-1,1\}$, and gambles $\Z = \{Z_\beta: \omega\mapsto \beta\omega \mid \beta\in\reals\}$.
  Here the event $A_\Ht = \{1\}$ represents heads, and $A_\Tt = \{-1\}$ tails.
  A gamble $Z_\beta\in\Z$ pays $\beta$ upon heads and $-\beta$ upon tails.

  To check our intuition that these are fair gambles, we can compute the set of consistent probability measures $\Delta_0(\Z)$.
  Since $\omega\mapsto\omega, \omega\mapsto-\omega \in \Z$, we have $\Delta_0(\Z) = \{P\}$ where $P(A_\Ht) = P(A_\Tt) = 1/2$, as desired.
  
  Let us compute the upper game-theoretic probability of heads:
  \begin{align}\label{eq:coin-formal-upper}
    \Pgu A_\Ht = \Egu \ones_{\{1\}} = \inf_{\beta\in\reals} \sup_{\omega\in\{-1,1\}} \ones\{\omega = 1\} - \beta\omega = \inf_{\beta\in\reals} \max(1-\beta,\beta)~.
  \end{align}
  Clearly $\beta = 1/2$ minimizes the right-hand side, giving $\Pgu A_\Ht = 1/2$.
  From the perspective of replication, the choice $\beta = 1/2$ corresponds to a gambling strategy that starts with \$$1/2$ and bets \$$1/2$ on heads, yielding a net \$1 when $\omega=1$ and \$0 if $\omega=-1$, as desired.

  Turning now to $\Pgl A_\Ht$, again we compute:
  \begin{align}
    \Pgl A_\Ht = \Egl \ones_{\{1\}} = \sup_{\beta\in\reals} \inf_{\omega\in\{-1,1\}} \ones\{\omega = 1\} + \beta\omega = \sup_{\beta\in\reals} \min(1+\beta,-\beta)~,
  \end{align}
  which is achieved by $\beta = 1/2$, giving $\Pgl A_\Ht = 1/2$ and thus $\Pg A_\Ht = 1/2$.
  In summary then, we indeed have a fair game-theoretic coin.

  One can add a bias $\epsilon \in [-1/2,1/2]$ to the coin by tilting the betting odds.
  Specifically, for $\Z_\epsilon = \{ Z_\beta: \omega\mapsto \beta(\omega-2\epsilon) \mid \beta\in\reals\}$, we will have $\Pg A_\Ht = 1/2 + \epsilon$.
  Similarly, we have $\Delta_0(\Z_\epsilon) = \{P_\epsilon\}$ where $P_\epsilon(A_\Ht) = 1/2 + \epsilon$.
\end{example}

The fact that the game-theoretic probabilities matched the measure-theoretic ones above is an instance of minimax duality.
We can verify this duality directly using e.g.\ Sion's minimax theorem (Theorem~\ref{thm:sion}),
\begin{align}\label{eq:coin-minimax}
  \Pgu A_\Ht
  &= \inf_{\beta\in\reals} \sup_{\omega\in\{-1,1\}} \ones\{\omega = 1\} - \beta(\omega - 2\epsilon)
  \\
  &= \inf_{\beta\in\reals} \sup_{P\in\Delta(\{-1,1\})} \E_P \left[\ones\{\omega = 1\} - \beta(\omega-2\epsilon)\right]
  \\
  &= \sup_{P\in\Delta(\{-1,1\})} \inf_{\beta\in\reals} \E_P \left[\ones\{\omega = 1\} - \beta(\omega-2\epsilon)\right]
  \\
  &= \sup_{p\in[0,1]} \inf_{\beta\in\reals} p - \beta((p-(1-p))-2\epsilon)
  \\
  &= \sup_{p\in[0,1]} \inf_{\beta\in\reals} p - \beta(2p-1-2\epsilon)
  \\
  &= 1/2 + \epsilon~.
\end{align}
In the final equality, since Gambler is free to choose any $\beta\in\reals$, the optimal $p$ must therefore set $2p-1-2\epsilon=0$, giving $p^*=1/2 + \epsilon$.
Indeed, this final observation is equivalent to $\Delta_0(\Z_\epsilon) = P_\epsilon$ as defined above.
As we will see, World must choose a consistent measure $P$ whenever the gamble space satisfies a scaling property (Theorem~\ref{thm:ep0-equals-ep}).
\footnote{It could be that $\Z$ fails to have any consistent probability measures.
  For example, if $\epsilon>1/2$ above then $Z_\beta > 0$ when $\beta<0$.
  Indeed, one has $\Pgu A_\Ht = -\infty$ in this case, and in fact $\Egu X = -\infty$ for all real-valued $X$.
  }

\begin{figure}[t]
  \centering
  \begin{tikzpicture}
    \begin{axis}[
      axis lines = middle,
      xlabel = $\omega$,
      ylabel = $y$,
      xmin = -1.1, xmax = 1.1,
      ymin = -1.1, ymax = 1.1,
      domain = -1:1,
      samples = 100,
      grid = both,
      grid style = {dashed, gray!30},
      legend pos = north west,
      legend style={font=\small},
      tick label style={font=\small}
      ]

      \addplot [blue, thick] {x^3} node[pos=0.9, right] {$X'$};
      \addlegendentry{$\omega^3$}

      \addplot [red, thick] {0.75*x + 0.25}; 
      \addlegendentry{$\frac{3}{4}\omega + \frac{1}{4}$}

      \addplot [orange, thick] {0.75*x - 0.25}; 
      \addlegendentry{$\frac{3}{4}\omega - \frac{1}{4}$}

      \node[circle, fill=red, inner sep=1.5pt, label={right:{\raisebox{-15pt}{$\Egu X'$}}}] at (axis cs:0, 0.25) {};
      \node[circle, fill=orange, inner sep=1.5pt, label={left:{\raisebox{5pt}{$\Egl X'$}}}] at (axis cs:0, -0.25) {};

    \end{axis}
  \end{tikzpicture}
  \label{fig:outcome-interval}
  \caption{A visualization of Example~\ref{ex:single-interval} showing a nontrivial price gap $\Egl X' = -\tfrac 1 4 < \tfrac 1 4 = \Egu X'$.  We can conveniently plot both the upper and lower expectations by reflecting the latter; rather that plotting the smallest affine function dominating $-X'$, we plot the largest affine lower bound.}
\end{figure}
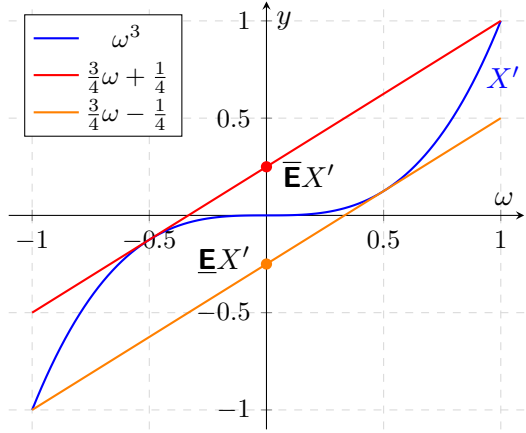

\begin{example}[Outcome interval]\label{ex:single-interval}
  The gambles in each round of the LLN game in \S~\ref{sec:motivating-example} took the form of a gamble space $(\Omega,\Z)$ given by $\Omega = [-1,1]$ and $\Z = \{ \omega\mapsto \beta\omega \mid \beta\in\reals\}$.
  More generally, we can consider $\Omega \subseteq \reals$ and $\Z = \{ \omega\mapsto \beta(\omega-c) \mid \beta\in\reals\}$ for some fixed $c\in\reals$.
  Intuitively, these gambles mean the outcome has ``mean'' equal to $c$.
  We can again check this intuition by computing the consistent probability measures: $\Delta_0(\Z) = \{P\in\Delta(\Omega) \mid \E_P X = c\}$ where  $X:\omega\mapsto\omega$ is the identity variable.
  Let us check that $\Eg X = c$ as well.
  As Gambler can replicate $X$ with initial capital $c$, and $-X$ with initial capital $-c$, by choosing $\beta = 1$ and $\beta = -1$, respectively.
  More formally $X - Z_{1} = c$ and $-X - Z_{-1} = -c$, so we have $\Egu X \leq c$ and $\Egu(-X) \leq -c$, and thus $\Egl X = -\Egu(-X) \geq c$.
  At this point we can directly verify the remaining inequalities $\Egu X \geq c$ and $\Egl X \leq c$, or appeal to the fact that $\Egu \geq \Egl$ in this case (Remark~\ref{rem:upper-lower-inequality}).

  While the variable $X : \omega \mapsto \omega$ has a game-theoretic expectation, most variables in this gamble space do not.
  For example, set $c=0$ for simplicity, and consider $X'(\omega) = \omega^3$.
  Here one can calculate the optimal $\beta$ as $3/4$, so that $(3/4)\omega + 1/4 \geq X$.
  We conclude $\Egu X' = 1/4$.
  By symmetry, the smallest affine function dominating $-X'$ on $[-1,1]$ is $-(3/4)\omega + 1/4 \geq X'$, giving $\Egu (-X') = 1/4$ and thus $\Egl X' = -1/4$.
  
  We can think of upper expectations as prices, the cost of replicating $X$ under a particular assumption about available gambles (or the power of World; see \S~\ref{sec:prices-probability}).
  Through this lens, the variable $X$ is fully priced, whereas $X'$ has a nontrivial ``bid-ask spread'' of $[-1/4,1/4]$.
  One can see that only variables which are affine functions will be fully priced in this gamble space; the rest will have nontrivial spreads.
  
\end{example}

\begin{example}[Variance]\label{ex:single-variance}
  Similarly, we may take $(\Omega=\reals,\Z)$ with $\Z = \{ \omega\mapsto \beta(\omega-c) + \alpha((\omega-c)^2-v) \mid \alpha,\beta\in\reals\}$ for some fixed $c,v\in\reals$.
  Letting $X:\omega\mapsto\omega$, we now have both $\Eg X = c$ and $\Eg (X-c)^2 = v$, by the same logic as Example~\ref{ex:single-interval}.
  If instead we had restricted $\alpha \geq 0$ in $\Z$, then we would only have $\Egu (X-c)^2 \leq v$; informally, there would be no way to ``short'' the quadratic variation of $X$, and profit when e.g.\ $X=c$.

  More generally, if we have $f\in\Z$ for any $f:\Omega\to\reals$, then $\Egu f \leq \sup (f-f) = 0$.
  (The restriction $f:\Omega\to\reals$ is important, as $f-f \neq 0$ if $f$ takes on infinite values.)
  In the $\Z$ above, we have the following elements of $\Z$: $\omega\mapsto\omega-c$, $\omega\mapsto-(\omega-c)$,
  $\omega\mapsto((\omega-c)^2-v)$,
  $\omega\mapsto-((\omega-c)^2-v)$,
  which give the conclusions above.
\end{example}

Example~\ref{ex:single-variance} raises an interesting question: if we only need these four gambles to control the game-theoretic mean and variance of $X$, why do we allow Gambler to scale and combine these gambles with coefficients $\alpha,\beta\in\reals$?
Indeed, this ``positive linearity'' of gambles is ubiquitous in the literature.
The reason has to do with the behavior of $\Egu$ on variables other than these four gambles, and the often-implicit goal that $\Egu$ align with some analog of a measure-theoretic expectation.
For example, the game-theoretic proof of Markov's inequality $\Pgu[X \geq \alpha] \leq \Egu X / \alpha$ for $X\geq 0$ requires $\Z$ to be scalable (Definition~\ref{def:gamble-space-conditions}).
The same is true for sufficient conditions and characterizations of events holding g.t.a.s.\ (game-theoretically almost surely); see Lemma~\ref{lem:prob-one} and Proposition~\ref{prop:prob-one-iff}.
In \S~\ref{sec:prices-probability}, we will see two deeper reasons why positive linearity is important for $\Egu$ to align with $\Ep$: (i) for minimax duality to hold, and (ii) for Gambler to be able to punish World for choosing a measure inconsistent with the gambles.

\begin{example}[Proper scoring rule]
  
  Consider a different binary outcome example, again with $\Omega = \{-1,1\}$, but this time with gambles
  $\Z = \{\omega\mapsto S(p,\omega)-S(p^*,\omega) \mid p\in[0,1]\}$
  where $S(p,\omega)$ is a strictly proper scoring rule~\citep{savage1971elicitation,gneiting2007strictly}.
  ($S$ is strictly proper if $\{p^*\} = \argmax_{p\in[0,1]} p^* S(p,1) + (1-p^*) S(p,-1)$.)
  Two prominent examples are the quadratic score $S(p,\omega) = p(\omega+1) - p^2$ and the log scoring rule (negative log loss), given by $S(p,1) = \log p$ and $S(p,-1) = \log(1-p)$.
  In this setting, Gambler may choose any prediction $p\in[0,1]$ and receives the excess score of $p$ relative to that of the baseline prediction $p^*$.
  By definition of strictly proper, we have $\Delta_0(\Z) = \{P^*\}$ where $P^*(\{1\}) = p^*$.
  Intuitively, for $p^*=1/2$, Gambler would only take a bet if she thought the coin was not fair, just as in Example~\ref{ex:coin-formal}.

  For log score and $p^*=1/2$, we can simplify these gamble via the parameterization $\beta = 2p-1$, giving:
  \begin{align*}
    S(p,\omega) - S(\tfrac 1 2,\omega)
    &=
      \begin{cases}
        \log 2p & \omega=1
        \\
        \log 2(1-p) & \omega=-1
      \end{cases}
    = \log(1 + \beta\omega)~.      
  \end{align*}
  We can thus rewrite this gamble space as $\Z = \{\omega\mapsto \log(1+\beta \omega) \mid \beta \in [-1,1]\}$.
  In this form, it is clear how one could generalize to $\Omega = [-1,1]$, via the same form for $\Z$.
  This gamble space is now exactly $\Z^{\log}$ from Example~\ref{ex:lln-gambles-multiplicative}, which captures the growth rate of capital in the setting of \S~\ref{sec:motivating-example}.
  We will see $\Z^{\log}$ again in \S~\ref{sec:online-learning}.
\end{example}

\subsection{Conditions and basic facts}
\label{sec:formalism-basic-facts}

We begin with two facts that require no assumptions on the gamble space.

\begin{proposition}\label{prop:basic-facts-no-assumptions}
  Let $(\Omega,\Z)$ be a gamble space and $X,Y:\Omega\to\extreals$ variables.
  Then we have
  \begin{enumerate}
  \item\label{fact:translation} $\Egu (X + c) = \Egu X + c$ for all $c\in\reals$ (translation)
  \item\label{fact:monotonicity} $X \leq Y \implies \Egu X \leq \Egu Y$ (monotonicity).
  \end{enumerate}
\end{proposition}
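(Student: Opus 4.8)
The plan is to prove both statements directly from the definition in eq.~\eqref{eq:formal-egu}, $\Egu X = \inf_{Z\in\Z}\sup_{\omega} X(\omega)-Z(\omega)$, by pushing the operation in question (adding $c$, or the hypothesis $X\le Y$) through the pointwise difference, then through the $\sup_\omega$, and finally through the $\inf_Z$. Since both $\sup$ and $\inf$ preserve pointwise inequalities and commute with adding a finite constant, the only genuine content is checking that these manipulations survive the extended-real conventions, in particular the pessimistic choice $\infty-\infty=(-\infty)-(-\infty)=\infty$.

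For translation (part~\ref{fact:translation}), I would first establish the pointwise identity $(X(\omega)+c)-Z(\omega)=(X(\omega)-Z(\omega))+c$ for every $\omega$ and every $c\in\reals$. Writing $x=X(\omega)$ and $z=Z(\omega)$, this is immediate when $x,z$ are both finite; the cases where $x$ or $z$ is infinite are dispatched by hand, using that $x+c$ retains the same sign of infinity as $x$ whenever $x\in\{\pm\infty\}$ and $c$ is finite, so that both sides collapse to the same infinite value (including the $\infty-\infty=\infty$ branch). Adding the finite constant $c$ then commutes with $\sup_\omega$ and with $\inf_Z$, giving $\Egu(X+c)=\inf_{Z}\bigl(\sup_\omega(X-Z)+c\bigr)=\Egu X+c$.

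For monotonicity (part~\ref{fact:monotonicity}), suppose $X\le Y$ pointwise. I would show $X(\omega)-Z(\omega)\le Y(\omega)-Z(\omega)$ for every $\omega$ and every $Z\in\Z$. With $x\le y$ and $z=Z(\omega)$: when $z$ is finite this is the usual order-preservation of subtracting a finite constant; when $z=-\infty$ both differences equal $\infty$; and when $z=+\infty$ one observes that if $x=\infty$ then necessarily $y=\infty$ as well (so both differences are $\infty$), whereas if $x<\infty$ the left side is $-\infty$, which is dominated by anything. Hence $\sup_\omega(X-Z)\le\sup_\omega(Y-Z)$ for each fixed $Z$, and taking $\inf$ over $Z\in\Z$ preserves the inequality, yielding $\Egu X\le\Egu Y$.

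The only obstacle here is careful bookkeeping: the convention $\infty-\infty=\infty$ is deliberately pessimistic rather than the naive one, so I would pay particular attention to the $z=+\infty$ branch of monotonicity, where it is precisely the hypothesis $x\le y$ (forcing $y=\infty$ whenever $x=\infty$) that rescues the inequality. Notably, no structural assumptions on $\Z$ enter anywhere, consistent with the proposition's assertion that these facts require no assumptions on the gamble space.
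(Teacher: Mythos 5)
Your proof is correct and takes essentially the same route as the paper's: both parts follow by pushing the constant $c$ (resp.\ the pointwise hypothesis $X\le Y$) through the difference $X-Z$, then through $\sup_{\omega\in\Omega}$, and finally through $\inf_{Z\in\Z}$. The only difference is that you explicitly verify the extended-real arithmetic under the convention $\infty-\infty=(-\infty)-(-\infty)=\infty$ (correctly isolating the $Z(\omega)=+\infty$ branch of monotonicity, where $x\le y$ forces $y=\infty$ whenever $x=\infty$), a case analysis the paper's terse proof leaves implicit.
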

\begin{proof}
  We have by definition
  \begin{align*}
    \Egu (X+c)
    &= \inf_{Z\in\Z} \sup_{\omega\in\Omega} (X(\omega)+c) - Z(\omega)
    \\
    &= \left(\inf_{Z\in\Z} \sup_{\omega\in\Omega} X(\omega) - Z(\omega)\right) + c
      = \Egu X + c~.
  \end{align*}
  
  For the second statement, for all $Z\in\Z$ and $\omega\in\Omega$ we have $Y(\omega) - Z(\omega) \geq X(\omega) - Z(\omega)$.
  Hence for all $Z\in\Z$ we have $\sup_{\omega\in\Omega} Y(\omega) - Z(\omega) \geq \sup_{\omega\in\Omega} X(\omega) - Z(\omega)$.
  Finally,
  \begin{align*}
    \Egu Y
    = \inf_{Z\in\Z} \sup_{\omega\in\Omega} Y(\omega) - Z(\omega)
    \geq \inf_{Z\in\Z} \sup_{\omega\in\Omega} X(\omega) - Z(\omega) = \Egu X~.
  \end{align*}
\end{proof}

One may recognize translation and monotonicity as the base assumptions on financial risk measures.
Indeed, game-theoretic upper expectations are exactly financial risk measures, modulo a minus sign and the allowance of infinite-valued variables; see \S~\ref{sec:financial-risk-measures}.

\begin{definition}[Conditions on gamble spaces]
  \label{def:gamble-space-conditions}
  Let $(\Omega,\Z)$ be a gamble space.
  We define the following conditions on $\Z$.
  \begin{enumerate}
  \item Contains zero: $0\in\Z$.
  \item Arbitrage-free: $Z\in\Z \implies$ $\inf Z \leq 0$.
  \item Normalized: arbitrage-free and contains zero.
  \item Scalable: $Z\in\Z, \alpha\geq 0 \implies \alpha Z\in\Z$;\\
    Downward scalable and upward scalable: $\alpha \in [0,1]$ and $[1,\infty)$, respectively.
  \item Positive-linear: $Z_1,Z_2\in\Z, \alpha_1,\alpha_2 \geq 0 \implies \alpha_1Z_1 + \alpha_2Z_2 \in \Z$.
  \item Bounded-below: $Z\in\Z \implies$ $\inf Z > -\infty$.
  \end{enumerate}
\end{definition}

The term ``normalized'' is convenient shorthand for a sufficient condition implying $\Egu\, 0 = 0$ (see Proposition~\ref{prop:basic-facts-zero}); one can often substitute the weaker condition $\sup_{Z\in\Z} \inf Z = 0$.
Positive linearity is equivalent to $\Z$ being a convex cone.
In particular, positive linearity implies that $\Z$ is scalable and contains zero.
We will often assume that gamble spaces are normalized, and further require some of the other conditions.

\begin{proposition}\label{prop:basic-facts-zero}
  Let gamble space $(\Omega,\Z)$ be given.
  Then we have
  \begin{enumerate}
  \item $\Egu\, 0 \geq 0$ if and only if $\Z$ is arbitrage-free.
  \item $\Egu\, 0 \leq 0$ when $\Z$ contains zero.
  \end{enumerate}
  In particular, $\Egu\, c = c$ for all $c\in\reals$ when both conditions hold ($\Z$ is normalized).
\end{proposition}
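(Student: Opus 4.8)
The plan is to reduce both parts and the corollary to a single evaluation of $\Egu 0$ straight from the definition. Specializing eq.~\eqref{eq:formal-egu} to $X = 0$ gives $\Egu 0 = \inf_{Z\in\Z} \sup_{\omega\in\Omega} (0 - Z(\omega))$. Because the constant $0$ is finite, the pessimistic $\infty - \infty$ convention is never triggered, so $\sup_{\omega}(-Z(\omega)) = -\inf_{\omega} Z(\omega) = -\inf Z$ holds in $\extreals$ for each $Z$. Thus the whole proposition rests on the identity $\Egu 0 = \inf_{Z\in\Z}(-\inf Z) = -\sup_{Z\in\Z} \inf Z$, which I would establish first.

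Given this identity, part (1) is immediate: $\Egu 0 \geq 0$ iff $\sup_{Z\in\Z}\inf Z \leq 0$, and a supremum is $\leq 0$ exactly when every term is, i.e.\ $\inf Z \leq 0$ for all $Z\in\Z$, which is precisely the arbitrage-free condition of Definition~\ref{def:gamble-space-conditions}. For part (2), if $0\in\Z$ then plugging the zero gamble into the outer infimum yields the value $-\inf 0 = 0$, so $\Egu 0 \leq 0$. The ``in particular'' clause then follows from these two parts together with translation (Proposition~\ref{prop:basic-facts-no-assumptions}): when $\Z$ is normalized both inequalities give $\Egu 0 = 0$, whence $\Egu c = \Egu 0 + c = c$ for every $c\in\reals$.

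I do not expect a genuine obstacle here; the only point requiring care is the extended-real bookkeeping. I would double-check that $\sup_{\omega}(-Z(\omega)) = -\inf Z$ survives the paper's arithmetic conventions (it does, since $X=0$ is finite, so only $-Z(\omega)$ ever appears) and confirm that the degenerate cases remain consistent: if $\Z = \emptyset$ then $\sup_{Z\in\Z}\inf Z = -\infty$, so $\Egu 0 = +\infty \geq 0$ while arbitrage-freeness holds vacuously, matching part (1); and gambles taking the values $\pm\infty$ cause no difficulty. With these checks the argument is only a few lines.
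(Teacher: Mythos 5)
Your proof is correct and follows essentially the same route as the paper's: both reduce to the identity $\Egu 0 = -\sup_{Z\in\Z}\inf Z$, read off part (1) as the equivalence with $\inf Z \leq 0$ for all $Z$, handle part (2) by plugging in the zero gamble, and conclude via translation. Your extra checks of the extended-real conventions and the $\Z=\emptyset$ case are sound but not needed beyond what the paper's argument already covers.
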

\begin{proof}
  \mbox{}
  \begin{enumerate}
  \item
    The arbitrage-free condition, $\inf_{\omega\in\Omega} Z(\omega) \leq 0$ for all $Z\in\Z$, is equivalent to $\sup_{Z\in\Z} \inf_{\omega\in\Omega} Z(\omega) \leq 0$.
    This condition is satisfied if and only if
    \begin{align*}
      \Egu 0 = \inf_{Z\in\Z} \sup_{\omega\in\Omega} 0 - Z(\omega) = -\sup_{Z\in\Z} \inf_{\omega\in\Omega} Z(\omega) \geq 0~.
    \end{align*}
  \item 
    If $\Z$ contains $0$, we have $\displaystyle\sup_{Z\in\Z} \inf_{\omega\in\Omega} Z(\omega) \geq \sup_{Z\in\{0\}}\inf_{\omega\in\Omega} Z(\omega) = 0$.
  \end{enumerate}
  Finally, when both conditions hold, we have $\Egu\, 0 = 0$, and thus $\Egu\, c = c$ for all $c\in\reals$ by Proposition~\ref{prop:basic-facts-no-assumptions} (translation).
\end{proof}

We now state several properties of $\Egu$ that show it can behave similarly to its measure-theoretic counterpart.
We begin with 1-homogeneity.
(See also E2 in \S~\ref{sec:axioms-fa}.)

\begin{proposition}\label{prop:basic-facts-homogeneity}
  Let gamble space $(\Omega,\Z)$ and variable $X:\Omega\to\extreals$ be given.
  Then
  \begin{enumerate}
  \item\label{fact:1-homog} $\Egu(cX) = c\, \Egu X$ for all $c\geq 0$ (1-homogeneity) when $\Z$ is arbitrage-free and scalable.
  \end{enumerate}
  When $\Z$ is scalable, the statement holds for $c>0$ (positive homogeneity).
\end{proposition}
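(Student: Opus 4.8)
The plan is to treat the cases $c>0$ and $c=0$ separately, since positive homogeneity ($c>0$) will follow from scalability alone, while the value at $c=0$ is where the arbitrage-free condition must enter.

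For $c>0$, the idea is to exploit that scalability makes multiplication by $c$ a bijection of $\Z$ onto itself: the map $W\mapsto cW$ sends $\Z$ into $\Z$ by scalability, and its inverse $Z\mapsto (1/c)Z$ does likewise since $1/c>0$. I would therefore reparametrize the infimum in eq.~\eqref{eq:formal-egu} by writing $Z=cW$, obtaining
\begin{align*}
  \Egu(cX)
  &= \inf_{Z\in\Z}\sup_{\omega\in\Omega} cX(\omega)-Z(\omega)
  = \inf_{W\in\Z}\sup_{\omega\in\Omega} cX(\omega)-cW(\omega)~.
\end{align*}
The next step is to pull the factor $c$ out of the pointwise difference and then out of the sup and inf. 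Pulling it out of the sup/inf is immediate because multiplication by $c>0$ is an increasing bijection of $\extreals$ and so commutes with both operations. The one point demanding care is the pointwise identity $cX(\omega)-cW(\omega)=c\bigl(X(\omega)-W(\omega)\bigr)$, which I would verify directly under the paper's convention $\infty-\infty=(-\infty)-(-\infty)=\infty$; checking the eight cases involving an infinite value of $X(\omega)$ or $W(\omega)$ shows the identity always holds when $c>0$. Combining these yields $\Egu(cX)=c\inf_{W\in\Z}\sup_{\omega}(X(\omega)-W(\omega))=c\,\Egu X$, establishing positive homogeneity using only scalability.

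For $c=0$, under the usual convention $0\cdot(\pm\infty)=0$ we have $0\cdot X\equiv 0$ pointwise, and likewise $0\cdot\Egu X=0$ regardless of whether $\Egu X$ is finite, so the claim reduces to $\Egu\,0=0$. This is exactly where arbitrage-free is needed: scalability (applied to any $Z\in\Z$ with $\alpha=0$, assuming $\Z\neq\emptyset$) gives $0\in\Z$, and together with the arbitrage-free hypothesis, Proposition~\ref{prop:basic-facts-zero} yields $\Egu\,0=0$. This completes the $c=0$ case and hence full $1$-homogeneity.

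I expect the main obstacle to be purely bookkeeping with the extended reals: making sure that the factoring $cX-cW=c(X-W)$ and the commuting of $c$ with sup and inf are valid under the paper's nonstandard convention $\infty-\infty=\infty$, and, for $c=0$, that the conventions $0\cdot(\pm\infty)=0$ make both sides collapse to $\Egu\,0$. No genuine analytic difficulty arises; the content is entirely in isolating that $c>0$ preserves $\Z$ bijectively, whereas $c=0$ forces the separate, arbitrage-dependent argument.
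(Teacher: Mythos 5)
Your proof is correct and follows essentially the same route as the paper: for $c>0$ you reparametrize the infimum using the scaling bijection on $\Z$ (the paper writes this as $(1/c)\Z=\Z$), and for $c=0$ you reduce to $\Egu\,0=0$ via $0\in\Z$ and the arbitrage-free condition through Proposition~\ref{prop:basic-facts-zero}. Your explicit case-check of $cX-cW=c(X-W)$ under the convention $\infty-\infty=(-\infty)-(-\infty)=\infty$, and your flagging of the implicit assumption $\Z\neq\emptyset$ for obtaining $0\in\Z$, are in fact slightly more careful than the paper's own argument.
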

\begin{proof}
  For $c>0$, by scalability we have $(1/c)\Z = \Z$, giving
  \begin{align*}
    \Egu (cX)
    &= \inf_{Z\in\Z} \sup_{\omega\in\Omega} c X(\omega) - Z(\omega)
    \\
    &= \inf_{Z\in (1/c)\Z} \sup_{\omega\in\Omega} c X(\omega) - c Z(\omega)
    \\
    &= c \inf_{Z\in \Z} \sup_{\omega\in\Omega} X(\omega) - Z(\omega) \\
    &= c\, \Egu X~.
  \end{align*}
  As $\Z$ contains zero, Proposition~\ref{prop:basic-facts-zero} gives the case $c=0$ when $\Z$ is additionally arbitrage-free.
\end{proof}
While positive homogeneity does not require gambles to be arbitrage-free, it is somewhat trivial, since $\inf Z > 0$ for some $Z$, so $\Egu X = -\infty$ unless possibly when $\sup X = \infty$.

\begin{proposition}\label{prop:basic-facts-assumptions}
  Let $(\Omega,\Z)$ be an arbitrage-free, positive-linear gamble space.
  For all $X,Y:\Omega\to\extreals$ we have
  \begin{enumerate}
  \item\label{fact:subadditivity} $\Egu (X + Y) \leq \Egu X + \Egu Y$ (subadditivity).
  \item\label{fact:lower-leq-upper}
    $\Egl X \leq \Egu X$.
  \end{enumerate}
\end{proposition}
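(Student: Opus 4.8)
The plan is to establish subadditivity (part~\ref{fact:subadditivity}) directly from positive linearity together with the subadditivity of the supremum, and then to obtain $\Egl X \le \Egu X$ (part~\ref{fact:lower-leq-upper}) as a corollary by feeding the pair $X, -X$ into the subadditivity bound and invoking monotonicity and $\Egu\,0 = 0$.

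For part~\ref{fact:subadditivity}, I would first dispose of the trivial case: if $\Egu X = +\infty$ or $\Egu Y = +\infty$, then (under the pessimistic conventions) $\Egu X + \Egu Y = \infty$ and the inequality is immediate. Otherwise both upper expectations are $<\infty$, and for any $\epsilon>0$ I would choose near-optimal gambles $Z_1,Z_2\in\Z$ with $\sup(X-Z_1) \le \Egu X + \epsilon$ and $\sup(Y-Z_2) \le \Egu Y + \epsilon$ (replacing the right-hand sides by arbitrarily negative bounds should either upper expectation equal $-\infty$). Positive linearity makes $Z_1+Z_2\in\Z$ an admissible replicating gamble for $X+Y$, and the key pointwise estimate $(X+Y)-(Z_1+Z_2) \le (X-Z_1)+(Y-Z_2)$ combined with subadditivity of $\sup$ yields
\begin{align*}
  \Egu(X+Y)
  \;\le\; \sup_\omega\bigl[(X+Y)-(Z_1+Z_2)\bigr]
  \;\le\; \sup(X-Z_1) + \sup(Y-Z_2)
  \;\le\; \Egu X + \Egu Y + 2\epsilon~.
\end{align*}
Letting $\epsilon\to 0$ gives the claim.

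For part~\ref{fact:lower-leq-upper}, I would apply part~\ref{fact:subadditivity} with the pair $X$ and $-X$ to get $\Egu\bigl(X+(-X)\bigr) \le \Egu X + \Egu(-X)$. Under the conventions $\infty-\infty=(-\infty)-(-\infty)=\infty$, the variable $X+(-X)$ is pointwise $\ge 0$: it equals $0$ where $X$ is finite and $\infty$ where $X$ is infinite. Since positive linearity forces $0\in\Z$ and $\Z$ is arbitrage-free by hypothesis, $\Z$ is normalized, so Proposition~\ref{prop:basic-facts-zero} gives $\Egu\,0 = 0$; monotonicity (Proposition~\ref{prop:basic-facts-no-assumptions}) then yields $\Egu\bigl(X+(-X)\bigr) \ge \Egu\,0 = 0$. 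Chaining these gives $0 \le \Egu X + \Egu(-X)$, which rearranges to $\Egl X = -\Egu(-X) \le \Egu X$, as desired.

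The main obstacle is the extended-real bookkeeping rather than any conceptual difficulty. The pointwise bound $(X+Y)-(Z_1+Z_2)\le (X-Z_1)+(Y-Z_2)$ is \emph{not} the ordinary real-arithmetic identity and must be verified by a short case analysis on which of $X,Y,Z_1,Z_2$ take infinite values at a given $\omega$; fortunately the pessimistic convention inflates the right-hand side to $\infty$ precisely in the configurations where the left-hand side could equal $\infty$, so the inequality is preserved. One must likewise be careful that the selection of near-optimal $Z_1,Z_2$ and the sum $\Egu X + \Egu Y$ remain meaningful when these quantities are $\pm\infty$, which is why I separate out the $+\infty$ case at the start.
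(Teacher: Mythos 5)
Your proof is correct and takes essentially the same route as the paper's: near-optimal gambles combined via positive linearity and subadditivity of the supremum for part~1, and part~2 obtained by feeding $X$ and $-X$ into part~1 together with normalization ($\Egu\,0=0$) and monotonicity. The only divergence is bookkeeping: where you verify the pointwise bound $(X+Y)-(Z_1+Z_2)\le(X-Z_1)+(Y-Z_2)$ by a case analysis on infinite values (which implicitly relies on an unstated convention for $\infty+(-\infty)$), the paper first invokes Propositions~\ref{prop:egu-x-infinite} and~\ref{prop:gambles-not-neg-infty} to reduce to $X,Y<\infty$ and $Z>-\infty$, after which every expression $X-Z$ lies in $\reals\cup\{-\infty\}$ and the pointwise bound is unambiguous---citing those two propositions would let you drop the case analysis entirely.
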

\begin{proof}
  \mbox{}
  \begin{enumerate}
  \item
    From Proposition~\ref{prop:egu-x-infinite}, if $X$ or $Y$ take on $\infty$, then we have $\Egu X + \Egu Y = \infty = \Egu[X+Y]$.
    From Proposition~\ref{prop:gambles-not-neg-infty}, we may assume without loss of generality that $Z > -\infty$ for all $Z\in\Z$.
    Thus, expressions of the form $X - Z$ only take values in $\reals\cup\{-\infty\}$.
    
    By definition, we have $\Egu X = \inf_{Z \in \Z} \sup X - Z$ and $\Egu Y = \inf_{Z \in \Z} \sup Y - Z$; let $\{Z_1^{n}\}_{n\in\N}$ and $\{Z_2^{n}\}_{n\in\N}$ be sequences in $\Z$ achieving these infima.
    Then we have
    \begin{align*}
      \Egu X + \Egu Y
      &= \lim_{n\to\infty} \left[\sup_{\omega \in \Omega} X(\omega) - Z_1^n(\omega)\right] + \lim_{n\to\infty} \left[\sup_{\omega \in \Omega} Y(\omega) - Z_2^n(\omega)\right]
      \\
      &= \lim_{n\to\infty} \left[\sup_{\omega \in \Omega} \left(X(\omega) - Z_1^n(\omega)\right) + \sup_{\omega \in \Omega} \left(Y(\omega) - Z_2^n(\omega)\right)\right]
      \\
      &\geq \lim_{n\to\infty} \sup_{\omega \in \Omega} \left[(X(\omega) - Z_1^n(\omega)) + (Y(\omega) - Z_2^n(\omega))\strut\right]
      \\
      &= \lim_{n\to\infty} \sup_{\omega \in \Omega} \left[X(\omega) + Y(\omega) - (Z_1^n+Z_2^n)(\omega)\strut\right]
      \\
      &\geq \inf_{Z\in\Z} \sup_{\omega \in \Omega} \left[(X+Y)(\omega) - Z(\omega)\strut\right]
      \\
      &= \Egu[X + Y]~,
    \end{align*}
    where we used the fact that $Z_1^n + Z_2^n \in \Z$ for all $n\in\N$ by positive linearity.

  \item From Proposition~\ref{prop:egu-x-infinite}, if $X(\omega)\in\{-\infty,\infty\}$ for any $\omega\in\Omega$, the statement holds trivially, as either $\Egl X = -\infty$ or $\Egu X = \infty$.
    It remains to consider $X:\Omega\to\reals$, in which case $X-X = 0$.
    By Proposition~\ref{prop:basic-facts-zero} and subadditivity, we have $0 = \Egu(X - X) \leq \Egu X + \Egu(-X) = \Egu X - \Egl X$.
  \end{enumerate}
\end{proof}

\begin{remark}\label{rem:upper-lower-inequality}
  The condition $\Egl X \leq \Egu X$ holds under weaker conditions.
  In particular, we show in \S~\ref{sec:conditions-upper-lower} that $\Egu \geq \Egl$ if and only if $\Z+\Z$ (the Minkowski sum) is arbitrage-free.
  We will also see a simple example where the inequality fails despite $\Z$ being arbitrage-free.
\end{remark}

\begin{proposition}\label{prop:basic-facts-probability}
  Let $(\Omega,\Z)$ be a gamble space, and let $A\subseteq\Omega$.
  Then
  \begin{enumerate}
  \item $\Pgl A = 1 - \Pgu A^c$.
  \end{enumerate}
  If gambles are furthermore arbitrage-free and positive-linear, then
  \begin{enumerate}[resume]
  \item $0 \leq \Pgl A \leq \Pgu A \leq 1$.
  \item $\Pgu A + \Pgu A^c \geq 1$.
  \end{enumerate}
\end{proposition}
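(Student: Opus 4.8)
The plan is to obtain all three claims by chaining the translation and monotonicity facts of Proposition~\ref{prop:basic-facts-no-assumptions} together with the normalization consequence of Proposition~\ref{prop:basic-facts-zero}; no new idea is needed beyond careful bookkeeping. For the first claim, the key is the pointwise identity $-\ones_A = \ones_{A^c} - 1$, which holds because $\ones_A + \ones_{A^c} \equiv 1$. Starting from the definition $\Pgl A = \Egl \ones_A = -\Egu(-\ones_A)$, I would substitute this identity and apply translation to extract the constant:
\begin{align*}
  \Egu(-\ones_A) = \Egu(\ones_{A^c} - 1) = \Egu \ones_{A^c} - 1 = \Pgu A^c - 1~,
\end{align*}
so that $\Pgl A = 1 - \Pgu A^c$. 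Notably, this step invokes no hypotheses on $\Z$, matching the unconditional placement of the first claim.

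For the second claim, I would first record that arbitrage-free together with positive-linear implies normalized, since positive linearity forces $0\in\Z$ (as noted after Definition~\ref{def:gamble-space-conditions}); hence Proposition~\ref{prop:basic-facts-zero} gives $\Egu c = c$ for all $c\in\reals$. The middle inequality $\Pgl A \leq \Pgu A$ is then the inequality $\Egl X \leq \Egu X$ of Proposition~\ref{prop:basic-facts-assumptions}, applied to $X=\ones_A$. For the upper bound, $\ones_A \leq 1$ pointwise and monotonicity give $\Pgu A = \Egu \ones_A \leq \Egu 1 = 1$. For the lower bound, $-\ones_A \leq 0$ and monotonicity give $\Egu(-\ones_A) \leq \Egu 0 = 0$, whence $\Pgl A = -\Egu(-\ones_A) \geq 0$.

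For the third claim, I would simply combine the first two. Rewriting the first claim as $\Pgu A^c = 1 - \Pgl A$ and adding $\Pgu A$ yields
\begin{align*}
  \Pgu A + \Pgu A^c = 1 + (\Pgu A - \Pgl A) \geq 1~,
\end{align*}
where the final inequality is exactly $\Pgl A \leq \Pgu A$ from the second claim. None of these steps poses a genuine obstacle: the entire proposition reduces to previously established properties. The one point deserving mild care is the hypothesis chain, namely verifying that arbitrage-free and positive-linear \emph{jointly} yield normalization, so that $\Egu$ fixes constants; after that, translation and monotonicity finish everything.
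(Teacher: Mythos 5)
Your proposal is correct and follows essentially the same route as the paper's proof: the identity $-\ones_A = \ones_{A^c}-1$ with translation for claim (1), Propositions~\ref{prop:basic-facts-assumptions}, \ref{prop:basic-facts-zero}, and monotonicity for claim (2), and combining the two for claim (3). Your explicit check that positive linearity supplies $0\in\Z$ (hence normalization, via Proposition~\ref{prop:basic-facts-zero}) is a detail the paper leaves implicit, but it is the same argument.
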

\begin{proof}
  Again we prove each in turn.
  \begin{enumerate}
  \item We appeal to Proposition~\ref{prop:basic-facts-no-assumptions} (translation).
    $\Pgl A = \Egl \ones_A = -\Egu(-\ones_A) = -\Egu(1-\ones_A) + 1 = 1 - \Egu \ones_{A^c} = 1 - \Pgu A^c$.
  \item From Proposition~\ref{prop:basic-facts-assumptions}, we have $\Pgl A \leq \Pgu A$.
    From Proposition~\ref{prop:basic-facts-zero} and Proposition~\ref{prop:basic-facts-no-assumptions} (translation) we have $\Egu 0 = 0$ and $\Egu 1 = 1$.
    Combining with Proposition~\ref{prop:basic-facts-no-assumptions} (monotonicity), we have $\Pgl A = \Egl \ones_A = -\Egu(-\ones_A) \geq -\Egu 0 = 0$ and $\Pgu A = \Egu \ones_A \leq \Egu 1 = 1$.
  \item From the above we have $\Pgu A + \Pgu A^c = \Pgu A + (1 - \Pgl A) \geq 1$.
  \end{enumerate}
\end{proof}

\subsection{Ville and almost sure events}
\label{sec:prob-one}

Recall that we write ``$A$ g.t.a.s.'' (game-theoretically almost surely) if $\Pgl A = 1$, or equivalently, $\Pgu A^c = 0$ (Proposition~\ref{prop:basic-facts-probability}).
In \S~\ref{sec:introduction}, we gave a gambling strategy risking at most \$1 that produces infinite wealth on $(\ALLN)^c$, and said this strategy implies $\ALLN$ g.t.a.s.
We now formalize the connection between these statements, along with several generalizations reminiscent of Ville's Theorem.
(See \S~\ref{sec:cond-expect-supermtg} for sequential versions.)
In particular, we will show for downward-scalable gambles spaces that $\Pgu A \leq \alpha$ if and only if for any $c < 1/\alpha$ there is a gambling strategy which, when given an initial capital of 1, does not risk bankruptcy and achieves capital at least $c$ on $A$.

\begin{lemma}\label{lem:prob-one}
  Let $(\Omega,\Z)$ be a downward-scalable gamble space, and $A\subseteq\Omega$.
  \begin{enumerate}
  \item If there exists $Z^*\in\Z$ bounded from below such that $Z^*(\omega) = \infty$ for all $\omega\notin A$, then $A$ holds g.t.a.s.
  \item For $\alpha \in (0,1]$, if there exists $Z^*\in\Z$ with $Z^* \geq -1$ such that $1 + Z^*(\omega) \geq 1/\alpha$ for all $\omega\in A$, then $\Pgu A \leq \alpha$.
  \end{enumerate}
\end{lemma}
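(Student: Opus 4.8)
The plan is to work directly from the replication-cost characterization of $\Pgu$, namely $\Pgu A = \Egu\ones_A = \inf\{\alpha \mid \exists Z\in\Z,\ Z+\alpha > \ones_A\}$ from eq.~\eqref{eq:prop-upper-ex-rep-cost}, and in each part to exhibit an explicit witnessing gamble obtained by \emph{scaling} the hypothesized $Z^*$ by a factor in $[0,1]$, so that downward scalability (Definition~\ref{def:gamble-space-conditions}) keeps it in $\Z$. I would prove part~2 first, as it is the cleaner finite statement, and then recover part~1 as its limiting ($\alpha\downarrow 0$) form applied to the complementary event $A^c$.

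For part~2 the key observation is that the target bound $\alpha\in(0,1]$ can itself serve as the scaling factor. I set $Z := \alpha Z^*$, which lies in $\Z$ by downward scalability since $\alpha\in[0,1]$, and then verify the pointwise inequality $\alpha Z^* + \alpha \geq \ones_A$ by splitting on $A$ and $A^c$: on $A$ we have $\alpha(1+Z^*) \geq \alpha\cdot(1/\alpha) = 1$ from the hypothesis $1+Z^*\geq 1/\alpha$, while on $A^c$ we have $\alpha(1+Z^*)\geq 0$ from $Z^*\geq -1$. Since $\ones_A$ is real-valued, Proposition~\ref{prop:egu-equiv-defs} permits the weak-inequality form eq.~\eqref{eq:egu-rep-cost-weak}, so this single witness immediately yields $\Pgu A = \Egu\ones_A \leq \alpha$.

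For part~1 I would instead scale $Z^*$ down to a vanishing multiple. Writing $Z^*\geq -M$ for some $M\geq 0$ (it is bounded below by hypothesis), for an arbitrary $\alpha>0$ I take $\lambda := \min(1,\,\alpha/(M+1))\in(0,1]$ and set $\tilde Z := \lambda Z^*\in\Z$ by downward scalability. Then $\tilde Z(\omega)=\infty$ for every $\omega\notin A$, whereas on $A$ we have $\tilde Z \geq -\lambda M \geq -\alpha$. This witnesses $\Pgu A^c \leq \alpha$ via eq.~\eqref{eq:egu-rep-cost-weak}: on $A^c$, $\tilde Z+\alpha=\infty\geq 1=\ones_{A^c}$, and on $A$, $\tilde Z+\alpha\geq 0 = \ones_{A^c}$. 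Letting $\alpha\downarrow 0$ gives $\Pgu A^c \leq 0$, hence $\Pgl A = 1-\Pgu A^c \geq 1$ by Proposition~\ref{prop:basic-facts-probability}. Together with nonnegativity of $\Pgu A^c$ this gives $\Pgl A = 1$, i.e.\ $A$ g.t.a.s.

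The main obstacle is the extended-real bookkeeping: I must keep track of the $\infty$ values of $Z^*$ and the $\infty-\infty$ convention while ensuring the scaled gamble simultaneously remains a valid element of $\Z$ and drives the replication cost down to the target, with the strict-versus-weak inequality subtlety handled by Proposition~\ref{prop:egu-equiv-defs} (legitimate here because $\ones_A,\ones_{A^c}$ are real-valued). The one genuinely delicate point is that ``$A$ g.t.a.s.'' literally means $\Pgl A = 1$, whereas the scaling argument directly delivers only $\Pgu A^c \leq 0$ (equivalently $\Pgl A \geq 1$); pinning the lower probability at exactly $1$ relies on $\Pgu A^c \geq 0$, i.e.\ nonnegativity of upper probabilities, which is where an arbitrage-free assumption (or the convention that event probabilities lie in $[0,1]$) enters.
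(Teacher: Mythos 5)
Your proof is correct and takes essentially the same approach as the paper: part~2 via the single witness $\alpha Z^*$ is identical, and your part~1, which rescales per~$\alpha$ by $\lambda=\min(1,\alpha/(M+1))$, is a minor variant of the paper's argument, which rescales once by $-1/\inf Z^*$ to get $Z'\geq -1$ and then invokes part~2 for $A^c$ and all $\alpha>0$ simultaneously. Your closing caveat---that downward scalability alone only yields $\Pgu A^c\leq 0$, hence $\Pgl A\geq 1$, with equality requiring nonnegativity of upper probabilities---is in fact slightly more careful than the paper, which concludes ``giving the result'' directly and relegates the normalized two-sided statement $\Pg A=1$ to a footnote.
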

\begin{proof}
  We begin with the second statement.
  Let $Z = \alpha Z^*$.
  By downward scalability, $Z \in \Z$.
  As $1 + Z^* \geq 0$, we have $\alpha + Z = \alpha(1 + Z^*) \geq 0$.
  For $\omega\in A$, we have $\alpha + Z(\omega) \geq \alpha(1+Z^*(\omega)) \geq 1$ by assumption.
  Thus $\alpha + Z \geq \ones_A$, giving $\Pgu A \leq \alpha$.

  For the first statement, let $b = \inf Z^*$.
  Let $Z' = Z^*$ if $b \geq -1$, and $Z' = (-1/b) Z^*$ otherwise.
  We have $Z' \in \Z$ by downward scalability, and $Z' \geq -1$ in both cases.
  As $Z'$ is infinite on $A^c$, this $Z'$ satisfies the condition of the second statement for $A^c$ and all $\alpha > 0$ simultaneously.
  Hence $\Pgu A^c \leq \alpha$ for all $\alpha > 0$, giving the result.
  \footnote{If $\Z$ is additionally normalized, the first statement becomes $\Pg A = 1$, with $\Pgl A \leq 1$ following from Proposition~\ref{prop:basic-facts-no-assumptions} and~\ref{prop:basic-facts-zero}.}
\end{proof}

It is natural to ask whether the converse holds.
For instance, does every g.t.a.s.\ $A$ have a corresponding gambling strategy that starts at 1, does not risk banruptcy, and becomes infinite when the $A$ fails?
The answer is not quite, because it could be that $\Pgl A = 1$ yet the supremum implicit in that statement is only achieved by a sequence of gambling strategies.
For example, if we have $\Omega = [0,1]$, $\Z = \{\omega \mapsto \beta \omega \mid \beta \in \reals\}$, then $\Pgl [0,1/2] = 1$ but no $Z\in\Z$ can take on infinite values on $(1/2,1]$.
(See Example~\ref{ex:minimax-fail-omega-01}.)

Weakening the conditions slightly to allow for sequences does indeed give a converse.
The first statement becomes:
$\Pgl A = 1$ if and only if there is a gambling strategy risking only \$1 (or any bounded amount) and earning an arbitrarily large amount on $A^c$.
The converse does rely on upward scalability, however, as one must be able to scale up any strategy that replicates the indicator variable for $A$.
We may also insist that the strategy risk less and less money.

\begin{proposition}\label{prop:prob-one-iff}
  \everymath{\displaystyle}
  Let $(\Omega,\Z)$ be a scalable gamble space.
  Let $A\subseteq\Omega$ and $\alpha > 0$.
  Then the following are equivalent:
  \begin{enumerate}
  \item $\Pgu A \leq \alpha$
  \item For all $\epsilon > 0$ there exists $Z^*\in\Z$
  with $Z^* \geq -1$ and $1 + Z^* \geq 1/\alpha - \epsilon$ on $A$.
  \end{enumerate}
  In other words, we may write
  $\Pgu A = \inf\{\alpha > 0 \mid \exists Z\in\Z, Z \geq -1, 1 + Z \geq \tfrac 1 \alpha \text{ on } A\}$~.
\end{proposition}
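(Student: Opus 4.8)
The plan is to prove the two implications $(1)\Leftrightarrow(2)$ directly; the displayed infimum reformulation is then essentially the third line of eq.~\eqref{eq:upper-prob-scalable} (with $E = 1+Z^*\in\Evar(\Z)$) and follows from the equivalence. Throughout I would work with the weak-inequality form of the replication cost, namely $\Pgu A = \inf\{\gamma\in\reals \mid \exists Z\in\Z,\ Z+\gamma \geq \ones_A\}$, which is licensed by Proposition~\ref{prop:egu-equiv-defs} because $\ones_A$ is real-valued, together with the elementary observation that this admissible set of $\gamma$ is upward closed (any witness $Z$ is finite-valued below, since $Z+\gamma\geq\ones_A\geq 0$ forces $Z>-\infty$, so increasing $\gamma$ preserves the inequality).

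For $(2)\Rightarrow(1)$: fix $\epsilon>0$ and take the promised $Z^*\in\Z$ with $Z^*\geq -1$ and $1+Z^*\geq \tfrac1\alpha-\epsilon$ on $A$. Since $\alpha>0$, scalability gives $Z:=\alpha Z^*\in\Z$. On $A$ we have $\alpha+Z=\alpha(1+Z^*)\geq 1-\alpha\epsilon$, while on $A^c$ we have $\alpha+Z=\alpha(1+Z^*)\geq 0$ because $Z^*\geq -1$; adding $\alpha\epsilon$ in both cases yields $\alpha(1+\epsilon)+Z\geq \ones_A$ pointwise. The weak-inequality characterization then gives $\Pgu A\leq \alpha(1+\epsilon)$, and letting $\epsilon\downarrow 0$ gives $\Pgu A\leq\alpha$.

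For $(1)\Rightarrow(2)$: the only real subtlety is that the infimum defining $\Pgu A$ need not be attained, so I would approach $\alpha$ strictly from above. Since the admissible $\gamma$-set is upward closed with infimum $\Pgu A\leq\alpha$, every $\gamma>\alpha$ admits some $Z_\gamma\in\Z$ with $Z_\gamma+\gamma\geq\ones_A$. Rescaling by $1/\gamma>0$ (scalability) and setting $Z^*:=Z_\gamma/\gamma\in\Z$, I obtain $1+Z^*=(\gamma+Z_\gamma)/\gamma\geq \ones_A/\gamma$. On $A^c$ this reads $Z^*\geq -1$, and on $A$ it reads $1+Z^*\geq 1/\gamma$ (so $Z^*>-1$ there); hence $Z^*\geq -1$ globally. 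Given $\epsilon>0$, it then suffices to choose $\gamma\in(\alpha,\infty)$ close enough that $1/\gamma\geq \tfrac1\alpha-\epsilon$: if $\tfrac1\alpha-\epsilon\leq 0$ any $\gamma$ works, and otherwise any $\gamma\in\bigl(\alpha,\,(\tfrac1\alpha-\epsilon)^{-1}\bigr)$ works since $(\tfrac1\alpha-\epsilon)^{-1}>\alpha$. This produces the required $Z^*$ and establishes $(2)$.

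The main obstacle is exactly this non-attainment of the infimum, which is what forces the ``for all $\epsilon$'' slack in $(2)$ and the strict approach $\gamma\downarrow\alpha$ in the second direction; once it is dispatched via upward-closedness and two-sided scalability, both implications are routine, and the care needed is only to check $Z^*\geq -1$ \emph{globally} rather than merely on $A^c$. I would also flag one precision point about the boxed reformulation: running the same rescaling argument on the exact (non-$\epsilon$) set $S:=\{\alpha>0 \mid \exists Z\in\Z,\ Z\geq -1,\ 1+Z\geq \tfrac1\alpha \text{ on } A\}$ shows $\inf S=\Pgu A\vee 0$, so the stated equality $\Pgu A=\inf S$ holds precisely when $\Pgu A\geq 0$ — which is automatic whenever $\Z$ is arbitrage-free (Propositions~\ref{prop:basic-facts-zero} and~\ref{prop:basic-facts-no-assumptions}), hence in all cases of interest. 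I would either add that hypothesis or note that the formula reads $\Pgu A\vee 0$ in the degenerate regime where $\Pgu A<0$.
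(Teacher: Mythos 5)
Your proof is correct and takes essentially the same route as the paper's: the $(2)\Rightarrow(1)$ direction via $Z=\alpha Z^*$ and $\epsilon\to 0$ is identical, and your $\gamma\downarrow\alpha$ rescaling in $(1)\Rightarrow(2)$ is exactly the paper's $\delta$-slack argument with $\gamma=\alpha+\delta$ and $Z^*=\tfrac{1}{\alpha+\delta}Z$, down to the same bound on $\delta$. Your closing remark is moreover a genuine refinement the paper's proof does not address: under scalability alone $\Pgu A$ can be negative (e.g.\ $\Z=\{\omega\mapsto c \mid c\geq 0\}$ gives $\Pgu A=-\infty$ while the displayed infimum is $0$), so the final ``in other words'' reformulation in fact computes $\max(\Pgu A,0)$ and implicitly requires $\Pgu A\geq 0$, which holds under arbitrage-freeness via Propositions~\ref{prop:basic-facts-zero} and~\ref{prop:basic-facts-no-assumptions}.
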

\begin{proof}
  $2 \implies 1$:
  As before let $Z' = \alpha Z^*$.
  By similar reasoning, $\alpha(1 + Z^*) \geq 0$ and on $A$ we have $\alpha(1 + Z^*) \geq 1 - \alpha\epsilon$.
  So $\alpha(1+\epsilon) + Z' \geq \ones_A$, whence $\Pgu A \leq \alpha(1 + \epsilon)$.
  Taking $\epsilon\to 0$ gives the result.
  
  $1 \implies 2$:
  Let $\epsilon > 0$.
  From eq.~\eqref{eq:prop-upper-ex-rep-cost}, for all $\delta>0$ we have $Z$ such that $\alpha + \delta + Z \geq \ones_A$.
  We will take $\delta \leq \epsilon \alpha^2 / (1 - \epsilon \alpha)$.
  Let $Z^* = \tfrac 1 {\alpha+\delta} Z$.
  Then $Z^* \geq -1$.
  On $A$, we have $Z \geq 1 - \alpha - \delta$, giving
  $Z^* = \tfrac 1 {\alpha+\delta} Z \geq \tfrac {1 - \alpha - \delta} {\alpha+\delta} = \tfrac 1 {\alpha+\delta} - 1 \geq$ on $A$.
  By our choice of $\delta$, we have $\tfrac 1 {\alpha+\delta} \geq \tfrac 1 \alpha - \epsilon$, giving the result.
\end{proof}

Observing that $\Pgu A^c \leq 0 \iff \Pgu A^c \leq \alpha$ for all $\alpha > 0$, we have the following.
\begin{corollary}\label{cor:prob-one-iff}
  Let $(\Omega,\Z)$ be a scalable gamble space.
  Let $A\subseteq\Omega$.
  Then the following are equivalent:
  \begin{enumerate}
  \item $A$ holds g.t.a.s.\ (i.e., $\Pgl A = 1$)
  \item For all $c > 0$ there exists $Z^*\in\Z$
  with $Z^* \geq -1$ and $1 + Z^* \geq c$ on $A^c$.
  \end{enumerate}
\end{corollary}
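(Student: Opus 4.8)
The plan is to deduce this statement directly from Proposition~\ref{prop:prob-one-iff} applied to the complement $A^c$, bridging the two via Proposition~\ref{prop:basic-facts-probability} and the elementary observation stated just before the corollary. First I would rewrite the hypothesis in terms of $A^c$: by Proposition~\ref{prop:basic-facts-probability}(1) we have $\Pgl A = 1 - \Pgu A^c$, so that $A$ holding g.t.a.s.\ is equivalent to $\Pgu A^c = 0$, and in particular to $\Pgu A^c \leq 0$. The stated observation then converts this into the family of inequalities $\Pgu A^c \leq \alpha$ for all $\alpha > 0$. At this point each individual inequality $\Pgu A^c \leq \alpha$ is exactly condition~(1) of Proposition~\ref{prop:prob-one-iff} for the set $A^c$ and parameter $\alpha$, so I can replace it with condition~(2): for every $\epsilon > 0$ there is $Z^* \in \Z$ with $Z^* \geq -1$ and $1 + Z^* \geq 1/\alpha - \epsilon$ on $A^c$.

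The remaining work is a reparametrization. Collecting quantifiers, condition~(1) of the corollary is now equivalent to: for all $\alpha > 0$ and all $\epsilon > 0$ there exists $Z^* \geq -1$ with $1 + Z^* \geq 1/\alpha - \epsilon$ on $A^c$, whereas condition~(2) asks the same with the threshold written as a single $c > 0$. I would show these coincide by noting that the thresholds $\{1/\alpha - \epsilon : \alpha,\epsilon > 0\}$ range over all of $\reals$, and in particular hit every positive value (e.g.\ $\alpha = 1/(2c)$, $\epsilon = c$ yields threshold $c$), so the $(\alpha,\epsilon)$-family and the $c$-family imply one another. The one point to check is the degenerate direction where $1/\alpha - \epsilon \leq 0$: there any admissible $Z^* \geq -1$ already satisfies $1 + Z^* \geq 0 \geq 1/\alpha - \epsilon$, and such a $Z^*$ is furnished by the $c$-family (take $c = 1$), so no content is lost. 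Monotonicity of the requirement in the threshold (if $1 + Z^* \geq c$ then also $1 + Z^* \geq c'$ for $c' \leq c$) keeps the bookkeeping clean.

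I expect the main subtlety, rather than the reparametrization, to be the first reduction: strictly speaking $\Pgl A = 1$ corresponds to $\Pgu A^c = 0$, while condition~(2) only delivers the upper bound $\Pgu A^c \leq 0$. Under scalability alone one can construct pathological spaces (e.g.\ containing some $Z$ with $\inf Z > 0$, whence $\Egu$ collapses to $-\infty$ on bounded variables) where $\Pgu A^c = -\infty$, so that $\leq 0$ is strictly weaker than $= 0$; the matching lower bound $\Pgu A^c \geq 0$ requires the arbitrage-free property via Propositions~\ref{prop:basic-facts-zero} and~\ref{prop:basic-facts-no-assumptions}. I would therefore either invoke arbitrage-freeness for the clean equality, or read ``$A$ g.t.a.s.''\ through the bound $\Pgu A^c \leq 0$, which is precisely the content that the observation and Proposition~\ref{prop:prob-one-iff} certify; with that in hand, every remaining step is routine.
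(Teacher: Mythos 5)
Your proof is correct and is essentially the paper's own argument: apply Proposition~\ref{prop:prob-one-iff} to $A^c$ for every $\alpha > 0$, combine with $\Pgl A = 1 - \Pgu A^c$ (Proposition~\ref{prop:basic-facts-probability}) and the observation that $\Pgu A^c \leq 0 \iff \Pgu A^c \leq \alpha$ for all $\alpha > 0$, then reparametrize the thresholds $1/\alpha - \epsilon$ by a single $c > 0$. Your closing caveat is moreover a genuine subtlety the paper glosses over: under scalability alone a gamble with $\inf Z > 0$ forces $\Pgu A^c = -\infty$, so condition~(2) certifies only $\Pgu A^c \leq 0$ rather than $\Pgu A^c = 0$, and the literal equivalence with $\Pgl A = 1$ needs arbitrage-freeness (via Propositions~\ref{prop:basic-facts-zero} and~\ref{prop:basic-facts-no-assumptions}) or the weaker reading you propose.
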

By scalability, one could strengthen the requirements on $Z^*$ in Corollary~\ref{cor:prob-one-iff} to require an arbitrarily small capital risk, e.g. $Z^* \geq -1/c$, without changing the result.

\citet[Proposition 6.7]{shafer2019game} give sufficient conditions, namely a certain continuity property (see Axioms E1--E5 in \S~\ref{sec:axioms-fa}), for Lemma~\ref{lem:prob-one}(1) to be an if and only if.
The converse of Lemma~\ref{lem:prob-one}(2) does not generally hold, however.
Thus, in light of the axiomatic approach of \citet{shafer2001probability,shafer2019game}, it is interesting that only scalability (essentially their E2, E3, and E4) is needed for Proposition~\ref{prop:prob-one-iff} and Corollary~\ref{cor:prob-one-iff}.

\subsection{Understanding the infinite cases}
\label{sec:infinite-cases}

We will find it helpful to rule out various corner cases that arise when $X$ and $Z$ take on infinite values.
These cases bear similarity to those already discussed in Proposition~\ref{prop:egu-equiv-defs}; here we focus more on ruling out cases that result in suboptimal strategies for either Gambler or World.
The first observation is that without loss of generality Gambler can ignore gambles which result in an infinite replication cost.

\begin{lemma}\label{lem:remove-bad-gambles}
  Let $(\Omega,\Z)$ be a gamble space and $X : \Omega \to \extreals$ a variable.
  Then $\Egu_\Z X = \Egu_{\Z\setminus\Z'} X$ where $\Z' \subseteq \{Z\in\Z \mid \sup (X - Z) = \infty\}$.
\end{lemma}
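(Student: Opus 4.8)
The plan is to argue directly from the definition $\Egu_\Z X = \inf_{Z\in\Z} f(Z)$, where I abbreviate $f(Z) := \sup_{\omega\in\Omega}\, (X(\omega) - Z(\omega)) \in \extreals$, and to exploit the fact that every gamble in $\Z'$ achieves the \emph{largest} possible value $f(Z) = \infty$, so it can never be relevant to an infimum.

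First, since $\Z \setminus \Z' \subseteq \Z$, passing to the infimum over the smaller index set can only increase it, so $\Egu_{\Z\setminus\Z'} X \geq \Egu_\Z X$ holds with no further work. It remains to prove the reverse inequality $\Egu_{\Z\setminus\Z'} X \leq \Egu_\Z X$.

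The crux is the observation that any $Z \in \Z$ with $f(Z) < \infty$ necessarily lies in $\Z \setminus \Z'$, since $\Z' \subseteq \{Z \mid f(Z) = \infty\}$ by hypothesis. I would then split on whether $\Egu_\Z X = \inf_{Z\in\Z} f(Z)$ is infinite. If it equals $+\infty$, the reverse inequality is immediate (the right-hand side is already maximal), and combined with the first paragraph both quantities equal $\infty$. If instead it is finite (or $-\infty$), I take a sequence $Z_n \in \Z$ with $f(Z_n) \to \Egu_\Z X$; discarding finitely many terms I may assume $f(Z_n) < \infty$ for all $n$, whence each $Z_n \in \Z\setminus\Z'$ by the crux observation. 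Thus $\Egu_{\Z\setminus\Z'} X \leq \inf_n f(Z_n) = \Egu_\Z X$, closing the reverse inequality.

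There is no genuine obstacle here beyond bookkeeping: the only subtlety is the degenerate case $\Z = \Z'$, where $\Z\setminus\Z'=\emptyset$ and one must invoke the convention $\inf\emptyset = +\infty$. But in that case every $Z\in\Z$ satisfies $f(Z)=\infty$, so $\Egu_\Z X = \infty$ as well, and the two sides agree; I would simply fold this into the $\Egu_\Z X = \infty$ branch above.
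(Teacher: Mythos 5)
Your proof is correct and rests on the same single observation as the paper's: every $Z\in\Z'$ attains the maximal value $\sup(X-Z)=\infty$, so removing $\Z'$ cannot change the infimum. The paper just compresses your two-inequality/minimizing-sequence bookkeeping into the one-line identity $\inf_{Z\in\Z}\sup(X-Z)=\min\bigl(\infty,\,\inf_{Z\in\Z\setminus\Z'}\sup(X-Z)\bigr)$, which also absorbs your degenerate case $\Z=\Z'$ via the convention $\inf\emptyset=\infty$.
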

\begin{proof}
  \begin{align*}
    \Egu_{\Z} X
    &= \inf_{Z\in\Z} \sup (X - Z)
    \\
    &=
      \min\left(\infty, \inf_{Z\in\Z\setminus\Z'} \sup (X - Z)\right)
    \\
    &=
      \inf_{Z\in\Z\setminus\Z'} \sup (X - Z)
    \\
    &= \Egu_{\Z\setminus\Z'} X~.
  \end{align*}
\end{proof}

In particular, when $X$ takes on the value $\infty$, all gambles give an infinite replication cost.
\begin{proposition}\label{prop:egu-x-infinite}
  If $X(\omega) = \infty$ for any $\omega\in\Omega$, then $\Egu X = \infty$ regardless of $\Z$.
\end{proposition}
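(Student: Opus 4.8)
The plan is to unwind the definition of $\Egu X$ from eq.~\eqref{eq:formal-egu} and show that the inner supremum is $\infty$ for every gamble $Z\in\Z$, so that the outer infimum over a set of $\infty$'s is again $\infty$. Fix any $\omega_0\in\Omega$ with $X(\omega_0)=\infty$. For an arbitrary $Z\in\Z$, I examine the single term $X(\omega_0)-Z(\omega_0)=\infty-Z(\omega_0)$. There are two cases depending on $Z(\omega_0)$: if $Z(\omega_0)\in\reals\cup\{-\infty\}$, then $\infty - Z(\omega_0)=\infty$ by ordinary extended-real arithmetic; if $Z(\omega_0)=\infty$, then $\infty-\infty=\infty$ by the paper's stated convention (from just before Definition~\ref{def:egu}, where both $\infty-\infty$ and $(-\infty)-(-\infty)$ are set to $\infty$ to be pessimistic). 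Thus in all cases $X(\omega_0)-Z(\omega_0)=\infty$.

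Consequently $\sup_{\omega\in\Omega}\bigl(X(\omega)-Z(\omega)\bigr)\geq X(\omega_0)-Z(\omega_0)=\infty$, so the supremum equals $\infty$ for this $Z$. Since $Z\in\Z$ was arbitrary, we conclude
\begin{align*}
  \Egu X
  = \inf_{Z\in\Z}\sup_{\omega\in\Omega}\bigl(X(\omega)-Z(\omega)\bigr)
  = \inf_{Z\in\Z}\infty
  = \infty~,
\end{align*}
using the convention that an infimum of the empty set or of a constant-$\infty$ family is $\infty$ (the latter being all that is needed here, since $\sup(X-Z)=\infty$ for every $Z$ means the set over which we infimize is $\{\infty\}$ or empty).

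The only subtlety, which I expect to be the single point worth stating carefully rather than a genuine obstacle, is the treatment of the indeterminate form $\infty-\infty$ when a gamble $Z$ also takes the value $\infty$ at $\omega_0$: here one must explicitly invoke the paper's pessimistic convention $\infty-\infty=\infty$, since under the opposite convention the claim would fail (a gamble that is $+\infty$ wherever $X$ is $+\infty$ could otherwise cancel the infinity). This matches the discussion in Remark~\ref{rem:egu-rep-cost-strict}, where exactly this convention is shown to be what makes eq.~\eqref{eq:formal-egu} behave correctly on infinite-valued variables. No assumptions on $\Z$ are needed, consistent with the statement's phrase ``regardless of $\Z$,'' and indeed the argument goes through even if $\Z=\emptyset$, in which case $\Egu X=\inf\emptyset=\infty$ directly.
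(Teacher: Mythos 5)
Your proof is correct and follows essentially the same route as the paper: both arguments rest on showing $\sup(X-Z)=\infty$ for every $Z$, with the convention $\infty-\infty=\infty$ handling the case $Z(\omega_0)=\infty$, exactly the subtlety you flag. The only cosmetic difference is that the paper packages the final step by invoking Lemma~\ref{lem:remove-bad-gambles} to reduce to $\Egu_{\emptyset}X=\inf\emptyset=\infty$, whereas you conclude directly that the infimum of a constant-$\infty$ family is $\infty$.
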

\begin{proof}
  As $\sup(X-Z) = \infty$ for all $Z:\Omega\to\extreals$,
  Lemma~\ref{lem:remove-bad-gambles} gives $\Egu_\Z X = \Egu_{\emptyset} X = \infty$.
\end{proof}

Similarly, Gambler can always ignore gambles taking on value $-\infty$.
This statement holds even when every gamble takes on $-\infty$; in this case $\Egu_\Z = \Egu_\emptyset = \infty$.
\begin{proposition}\label{prop:gambles-not-neg-infty}
  For any gamble space $(\Omega,\Z)$ we have 
  $\Egu_{\Z} = \Egu_{\Z^{(-\infty,\infty]}}$, where $\Z^{(-\infty,\infty]} = \{Z \in \Z \mid Z > -\infty\}$.
\end{proposition}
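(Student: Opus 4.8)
The plan is to reduce this to Lemma~\ref{lem:remove-bad-gambles} by showing that any gamble taking the value $-\infty$ somewhere is useless to Gambler, in the sense that it incurs an infinite replication cost against \emph{any} target. Fix an arbitrary variable $X:\Omega\to\extreals$, and let $\Z' := \Z\setminus\Z^{(-\infty,\infty]} = \{Z\in\Z \mid Z(\omega_0) = -\infty \text{ for some }\omega_0\in\Omega\}$ be the set of gambles that get removed. I would show the inclusion $\Z' \subseteq \{Z\in\Z \mid \sup(X-Z) = \infty\}$, after which Lemma~\ref{lem:remove-bad-gambles} immediately yields $\Egu_\Z X = \Egu_{\Z\setminus\Z'} X = \Egu_{\Z^{(-\infty,\infty]}} X$. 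Since $X$ is arbitrary, this establishes the claimed equality of the two upper expectations as functions of $X$.

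To prove the inclusion, take any $Z\in\Z'$ together with a witness $\omega_0$ at which $Z(\omega_0)=-\infty$, and evaluate the pointwise gap $X(\omega_0)-Z(\omega_0) = X(\omega_0)-(-\infty)$ under the pessimistic conventions fixed alongside Definition~\ref{def:egu}. If $X(\omega_0) > -\infty$, whether finite or $+\infty$, ordinary extended-real arithmetic gives $X(\omega_0)-(-\infty) = \infty$. The only borderline case is $X(\omega_0) = -\infty$, which produces the indeterminate form $(-\infty)-(-\infty)$; here the stated convention $(-\infty)-(-\infty) = \infty$ is exactly what is needed. Hence in every case $X(\omega_0)-Z(\omega_0) = \infty$, so $\sup_{\omega\in\Omega}(X(\omega)-Z(\omega)) = \infty$ and $Z$ belongs to the target set.

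The only step requiring any care is this coincident-infinity case, and it is resolved entirely by the pessimistic convention already built into $\Egu$; the rest is bookkeeping, so I do not expect a genuine obstacle. As a sanity check on the degenerate situation flagged in the surrounding text, observe that if every gamble in $\Z$ takes the value $-\infty$ somewhere then $\Z^{(-\infty,\infty]} = \emptyset$, and the empty infimum gives $\Egu_\emptyset X = \infty$ for all $X$, consistent with the reduction above.
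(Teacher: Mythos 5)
Your proof is correct and follows essentially the same route as the paper: observe that any $Z$ with $Z(\omega_0)=-\infty$ satisfies $\sup(X-Z)\geq X(\omega_0)-Z(\omega_0)=\infty$ for every $X$ (using the convention $(-\infty)-(-\infty)=\infty$ for the coincident case), then invoke Lemma~\ref{lem:remove-bad-gambles} to discard such gambles. Your explicit case split and the empty-$\Z^{(-\infty,\infty]}$ sanity check are slightly more detailed than the paper's two-line argument, but the substance is identical.
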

\begin{proof}
  For the first statement, if $Z(\omega) = -\infty$, then $\sup X-Z \geq X(\omega) - Z(\omega) = \infty$.
  We now apply Lemma~\ref{lem:remove-bad-gambles} to $\Z \setminus \Z^{(-\infty,\infty]}$.
\end{proof}

Finally, suppose $X(\omega) = -\infty$ for some $\omega$.
If Gambler chooses $Z > -\infty$, then such an $\omega$ would result in a $-\infty$ payoff to World, so World would never play such an $\omega$.
Hence we may restrict to the remaining outcomes without loss of generality.
Here we define $\Z|_{\Omega'} = \{Z|_{\Omega'} \mid Z\in\Z\}$ for any $\Omega' \subseteq \Omega$.
\begin{proposition}
  \label{prop:eliminate-X-neg-infinity}
  Let $(\Omega,\Z)$ be a nonempty gamble space such that $Z > -\infty$ for all $Z\in\Z$.
  Let $X$ be a variable.
  Then $\Egu_\Z X = \Egu_{\Z|_{\Omega'}} X|_{\Omega'}$, where $\Omega' = \{\omega\in\Omega \mid X(\omega) > -\infty\}$.
\end{proposition}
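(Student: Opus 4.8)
The plan is to partition the outcome space as $\Omega = \Omega' \cup \Omega''$, where $\Omega' = \{\omega : X(\omega) > -\infty\}$ is as in the statement and $\Omega'' = \{\omega : X(\omega) = -\infty\}$, and to argue that the outcomes in $\Omega''$ never influence the inner supremum defining $\Egu_\Z X$. The central claim is that for every $Z \in \Z$ and every $\omega \in \Omega''$ we have $X(\omega) - Z(\omega) = -\infty$. Granting this, the contribution of $\Omega''$ to $\sup_{\omega\in\Omega}(X(\omega) - Z(\omega))$ is always $-\infty$, so the supremum over $\Omega$ coincides with the supremum over $\Omega'$, and the result follows by taking infima over the gambles.

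To establish the central claim I would invoke the hypothesis $Z > -\infty$, which is precisely what rules out the troublesome convention. Since $Z(\omega) \in (-\infty,\infty]$ while $X(\omega) = -\infty$ on $\Omega''$, the difference $X(\omega) - Z(\omega)$ is either $(-\infty) - a = -\infty$ for finite $a$, or $(-\infty) - \infty = -\infty$; crucially, because $Z$ never takes value $-\infty$, we never meet the case $(-\infty) - (-\infty)$, which the paper's pessimistic convention would set to $+\infty$. Thus $X - Z \equiv -\infty$ on $\Omega''$, and (reading $\sup_{\emptyset} = -\infty$) we obtain $\sup_{\omega\in\Omega}(X-Z) = \max\big(\sup_{\omega\in\Omega'}(X-Z),\, -\infty\big) = \sup_{\omega\in\Omega'}(X-Z)$ for every $Z\in\Z$, which equals $\sup_{\omega\in\Omega'}(X|_{\Omega'} - Z|_{\Omega'})$.

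Finally, this inner supremum depends on $Z$ only through its restriction $Z|_{\Omega'}$, and the restriction map $Z \mapsto Z|_{\Omega'}$ is by definition surjective onto $\Z|_{\Omega'}$. Since the quantity being minimized factors through this surjection, the infimum over $Z\in\Z$ agrees with the infimum over $Z'\in\Z|_{\Omega'}$, yielding $\Egu_\Z X = \inf_{Z\in\Z}\sup_{\omega\in\Omega'}(X|_{\Omega'} - Z|_{\Omega'}) = \Egu_{\Z|_{\Omega'}} X|_{\Omega'}$ directly from eq.~\eqref{eq:formal-egu}.

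I expect the only real subtlety — more bookkeeping than obstacle — to be the handling of infinite values: confirming that the single infinite-arithmetic case arising on $\Omega''$ is benign exactly because $Z > -\infty$, and checking the degenerate case $\Omega' = \emptyset$, where $X \equiv -\infty$ forces both sides to equal $-\infty$, using that $\Z$, and hence $\Z|_{\Omega'}$, is nonempty so that the outer infimum is not vacuously $+\infty$. Everything else is a direct manipulation of the defining formula.
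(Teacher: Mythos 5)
Your proof is correct and follows essentially the same route as the paper's: for each $Z\in\Z$, the hypothesis $Z > -\infty$ ensures $X(\omega)-Z(\omega) = -\infty$ whenever $X(\omega) = -\infty$ (the pessimistic convention applies only to $(-\infty)-(-\infty)$, which cannot arise), so $\sup_{\omega\in\Omega}(X-Z) = \sup_{\omega\in\Omega'}(X-Z)$ for every gamble, and taking infima gives the claim. Your explicit verification of the extended-arithmetic cases and of the degenerate case $\Omega'=\emptyset$ is sound bookkeeping that the paper's shorter proof leaves implicit.
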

\begin{proof}
  If $X(\omega) = -\infty$, then $X(\omega) - Z(\omega) = -\infty$ for all $Z\in\Z$.
  Thus, for all $Z\in\Z$, we have $\sup_{\omega\in\Omega} X(\omega) - Z(\omega) = \max(-\infty, \sup_{\omega\in\Omega'} X(\omega) - Z(\omega)) = \sup_{\omega\in\Omega'} X(\omega) - Z(\omega)$.
  The result follows.
\end{proof}

\subsection{Operations and restrictions on gambles}
\label{sec:bounded-below}

We now state some basic facts about restrictions on $X$ and $\Z$ and how they relate.
We begin with some ways to restrict $\Z$ regardless of $X$.
First, any gamble which takes value $-\infty$ is useless.
Perhaps less obvious is that we may take all gambles to be real-valued without loss of generality.
For this latter claim, we introduce the \emph{downward closure} of $\Z$, which we take to be all real-valued gambles weakly dominated by $\Z$.
\begin{definition}[Downward closure]
  \label{def:dcl}
  Given $\Z \subseteq (\Omega \to \extreals)$, define the \emph{downward closure} $\dcl(\Z) \subseteq (\Omega \to \reals)$ of $\Z$ by $\dcl(\Z) := \{ Z':\Omega \to \reals \mid \exists Z\in\Z,\; Z'\leq Z \} = (\Z+ (-\infty,0]^\Omega)\cap\reals^\Omega $.
\end{definition}

We will also use the \emph{operator closure} $\tilde\Z = \{Z\in\reals^\Omega \mid \Egu Z \leq 0\}$, particularly in \S~\ref{sec:axioms-fa}.
(This operation is essentially the \emph{acceptance set} of a financial risk measure; see \S~\ref{sec:financial-risk-measures}.)
We consider a version for $Z\in\extreals^\Omega$ as well.
If all gambles in $\Z$ are real-valued, then $\Z \subseteq \dcl(\Z) \subseteq \tilde\Z$, but if any elements of $\Z$ take on infinite values the first inclusion will not hold.
The second inclusion can be strict: taking
$\Z = \{y\mapsto c \mid c < 0\}$, we have $\Egu 0 = 0$ but $0\notin \dcl(\Z)$.

\begin{proposition}
  \label{prop:restricting-gambles-all-X}
  \label{prop:dcl-wlog}
  For gamble space $(\Omega,\Z)$, let $\overline \Z = \{Z\in\extreals^\Omega \mid \Egu Z \leq 0\}$ and $\tilde \Z = \overline \Z \cap \reals^\Omega$.
  Then 
  \(\Egu_\Z = \Egu_{\dcl(\Z)} = \Egu_{\overline\Z} = \Egu_{\tilde\Z}\).
\end{proposition}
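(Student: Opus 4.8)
The plan is to prove the single substantive inequality $\Egu_{\dcl(\Z)} \leq \Egu_\Z$ together with $\Egu_{\overline\Z} \geq \Egu_\Z$, and to get the remaining comparisons for free from two easy observations: (i) $\Egu$ is antitone in the gamble set, since $\Z_1 \subseteq \Z_2$ makes the infimum range over a larger set, giving $\Egu_{\Z_2} \leq \Egu_{\Z_1}$; and (ii) the inclusions $\dcl(\Z) \subseteq \tilde\Z \subseteq \overline\Z$ hold. The second inclusion is immediate from $\tilde\Z = \overline\Z \cap \reals^\Omega$, and the first holds because any real-valued $Z' \leq Z$ with $Z \in \Z$ satisfies $\Egu_\Z Z' \leq \sup(Z'-Z) \leq 0$ (the difference is $\leq 0$ where $Z$ is finite and equals $-\infty$ where $Z=\infty$), so $Z' \in \tilde\Z$. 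Antitonicity then yields $\Egu_{\overline\Z} \leq \Egu_{\tilde\Z} \leq \Egu_{\dcl(\Z)}$, and combining this chain with the two target inequalities sandwiches all four expectations so that each equals $\Egu_\Z$.

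Before either target inequality I would dispatch the infinite cases. If $X(\omega)=\infty$ for some $\omega$ then every expectation equals $\infty$ (Proposition~\ref{prop:egu-x-infinite}, together with $\sup(X-W)=\infty$ for any $W$), so assume $X<\infty$. Using Proposition~\ref{prop:gambles-not-neg-infty} I may discard all gambles that take the value $-\infty$: this leaves $\Egu_\Z$ unchanged, hence leaves $\overline\Z$ and $\tilde\Z$ unchanged (they are defined through the unchanged operator $\Egu_\Z$) and leaves $\dcl(\Z)$ unchanged (a gamble equal to $-\infty$ at any outcome dominates no real-valued variable there), so I may assume $Z>-\infty$ for every $Z\in\Z$. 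The inequality $\Egu_{\overline\Z} \geq \Egu_\Z$ then reduces to the key fact that any $W$ with $\Egu_\Z W \leq 0$ obeys $\sup(X-W) \geq \Egu_\Z X$: for each $\epsilon>0$ pick $Z_\epsilon \in \Z$ with $W \leq Z_\epsilon + \epsilon$ pointwise, and verify case-by-case (using $Z_\epsilon > -\infty$) that $X - W \geq (X - Z_\epsilon) - \epsilon$ at every outcome, so $\sup(X-W) \geq \Egu_\Z X - \epsilon$; let $\epsilon \to 0$. Since every element of $\overline\Z$ has $\Egu_\Z$-value at most $0$ by definition, taking the infimum over $W \in \overline\Z$ gives $\Egu_{\overline\Z} X \geq \Egu_\Z X$.

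The remaining inequality $\Egu_{\dcl(\Z)} \leq \Egu_\Z$ is where the real work lies, and I expect it to be the main obstacle. It suffices to show $\Egu_{\dcl(\Z)} X \leq \sup(X-Z)$ for each individual $Z \in \Z$; writing $s := \sup(X-Z)$, the case $s=\infty$ is vacuous, so take $s<\infty$. A naive truncation $\min(Z,n)$ does \emph{not} work: when $Z$ takes the value $\infty$ on outcomes where $X$ is large but finite, the flat cap at $n$ can blow the supremum back up to $\infty$. The fix is to dominate $Z$ by an $X$-shaped real-valued gamble. From $s<\infty$ one obtains $X - s \leq Z$ pointwise, so for any $\epsilon>0$ the variable $Z' := \min\!\big(Z,\ \max(X - s - \epsilon,\, -n)\big)$ is real-valued, satisfies $Z' \leq Z$ (hence $Z' \in \dcl(\Z)$), and satisfies $Z' \geq X - s - \epsilon$, whence $\sup(X - Z') \leq s + \epsilon$. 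Letting $\epsilon \to 0$ gives $\Egu_{\dcl(\Z)} X \leq s$, and taking the infimum over $Z \in \Z$ completes the inequality. Throughout, the delicate points are purely the extended-arithmetic bookkeeping under the conventions $\infty - \infty = (-\infty) - (-\infty) = \infty$, which are handled at the outcomes where $X = -\infty$ or $Z = \infty$ by the reductions above and by the floor $-n$ in the construction.
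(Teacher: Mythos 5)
Your proof is correct and takes essentially the same route as the paper's: your minorant $Z'=\min\bigl(Z,\,\max(X-s-\epsilon,\,-n)\bigr)$ (where the floor $-n$ can simply be any fixed real constant) is a one-formula variant of the paper's piecewise construction in eq.~\eqref{eq:dcl-construction-z-prime}, and your $\epsilon$-approximation of $W\in\overline\Z$ by some $Z_\epsilon\in\Z$ is the same argument the paper uses to show $\Egu_\Z\leq\Egu_{\tilde\Z}$. The only organizational difference is that you run that approximation on $\overline\Z$ directly and let antitonicity together with the inclusions $\dcl(\Z)\subseteq\tilde\Z\subseteq\overline\Z$ sandwich all four operators, replacing the paper's separate appeal to Proposition~\ref{prop:gambles-not-neg-infty} for the equality $\Egu_{\overline\Z}=\Egu_{\tilde\Z}$---a slightly tidier gluing of the same two substantive lemmas.
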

\begin{proof}
  For the first equality, take any $X : \Omega \to \extreals$.
  Let $Z'\in\dcl(\Z)$ and $\alpha \in \reals$ such that $Z' + \alpha \geq X$.
  By definition of $\dcl$, we have $Z\in\Z$ with $Z' \leq Z$, and thus $Z + \alpha \geq X$.
  Thus
  \begin{align}
    \Egu_{\Z} X
    &= \inf\{\alpha\in\reals \mid \exists Z\in\Z \text{ s.t. } Z + \alpha \geq X\} \nonumber
    \\
    &\leq \inf\{\alpha\in\reals \mid \exists Z'\in\dcl(\Z) \text{ s.t. } Z' + \alpha \geq X\}
      = \Egu_{\dcl(\Z)} X\,. \label{eq:dcl-inequality}
  \end{align}
  For the reverse inequality, if $\Egu_\Z X = \infty$, we are done by eq.~\eqref{eq:dcl-inequality}.
  Assume then that $\Egu_\Z X < \infty$, which from Proposition~\ref{prop:egu-x-infinite} implies $X < \infty$.
  Let $\alpha\in\reals$, $\alpha > \Egu_\Z X$, so that from the definition of $\Egu$ we have $Z\in\Z$ with $Z + \alpha \geq X$.
  From the first statement, we may assume $Z > -\infty$ without loss of generality.
  Define $Z':\Omega\to\reals$ by
  \begin{align}
    \label{eq:dcl-construction-z-prime}
    Z'(\omega) =
    \begin{cases}
      X(\omega) - \alpha & X(\omega) > -\infty
      \\
      \min(0,Z(\omega)) & X(\omega) = -\infty
    \end{cases}~.
  \end{align}
  Clearly $Z' + \alpha \geq X$.
  As $Z + \alpha \geq X$, in both cases we also have $Z' \leq Z$.
  As $X < \infty$ and $Z > -\infty$, we have $Z' : \Omega \to \reals$.
  Thus $Z' \in \dcl(\Z)$, giving $\Egu_{\dcl(\Z)} X \leq \alpha$.
  Taking $\alpha \to \Egu_\Z X$ gives the result.

  For the third equality, note that from Proposition~\ref{prop:egu-x-infinite}, $\Egu Z \leq 0 \implies Z < \infty$, so we must have $\overline \Z \subseteq (\reals\cup\{-\infty\})^\Omega$.
  Since $\overline\Z \cap (\infreals)^\Omega = \overline\Z \cap \reals^\Omega = \tilde\Z$, Proposition~\ref{prop:gambles-not-neg-infty} now gives $\Egu_{\overline\Z} = \Egu_{\tilde\Z}$.

  For the final equality, as we have already shown $\Egu_\Z = \Egu_{\dcl(\Z)}$, it suffices to show $\Egu_\Z \leq \Egu_{\tilde\Z} \leq \Egu_{\dcl(\Z)}$.
  The second inequality follows from the fact that $\dcl(\Z) \subseteq \tilde\Z$.
  For the first inequality, consider $X\in\extreals^\Omega$.
  If $\Egu_{\tilde\Z} X = \infty$, we are done; otherwise let $\alpha \in \reals$, $\alpha > \Egu_{\tilde\Z} X$.
  By definition of $\Egu$ we have $\tilde Z \in \tilde\Z$ such that $\tilde Z + \alpha \geq X$.
  By definition of $\tilde Z$, we have $\Egu_\Z \tilde Z \leq 0$.
  So fixing $\epsilon > 0$, we have some $Z\in\Z$ with $Z + \epsilon \geq \tilde Z$.
  Thus $Z + \alpha + \epsilon \geq \tilde Z + \alpha \geq X$, and we conclude $\alpha + \epsilon \geq \Egu_\Z X$.
  Taking $\epsilon \to 0$ and $\alpha \to \Egu_{\tilde\Z} X$ gives $\Egu_\Z X \leq \Egu_{\tilde\Z} X$.
\end{proof}
As with Proposition~\ref{prop:gambles-not-neg-infty}, the statement of Proposition~\ref{prop:dcl-wlog} holds even when $\dcl(\Z)$, $\overline\Z$, or $\tilde\Z$ are empty.

We now consider restrictions on $X$ which translate to gambles.
We first discuss boundedness from above and below, and then move on to more general statements.

\begin{definition}[Bounded below]
  We say a variable $X:\Omega \to \extreals$ is \emph{bounded below} if $\inf X > -\infty$, and \emph{bounded above} if $\sup X < \infty$.
  Given $\Z \subseteq (\Omega \to \extreals)$, define its \emph{bounded below subset} by $\bb(\Z) := \{Z\in\Z \mid \inf Z > -\infty\}$.
\end{definition}

Similar to why choosing $Z(\omega)=-\infty$ would be foolish no matter what $X$ is, when trying to replicate a bounded-below variable $X$, choosing a gamble $Z$ with $\inf Z = -\infty$ would also be foolish: World can simply make $Z$ unboundedly negative while $X$ remains bounded from below, again yielding an infinite penalty $\sup (X - Z) = \infty$.
From Lemma~\ref{lem:remove-bad-gambles}, when $X$ is bounded below, we may assume without loss of generality that gambles are also bounded below.
A similar statement holds when $X$ is bounded from above, where now one needs to invoke the downward closure first.
These statements are especially useful for bounded $X$ such as indicators $\ones_A$, and for nonnegative varibales (cf.~\S~\ref{sec:multiplicative}).

\begin{proposition}\label{prop:bounded-X-gambles}
  Let $(\Omega,\Z)$ be a gamble space, and $X:\Omega\to\extreals$ a variable.
  \begin{enumerate}
  \item If $X$ is bounded below, then $\Egu_{\Z} X = \Egu_{\bb(\Z)} X = \Egu_{\bb(\dcl(\Z))} X$.
  \item If $X$ is bounded above, then $\Egu_{\Z} X = \Egu_{\ba(\dcl(\Z))} X$ where $\ba(\Z) := \{Z\in\Z \mid \sup Z < \infty\}$.
  \end{enumerate}
  In particular, when $X$ is bounded, then $\Egu_\Z X = \Egu_{\Z'} X$ where $\Z'$ are the bounded elements of $\dcl(\Z)$.
\end{proposition}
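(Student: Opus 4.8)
The plan is to treat the two parts separately—the bounded-below case resting almost entirely on Lemma~\ref{lem:remove-bad-gambles}, and the bounded-above case requiring a genuine truncation construction—and then to obtain the final ``bounded'' statement by chaining the two. Throughout I would freely pass between $\Z$ and $\dcl(\Z)$ using Proposition~\ref{prop:dcl-wlog}.

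For Part 1, suppose $\inf X = m > -\infty$. First I would observe that any gamble $Z$ with $\inf Z = -\infty$ is useless: along a sequence $\omega_n$ with $Z(\omega_n)\to-\infty$ we get $X(\omega_n) - Z(\omega_n) \geq m - Z(\omega_n) \to \infty$, so $\sup(X - Z) = \infty$. Hence $\Z\setminus\bb(\Z) \subseteq \{Z : \sup(X-Z)=\infty\}$, and Lemma~\ref{lem:remove-bad-gambles} gives $\Egu_\Z X = \Egu_{\bb(\Z)} X$. For the remaining equality I would invoke Proposition~\ref{prop:dcl-wlog} to replace $\Z$ by $\dcl(\Z)$, and then apply the very same removal argument inside the gamble space $(\Omega, \dcl(\Z))$, concluding $\Egu_{\dcl(\Z)} X = \Egu_{\bb(\dcl(\Z))} X$; combining, all three quantities coincide.

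Part 2 is where the real work lies, and the key difficulty—the step I expect to be the main obstacle—is that an unbounded-above gamble cannot simply be discarded, since being large where $X$ is small does not raise the replication cost $\sup(X-Z)$, so instead it must be truncated, which is exactly why the downward closure appears here but not in Part 1. Writing $M = \sup X < \infty$, I would first pass to $\dcl(\Z)$ so that every gamble is real-valued, and then show $\Egu_{\ba(\dcl(\Z))} X \leq \Egu_{\dcl(\Z)} X$ (the reverse inequality is automatic from $\ba(\dcl(\Z))\subseteq\dcl(\Z)$). Given $Z'\in\dcl(\Z)$ with finite cost $s := \sup(X-Z')$, define the truncation $Z'' := \min(Z', c)$ with the cap $c := M - \min(s,0)$, chosen just large enough that $M - c \leq s$ while remaining finite. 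Then $Z''\leq Z'$ is real-valued, hence $Z''\in\dcl(\Z)$, and $\sup Z'' \leq c < \infty$, so $Z''\in\ba(\dcl(\Z))$. The verification $\sup(X - Z'') \leq s$ splits by whether $Z'(\omega)\leq c$ (where nothing changes) or $Z'(\omega) > c$ (where $X(\omega) - Z''(\omega) = X(\omega) - c \leq M - c \leq s$); the degenerate case $s = -\infty$, which forces $X\equiv-\infty$ since $Z'$ is real, I would dispatch directly. Taking the infimum over $Z'\in\dcl(\Z)$ yields the claimed inequality and hence the equality.

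Finally, for the ``in particular'' statement with $X$ bounded, the order of application matters: Part 2 gives $\Egu_\Z X = \Egu_{\ba(\dcl(\Z))} X$, and then the first equality of Part 1, applied to the gamble space $(\Omega, \ba(\dcl(\Z)))$ and using that $X$ is bounded below, gives $\Egu_{\ba(\dcl(\Z))} X = \Egu_{\bb(\ba(\dcl(\Z)))} X$. Since $\bb(\ba(\dcl(\Z)))$ is precisely the set of elements of $\dcl(\Z)$ bounded both above and below, this is exactly $\Egu_{\Z'} X$, completing the argument. Apart from the truncation in Part 2, every step is a routine application of Lemma~\ref{lem:remove-bad-gambles}, Proposition~\ref{prop:dcl-wlog}, and Proposition~\ref{prop:egu-x-infinite}.
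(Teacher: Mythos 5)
Your proof is correct and takes essentially the same route as the paper's: Part 1 is the identical application of Lemma~\ref{lem:remove-bad-gambles} before and after invoking Proposition~\ref{prop:dcl-wlog}, and Part 2 is the same truncation idea, the only difference being that you cap at $c = M - \min(s,0)$ to preserve the per-gamble cost $\sup(X-Z')$ directly, whereas the paper caps at $\sup X - \alpha$ for $\alpha > \Egu_\Z X$ via the replication-cost form of eq.~\eqref{eq:prop-upper-ex-rep-cost}. If anything, your sup-form truncation is marginally cleaner---it sidesteps the strict-versus-weak inequality subtlety at points where $X$ attains its supremum---and your explicit chain $\bb(\ba(\dcl(\Z)))$ for the ``in particular'' clause spells out what the paper leaves implicit.
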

\begin{proof}
  For the first statement, let $\inf X = b > -\infty$.
  For any variable $Z$ with $\inf Z = -\infty$, we have $\sup X-Z \geq \sup b - Z = b - \inf Z = b - (-\infty) = \infty$.
  Lemma~\ref{lem:remove-bad-gambles} now gives $\Egu_{\Z} X = \Egu_{\bb(\Z)} X$
  and $\Egu_{\dcl(\Z)} X = \Egu_{\bb(\dcl(\Z))} X$.
  Proposition~\ref{prop:dcl-wlog} gives the remaining equality.

  For the second statement, let $\sup X = b < \infty$.
  Let $\alpha > \Egu_\Z X = \Egu_{\dcl(\Z)}$, $\alpha\in\reals$.
  For $Z\in\dcl(\Z)$ with $Z + \alpha > X$,
  let $Z' = \min(Z,b-\alpha) \in \ba(\dcl(\Z))$.
  By construction $Z' + \alpha > X$ as well, giving $\alpha > \Egu_{\ba(\dcl(\Z))}$.
  Thus $\Egu_{\ba(\dcl(\Z))} X \leq \Egu_{\Z} X$.
  The reverse inequality follows from Proposition~\ref{prop:dcl-wlog} and the fact that $\ba(\dcl(\Z)) \subseteq \dcl(\Z)$.
\end{proof}

\begin{remark}\label{rem:bounded-above-below}
  In general, one cannot restrict to bounded above/below gambles if $X$ is not bounded above/below.
  For any $X$ with $\sup X = \infty$, for bounded above $Z$ we have $\sup X-Z = \infty$, giving $\Egu_{\ba(\dcl(\Z))} X = \infty$.
  Yet $\dcl(\Z)$ could contain $Z$ that replicates $X$ within a finite constant.
  For example, consider gamble space $(\Omega,\Z)$ where $\Omega = \reals$ and $\Z = \{\omega\mapsto \beta \omega \mid \beta\in\reals\}$.
  For the identity $X:\omega\mapsto\omega$, we have $\Egu_\Z X = 0$ via the choice $Z = X \in \Z$, but $\Egu_{\ba(\dcl(\Z))} X = \infty$ by the argument above.
  The same example shows that when $X$ is not bounded below, we can have $\Egu_\Z X < \Egu_{\bb(\Z)}$: here $\bb(\Z) = \{0\}$ and thus $\Egu_{\bb(\Z)} X = \sup (X - 0) = \infty$.

  The reader may note that we did not state $\Egu_\Z X = \Egu_{\ba(\Z)} X$ when $X$ is bounded above: this statement fails in general.
  Consider the same $\Z$ but now $X(\omega) = \max(\omega,1)$.
  Clearly $\Egu X = 0$ via the same choice $Z:\omega\mapsto\omega$.
  But $\ba(\Z) = \{0\}$ as with $\bb$, giving $\Egu_{\ba(\Z)} X = \sup (X-0) = 1$.
\end{remark}

As we saw in Proposition~\ref{prop:bounded-X-gambles} and Remark~\ref{rem:bounded-above-below}, it seems that when $X$ satisfies a certain property (like bounded below or bounded above), we may restrict the gambles $\Z$ to only those sharing that property, though in some cases we must first replace $\Z$ with $\dcl(\Z)$.
The following proposition shows that indeed this observation holds quite generally as long as the property in question is closed under translation.

\begin{proposition}\label{prop:egu-dcl-preserves-properties}
  Let $(\Omega,\Z)$ be a gamble space and $\G \subseteq (\infreals)^\Omega$ any set closed under translation, i.e., $X\in\G,\alpha\in\reals \implies X+\alpha\in\G$.
  Then $\Egu_\Z X = \Egu_{\dcl(\Z)\cap\G} X$ for any $X\in\G$.
\end{proposition}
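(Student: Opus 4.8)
The plan is to prove the two inequalities separately, leaning on the already-established identity $\Egu_\Z = \Egu_{\dcl(\Z)}$ from Proposition~\ref{prop:dcl-wlog}. The inequality $\Egu_{\dcl(\Z)\cap\G} X \geq \Egu_\Z X$ is immediate: since $\dcl(\Z)\cap\G \subseteq \dcl(\Z)$, the infimum in eq.~\eqref{eq:formal-egu} ranges over a smaller set of gambles and can therefore only be larger, so $\Egu_{\dcl(\Z)\cap\G} X \geq \Egu_{\dcl(\Z)} X = \Egu_\Z X$. All the content lies in the reverse inequality $\Egu_{\dcl(\Z)\cap\G} X \leq \Egu_\Z X$.

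For that direction, I would first dispose of the case $\Egu_\Z X = \infty$, where the bound is vacuous. Otherwise $\Egu_\Z X < \infty$, which by Proposition~\ref{prop:egu-x-infinite} forces $X < \infty$ everywhere, while $X\in\G\subseteq(\infreals)^\Omega$ rules out the value $-\infty$; hence $X$ is real-valued. The key observation is that, up to a constant translation, the tightest replicating gamble for $X$ is $X$ itself. Concretely, fix $\alpha\in\reals$ with $\alpha > \Egu_\Z X = \Egu_{\dcl(\Z)} X$ and choose $Z'\in\dcl(\Z)$ with $Z' + \alpha \geq X$ (available from eq.~\eqref{eq:prop-upper-ex-rep-cost}, or its weak form via Proposition~\ref{prop:egu-equiv-defs} since $X$ is real-valued). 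Then $X - \alpha \leq Z'$, so $X-\alpha$ is a real-valued variable dominated by an element of $\dcl(\Z)$ and is therefore itself in $\dcl(\Z)$. Because $X\in\G$ and $\G$ is closed under translation, $X-\alpha\in\G$ as well, so $X-\alpha\in\dcl(\Z)\cap\G$. As $(X-\alpha)+\alpha = X \geq X$, this gamble witnesses $\Egu_{\dcl(\Z)\cap\G} X \leq \alpha$; letting $\alpha\to\Egu_\Z X$ finishes the proof.

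The only delicate point—and the step I would write most carefully—is the handling of infinite values, namely verifying that $X$ is genuinely real-valued before forming the translate $X-\alpha$ and invoking it as a gamble. This is exactly where the two hypotheses combine: $X\in\G$ excludes the value $-\infty$, and $\Egu_\Z X < \infty$ excludes $+\infty$ through Proposition~\ref{prop:egu-x-infinite}. Beyond this bookkeeping, the argument is purely the translation-closure trick, which is why the statement holds for any translation-closed $\G$ and subsumes the earlier bounded-below and bounded variants of Proposition~\ref{prop:bounded-X-gambles} (the classes of bounded-below and of bounded variables are each closed under translation and contained in $(\infreals)^\Omega$).
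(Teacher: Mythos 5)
Your proof is correct and follows essentially the same route as the paper's: both reduce to $\Egu_\Z = \Egu_{\dcl(\Z)}$ via Proposition~\ref{prop:dcl-wlog}, verify $X$ is real-valued (ruling out $+\infty$ through Proposition~\ref{prop:egu-x-infinite} and $-\infty$ through $\G\subseteq(\infreals)^\Omega$), and then play the translation trick, showing $X-\alpha \in \dcl(\Z)\cap\G$ for any real $\alpha > \Egu_\Z X$ and letting $\alpha \to \Egu_\Z X$. Your only deviations are immaterial: you take the dominating gamble from $\dcl(\Z)$ rather than from $\Z$ (harmless, since domination by an element of $\dcl(\Z)$ implies domination by an element of $\Z$), and you split cases on $\Egu_\Z X = \infty$ rather than on $\Egu_{\dcl(\Z)\cap\G} X = \infty$, which is in fact the cleaner formulation, since it is the case split the subsequent argument actually needs.
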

\begin{proof}
  Let $\Z' = \dcl(\Z)\cap\G$.
  As $\Z' \subseteq \dcl(\Z)$, we have $\Egu_\Z = \Egu_{\dcl(\Z)} \leq \Egu_{\Z'}$, with the equality coming from Proposition~\ref{prop:dcl-wlog}.
  For the reverse inequality, if $\Egu_{\Z'} X = \infty$ we are done; otherwise $X<\infty$ from Proposition~\ref{prop:egu-x-infinite} and thus $X\in\G\cap\reals^\Omega$.
  Let $\alpha \in \reals$ with $\alpha > \Egu_\Z X$.
  By definition of $\Egu$, there exists $Z\in\Z$ with $Z + \alpha \geq X$.
  Thus $X - \alpha \leq Z$.
  As $X - \alpha \in \reals^\Omega$, we have $X - \alpha \in \dcl(\Z)$.
  By assumption, we also have $X - \alpha \in \G$.
  We conclude $\Egu_{\Z'} X \leq \alpha$, and taking $\alpha \to \Egu_\Z X$ gives $\Egu_{\Z'} X \leq \Egu_\Z X$.
\end{proof}

To see the power of this observation, suppose $\Omega$ is a topological space.
Let $C(\Omega)$ be the set of continuous functions $\Omega\to\reals$,
and $C_b(\Omega) \subseteq C(\Omega)$ the set of bounded continuous functions.

\begin{corollary}\label{cor:continuous-gambles-wlog}
  Let $(\Omega,\Z)$ be a gamble space such that $\Omega$ is a topological space.
  If $X:\Omega\to\reals$ is continuous, we have $\Egu_\Z X = \Egu_{\dcl(\Z) \cap C(\Omega)} X$.
  If $X$ is additionally bounded, we have $\Egu_\Z X = \Egu_{\dcl(\Z) \cap C_b(\Omega)} X$.
\end{corollary}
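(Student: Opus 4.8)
The plan is to apply Proposition~\ref{prop:egu-dcl-preserves-properties} directly, once with $\G = C(\Omega)$ and once with $\G = C_b(\Omega)$. The entire content of the proof reduces to verifying the single hypothesis of that proposition—that the chosen class $\G$ is closed under translation by real constants—together with the trivial observation that $X$ lies in $\G$. So the corollary is a direct specialization, and almost nothing beyond a routine check is needed.

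First I would handle the continuous case. Set $\G = C(\Omega)$, the real-valued continuous functions on $\Omega$; note $\G \subseteq \reals^\Omega \subseteq (\infreals)^\Omega$, so that $\G$ is an admissible choice in Proposition~\ref{prop:egu-dcl-preserves-properties}. To check closure under translation, observe that for any continuous $Y:\Omega\to\reals$ and any $\alpha\in\reals$, the map $Y + \alpha$ is again continuous and real-valued, hence $Y + \alpha \in C(\Omega)$. Since $X$ is continuous by hypothesis, $X \in \G$, and the proposition yields $\Egu_\Z X = \Egu_{\dcl(\Z)\cap C(\Omega)} X$.

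For the bounded case I would set $\G = C_b(\Omega)$. Closure under translation now requires that both continuity and boundedness be preserved: for $Y\in C_b(\Omega)$ and $\alpha\in\reals$ we have $\sup(Y+\alpha) = \sup Y + \alpha < \infty$ and $\inf(Y+\alpha) = \inf Y + \alpha > -\infty$, so $Y+\alpha$ remains bounded, and it is continuous as in the previous paragraph. Hence $C_b(\Omega)$ is translation-closed and, being contained in $\reals^\Omega$, is again an admissible $\G$. Since $X$ is here assumed bounded and continuous, $X\in C_b(\Omega) = \G$, and Proposition~\ref{prop:egu-dcl-preserves-properties} gives the second equality.

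I expect no substantive obstacle, since the result is essentially a corollary by inspection. The only subtlety worth stating explicitly is the verification that adding a constant cannot destroy either continuity or (in the second part) boundedness, so that $C(\Omega)$ and $C_b(\Omega)$ genuinely satisfy the translation-invariance required by the proposition. Everything else is inherited from Proposition~\ref{prop:egu-dcl-preserves-properties}.
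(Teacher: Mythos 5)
Your proposal is correct and matches the paper's intended argument exactly: the corollary is stated as an immediate application of Proposition~\ref{prop:egu-dcl-preserves-properties}, with $\G = C(\Omega)$ and $\G = C_b(\Omega)$ both closed under translation and containing $X$. The routine checks you spell out (translation preserves continuity and boundedness, and $\G \subseteq \reals^\Omega \subseteq (\infreals)^\Omega$) are precisely what the paper leaves implicit.
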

In \S~\ref{sec:measurability} we will also apply this observation to measurable variables.

\subsection{Sequential gambles}
\label{sec:sequential-gambles}

To capture sequential settings like the law of large numbers in \S~\ref{sec:introduction}, we will specify a triple $(\Y,\{\hat\Z_t\}_t,T)$: $\Y$ is the set of per-round outcomes, $\hat\Z_t \subseteq \Y \to \extreals$ the set of gambles on round $t$, and $T \in \N \cup \{\infty\}$ the \emph{time horizon}, i.e., how many rounds will be played.
(See \protocol~\ref{alg:simple-repeated}.)
The setting in \S~\ref{sec:motivating-example} corresponds to the choice $\Y = [-1,1]$.

\begin{algorithm}[t]
  \caption{Simple sequential gambles}
  \For{$t = 1, 2, \ldots, T$}{
    Gambler chooses $\hat Z_t \in \hat \Z_t$ \\
    World chooses $y_t \in \Y$ \\
    Gambler receives $\hat Z_t(y_t)$ \\
  }
  \label{alg:simple-repeated}
\end{algorithm}

\begin{example}[Bounded sequential gambles]
  \label{ex:bounded-sequential}
  The law of large numbers in \S~\ref{sec:introduction} was set in the simple sequential gamble space $(\Y,\hat\Z,\infty)$ with $\Y = [-1,1]$ and $\hat\Z = \{y\mapsto \beta y \mid \beta \in \reals\}$.
  See Definition~\ref{def:simple-sequential-gambles}.
\end{example}

A natural extension of Example~\ref{ex:bounded-sequential} would allow the means $\mu_t$ to vary in each time step, so that $\hat\Z_t = \{y\mapsto \beta (y - \mu_t) \mid \beta \in \reals\}$.
Rather than having $\{\mu_t\}_t$ be a fixed sequence, we might want it to be merely predictable in the usual sense; to this end,
\citet{shafer2019game} often introduce a third player into the game, Forecaster, whose role is to select $\mu_t$ or other parameters at the beginning of the round.
Yet from the point of view of Gambler, worst-case guarantees correspond to World and Forecaster conspiring against her, so without loss of generality they are the same player.
In other words, even when including other players, game-theoretic statements still boil down to a 2-player zero-sum game.
We may therefore merge Forecaster and World into one player without loss of generality.
\footnote{In other settings, such as defensive forecasting, Forecaster plays the role of Gambler, and one can merge the other players into World.}
In doing so, we must take care with the time indices, since now World reveals the $\mu_{t+1}$ at the end of round $t$, along with $y_t$.
One could more generally allow some context $w_{t+1}$ to be revealed, as shown in \protocol~\ref{alg:adding-context}.

We can capture such ``context'' by writing $\Y = (\hat\Y,\W)$ where $y\in\hat\Y$ is thought of as the actual outcome, and $w\in\W$ is the context for the next round.
For example, we might take $\hat\Y=\W=[-1,1]$, and write $y_t = (\hat y_t,w_{t+1}) = (\hat y_t,\mu_{t+1})$ to define the gambles $\hat\Z_t$ above.
But now $\hat\Z_t$ depends on $\mu_t = w_t$, and thus on $y_{t-1}$, the previous outcome.
To capture such context, therefore, we must extend our definitions to allow for this dependence; for the sake of generality, we may allow $\hat\Z_t = \hat\Z^{(s)}$ to depend on the entire situation $s = y_{1..t-1}$.
This extension leads to a \emph{sequential gamble space}.
See \protocol~\ref{alg:equivalent-protocol} for the special case where only additional context is revealed, giving an equivalent way to express \protocol~\ref{alg:adding-context}.

\begin{algorithm}[!h]
  \caption{Adding context to sequential gambles}
  \For{$t = 1, 2, \ldots, T$}{
    Forecaster chooses parameters $w_t \in \reals^d$ \\
    Gambler chooses $Z_t \in \hat\Z_t^{(w_t)}$ \\
    World chooses $y_t \in \Y$ \\
    Gambler receives $Z_t(y_t)$ \\
  }
  \label{alg:adding-context}
\end{algorithm}
\begin{algorithm}[!h]
  \caption{Adding context, as a sequential gamble space}
  $s \gets \emptystring$, $w_1\in\reals$ given \\
  \For{$t = 1, 2, \ldots, T$}{
    Gambler chooses $Z_t \in \hat\Z^{(s)} := \hat\Z_t^{(w_t)}$ \\
    World chooses $y_t = (\hat y_t,w_{t+1}) \in \Y := \hat \Y \times \reals^d$ \\
    Gambler receives $Z_t(\hat y_t)$ \\
    $s\gets s \oplus y_t$
  }
  \label{alg:equivalent-protocol}
\end{algorithm}

Let us now define sequential gamble spaces formally.
Let $\Y$ be a set of per-round outcomes.
\footnote{It is also natural to consider settings where the set of available outcomes in each round itself depends on the history.  For the most part, this generalization is not needed, but it can be a more elegant way to capture settings where the outcomes can vary, or even statements like the tower rule, where it would be natural to focus on two-round gamble spaces; see \S~\ref{sec:general-seq-gambles} for a more general setting. }
Let time horizon $T\in\N\cup\{\infty\}$ be given.
Let $\Omega = \Y^T$ be the set of \emph{outcomes}.
Given $y\in\Y^t$, we write $y_{1..i} := (y_1,\ldots,y_i) \in \Y^i$ to be the first $i$ elements of $y$, where $y_{1..0} := \emptystring$ is the empty sequence and $\Y^0 = \{\emptystring\}$.
For $t\in\N\cup\{\infty\}$, let $\Y^{<t} := \bigcup_{0\leq i<t} \Y^i$.
Define the set of \emph{situations} to be
$\Y^{<T}$.
The outcomes $\Omega = \Y^T$ are not situations, but can be thought of as ``terminal'' situations.
Given $s\in\Y^t$ and $y\in\Y$, the sequence $s\oplus y\in\Y^{t+1}$ is the concatenation of $s$ and $y$.

For each situation $s\in\Y^{<T}$ we are given a set of \emph{per-round gambles} $\hat \Z^{(s)} \subseteq (\Y\to\extreals)$ available in that situation.
We let $\Psi$ be the set of all gambling strategies $\psi : \Y^{<T} \to (\Y\to\extreals)$ such that for $\psi(s) \in \hat\Z^{(s)}$ for each $s\in\Y^{<T}$.
We can equivalently represent the available per-round gambles via a single set $\hat\Z \subseteq (\Y^{<T}\to\extreals)$, by defining
$\hat \Z^{(s)} := \{\hat Z(s\oplus \cdot):\Y\to\extreals \mid \hat Z\in\hat\Z\}$.
Because of this equivalent representation, we will often write $\hat\Z \subseteq (\Y^{<T}\to\extreals)$ as shorthand for the indexed set $\{\hat \Z^{(s)}\}_s$.

For example, in simple sequential gambles (\protocol~\ref{alg:simple-repeated}), we have $\hat\Z^{(s)} = \hat\Z_{|s|+1}$ for all $s\in\Y^{<T}$.

The cumulative gamble $Z^\psi_t$ for strategy $\psi\in\Psi$ up to time $t\in\N$ is simply the sum of the resulting per-round payoffs,
\begin{align}\label{eq:cumulative-gamble-finite}
  Z^\psi_t(y_{1..t}) = \sum_{i=1}^t \psi(y_{1..i-1})(y_i) = \sum_{i=1}^t \hat Z_i(y_{1..i})~.
\end{align}
For $t=\infty$, this sum may fail to converge; we take the limit infimum as a pessimistic evaluation of Gambler's profit:
\begin{align}\label{eq:cumulative-gamble-infinite}
  Z^\psi_\infty(y) = \liminf_{t\to\infty} Z^\psi_t(y_{1..t})~.
\end{align}
Finally, for $t \in \N \cup \{\infty\}$, we let $\Z_{t} = \{Z^\psi_t \mid \psi\in\Psi\}$ be the set of partial cumulative gambles up to time $t$.

\begin{definition}[Sequential gamble space]
  \label{def:sequential-gamble-space}
  Let per-round outcomes $\Y$ and time horizon $T\in\N\cup\{\infty\}$ be given.
  Let per-round gambles $\hat\Z \subseteq (\Y^{<T}\to\extreals)$ be given, from which we can define $\{\hat\Z^{(s)}\}_{s\in\Y^{<T}}$ as above.
  Then we define the \emph{sequential gamble space} $(\Y,\hat\Z,T)$ to be the gamble space $(\Y^T,\Z_T)$, as defined following eq.~\eqref{eq:cumulative-gamble-infinite}.
\end{definition}

Some special cases of sequential gamble spaces will arise frequently.
We say $(\Y,\hat\Z,T)$ is \emph{real-valued} if $\hat\Z \subseteq (\Y^{<T}\to\reals)$.
We say $(\Y,\hat\Z,T)$ is \emph{sequentially positive-linear} or \emph{sequentially normalized} if the $\hat\Z^{(s)}$ are respectively positive linear or normalized for all $s\in\Y^{<T}$.
For brevity, we will often omit the extra ``sequential'', e.g., we will speak of sequentially normalized gamble spaces $(\Y,\hat\Z,T)$.

Sequentially positive-linear gamble spaces feature prominently in the sequel.
Note however that when $T=\infty$, the global gamble space $(\Y^T,\Z_T)$ is typically \emph{not} positive linear in the sense of Definition~\ref{def:gamble-space-conditions}, because of the limit infimum in eq.~\eqref{eq:cumulative-gamble-infinite}.
Nonetheless the gambles do have considerable structure; see Lemma~\ref{lem:convexity-dcl} and \S~\ref{sec:axioms-fa}.

We will often find it convenient in examples to work with the special case of ``simple'' sequential gamble spaces, where $\hat\Z^{(s)}$ depends only on $|s|$.

\begin{definition}[Simple sequential gamble space]
  \label{def:simple-sequential-gambles}
  Let $\Y$ be a set of per-round outcomes, $T\in\N\cup\{\infty\}$ a time horizon, and $\{\hat\Z_t\subseteq(\Y\to\extreals)\}_{t=1}^T$ sets of per-round gambles.
  The \emph{simple sequential gamble space} $(\Y,\{\hat\Z_t\}_t,T)$ is the sequential gamble space $(\Y,\hat\Z,T)$ given by
  $\hat\Z^{(s)} = \hat\Z_{|s|+1}$ for all $s\in\Y^{<T}$.
  When we have some $\hat\Z\subseteq(\Y\to\extreals)$ such that $\hat\Z_t = \hat\Z$ for all $t\leq T$, we call it a \emph{simple repeated gamble space} and write it more simply as $(\Y,\hat\Z,T)$.
  
\end{definition}

The following conventions will simplify the exposition surrounding sequential gamble spaces.
Several results may be stated for $T=\infty$ only, using the fact that for $T < \infty$ one can always define $\hat\Z^{(s)} = \{0\}$ for all $|s|\geq T$.
Similarly, when $0\in\hat\Z^{(s)}$ for all $s\in\Y^{<T}$, we will often consider $\Z_t$ to be a subset of $\Z_T$ when $t\leq T$.
Finally, we will sometimes use the functions $Y_t:\Y^T\to\Y$ given by $Y_t(y) = y_t$ to write the per-round outcomes.

It may not be completely obvious that Definition~\ref{def:simple-sequential-gambles} and \protocol~\ref{alg:simple-repeated} align; the following establishes the equivalence of these two for finite time horizons.
\begin{proposition}\label{prop:simple-sequential-one-shot}
  Let $(\Y,\hat\Z,T)$ be a real-valued simple repeated gamble space for $T\in\N$.
  Then for any variable $X:\Y^T\to\extreals$, we have
  \begin{align}
    \Egu X
    &= \inf_{Z_1\in\hat\Z} \sup_{y_1\in\Y} \inf_{Z_2\in\hat\Z} \sup_{y_2\in\Y} \cdots \inf_{Z_T\in\hat\Z} \sup_{y_T\in\Y} X(y_{1..T}) - \sum_{t=1}^T Z_t(y_t)~.
  \end{align}
\end{proposition}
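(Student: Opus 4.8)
The plan is to prove the identity by induction on $T$, using a single elementary interchange lemma that captures precisely the difference between Gambler committing to a full strategy in advance (the $\inf_{\psi}$ implicit in $\Egu$) and Gambler reacting to each outcome as it is revealed (the nested $\inf_{Z_t}\sup_{y_t}$).

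First I would record the interchange lemma: for any sets $A,B$ and any $f:A\times B\to\extreals$,
\[
  \inf_{g:B\to A}\ \sup_{b\in B} f(g(b),b) \;=\; \sup_{b\in B}\ \inf_{a\in A} f(a,b)~.
\]
The $\geq$ direction is immediate, since $f(g(b),b)\geq\inf_a f(a,b)$ pointwise for every $g$ and $b$. For $\leq$, given a target above $\sup_b\inf_a f(a,b)$, I would pick for each $b$ some $a_b$ nearly attaining $\inf_a f(a,b)$ (within $\epsilon$ if the inf is finite, arbitrarily negative if it is $-\infty$, and anything if it is $+\infty$), and set $g(b)=a_b$. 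This is exactly where the absence of any measurability constraint on strategies is essential, as $g$ may be a completely arbitrary function. The only care needed is extended-real bookkeeping, handled by the case split on the value of $\inf_a f(a,b)$.

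For the induction, the base case $T=1$ holds by definition, since one-round strategies are exactly the elements $\psi(\emptystring)\in\hat\Z$ and $\Z_1=\hat\Z$. For the inductive step I would start from
\[
  \Egu X = \inf_{\psi\in\Psi}\ \sup_{y_{1..T}}\Big[X(y_{1..T}) - \textstyle\sum_{i=1}^T \psi(y_{1..i-1})(y_i)\Big]~,
\]
and use that, because the space is simple and repeated (so $\hat\Z^{(s)}$ is independent of $s$), a $T$-round strategy $\psi$ is in bijection with a pair: a first gamble $Z_1=\psi(\emptystring)\in\hat\Z$ together with a family $(\psi'_{y_1})_{y_1\in\Y}$ of $(T-1)$-round strategies chosen independently for each $y_1$. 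Splitting $\inf_\psi=\inf_{Z_1\in\hat\Z}\inf_{(\psi'_{y_1})}$ and $\sup_{y_{1..T}}=\sup_{y_1}\sup_{y_{2..T}}$, and pulling the finite term $-Z_1(y_1)$ (finite by the real-valued hypothesis) through, I would apply the interchange lemma with $B=\Y$ (the variable $y_1$) and $A$ the set of $(T-1)$-round strategies to swap $\inf_{(\psi'_{y_1})}$ past $\sup_{y_1}$, yielding
\[
  \inf_{Z_1\in\hat\Z}\sup_{y_1\in\Y}\Big[-Z_1(y_1) + \inf_{\psi'}\sup_{y_{2..T}}\big(X(y_1,y_{2..T}) - \textstyle\sum_{i=2}^T \psi'(y_{2..i-1})(y_i)\big)\Big]~.
\]

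Finally I would observe that the inner $\inf_{\psi'}\sup_{y_{2..T}}$ is exactly $\Egu$ of the variable $(y_2,\dots,y_T)\mapsto X(y_1,y_2,\dots,y_T)$ in the $(T-1)$-round simple repeated gamble space, with $y_1$ frozen, and invoke the induction hypothesis to expand it into the nested $\inf$-$\sup$ chain over rounds $2,\dots,T$; reassembling gives precisely the claimed $T$-fold expression. The main obstacle I anticipate is not any single estimate but getting the strategy decomposition exactly right: verifying that $\psi\mapsto(Z_1,(\psi'_{y_1}))$ is a genuine bijection (which relies crucially on $\hat\Z^{(s)}$ being independent of the situation $s$), and checking that every reindexing of suprema, infima, and the pulled-out finite term $-Z_1(y_1)$ is valid in $\extreals$ given that all gambles are real-valued while $X$ may take infinite values.
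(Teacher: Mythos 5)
Your proof is correct and takes essentially the same route as the paper: your interchange lemma is precisely step (i) in the paper's proof of Proposition~\ref{prop:tower-property-egu} (Gambler's contingent choices can equivalently be committed ahead of time), and your pulling of the finite term $-Z_1(y_1)$ through the suprema is its step (ii), which is exactly where the real-valued hypothesis is used. The only organizational difference is that you peel off the first round and induct directly, whereas the paper proves the general tower property $\Egu X = \Egu[\Egu[X \mid Y_{1..T-1}]]$ and iterates it as in eq.~\eqref{eq:iterated-tower-property}; note also that the strategy decomposition does not actually require $\hat\Z^{(s)}$ to be situation-independent (the tower property holds for arbitrary real-valued sequential gamble spaces), though assuming it is harmless here and makes your induction hypothesis apply verbatim.
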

The proof, which we give in \S~\ref{sec:tower-properties-sequential-minimax}, follows by the observation that Gambler deciding a gamble in each round, based on the outcomes so far, is equivalent to Gambler specifying a full contingency plan up front.
That is, we can encode all of Gambler's choices on the right-hand side, each contingent on the choices of World thus far, in a strategy $\psi$ for the left-hand side.
This observation is essentially a tower property for upper expectations (Proposition~\ref{prop:tower-property-egu}).

\begin{remark}\label{remark:defining-Z-infty}
  \citet[eq.~(7.19)]{shafer2019game} essentially define $\Z_T$ to be $\bb(\Z_T)$.
  \footnote{As justification for this restriction, \citet[Exercise 8.3]{shafer2019game} shows that $\Egl \leq \Egu$ can fail if one does not restrict to $\bb(\Z_T)$ in their setting, even when the sequential gamble space is arbitrage-free and positive linear.  This statement would appear to contradict Proposition~\ref{prop:basic-facts-assumptions}.  The discrepancy is the weak inequality in their replication definition of $\Egu$, while we use a strict inequality in eq.~\eqref{eq:prop-upper-ex-rep-cost}.}
  Indeed, from Proposition~\ref{prop:bounded-X-gambles}, we may do so without loss of generality when reasoning about bounded-below variables $X$.
  But as we saw in Remark~\ref{rem:bounded-above-below}, for unboundedly negative $X$, defining $\Z_T$ to be $\bb(\Z_T)$ can lead to results which are both counter-intuitive and do not match their measure-theoretic counterparts.

  To cast that example as a sequential gamble space, take the simple repeated gamble space $(\Y,\hat\Z,T)$ where $\Y = \reals$, $\hat\Z = \{y\mapsto \beta y \mid \beta\in\reals\}$, and any $T \geq 1$.
  Let $X(y) = y_1$, which has $\inf X = -\infty$.
  Then as before, $\Egu_{\bb(\Z_T)} X = \sup X = \infty$, since the only bounded-below gamble is $Z=0$, i.e., $\bb(\Z_T) = \{0\}$.
  Yet the analogous measure-theoretic statement would be $\E Y_1 = 0$.
  Taking no restriction on the gambles also yields a zero game-theoretic upper expectation, $\Egu_{\Z_T} X = 0$, by taking $\beta = 1$.

  That said, as we will see in \S~\ref{sec:prices-probability}, there are good reasons to restrict $\Z_T$ in some way.
  For one, without restriction the gambles can fail to have ``consistent'' probability measures (Example~\ref{ex:seq-consistent-but-no-consistent}).
  While restricting all the way to $\bb(\Z_T)$ may be too far, as illustrated above, adding back all finite-time gambling strategies is a useful compromise.
  This is the approach we take in Proposition~\ref{prop:sequentially-consistent-is-consistent}, Corollary~\ref{cor:sequential-price-inequalities}, and Corollary~\ref{cor:ep0star-equals-ep}.
\end{remark}

\begin{remark}[The no-bankruptcy condition]
  \label{rem:no-bankruptcy}
  In the literature on game-theoretic probability, restrictions on gambles are often stated via a no-bankruptcy condition.
  In \S~\ref{sec:motivating-example} and Proposition~\ref{prop:prob-one-iff}, we instead imposed a lower bound on $Z$, namely $Z\geq -1$.
  In sequential settings, this condition is on the limit infimum of $Z^\psi_t$ as in eq.~\eqref{eq:cumulative-gamble-infinite}.
  In general, if $\hat\Z$ is scalable and arbitrage-free, the two conditions are equivalent.
  But if even one round allows arbitrage, the liminf definition can be strictly weaker.
  As a simple example, one can take any sequential gamble space and add two rounds to the beginning: fixing some $c>0$, in the first the only gamble is $-c$, and in the second the only gamble is $c$.
  Then one has not changed the liminf, as the first two rounds exactly cancel out, but now Gambler's capital becomes arbitrarily negative after the first round.
  For this reason, the liminf definition is more robust.
\end{remark}

\subsection{Conditional expectations, supermartingales, tower property}
\label{sec:cond-expect-supermtg}

In sequential settings, we have two types of upper expectations to study: the ``global'' notion on the gamble space $(\Y^T,\Z_T)$, and the per-round upper expectation on $(\Y,\hat\Z^{(s)})$.
Conditional upper expectations give rise to the latter.

\begin{definition}[Conditional game-theoretic upper expectation]
  \label{def:conditional-egu}
  
  Let $(\Y,\hat\Z,T)$ be a sequential gamble space.
  Let $t \in \{1,\ldots,T-1\}$ and $s\in\Y^t$.
  Define $\hat\Z|_s \subseteq (\Y^{<T-t} \to \extreals)$ by $\hat\Z|_s = \{\hat Z(s \oplus \, \cdot \,) : \hat Z \in \hat\Z\}$.
  For any $X:\Y^T\to\extreals$, we define $\Egu[X\mid s] := \Egu[X(s \oplus \, \cdot \,)]$ with respect to the sequential gamble space $(\Y,\hat\Z|_s,T-t)$.
  
\end{definition}

For example, suppose $(\Y,\hat\Z,T)$ is a simple repeated gamble space for $T\in\N\cup\{\infty\}$.
Then for variable $X:\Y^T\to\reals$, and any $s\in\Y^t$, $t<T$, the conditional game-theoretic upper expectation $\Egu[ X \mid s ]$ is $\Egu[X(s\oplus \,\cdot\, )]$ with respect to the simple repeated gamble space $(\Y^{T-t},\hat\Z,T-1)$.

We will primarily use the conditional upper expectation on a single following round, i.e.\ the gamble space $(\Y,\hat\Z^{(s)})$.

In this case, if $X:\Y^t\to\extreals$ and $s\in\Y^{t-1}$ we have
$\hat\Z|_s = \hat\Z^{(s)}$ by definition, and thus
\begin{align}
  \label{eq:conditional-expectation}
  \Egu[ X \mid s] &= \Egu_{\hat\Z^{(s)}} X(s \oplus \, \cdot \,) = \inf_{Z \in \hat\Z^{(s)}} \sup_{y\in\Y} X(s\oplus y) - Z(y)~.
\end{align}
With this next-round conditional expectation, we can define supermartingales.

\begin{definition}[Game-theoretic supermartingale]
  \label{def:supermartingale}
  Let $(\Y,\hat\Z,T)$ be a sequential gamble space.
  A sequence $\{X_t:\Y^t\to\extreals\}_{t\leq T}$ is a \emph{game-theoretic supermartingale} if for all $t< T$ and situations $s\in\Y^t$, we have 
  \begin{align}
    \label{eq:supermartingale-def}    
    \Egu[ X_{t+1} \mid s] \leq X_t(s)~,
  \end{align}
  and a \emph{game-theoretic martingale} if $\Eg[ X_{t+1} \mid s] = X_t(s)$.
\end{definition}

One can think of supermartingales as the wealth process resulting from strategies for Gambler, though perhaps some money is discarded in the process, typically for convenience or tractability.
In particular, we have the following.

\begin{proposition}\label{prop:gambling-strategy-supermartingale}
  Let $(\Y,\hat\Z,T)$ be a sequential gamble space and $Z^\psi\in\Z_T$.
  If $\psi(s)(y)\in\reals$ for all $s\in\Y^{<T},y\in\Y$, then $\{Z^\psi_t\}_t$ is a game-theoretic supermartingale.
\end{proposition}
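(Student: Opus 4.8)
The plan is to unwind the definitions and reduce the supermartingale inequality to the near-triviality that each per-round gamble replicates itself at zero cost. Fix $t < T$ and a situation $s\in\Y^t$; I need to verify $\Egu[Z^\psi_{t+1}\mid s] \leq Z^\psi_t(s)$. First I would observe, directly from the definition of the cumulative gamble in eq.~\eqref{eq:cumulative-gamble-finite}, that for every $y\in\Y$ the first $t$ summands of $Z^\psi_{t+1}(s\oplus y)$ depend only on $s$, while the final summand is $\psi(s)(y)$, so that
\begin{align*}
  Z^\psi_{t+1}(s\oplus y) = Z^\psi_t(s) + \psi(s)(y)~.
\end{align*}
The hypothesis that $\psi(s')(y')\in\reals$ for all $s',y'$ ensures that $Z^\psi_t(s)$ is a finite sum of reals, hence a real constant with respect to the next-round outcome $y$.

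Next I would substitute this decomposition into the next-round conditional expectation, eq.~\eqref{eq:conditional-expectation}. Since $Z^\psi_t(s)\in\reals$ does not depend on $y$, translation (Proposition~\ref{prop:basic-facts-no-assumptions}) lets me pull it out of the upper expectation:
\begin{align*}
  \Egu[Z^\psi_{t+1}\mid s]
  = Z^\psi_t(s) + \inf_{Z\in\hat\Z^{(s)}} \sup_{y\in\Y} \psi(s)(y) - Z(y)
  = Z^\psi_t(s) + \Egu_{\hat\Z^{(s)}} \psi(s)~.
\end{align*}
Because $\psi\in\Psi$, the gamble $\psi(s)$ itself lies in $\hat\Z^{(s)}$, so choosing $Z = \psi(s)$ in the infimum bounds $\Egu_{\hat\Z^{(s)}}\psi(s) \leq \sup_{y\in\Y}\bigl(\psi(s)(y) - \psi(s)(y)\bigr) = 0$. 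Combining the two displays yields $\Egu[Z^\psi_{t+1}\mid s]\leq Z^\psi_t(s)$, which is exactly eq.~\eqref{eq:supermartingale-def}.

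The only delicate point — and the place where the real-valuedness hypothesis does real work — is the final step $\psi(s)(y) - \psi(s)(y) = 0$. Under the paper's convention that $\infty-\infty = (-\infty)-(-\infty) = \infty$, an infinite-valued gamble would \emph{not} replicate itself at zero cost, and the self-replication bound could fail. The assumption $\psi(s)(y)\in\reals$ rules this out and simultaneously guarantees that the partial sum $Z^\psi_t(s)$ is finite, so that translation is legitimately applicable. I therefore expect the entire argument to be routine once the additive decomposition is in hand; the real obstacle is purely bookkeeping around the infinite cases, and the hypothesis is precisely what sidesteps them. (Note the statement concerns only finite $t$, so the $\liminf$ of eq.~\eqref{eq:cumulative-gamble-infinite} never enters.)
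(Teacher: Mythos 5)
Your proof is correct and follows essentially the same route as the paper's: both rest on the decomposition $Z^\psi_{t+1}(s\oplus y) = Z^\psi_t(s) + \psi(s)(y)$ and then witness the infimum in eq.~\eqref{eq:conditional-expectation} with $\hat Z = \psi(s)$, using real-valuedness so that $\psi(s)(y) - \psi(s)(y) = 0$ under the paper's $\infty - \infty = \infty$ convention. Your extra translation step (pulling the constant $Z^\psi_t(s)$ out via Proposition~\ref{prop:basic-facts-no-assumptions}) is a cosmetic variant of the paper's keeping it inside the supremum, and your closing remarks about where the hypothesis does its work are exactly right.
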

\begin{proof}
  Let $t< T$ and $s \in \Y^t$ be given.
  \begin{align*}
    \Egu[ Z^\psi_{t+1} \mid s]
    &= \inf_{\hat Z \in \hat\Z^{(s)}} \sup_{y\in\Y} Z^\psi_{t+1}(s\oplus y) - \hat Z(y)
    \\
    &= \inf_{\hat Z \in \hat\Z^{(s)}} \sup_{y\in\Y} \left(Z^\psi_t(s) + \psi(s)(y)\right) - \hat Z(y)
    \\
    &\leq \sup_{y\in\Y} \left(Z^\psi_t(s) + \psi(s)(y)\right) - \psi(s)(y)
    \\
    &= Z^\psi_t(s)~.
  \end{align*}
\end{proof}

For real-valued sequential gamble spaces, the Doob/Levy supermartingale for $X:\Y^T\to\extreals$ is the sequence $\{X_t\}_t$ given by $X_t(s) = \Egu[X \mid s]$.
The supermartingale property follows from the tower property, which we now show.

\begin{proposition}\label{prop:tower-property-egu}
  Let $(\Y,\hat\Z,T)$ be a real-valued sequential gamble space.
  Let $X:\Y^T\to\extreals$, and consider $\Y^t\ni s\mapsto\Egu[X\mid s]$ to be a variable on $\Y^t$ for $t<T$.
  Then $\Egu[\Egu[X\mid\cdot\,]] = \Egu X$.
\end{proposition}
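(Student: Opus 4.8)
The plan is to decompose a full-horizon gambling strategy at time $t$ into a \emph{prefix} strategy governing rounds $1,\ldots,t$ together with an independent family of \emph{continuation} strategies, one per situation $s\in\Y^t$, governing the remaining rounds. Concretely, a strategy $\psi\in\Psi$ corresponds to a pair: its restriction $\phi$ to situations in $\Y^{<t}$ (a strategy for the $t$-round truncation $(\Y,\hat\Z,t)$), and a family $\{\psi^{(s)}\}_{s\in\Y^t}$, where $\psi^{(s)}$ is a continuation strategy for the sequential gamble space $(\Y,\hat\Z|_s,T-t)$; conversely any such pair assembles into a valid $\psi$. Under this correspondence the cumulative gamble splits additively: for $y\in\Y^T$ with prefix $s=y_{1..t}$ and suffix $y'=y_{t+1..T}$,
\begin{align*}
  Z^\psi_T(y) = Z^\phi_t(s) + Z^{\psi^{(s)}}_{T-t}(y')~.
\end{align*}
Because the gamble space is real-valued, $Z^\phi_t(s)$ is a \emph{finite} constant (a sum of finitely many real per-round payoffs), which is exactly what lets this split survive the $\liminf$ of eq.~\eqref{eq:cumulative-gamble-infinite} when $T=\infty$: the constant $Z^\phi_t(s)$ passes through the limit infimum unchanged, so $Z^\psi_\infty(y) = Z^\phi_t(s) + Z^{\psi^{(s)}}_\infty(y')$.

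With the decomposition in hand, I would expand $\Egu X = \inf_\psi \sup_{y\in\Y^T} X(y) - Z^\psi_T(y)$ and split the outer supremum over $\Y^T$ into a supremum over $s\in\Y^t$ followed by a supremum over the suffix $y'$:
\begin{align*}
  \sup_{y\in\Y^T} X(y) - Z^\psi_T(y)
  = \sup_{s\in\Y^t}\left[ -Z^\phi_t(s) + \sup_{y'} \big(X(s\oplus y') - Z^{\psi^{(s)}}_{T-t}(y')\big)\right]~.
\end{align*}
Taking the infimum over $\psi$ then amounts to infimizing independently over the prefix $\phi$ and over each continuation $\psi^{(s)}$. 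The crucial observation is that each $\psi^{(s)}$ affects only the single term indexed by $s$ in the outer supremum, so minimizing a supremum over these independent choices commutes with that supremum, giving $\inf_{\{\psi^{(s)}\}}\sup_s[\,\cdot\,] = \sup_s \inf_{\psi^{(s)}}[\,\cdot\,]$. After the swap, the inner optimization $\inf_{\psi^{(s)}}\sup_{y'}\big(X(s\oplus y') - Z^{\psi^{(s)}}_{T-t}(y')\big)$ is exactly $\Egu[X\mid s]$ by Definition~\ref{def:conditional-egu}. What remains is $\inf_\phi \sup_{s\in\Y^t} \big(\Egu[X\mid s] - Z^\phi_t(s)\big)$, which is by definition the upper expectation of the variable $s\mapsto\Egu[X\mid s]$ with respect to the $t$-round truncation $(\Y^t,\Z_t)$, i.e.\ $\Egu[\Egu[X\mid\cdot\,]]$.

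I expect the main obstacle to be justifying the commutation $\inf_{\{\psi^{(s)}\}}\sup_s = \sup_s\inf_{\psi^{(s)}}$ rigorously in the extended reals. The $\geq$ direction is the usual weak bound: any joint choice dominates each term, hence dominates $\sup_s\inf_{\psi^{(s)}}$. For $\leq$, one fixes $\epsilon>0$, selects an $\epsilon$-suboptimal $\psi^{(s)}$ for each $s$ independently, and checks that the resulting joint suffix strategy achieves a supremum at most $\sup_s\inf_{\psi^{(s)}}+\epsilon$; the cases where some $\inf_{\psi^{(s)}}$ equals $\pm\infty$, and the pessimistic convention $\infty-\infty=\infty$ inherited from Definition~\ref{def:egu}, must be handled by hand. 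The secondary technical point is the validity of the additive split when $T=\infty$; as noted, real-valuedness of $\hat\Z$ makes $Z^\phi_t(s)$ a genuine real constant, so no $\infty-\infty$ ambiguity arises in the split itself and the $\liminf$ factors cleanly into the prefix constant plus the continuation's limit infimum.
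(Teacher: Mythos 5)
Your proposal is correct and follows essentially the same route as the paper's proof: the same decomposition of a full strategy into a prefix strategy plus an independent family of continuation strategies $\{\psi^{(s)}\}_{s\in\Y^t}$, the same $\inf$--$\sup$ interchange justified by the independence of continuations across situations, and the same use of real-valuedness of $Z^\phi_t(s)$ to make the additive split (and the merging of suprema under the extended-real conventions, including the $\liminf$ when $T=\infty$) legitimate. The only difference is cosmetic: you expand $\Egu X$ into the iterated form, while the paper runs the identical chain of equalities in the reverse direction starting from $\Egu[\Egu[X\mid\cdot\,]]$.
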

\begin{proof}
  Recall that $\Psi = \{\psi:\Y^{<T}\to \extreals^\Y :
  \forall s\in\Y^{<T}\; \psi(s)\in\hat\Z^{(s)}\}$ is the set of valid gambling strategies.
  Let $\Psi_t$ be the same set but with domain $\Y^{< t}$, and let $\Psi_{t+1,T}^{(s)}$ denote the set of $\psi:\Y{<T-t}\to \extreals^\Y$ such that $\forall s'\in\Y^{<T-t}\; \psi(s')\in\hat\Z^{(s\oplus s')}\}$.
  Observe that a choice $\psi\in\Psi_t$ and set of choices $\{\psi^{(s)} \in \Psi_{t+1,T}^{(s)}\}_{s\in\Y^t}$ gives rise to a choice $\psi' \in \Psi$ given by $\psi'(s) = \psi(s)$ if $|s|\leq t$ and $\psi'(s) = \psi^{(s_{1..t})}(s_{t+1..|s|})$ for $|s|\geq t+1$.
  Conversely, any $\psi'\in\Psi$ decomposes into some $\psi\in\Psi_t$ and $\{\psi^{(s)} \in \Psi_{t+1,T}^{(s)}\}_{s\in\Y^t}$.
  \begin{align*}
    \Egu[\Egu[ X \mid Y_{1..t}]]
    &=
      \inf_{Z_t\in\Z_t} \sup_{s\in\Y^t} \Egu[ X \mid s] - Z_t(s)
    \\
    &=
      \inf_{\psi\in\Psi_t} \sup_{s\in\Y^t} \left(\inf_{\psi^{(s)} \in \Psi_{t+1,T}^{(s)}} \sup_{s'\in\Y^{T-t}} X(s\oplus s') - Z^{\psi^{(s)}}(s') \right) - Z^{\psi}(s)
    \\
    &\stackrel{(i)}{=}
      \inf_{\substack{\psi\in\Psi_t,\\
    \{\psi^{(s)} \in \Psi_{t+1,T}^{(s)}\}_{s\in\Y^t}}}
    \sup_{s\in\Y^t} \left(\sup_{s'\in\Y^{T-t}} X(s\oplus s') - Z^{\psi^{(s)}}(s') \right) - Z^{\psi}(s)
    \\
    &\stackrel{(ii)}{=}
      \inf_{\substack{\psi\in\Psi_t,\\
    \{\psi^{(s)} \in \Psi_{t+1,T}^{(s)}\}_{s\in\Y^t}}}
    \sup_{\substack{s\in\Y^t\\s'\in\Y^{T-t}}} X(s\oplus s') - Z^{\psi^{(s)}}(s') - Z^{\psi}(s)
    \\
    &=
      \inf_{\psi\in\Psi}
    \sup_{y\in\Y^T}
    X(y) - Z^\psi(y)
    \\
    &=
      \Egu X~.
  \end{align*}
  Equality (i) follows from the observation that the choices in the inner infimum over $\psi^{(s)}$ can equivalently be determined ahead of time.
  Equality (ii) is more subtle: due to our arithmetic conventions around $\infty$, the term $Z^\psi(s)$ can be combined inside the suprema as it is real-valued.
  (See Remark~\ref{rem:real-valued-tower-rule}.)
\end{proof}

\begin{remark}\label{rem:real-valued-tower-rule}
  The restriction that $\hat\Z$ be real-valued is necessary in Proposition~\ref{prop:tower-property-egu}.
  Consider the following simple example.
  Let $\Y=\reals$, $T=2$, and define the per-round gamble spaces by $\hat\Z^{(\emptystring)} = \{Z_1:y\mapsto \infty y\}$ and $\hat\Z^{(y_1)} = \{0\}$ for all $y_1\in\reals$.
  In other words, $Z_1(0) = 0$ and $Z_1(y_1) = \infty$ otherwise.
  Note that $\Z_T = \Z_2 = \{(y_1,y_2)\mapsto \infty y_1\}$, i.e., there is only a single choice available, and it is equal to $Z_1 + 0 = Z_1$.

  Now consider $X(y_1,y_2) = y_1y_2$.
  We have $\Egu X = 0$: clearly $\Egu \leq 0$ as $X$ is finite and $Z_1 \geq X$, and this inequality is tight at $y_1 = 0$.
  But looking at the conditional upper expectations, we see $\Egu[X \mid y_1] = 0$ when $y_1=0$, and $\Egu[X \mid y_1] = \infty$ when $y_1 \neq 0$.
  Thus $\Egu[\Egu[X \mid y_1]] = \infty$ as well.

  Beyond the tower property failing, we can also see that, despite the fact that $\Egu_\Z = \Egu_{\dcl(\Z)}$ from Proposition~\ref{prop:dcl-wlog}, one cannot replace $\hat\Z^{(s)}$ by $\dcl(\hat\Z^{(s)})$ and preserve the global upper expectation.
  Indeed, if we took any $Z_1' \in \reals^\Y$ with $Z'_1 \leq Z_1$, then say for $y_1 = 1$ we would have $Z'_1(1) \in \reals$, and thus $X(y_1,y_2) = y_1 y_2 > Z'_1$ for $y_2 > Z'_1(1)$.
  For any finite initial capital $\alpha\in\reals$, the same would hold for $y_2 > Z'_1(1) + \alpha$.
  Thus we would have $\Egu_{\Z_2'} X = \infty$ for this per-round-dcl version of $\Z_2$.
\end{remark}

\begin{corollary}\label{cor:finite-stopping}
  Let $(\Y,\hat\Z,T)$ be a real-valued sequential gamble space and $\{X_t\}_t$ a  game-theoretic supermartingale.
  Then for all $t\in\N, t \leq T$, we have $\Egu X_t \leq X_0$.
\end{corollary}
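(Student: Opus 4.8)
The statement to prove is Corollary~\ref{cor:finite-stopping}: for a real-valued sequential gamble space $(\Y,\hat\Z,T)$ and a game-theoretic supermartingale $\{X_t\}_t$, we have $\Egu X_t \leq X_0$ for every $t \leq T$, where $\Egu X_t$ is taken with respect to the truncated gamble space $(\Y^t,\Z_t)$ (the natural reading, since $X_t:\Y^t\to\extreals$). The plan is to prove this by induction on $t$, peeling off one round at a time using the tower property (Proposition~\ref{prop:tower-property-egu}) together with monotonicity (Proposition~\ref{prop:basic-facts-no-assumptions}). The supermartingale hypothesis feeds in exactly one inequality per round, and monotonicity propagates it through the conditional expectation.

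For the base case $t=0$, note $\Z_0=\{0\}$ (the empty-sum cumulative gamble), so $\Y^0=\{\emptystring\}$ and $\Egu X_0 = \sup_{s\in\Y^0} X_0(s)-0 = X_0$. For the inductive step, assume $\Egu X_{t-1}\leq X_0$ for some $1\le t\le T$, where this $\Egu$ is on $(\Y^{t-1},\Z_{t-1})$. I would apply the tower property to the gamble space $(\Y,\hat\Z,t)$ obtained by truncating the horizon at $t$ (still real-valued, so Proposition~\ref{prop:tower-property-egu} applies), conditioning at level $t-1$:
\begin{align*}
  \Egu X_t = \Egu\bigl[\,\Egu[X_t \mid \,\cdot\,]\,\bigr],
\end{align*}
where the inner map $s\mapsto \Egu[X_t\mid s]$, for $s\in\Y^{t-1}$, is the single-round conditional expectation of eq.~\eqref{eq:conditional-expectation}, and the outer $\Egu$ is on $(\Y^{t-1},\Z_{t-1})$. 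The supermartingale property (Definition~\ref{def:supermartingale}) at index $t-1<T$ gives $\Egu[X_t\mid s]\leq X_{t-1}(s)$ pointwise in $s\in\Y^{t-1}$. Monotonicity of $\Egu$ on $(\Y^{t-1},\Z_{t-1})$ then yields
\begin{align*}
  \Egu X_t = \Egu\bigl[\,\Egu[X_t\mid\,\cdot\,]\,\bigr] \;\leq\; \Egu X_{t-1} \;\leq\; X_0,
\end{align*}
the last step by the inductive hypothesis. This closes the induction.

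The only genuine subtlety—and where I would be most careful—is the bookkeeping of which gamble space each $\Egu$ refers to: the corollary's $\Egu X_t$ lives on $(\Y^t,\Z_t)$, the tower property must be invoked on the truncation $(\Y,\hat\Z,t)$ rather than on the full $(\Y,\hat\Z,T)$, and the outer expectation after applying it lives on $(\Y^{t-1},\Z_{t-1})$, which is precisely the space of the inductive hypothesis. One should verify that truncating the horizon preserves real-valuedness (it does, since the per-round gambles $\hat\Z^{(s)}$ for $s\in\Y^{<t}$ are unchanged) so that Proposition~\ref{prop:tower-property-egu} is applicable, and that the single-round conditional in the supermartingale definition matches the $T-t'=1$ instance of the conditional used in the tower identity. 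No infinite-arithmetic difficulties arise beyond those already handled in Proposition~\ref{prop:tower-property-egu}, since the real-valuedness of $\hat\Z$ is assumed throughout and the $X_t$ are allowed to be $\extreals$-valued without issue.
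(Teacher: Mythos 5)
Your proof is correct and follows essentially the same route as the paper's: induction on $t$, with the supermartingale condition, the tower property (Proposition~\ref{prop:tower-property-egu}), and monotonicity giving $\Egu X_t = \Egu[\Egu[X_t \mid Y_{1..t-1}]] \leq \Egu X_{t-1} \leq X_0$. Your explicit bookkeeping of the truncated gamble space $(\Y,\hat\Z,t)$ is a careful spelling-out of what the paper's terse proof leaves implicit, and it is the right reading.
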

\begin{proof}
  The statement is trivial for $t=0$.
  For $t>0$, we have
  $\Egu[X_t \mid Y_{t-1}] \leq X_{t-1}$ by the supermartingale condition.
  Proposition~\ref{prop:tower-property-egu} and monotonicity give
  $\Egu X_t = \Egu[\Egu[X_t \mid Y_{t-1}]] \leq \Egu[X_{t-1}] \leq X_0$, the last inequality by induction.
\end{proof}

As noted above, conditional expectations can be thought of separating outcomes and gambles into what has been revealed thus far, and what remains.
The tower property, Proposition~\ref{prop:tower-property-egu}, then relates this separation back to the full gamble space: the two views are equivalent.
Using these concepts, Proposition~\ref{prop:simple-sequential-one-shot} follows from the observation that
\begin{align*}
  \Egu X
  &= \Egu[\Egu[\cdots \Egu[\Egu[X\mid Y_{1..T-1}]\mid Y_{1..T-2}] \cdots \mid Y_{1..2} ] \mid Y_1]
    \numberthis \label{eq:iterated-tower-property}
  \\
  &= \inf_{Z_1\in\hat\Z} \sup_{y_1\in\Y} \inf_{Z_2\in\hat\Z} \sup_{y_2\in\Y} \cdots \inf_{Z_T\in\hat\Z} \sup_{y_T\in\Y} X(y_{1..T}) - \sum_{t=1}^T Z_t(y_t)~.
\end{align*}
Eq.~\eqref{eq:iterated-tower-property} for finite $T$ follows by induction with the definition of game-theoretic upper expectation (Definition~\ref{def:egu}) as the base case, and Proposition~\ref{prop:tower-property-egu} giving $\Egu X = \Egu[\Egu[X \mid Y_{1..T-1}]]$.

We can also ``invert'' the definition of supermartingales to write $\Egu X$ and $\Pgu A$ in a form similar to Ville's Theorem: the minimum capital needed for a supermartingale to replicate $X$ or $\ones_A$, respectively.
Indeed, \citet{shafer2001probability,shafer2019game} essentially define game-theoretic expectations and probabilities in this way.
These statements hold for arbitrary gamble spaces.
When gambles are scalable, we can state the latter as the smallest $\alpha$ for which we can start from \$1 and making arbitrarily close to $1/\alpha$ on $A$.

\begin{theorem}\label{thm:ville-expectation}
  Let $(\Y,\hat\Z,T)$ be a real-valued sequential gamble space.
  Then for all $X:\Y^T\to\extreals$ we have
  \begin{align}
    \label{eq:ville-supermartingale-egu}
    \Egu X = \inf\{ X_0 \mid \{X_t \in \reals\}_t \text{ game-theoretic supermartingale}, X_T > X \}~,
  \end{align}
  where $X_\infty := \liminf_{t\to\infty} X_t$.
\end{theorem}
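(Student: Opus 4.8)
The plan is to prove the two inequalities $V \le \Egu X$ and $\Egu X \le V$ separately, where $V$ denotes the right-hand side of eq.~\eqref{eq:ville-supermartingale-egu} (with the convention $\inf\emptyset = \infty$). The inequality $V \le \Egu X$ is the easy direction, obtained from the replication form eq.~\eqref{eq:prop-upper-ex-rep-cost}: fix $\alpha \in \reals$ with $\alpha > \Egu X$, so there is a strategy $\psi \in \Psi$ with $\alpha + Z^\psi_T > X$. Since the gamble space is real-valued, each $\psi(s)(y) \in \reals$, so Proposition~\ref{prop:gambling-strategy-supermartingale} makes $\{Z^\psi_t\}_t$ a game-theoretic supermartingale, and by translation (Proposition~\ref{prop:basic-facts-no-assumptions}) so is $X_t := \alpha + Z^\psi_t$. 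This process is finite at every finite time, has $X_0 = \alpha$, and satisfies $X_T = \alpha + Z^\psi_T > X$ (for $T = \infty$ using that $\liminf$ commutes with adding the constant $\alpha$). Hence $V \le \alpha$, and letting $\alpha \downarrow \Egu X$ gives $V \le \Egu X$.

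The reverse inequality $\Egu X \le V$ is the substantive direction. Given any real-valued supermartingale $\{X_t\}_t$ with $X_T > X$, I would construct a strategy that replicates $X$ up to an arbitrarily small additive error. Fix $\epsilon > 0$ and positive reals $\{\epsilon_t\}_t$ with $\sum_t \epsilon_t \le \epsilon$. For each situation $s$ with $|s| = t$, the supermartingale inequality $\Egu[X_{t+1}\mid s] \le X_t(s)$ together with the definition eq.~\eqref{eq:conditional-expectation} of the conditional upper expectation as an infimum lets me choose $\psi(s) \in \hat\Z^{(s)}$ with $\sup_{y}\bigl(X_{t+1}(s\oplus y) - \psi(s)(y)\bigr) < X_t(s) + \epsilon_t$, that is, $X_{t+1}(s\oplus y) < X_t(s) + \psi(s)(y) + \epsilon_t$ for all $y \in \Y$. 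All quantities here are finite, since $X_t$, $X_{t+1}$, and $\psi(s)$ are real-valued, so there is no infinite-arithmetic ambiguity. Assembling these choices over all $s$ yields a strategy $\psi \in \Psi$.

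Telescoping the per-round bound along an outcome $y$ gives, for every finite $t$, the estimate $X_t(y_{1..t}) < X_0 + Z^\psi_t(y_{1..t}) + \sum_{i<t}\epsilon_i \le X_0 + Z^\psi_t(y_{1..t}) + \epsilon$. For finite $T$ this is exactly the needed bound at $t = T$; for $T = \infty$, taking $\liminf_{t\to\infty}$ of both sides and using that the constant $X_0 + \epsilon$ passes through the $\liminf$ yields $X_\infty \le X_0 + \epsilon + Z^\psi_\infty$. In either case, combining with $X_T > X$ gives the strict bound $X < (X_0 + \epsilon) + Z^\psi_T$, so that $Z^\psi_T \in \Z_T$ witnesses $\Egu X \le X_0 + \epsilon$ via eq.~\eqref{eq:prop-upper-ex-rep-cost}. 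Letting $\epsilon \to 0$ gives $\Egu X \le X_0$; taking the infimum over all admissible supermartingales gives $\Egu X \le V$, and if none exists then $V = \infty$ and the inequality is trivial.

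The main obstacle is the $T = \infty$ case. Two points need care: first, the per-round errors $\epsilon_t$ must accumulate to a bound uniform in $t$ (hence the summable choice $\sum_t \epsilon_t \le \epsilon$), so that the single constant $X_0 + \epsilon$ survives the passage to $\liminf$; second, one must track the infinite values of $X$ and of $X_\infty = \liminf_t X_t$ and verify that the strict chain $X < X_\infty \le X_0 + \epsilon + Z^\psi_\infty$ is preserved under the paper's conventions for $\pm\infty$ (Definition~\ref{def:egu}), in particular at outcomes where $X$ or $X_\infty$ is infinite. The real-valuedness of the gamble space is precisely what keeps every finite-time quantity in $\reals$, isolating all infinite behavior to the terminal $\liminf$ and keeping the telescoping step clean.
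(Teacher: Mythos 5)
Your proof is correct and takes essentially the same route as the paper's: the easy direction $V \le \Egu X$ via Proposition~\ref{prop:gambling-strategy-supermartingale} applied to the capital process $\alpha + Z^\psi_t$, and the substantive direction by choosing per-round gambles within $\epsilon_t$ of the conditional upper expectation, telescoping, and passing the summable error budget through the $\liminf$ (the paper simply fixes $\epsilon_t = \epsilon 2^{-t}$). Your attention to preserving the strict inequality at outcomes where $X$ or $X_\infty$ is infinite matches the paper's reliance on the strict replication form eq.~\eqref{eq:prop-upper-ex-rep-cost}.
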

\begin{proof}
  
  The $\leq$ direction follows from Proposition~\ref{prop:gambling-strategy-supermartingale} and the assumption that the gamble space is real-valued.

  For the reverse inequality, for all $t<T$ and $s\in\Y^{t-1}$,
  the supermartingale condition gives $\Egu[ X_{t+1} \mid s] \leq X_t(s)$ on gamble space $(\Y,\hat\Z^{(s)})$.
  From the definition of $\Egu$, 
  we have some $Z^{(s)} \in \hat\Z^{(s)}$ such that $Z^{(s)}(\cdot) + X_t(s) + \epsilon 2^{-t} \geq X_{t+1}(s\oplus \,\cdot\,)$.
  As all quantities in this expression are real-valued by assumption, we have $X_t(s\oplus \,\cdot\,) - Z^{(s)}(\cdot) \leq X_t(s) + \epsilon 2^{-t}$.
  Letting $\psi : s \mapsto Z^{(s)}$, we now have for all $y\in\Y^T$,
  \begin{align*}
    Z^\psi(y)
    &= \liminf_{n\to T} \sum_{t=1}^n \psi(y_{1..t})(y_{t})
    \\
    &\geq \liminf_{n\to T} \sum_{t=1}^n X_t(y_{1..t}) - X_{t-1}(y_{1..t-1}) - \epsilon 2^{-t}
    \\
    &\geq \liminf_{n\to T} X_n(y_{1..n}) - X_0 - \epsilon
    \\
    &= X_T(y) - X_0 - \epsilon~.
  \end{align*}
  So $Z^\psi + X_0 + \epsilon \geq X_T > X$, whence $\Egu X \leq X_0 + \epsilon$.
  Taking $\epsilon\to 0$ gives $\Egu X \leq X_0$.
\end{proof}

\begin{corollary}\label{cor:proto-ville}
  Let $(\Y,\hat\Z,T)$ be a sequentially normalized gamble space.
  Then for all $A\subseteq\Y^T$ we have
  \begin{align*}
    \Pgu A = \inf\{ X_0 \mid \{X_t\in\reals\}_t \text{ game-theoretic supermartingale}, X_T \geq \ones_A \}~.
  \end{align*}
  If $(\Y,\hat\Z,T)$ is scalable, then  
  \begin{align*}
    \Pgu A = \inf\left\{ \alpha > 0 \,\middle|\, \{X_t \geq 0\}_t \text{ game-theoretic supermartingale}, X_0 = 1, X_T \geq \frac 1 \alpha \text{ on } A\right\}~.
  \end{align*}
\end{corollary}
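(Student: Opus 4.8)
The plan is to derive both statements from Theorem~\ref{thm:ville-expectation} applied to the variable $X = \ones_A$, which already gives the supermartingale characterization of $\Egu \ones_A = \Pgu A$ but with the \emph{strict} terminal inequality $X_T > \ones_A$. First I would bridge the gap between this strict inequality and the weak inequality $X_T \geq \ones_A$ appearing in the corollary. One direction is immediate, since any supermartingale witnessing $X_T > \ones_A$ also witnesses $X_T \geq \ones_A$, so the weak-inequality infimum is at most $\Pgu A$. For the reverse, given any real-valued supermartingale $\{X_t\}$ with $X_T \geq \ones_A$ and $X_0 = c$, I would translate it upward to $\{X_t + \epsilon\}$: this is again a supermartingale by the translation property of the conditional upper expectation (Proposition~\ref{prop:basic-facts-no-assumptions}, applied on each $(\Y,\hat\Z^{(s)})$), it starts at $c + \epsilon$, and its terminal value satisfies $X_T + \epsilon \geq \ones_A + \epsilon > \ones_A$ (using $X_\infty + \epsilon = \liminf_t(X_t + \epsilon)$ when $T = \infty$). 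Taking the infimum over weak witnesses and sending $\epsilon \to 0$ then yields equality of the two infima.

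The main obstacle is that Theorem~\ref{thm:ville-expectation} is stated only for real-valued sequential gamble spaces, whereas the corollary assumes only that the space is sequentially normalized, so the per-round gambles may take values $\pm\infty$. Here Remark~\ref{rem:real-valued-tower-rule} is a genuine warning: one cannot in general replace $\hat\Z^{(s)}$ by its downward closure without changing the global upper expectation. What rescues the argument is that the target $\ones_A$ is bounded. Since the space is normalized, Proposition~\ref{prop:basic-facts-zero} gives $\Egu\,0 = 0$ and $\Egu\,1 = 1$, so monotonicity (Proposition~\ref{prop:basic-facts-no-assumptions}) forces every conditional value $\Egu[\ones_A \mid s]$ into $[0,1]$; thus only bounded-below, indeed $[0,1]$-valued, supermartingales are relevant. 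I would use this to reduce to the real-valued case, intersecting each $\hat\Z^{(s)}$ with a suitable family of bounded real-valued gambles and justifying the reduction by Proposition~\ref{prop:bounded-X-gambles} for the bounded variable $\ones_A$, so that Theorem~\ref{thm:ville-expectation} and the tower property (Proposition~\ref{prop:tower-property-egu}) apply. Verifying that this reduction changes neither $\Pgu A$ nor the supermartingale infimum is the most delicate step, and it is exactly where boundedness of $\ones_A$ is indispensable.

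For the scalable statement I would argue that the two infima coincide via the homogeneity of the conditional upper expectation. Given a nonnegative supermartingale $\{X_t \geq 0\}$ with $X_0 = 1$ and $X_T \geq 1/\alpha$ on $A$, scalability together with Proposition~\ref{prop:basic-facts-homogeneity} makes $\{\alpha X_t\}$ a supermartingale with $(\alpha X)_0 = \alpha$ and $\alpha X_T \geq \ones_A$ (on $A$ because $\alpha X_T \geq 1$, off $A$ because $\alpha X_T \geq 0 = \ones_A$); the first statement then gives $\Pgu A \leq \alpha$, and taking the infimum over admissible $\alpha$ yields $\Pgu A \leq W$, where $W$ denotes the scalable infimum. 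For the reverse inequality I would use the Doob/Levy supermartingale $M_t(s) = \Egu[\ones_A \mid s] \in [0,1]$, which has $M_0 = \Pgu A$, $M_T = \ones_A$, and is nonnegative. When $\Pgu A > 0$, the rescaled process $\{M_t / \Pgu A\}$ is a nonnegative supermartingale starting at $1$ with terminal value $1/\Pgu A \geq 1/\alpha$ on $A$, giving $W \leq \Pgu A$. The residual case $\Pgu A = 0$ cannot be handled by this normalization and instead follows from the scalable characterization of almost-sure events: Proposition~\ref{prop:prob-one-iff} (equivalently Corollary~\ref{cor:prob-one-iff}) supplies, for each $\alpha > 0$, a gamble $Z^\psi \in \Z_T$ with $Z^\psi \geq -1$ and $1 + Z^\psi \geq 1/\alpha$ on $A$, and the capital-process argument of \S~\ref{sec:multiplicative} (arbitrage-free plus scalable) lets me take the associated supermartingale $\{1 + Z^\psi_t\}$ nonnegative throughout, completing $W \leq \Pgu A = 0$.
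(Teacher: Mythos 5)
Your outer structure is sound---derive both displays from Theorem~\ref{thm:ville-expectation} applied to $X=\ones_A$, bridge the strict versus weak terminal inequality by an $\epsilon$-translation, and handle the scalable display by rescaling supermartingales (the Doob-process argument and the residual case via Proposition~\ref{prop:prob-one-iff} are fine in outline, modulo the point below). The genuine gap sits exactly at the step you flag as delicate: the reduction to the real-valued setting so that Theorem~\ref{thm:ville-expectation} applies. You propose to intersect each per-round set $\hat\Z^{(s)}$ with a family of bounded real-valued gambles and to justify this by Proposition~\ref{prop:bounded-X-gambles}. But that proposition is a statement about a \emph{single, global} gamble space; it does not license round-by-round modification. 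Altering the $\hat\Z^{(s)}$ individually changes the sequential gamble space, and Remark~\ref{rem:real-valued-tower-rule}---the very remark you cite as a warning---exhibits precisely this failure, showing that replacing each $\hat\Z^{(s)}$ by $\dcl(\hat\Z^{(s)})$ can change the global upper expectation. So the tool you invoke cannot deliver the verification you defer, and nothing in your sketch supplies a substitute.

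The paper closes this gap by a different move: it never modifies the gamble sets, only the \emph{strategy}. With $b := \sup|X| < \infty$ and $\alpha > \Egu X$, take $\psi$ with $Z^\psi + \alpha \geq X$; one first notes $\psi(\cdot)(\cdot) > -\infty$ (otherwise $Z^\psi(y) = -\infty$ on some path, contradicting replication of a bounded variable), and then recursively defines $\psi'$ to agree with $\psi$ wherever $\psi$ is finite, while wherever $\psi(s)(y) = \infty$ the modified strategy instead collects $2b - Z^{\psi'}(s)$ and plays $0$ thereafter---legal because normalization gives $0 \in \hat\Z^{(s)}$. The capped $\psi'$ is real-valued and still replicates, since $Z^{\psi'} = 2b \geq X - \alpha$ on the capped paths. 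Crucially---and this is what a per-round intersection of gamble sets would lose---the process $\bigl\{\alpha + Z^{\psi'}_t\bigr\}_t$ is a game-theoretic supermartingale \emph{with respect to the original} $\hat\Z^{(s)}$: the original gamble $\psi(s)$, infinite values and all, witnesses each conditional inequality, the difference $X_{t+1}(s\oplus y) - \psi(s)(y)$ being $-\infty$ at capped outcomes. Substituting this recursive capping for your appeal to Proposition~\ref{prop:bounded-X-gambles} repairs your argument, and it also fixes the same real-valuedness issue lurking in your $\Pgu A = 0$ case, where Proposition~\ref{prop:gambling-strategy-supermartingale} requires $\psi$ to be real-valued.
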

\begin{proof}
  The result follows if we can show that gambles can be chosen to be real-valued without loss of generality.
  This follows from the boundedness of $X = \ones_A$.
  More generally let $X$ be arbitrary with $b := \sup |X| < \infty$.
  Then by normalization and translation $\Egu X \in [-b,b]$.
  Let $\alpha > \Egu X$ and $Z^\psi\in\Z_T$ with $Z^\psi + \alpha \geq X$.
  We must have $\psi(\cdot)(\cdot) > -\infty$, since otherwise $Z^\psi(y) = -\infty$ for at least one $y\in\Y^T$.
  Now recursively define $\psi'(s)(y) = 2b - Z^{\psi'}(s)$ if $\psi(s)(y) = \infty$, with $\psi'(s\oplus s') = 0$ for all $s'\in\Y^{T-|s|}$, and otherwise $\psi'(s,y) = \psi(s,y)$. 
  By assumption, we have $Z^{\psi'} + \alpha \geq X$ both on paths where $\psi$ was finite, in which case $Z^\psi = Z^{\psi'}$, or otherwise, in which case $Z^{\psi'} = 2b$.
  
\end{proof}

\subsection{Online machine learning}
\label{sec:online-learning}

Let us briefly see how to cast adversarial online machine learning as a sequential gamble space.
In one common setting, we have some abstract per-round outcome set $\Y$, action space $\A$, and a scoring rule (negative loss function) $S:\A\times\Y\to\extreals$.
For example, a binary prediction setting could take $\A = [0,1]$, $\Y = \{-1,1\}$, and $S$ to be log score, $S(p,1) = \log p$ and $S(p,0) = \log(1-p)$.
An online learning algorithm $\Alg$ must choose $a_t$ as a function of $y_{1..t-1}$.

In adversarial online learning, one seeks worst-case learning guarantees.
These guarantees cannot be in the sense of absolute performance, as World can simply choose an outcome sequence making Gambler maximally incorrect on every round.
Instead, we measure performance as the \emph{regret} relative to some benchmark, such as the performance of the best fixed action in hindsight,
\begin{align}
  \label{eq:regret}
  \Reg_T(\Alg,y) = \sup_{a\in\A} \sum_{t=1}^T S(a,y_t) - \sum_{t=1}^T S(a_t,y_t)~,
\end{align}
where $a_t$ is the sequence of actions chosen by $\Alg$.
The most pessimistic view of the algorithm's performance is the \emph{worst-case regret} over all sequences $y\in\Y^T$,
\begin{align}
  \label{eq:minimax-regret}
  \Reg_T(\Alg) = \sup_{y\in\Y^T} \Reg_T(\Alg,y)~,
\end{align}
The best possible worst-case regret $\Reg^*_T(S)$, often called the \emph{minimax regret}, is simply the infimum of $\Reg_T(\Alg)$ over all possible algorithms.

The setting described above is exactly the simple repeated gamble space $(\Y,\hat\Z,T)$ with $\hat\Z = \{ y \mapsto S(a,y) \mid a\in\A \}$.
In general, this gamble space is not sequentially normalized.
Assuming without loss of generality that $S(a,\cdot)$ is unique for each $a\in\A$, the set of possible algorithms $(\Y^*\to\A)$ is in bijection with the set of strategies $\Psi = (\Y^*\to\hat\Z)$, and in turn with the cumulative gambles $Z^\psi \in \Z_T$.
Thus, phrased in terms of gambles spaces, the minimax regret is simply the replication cost of the benchmark $X(y) = \sup_{a\in\A} \sum_{t=1}^T S(a,y_t)$:
\begin{align}
  \Reg^*_T(S)
  &=
    \inf_{\Alg\in (\Y^*\to\A)} \sup_{y\in\Y^T} \left( \sup_{a\in\A} \sum_{t=1}^T S(a,y_t) - \sum_{t=1}^T S(\Alg(y_{1..t-1}),y_t) \right)
  \\
  &=
    \inf_{Z\in\Z_T} \sup_{y\in\Y^T} X(y) - Z(y)
  \\
  &=
  \Egu X~.
\end{align}
In other words, the minimax regret of a particular online learning setting is exactly the game-theoretic upper expectation of the benchmark $X$, with respect to gambles given by the allowed algorithms.
In particular, for any algorithm, we must have $\Reg_T(\Alg) \geq \Egu X$.

Moreover, any online learning algorithm $\Alg$ induces a particular game-theoretic supermartingale $\{X^\Alg_t\}_t$, given by
\begin{align}
  \label{eq:online-learning-supermartingale}
  X^\Alg_t(y_{1..t}) = \sum_{i=1}^t S(\Alg(y_{1..i-1}),y_i) + \Reg_T(\Alg)~,
\end{align}
which satisfies both $X^\Alg_T \geq X$ and $X^\Alg_0 = \Reg_T(\Alg)$.
That is, $\Alg$ induces a game-theoretic supermartingale that replicates $X$, as in eq.~\eqref{eq:ville-supermartingale-egu}, and $\Reg^*_T(S) = \inf\{X^\Alg_0 \mid \Alg\in(\Y^*\to\A)\}$ is the minimum ``starting capital'' needed to replicate $X$.
As we explore further in \S~\ref{sec:online-learning-supermartingales}, this particular perspective is implicit in the ``Relax and Randomize'' framework of \citet[eq.~(4)]{rakhlin2012relax}.
Their ``admissible relaxations'' $\Rel_T$ are also game-theoretic supermartingales that replicate the benchmark $X$, which one can in turn replicate on each round with a particular algorithm.

Let us illustrate the connections between online learning and game-theoretic probability in the bounded outcome setting of \S~\ref{sec:motivating-example} and Example~\ref{ex:lln-gambles-multiplicative}.
Here $\hat\Z = \{y\mapsto \beta y \mid \beta \in \reals\}$.
As we saw in Example~\ref{ex:lln-gambles-multiplicative}, the logarithmic version $\hat\Z^{\log} = \{y\mapsto \log(1 + \alpha y) \mid \alpha \in \reals\}$ is a generalization of the log scoring rule example above, defining $S(\alpha,y) = \log(1+\alpha y)$.
As $S$ is concave in $\alpha$, this $\hat\Z^{\log}$ setting is an instance of online convex optimization.
Let us take any online learning algorithm $\Alg$, such as online gradient descent, online Newton step, or follow-the-regularized-leader (FTRL), which achieves $o(T)$ regret with respect to the benchmark $X(y) = \sup_{\alpha\in[-1,1]} \sum_{t\leq T} \log(1 + \alpha y_t)$ defined above.
\footnote{See \citet{hazan2016introduction,orabona2019modern} for an overview of online learning and online optimization algorithms.  These algorithms typically require the gradients of $S$ to be bounded; as discussed below, without loss of generality we may restrict e.g.\ $|\alpha| \leq 1/2$, giving bounded gradients.}

If $y\notin\ALLN$, one can verify by a simple Taylor approximation that $X(y) = \Omega(T)$.
This fact continues to hold when restricting $\alpha \in [-c,c]$ for any $c \in (0,1)$.
In other words, there exists some constant-fraction betting strategy $\psi^\alpha$ in $(\Y,\hat\Z,T)$, the original gamble space of \S~\ref{sec:motivating-example}, which sets $C_0 = 1$ and bets an $\alpha_t = \alpha$ fraction of the current capital $C_t$, giving $C_T = 1 + Z^{\psi^\alpha}_T(y) = \prod_{t\leq T} (1+\alpha y_t) = \exp(\Omega(T))$.

Back in gamble space $(\Y,\hat\Z^{\log},T)$, we thus have $Z^\Alg_T \geq X - o(T) \geq \Omega(T) - o(T) = \Omega(T)$.
Letting $\psi^*$ be the strategy in $(\Y,\hat\Z,T)$ that sets $C_0 = 1$ and bets $\alpha_t = \Alg(y_{1..t-1})$, we thus have $Z^{\psi^*}_T \geq -1$ and $C_T = 1 + Z^{\psi^*}_T = \exp(Z^\Alg_T) = \exp(\Omega(T))$ on $(\ALLN)^c$.
Thus, not only does such a $\psi^*$ exhibit $\Pgu (\ALLN)^c = 0$, it even achieves an exponential growth in capital on $(\ALLN)^c$.

From this example, we can see that a valid $\psi^*$ could even have $\Reg_T(\Alg) = \Theta(T)$, meaning an algorithm could fail to have no regret in $\Z^{\log}$ but still exhibit $\Pgu (\ALLN)^c = 0$.
Roughly speaking then, one could think of online learning guarantees as finer grained than those of game-theoretic probability.

\citet{orabona2016coin} show how to use strategies $\psi^*$ for this same setting to develop parameter-free online optimization algorithms.
\citet{rakhlin2012relax} also give parameter-free algorithms, implicitly via game-theoretic supermartingales (\S~\ref{sec:online-learning-supermartingales}), though it is not clear whether they can also be viewed as reductions from this same setting.

\section{Consistency, prices, and minimax duality}
\label{sec:prices-probability}

In this section, we introduce several new upper expectation operators which help relate the game-theoretic upper expectation to the more familiar landscape of measure-theoretic probability.
These operators can be seen as placing restrictions on World, as follows.
\begin{enumerate}[(i)]
\item $\Epucons$: World is passive, probabilistic, and consistent with the gambles (i.e., constrained so that no gamble makes money in expectation).
\item $\Epu$: World is adversarial and probabilistic but cannot respond to Gambler.
\item $\Egu$: World is worst-case responding to Gambler's choices.
\end{enumerate}
The corresponding quantities $\Epucons X$, $\Epu X$, $\Egu X$ can be thought of as \emph{prices} at which Gambler would be willing to sell the variable $X$, given the corresponding assumption (i--iii) about World.
Intuitively, these prices should increase as one moves from (i) to (ii) to (iii), as the increasing power of World should make it harder for Gambler to replicate $X$.

We can write these operators formally as follows.
Here $\Delta_0(\Z)$ is the set of consistent probability measures (Definition~\ref{def:consistency}), and $\mdcl(\Z)$ is a set of measurable gambles (Definition~\ref{def:mdcl}).
\begin{equation}
  \label{eq:intro-price-defs-chain}
  \begin{tikzpicture}[baseline]
    \matrix [matrix of math nodes,row sep=0.1cm,column sep=0.1cm] (m) {
      \text{(i)} && \text{(ii)} && \text{(iii)}
      \\
      {\displaystyle \sup_{P\in\Delta_0(\Z)} \E_P X} & \leq
      &\displaystyle \sup_{P\in\Delta(\Omega)} \inf_{Z\in\mdcl(\Z)} \E_P[ X - Z]
      &\leq
      &\displaystyle \inf_{Z\in\Z} \sup_{\omega\in\Omega} X(\omega) - Z(\omega)~.
      \\[-6pt]
      \rotatebox{90}{$:=$}&&\rotatebox{90}{$:=$}&&\rotatebox{90}{$:=$}
      \\[-4pt]
      \Epucons X && \Epu X && \Egu X
      \\ };
  \end{tikzpicture}
\end{equation}
Combining these inequalities with the interpretation of these quantities as prices under beliefs (i)--(iii), we can visualize them like an financial order book as in Fig.~\ref{fig:intro-price-chain}.

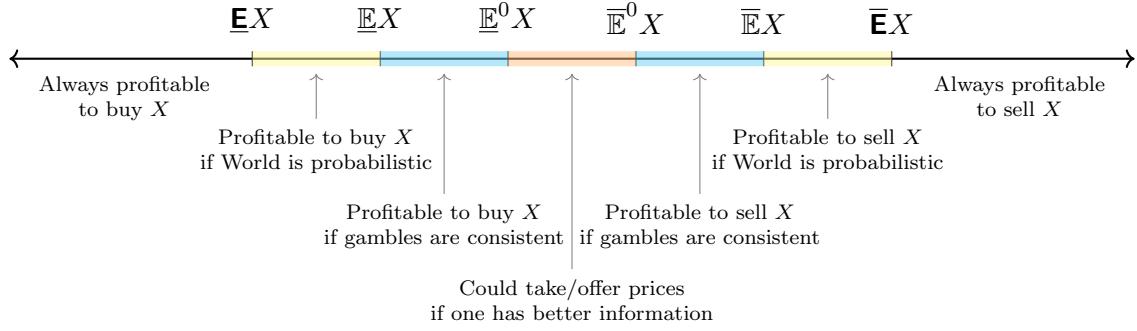
\begin{figure}[t]
  \centering
  \begin{tikzpicture}[xscale=1.7]
    
    \draw[<->, thick] (-0.9,0) -- (7.9,0);
    
    \foreach \i/\t in {1/{$\Egl X$},2/{$\Epl X$},3/{$\Epl^0 X$},4/{$\Epucons X$},5/{$\Epu X$},6/{$\Egu X$}} {
      \draw (\i,0.1) -- (\i,-0.1) node[above=3mm] {\t};
    }
    
    \foreach \i/\col in {1/yellow, 2/cyan, 3/orange, 4/cyan, 5/yellow} {
      \draw[rounded corners=5mm, line width=2mm, \col!50, opacity=0.5] (\i,0) -- ++(1,0);
    }
    
    \foreach \i/\j/\t in {
      1/1/{Profitable to buy $X$\\ if World is probabilistic},
      2/2/{Profitable to buy $X$\\ if gambles are consistent},
      3/3/{Could take/offer prices\\ if one has better information},
      4/2/{Profitable to sell $X$\\ if gambles are consistent},
      5/1/{Profitable to sell $X$\\ if World is probabilistic}} {
      \draw[<-, gray] (\i+0.5,-0.2) -- (\i+0.5,0.2-\j) node[below, align=center, font=\scriptsize, black] {\t};
    }
    \node[below=1mm, align=center, font=\scriptsize] at (0,0) {Always profitable\\ to buy $X$};
    \node[below=1mm, align=center, font=\scriptsize] at (7,0) {Always profitable\\ to sell $X$};
    
  \end{tikzpicture}
  \caption{The chain of prices of some variable $X$ implied by eq.~\eqref{eq:intro-price-defs-chain}.  Observe that negations of these statements can be inferred too: if it is always profitable to buy $X$ at a price under the assumption listed, it is never profitable to sell at that price.  Hence, these prices are from the perspective of one agent only; if both agents have the same assumption about World, no trade would occur unless both agents are indifferent.  Taken in pairs, one can think of these quantities as the bid-ask spreads offered by market makers with assumptions (i-iii) about World.
    }
  \label{fig:intro-price-chain}
\end{figure}

The inequalities are established in Theorem~\ref{thm:chain-of-price-inequalities}; let us sketch the proof.
By definition, we must have $\E_P  X \leq \E_P [X-Z]$ for all $P\in\Delta_0(\Z)$.
The first inequality now follows from taking a supremum over $P$, and the fact that a supremum over a larger set is weakly larger.
For the second inequality, first observe that without loss of generality World could choose a probability measure $P\in\Delta(\Omega)$ instead of a specific outcome in the definition of $\Egu X$.
Then this inequality holds simply because it is weakly better to play second in a zero-sum game, after seeing the move of your opponent.

These operators and their relationship become more complex when considering sequential gambles.
In particular, it is natural to consider the sequential version of $\Epucons$, denoted $\Epuseq$, where World is required to play a \emph{sequentially consistent} probability measure, whose conditional measures on each round are consistent for the gambles $\hat\Z^{(s)}$ on that round.
The set of such sequentially consistent measures is denoted $\Delta_0^T(\hat\Z)$.
It turns out that we must restrict the gambles allowed for (ii) to some $\Z'$, since otherwise sequentially consistent probability measures can fail to be consistent, and moreover the chain of inequalities can fail (Example~\ref{ex:seq-consistent-but-no-consistent}).
Fortunately, choices like $\Z' = \bb(\Z)$ are rich enough that we still have $\Epuseq_{\Z'} = \Epuseq_\Z$ and $\Egu_{\Z'} = \Egu_\Z$ for bounded-below variables, so in particular we can conclude $\Epuseq \leq \Egu$ (Corollary~\ref{cor:sequential-price-inequalities}).

\begin{equation}
  \label{eq:sequential-price-defs-chain}
  \begin{tikzpicture}[baseline]
    \matrix [matrix of math nodes,row sep=0.1cm,column sep=0.1cm] (m) {
      \text{(i)} && \text{(ii)} && \text{(iii)}
      \\
      {\displaystyle \sup_{P\in\Delta_0^T(\hat\Z)} \E_P X} & \leq
      &\displaystyle \sup_{P\in\Delta(\Y^T)} \inf_{Z\in\mdcl(\Z')} \E_P[ X - Z]
      &\leq
      &\displaystyle \inf_{Z\in\Z} \sup_{\omega\in\Y^T} X(\omega) - Z(\omega)~.
      \\[-6pt]
      \rotatebox{90}{$:=$}&&\rotatebox{90}{$:=$}&&\rotatebox{90}{$:=$}
      \\[-4pt]
      \Epuseq X && \Epu_{\Z'} X && \Egu X
      \\ };
  \end{tikzpicture}
\end{equation}

As we will see in \S~\ref{sec:translating-mtp-gtp}, many measure-theoretic results of interest can be phrased in the form $\Epuseq X \leq c$ for some $X$ and $c$.
For example, since the probability measures $P$ which are sequentially consistent with the gambles for bounded outcomes in \S~\ref{sec:introduction} and Example~\ref{ex:bounded-sequential} are the martingale measures, the measure-theoretic bounded law of large numbers can be phrased as $\Epuseq \ones_{(\ALLN)^c} \leq 0$.
In light of these sequential price inequalities~\eqref{eq:sequential-price-defs-chain}, a corresponding game-theoretic version $\Egu X \leq c$ is therefore stronger, as we would have $\Epuseq X \leq \Egu X \leq c$.
Conversely, we could \emph{derive} the game-theoretic version if we had price equality, $\Epuseq X = \Egu X$.
After showing the inequalities~(\ref{eq:intro-price-defs-chain}, \ref{eq:sequential-price-defs-chain}), we will give sufficient conditions for the equality $\Epuseq X = \Epu X$ (Corollary~\ref{cor:ep0star-equals-ep}).
Thus, under these conditions, the equality $\Epuseq X = \Egu X$ is equivalent to $\Epu X = \Egu X$, which is a minimax theorem (Proposition~\ref{prop:price-equality-minimax}).

\subsection{Measurable variables and gambles}
\label{sec:measurability}

To connect the preceding game-theoretic definitions with their measure-theoretic counterparts, we will equip $\Omega$ with a $\sigma$-algebra $\Sigma$.
Unless otherwise stated, $\Sigma$ is arbitrary.
When $\Omega \subseteq \reals^d$, we generally assume $\Sigma$ is the Borel $\sigma$-algebra.
Given $\Omega$ and $\Sigma$, we define $\Delta(\Omega)$ to be the corresponding set of probability measures.
We sometimes use the notation $\X$ for the set of measurable measurable functions $\Omega\to\extreals$, and $\X_b\subseteq\X$ the set of bounded measurable functions.

Similar to the downward closure, we will often make use of the measurable version.

\begin{definition}[Measurable downward closure]
  \label{def:mdcl}
  Given $\Z \subseteq (\Omega \to \extreals)$, define the \emph{measurable downward closure} by $\mdcl(\Z) = \dcl(\Z) \cap \X$.
\end{definition}

The following result states that we preserve the game-theoretic upper expectation when replacing $\Z$ by the measurable gambles $\mdcl(\Z)$.
This statement is absolutely essential when attempting to draw connections with measure-theoretic probability.
Of course, one of the benefits of game-theoretic probability is avoiding the need to discuss measurability.
In previous work such as \citet[Theorem 9.3, Corollary 9.18]{shafer2019game}, however, deducing measure-theoretic results from game-theoretic ones requires one to establish the measurability of the gambling strategy.
Yet this proposition states that gambling strategies can \emph{always} be taken to be measurable when the variable $X$ in question is measurable.
The proof is immediate from Proposition~\ref{prop:egu-dcl-preserves-properties} with the class $\G$ of measurable variables.

\begin{proposition}\label{prop:mdcl}
  Let $(\Omega,\Z)$ be a gamble space, and $X:\Omega\to\infreals$ a measurable variable.
  Then $\Egu_\Z X = \Egu_{\mdcl(\Z)} X$.
\end{proposition}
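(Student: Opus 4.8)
The plan is to derive this directly from Proposition~\ref{prop:egu-dcl-preserves-properties} by choosing the translation-closed class $\G$ to be the set of measurable variables valued in $\infreals$, i.e.\ $\G = \{X:\Omega\to\infreals \mid X \text{ measurable}\}$. This is exactly the setting in which that proposition applies, and the hypothesis $X:\Omega\to\infreals$ (rather than $\Omega\to\extreals$) is precisely what places the variable inside $\G$.

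First I would verify the two conditions required to invoke Proposition~\ref{prop:egu-dcl-preserves-properties}. The containment $\G\subseteq(\infreals)^\Omega$ holds by construction. For closure under translation, fix $X\in\G$ and $\alpha\in\reals$: adding a finite constant preserves measurability, and it maps $\infreals$ into $\infreals$ since finite values merely shift while $\infty+\alpha=\infty$; hence $X+\alpha\in\G$. Since $X$ is measurable and $\infreals$-valued by hypothesis, we have $X\in\G$, so the proposition yields
\begin{align*}
  \Egu_\Z X = \Egu_{\dcl(\Z)\cap\G} X~.
\end{align*}

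It then remains to identify $\dcl(\Z)\cap\G$ with $\mdcl(\Z)$. By Definition~\ref{def:dcl}, $\dcl(\Z)\subseteq\reals^\Omega$, so intersecting with $\G$ retains exactly the real-valued measurable members of $\dcl(\Z)$. The definition $\mdcl(\Z) = \dcl(\Z)\cap\X$ uses the larger class $\X$ of measurable functions $\Omega\to\extreals$ (allowing the value $-\infty$), but since every element of $\dcl(\Z)$ is already real-valued, the $-\infty$ values permitted by $\X$ are never realized, and so $\dcl(\Z)\cap\X = \dcl(\Z)\cap\G$. Substituting gives $\Egu_\Z X = \Egu_{\mdcl(\Z)} X$, as claimed.

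The only real content beyond citing the earlier proposition is this bookkeeping about value ranges: confirming that the slightly different codomains ($\infreals$ in $\G$ versus $\extreals$ in $\X$) are reconciled once both are intersected with the real-valued set $\dcl(\Z)$, and that $\G$ genuinely satisfies the translation-closure hypothesis. Neither step presents a genuine obstacle, which is why the proposition can be dispatched in a couple of lines; the work was already done in establishing Proposition~\ref{prop:egu-dcl-preserves-properties} and the downward-closure machinery of Proposition~\ref{prop:dcl-wlog}.
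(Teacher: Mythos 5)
Your proposal is correct and takes exactly the paper's route: the paper proves Proposition~\ref{prop:mdcl} by invoking Proposition~\ref{prop:egu-dcl-preserves-properties} with $\G$ the class of measurable variables, which is precisely your argument. The codomain bookkeeping you add (reconciling $\infreals$-valued $\G$ with the $\extreals$-valued class $\X$ via the fact that $\dcl(\Z)\subseteq\reals^\Omega$) is left implicit in the paper and is verified correctly in your write-up.
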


\begin{remark}
  An argument with a similar goal appears in the proof of Ville's Theorem in \citet[Theorem 9.3]{shafer2019game}.
  That argument seems to rely on every variable to be ``priced'', in particular so that the game-theoretic upper and lower expectations are both equal to a measure-theoretic expectation.
  Proposition~\ref{prop:mdcl} is thus much more general.
  In particular, it allows for much smaller gamble spaces, with a nontrivial gap between upper and lower expectations.
\end{remark}

\begin{remark}
  For non-measurable $X$, the statement
  $\Egu_\Z X = \Egu_{\mdcl(\Z)} X$
  need not hold.
  Take for example $X = \ones_A$ for some non-measurable set $A$ other than $\emptyset$ or $\Omega$.
  Let $\Z = \{Z_\beta: \omega\mapsto \beta(2\ones\{\omega\in A\}-1) \mid \beta\in\reals\}$.
  These gambles offer even odds on whether the outcome falls in $A$: $Z_\beta$ is $\beta$ when $\omega\in A$ and $-\beta$ otherwise.
  By the same argument as in Example~\ref{ex:coin-formal}, we have $\Eg X = \Pg A = 1/2$.
  The gambling strategy that replicates $X = \ones_A$, however, is non-measurable.
  
  For any $Z\in\mdcl(\Z)$, by definition we have $Z$ measurable and $Z\leq Z_\beta$ for some $\beta\in\reals$.
  For $\omega\in A$ we have $X(\omega) - Z(\omega) \geq 1 - \beta$.
  For $\omega\notin A$ we have $X(\omega) = -Z(\omega) \geq \beta$.
  Thus, $\sup X - Z \geq \max(\beta,1-\beta)$.
  Now suppose for a contradiction that some sequence $\{Z_n\}_n \subseteq \mdcl(\Z)$ achieved $\lim_{n\to\infty} \sup X - Z_n = 1/2$.
  The corresponding $\beta_n$ must therefore satisfy $\lim_{n\to\infty} \beta_n = 1/2$.
  We conclude that $Z_n \to \ones_A - 1/2$ pointwise.
  As the $Z_n$ are measurable, $\ones_A - 1/2$ would be measurable as the pointwise limit of measurable functions, a contradiction.
  Thus $\Egu_{\mdcl(\Z)} X > 1/2$.
\end{remark}

\begin{remark}
  The restriction $X > -\infty$ in Proposition~\ref{prop:mdcl} may also be necessary.
  Without it, one could try define $Z_n(\omega) = \inf_{\omega\in\Omega'} Z_n'(\omega)$ on $\Omega'$, but this infimum could be $-\infty$.
  One could alternatively partition $\Omega'$ into measurable subsets, but one would need to take care that the infimum is not $-\infty$ on these partitions.
  
\end{remark}

\subsection{Consistency and sequential consistency}
\label{sec:consistency-seq-def}

To define the prices (i) where World must play a consistent probability measure, we introduce this notion formally.
Roughly, a probability measure is consistent if no gamble is profitable in expectation, and sequentially consistent if that holds in (almost) every round.

\begin{definition}[Consistency]
  \label{def:consistency}
  We say $P\in\Delta(\Omega)$ and measurable $Z:\Omega\to\extreals$ are \emph{consistent} if $\E_P Z \leq 0$.
  Similarly, a set $\P \subseteq \Delta(\Omega)$ is \emph{consistent with} $\Z$ if $\E_P Z \leq 0$ for all $P\in\P, Z\in\Z$.
  For gamble space $(\Omega,\Z)$, we define
  \begin{align}
    \label{eq:consistent-dist}
    \Delta_0(\Z) &:= \{P\in\Delta(\Omega) \mid \E_P Z \leq 0 \;\forall Z\in\mdcl(\Z)\}~,
  \end{align}
  to be the set of all consistent probability measures.
\end{definition}

\begin{definition}[Sequential consistency]
  \label{def:seq-consistency}
  Let $(\Y,\hat\Z,T)$ be a sequential gamble space.
  We say $P\in\Delta(\Y^T)$ is \emph{sequentially consistent} if $P$ admits a regular conditional probability measure
  \footnote{The measure $P(\cdot \mid s)$ is \emph{regular} if (i) $P(\cdot \mid s) \in \Delta(\Y)$ holds $P$-a.s., and (ii) $s\mapsto P(\cdot \mid s)$ is a measurable function.  (Recalling that $\Y$ is a measurable space, we equip $\Y^{<T}$ with the disjoint union $\sigma$-algebra on $\Y^{<T} = \bigsqcup_{t< T} \Y^t$.)}
  and
  \begin{align}
    \label{eq:2}
    P(Y_{t+1} \mid Y_{1..t}) \in \Delta_0(\hat\Z^{(Y_{1..t})})
  \end{align}
  holds $P$-a.s.\ for all $t<T$.
  We define $\Delta_0^T(\hat\Z) \subseteq \Delta(\Omega)$
  to be the set of probability measures sequentially consistent with $\hat\Z$.
\end{definition}

\begin{remark}[Constructing sequentially consistent measures]
  Let $(\Y,\hat\Z,T)$ be a sequential gamble space, where $\Y$ equipped with sigma-algebra $\Sigma$.
  Define a \emph{measurable kernel} to be a function $\kappa:\Y^{<T} \to \Delta(\Y)$
  such that $s \mapsto \kappa(s)(A)$ is measurable for all $A \in \Sigma$.
  The Ionescu-Tulcea Theorem gives that any measurable kernel $\kappa$ defines a unique probability measure $P^\kappa$ on $\Y^T$.
  We say $\kappa$ is a \emph{consistent kernel} if it is a measurable kernel with $\kappa(s) \in \Delta_0(\hat\Z^{(s)})$ for all $s\in\Y^{<T}$.
  Then $P\in\Delta(\Y^T)$ is sequentially consistent if and only if $P = P^\kappa$ for some consistent kernel $\kappa:\Y^{<T} \to \Delta(\Y)$.
\end{remark}

To relate the global and local views, and in particular to relate global consistency to local consistency, it will be convenient to restrict to \emph{disintegrable} spaces, where all probability measures admit regular conditional probability measures.

\begin{definition}[Disintegrable]
  \label{def:disintegrable}
  A sequential gamble space $(\Y,\hat\Z,T)$ is \emph{disintegrable} if every $P\in\Delta(\Y^T)$ admits a regular conditional probability measure.
\end{definition}
One convenient sufficient condition is that $\Y$ be a Polish space (a separable completely metrizable topological space)~\citep[\S~21.4]{fristedt1996modern}.

A natural question is the relationship between consistency and sequential consistency.
Our first result shows that consistency is a weakly stronger condition.

\begin{proposition}\label{prop:consistent-implies-seq-consistent}
  Let $(\Y,\hat\Z,T)$ be a disintegrable, sequentially normalized gamble space.
  Then $\Delta_0(\Z_T) \subseteq \Delta_0^T(\hat\Z)$.
  When $T=\infty$, we also have $\Delta_0(\bigcup_{t\in\N} \Z_t) \subseteq \Delta_0^\infty(\hat\Z)$.
\end{proposition}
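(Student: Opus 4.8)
The plan is to argue by contradiction. Fix $P$ in the relevant consistent set (either $\Delta_0(\Z_T)$, or $\Delta_0(\bigcup_{t\in\N}\Z_t)$ in the infinite case) and suppose $P$ is not sequentially consistent. Since the space is disintegrable, $P$ admits a regular conditional kernel $s \mapsto P(\cdot \mid s)$, so the failure of Definition~\ref{def:seq-consistency} means there is a round $t < T$ for which the set
\[
  N = \{\, s \in \Y^t : P(\cdot \mid s) \notin \Delta_0(\hat\Z^{(s)}) \,\}
\]
has positive $P$-marginal measure. My aim is to convert this local profitability into a single gamble in $\mdcl(\Z_T)$ with strictly positive $P$-expectation, contradicting consistency.

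The exploiting strategy is the natural one: Gambler does nothing on every round except round $t+1$, where on situations $s \in N$ she plays a gamble that is profitable in conditional expectation and plays $0$ elsewhere. Sequential normalization is exactly what makes this valid, since $0 \in \hat\Z^{(s)}$ for every $s$. The resulting cumulative gamble depends only on $y_{1..t+1}$, so it lives in $\Z_{t+1}$; under the convention padding finite-time strategies with zeros, this places it in $\Z_T$ and, crucially, in $\bigcup_{t\in\N}\Z_t$. This is precisely why the $T=\infty$ statement needs only consistency with the finite-time gambles and never touches the liminf defining $\Z_\infty$. The expectation itself is then computed by disintegration: for a measurable per-round gamble $\hat Z$ and the strategy above,
\[
  \E_P[Z^\psi] = \int_{N} \E_{P(\cdot\mid s)}\bigl[\hat Z(s \oplus \cdot)\bigr]\, P_t(ds),
\]
where $P_t$ is the marginal of $P$ on $\Y^t$; measurability of the kernel makes the integrand measurable, and it is strictly positive on $N$. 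Passing to a measurable minorant $\le Z^\psi$ (if the played gambles are not themselves measurable) keeps us inside $\mdcl(\Z_T)$, and $P_t(N) > 0$ then forces $\E_P[Z^\psi] > 0$, the desired contradiction.

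The hard part is a quantifier swap hidden in the second paragraph. Running the disintegration argument for a single fixed measurable gamble $\hat Z$ only yields $\E_{P(\cdot\mid s)}[\hat Z(s\oplus\cdot)] \le 0$ for $P$-almost every $s$; but sequential consistency demands that, for $P$-almost every $s$, \emph{every} gamble in $\mdcl(\hat\Z^{(s)})$ be unprofitable simultaneously. Since $\hat\Z^{(s)}$ is generally uncountable, one cannot simply take a union over null sets. I would resolve this with a measurable selection: on the bad set $N$, choose measurably in $s$ a gamble $W(s) \in \mdcl(\hat\Z^{(s)})$ with $\E_{P(\cdot\mid s)}[W(s)] > 0$, so that $(s,y)\mapsto W(s)(y)$ is jointly measurable and a single strategy exploits all of $N$ at once (the associated $V(y)=W(y_{1..t})(y_{t+1})\ones\{y_{1..t}\in N\}$ being the measurable minorant in $\mdcl(\Z_T)$). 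Establishing that $N$ is $P$-measurable and that such a profitable choice can be made measurably is the crux, and it is exactly where disintegrability and regularity of the conditional kernel are indispensable. I expect this selection step to be the main obstacle; it should be manageable because we only need $P$-almost-sure conclusions, so a universally measurable selection (e.g.\ via Jankov--von Neumann, using measurability of $s\mapsto \E_{P(\cdot\mid s)}[\cdot]$) suffices rather than a Borel one.
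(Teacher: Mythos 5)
Your proof is correct and follows essentially the same route as the paper's: on the positive-measure set of sequentially inconsistent situations at some finite round, play a conditionally profitable gamble there and $0$ elsewhere (valid by sequential normalization), observe that the resulting cumulative gamble lies in the finite-time class $\Z_{t+1} \subseteq \bigcup_{k\in\N}\Z_k$ (which is exactly why the $T=\infty$ case follows), and disintegrate to obtain strictly positive expectation, contradicting consistency. The only substantive difference is that you explicitly surface the measurable-selection step --- choosing $s\mapsto W(s)\in\mdcl(\hat\Z^{(s)})$ measurably on the bad set so that the exploiting strategy is itself a measurable gamble --- a point the paper's proof passes over silently when it asserts $Z^\psi\in\mdcl(\Z_k)$, so your Jankov--von Neumann remark is a legitimate patch for a gap the paper leaves implicit rather than a flaw in your argument.
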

\begin{proof}
  Fix some $P\in\Delta(\Y^T)$ with regular conditional probability measure $P(\cdot \mid \cdot)$.
  Supposing $P\notin\Delta_0^T(\hat\Z)$, we will show $P\notin\Delta_0(\Z_k)$ for some $k \in \N$, $k\leq T$, which covers both cases.
  
  Let $\Sc_+ = \{s\in\Y^{<T} \mid P(Y_{|s|+1} \mid s) \notin \Delta_0(\hat\Z^{(s)})\}$ be the situations where $P$ is sequentially inconsistent.
  Let $A_k = \{y\in\Y^T \mid y_{1..k} \in \Sc_+\}$ be the event that there is a sequentially inconsistent prefix of $y$ of length $k$, and $A_k = \emptyset$ for $k>T$.
  As $P\notin\Delta_0^T(\hat\Z)$, we have $P\left(\bigcup_{k\in\N} A_k\right) > 0$.
  We conclude $P(A_k) > 0$ for some $k \in \N$.

  For each $s\in \Sc_+$, let $\hat Z^s\in\mdcl(\hat\Z^{(s)})$ be a gamble such that $\E_P[\hat Z^s \mid s] > 0$.
  Consider the strategy $\psi(s) = \hat Z^s$ if $s\in \Sc_+\cap \Y^k$ and $\psi(s) = 0$ otherwise; observe $Z^\psi \in \mdcl(\Z_k)$.
  We have $\E_P[ Z^\psi \mid Y_{1..k} ] > 0$ for $Y_{1..k}\in \Sc_+$ and $\E_P[ Z^\psi \mid Y_{1..k} ] = 0$ otherwise.
  As $P(Y_{1..k}\in\Sc_+) = P(A_k) > 0$, Markov's inequality gives
  $\E_P Z^\psi = \E_P[ \E_P[ Z^\psi \mid Y_{1..k} ] ] > 0$.
\end{proof}

Interestingly, the converse need not hold: sequentially consistent probability measures can fail to be consistent in the global sense.
In other words, even if gambles are not profitable in every round, the cumulative gamble can still have a positive expected value across all rounds.

\begin{example}\label{ex:seq-consistent-but-no-consistent}
  
  Consider the simple repeated gamble space $(\Y,\hat\Z,\infty)$ where $\Y=[-1,1]$, equipped with the Borel $\sigma$-algebra, and $\hat\Z = \{y\mapsto\beta y \mid \beta \in \reals\}$.
  As $\Y$ is Polish, the gamble space is disintegrable.
  Consider $P\in\Delta(\Y^\infty)$ which is the i.i.d.\ probability measure with $P(Y_t=1 \mid Y_{1..t-1}) = P(Y_t=-1 \mid Y_{1..t-1}) = 1/2$ for all $t$.
  Clearly $P\in\Delta_0^\infty(\hat\Z)$.
  Now consider $Z^\psi \in \Z_\infty$ for the strategy $\psi(s) = (y\mapsto y)$ if $Z^\psi(s) \neq 1$ and $\psi(s) = (y\mapsto 0)$ otherwise, i.e., bet \$1 on $y=1$ in each round until the total winnings equal 1, then stop.
  Then we have $\E_P Z^\psi = 1 > 0$.
  We conclude $P\notin\Delta_0(\Z_\infty)$.
  In fact, for the special case $\Y = \{-1,1\}$, we have $\Delta_0^\infty(\hat\Z) = \{P\}$, and thus $\Delta_0(\Z_T) = \emptyset$ since $\Delta_0(\Z_\infty) \subseteq \Delta_0^\infty(\hat\Z)$ from Proposition~\ref{prop:consistent-implies-seq-consistent}.
\end{example}

Example~\ref{ex:seq-consistent-but-no-consistent} relies on a particular violation of the optional stopping theorem: a $P$-(super)martingale $\{Z_t\}_t$ with $Z_0 = 0$, and a stopping time $\tau$ with $\E_P Z_\tau > 0$.
Intuitively, these are the only problematic gambles: if we restrict gambles to $\Z'$ such that $\E_P Z_\tau \leq 0$ for all sequentially consistent $P$, all $Z\in\Z'$, and all stopping times $\tau$, then sequential consistency should imply consistency.
In particular taking $\Z' = \bb(\Z_\infty)$, aligning with \citet{shafer2019game} as discussed in Remark~\ref{remark:defining-Z-infty}, would suffice.
(Recall from Proposition~\ref{prop:bounded-X-gambles} that this restriction does not change $\Egu X$ for bounded-below $X$.)
Yet we will eventually need a larger set of gambles, so here we additionally allow all finite stopping times, another case where the optional stopping theorem holds.

\begin{proposition}\label{prop:sequentially-consistent-is-consistent}
  Let $(\Y,\hat\Z,\infty)$ be a sequentially normalized gamble space.
  Then $\Delta_0^\infty(\hat\Z) \subseteq \Delta_0(\Z')$, where $\Z' = \bb(\Z_\infty) \cup \bigcup_{t\in\N} \Z_t$.
\end{proposition}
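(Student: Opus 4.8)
The plan is to verify directly that $\E_P Z \le 0$ for every $Z \in \mdcl(\Z')$, which by Definition~\ref{def:consistency} is exactly what membership $P \in \Delta_0(\Z')$ demands. Since $\dcl$ distributes over unions, $\mdcl(\Z') = \mdcl(\bb(\Z_\infty)) \cup \bigcup_{t\in\N} \mdcl(\Z_t)$, so it suffices to handle two families of measurable targets separately: those dominated by a finite-horizon cumulative gamble $Z^\psi_t$, and those dominated by a bounded-below infinite cumulative gamble $Z^\psi_\infty = \liminf_t Z^\psi_t$. Throughout I fix $P \in \Delta_0^\infty(\hat\Z)$ together with its consistent kernel $\kappa(s) = P(Y_{|s|+1}\mid s) \in \Delta_0(\hat\Z^{(s)})$, which exists because sequential consistency (Definition~\ref{def:seq-consistency}) already presumes regular conditional probabilities.

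For the finite-horizon family I would induct on $t$, the key device being to argue through conditional expectations of the \emph{measurable} target $Z$ rather than through the possibly non-measurable strategy $\psi$ witnessing the domination. The base case $t=0$ is immediate, since $\Z_0 = \{0\}$ forces $Z \le 0$. For the inductive step, take measurable $Z \le Z^\psi_t$ and condition on $Y_1$: for $P_1 := P(Y_1)$-almost every $y_1$, the shifted variable $Z^{(y_1)} := Z(y_1 \oplus \,\cdot\,)$ satisfies $Z^{(y_1)} - \psi(\emptystring)(y_1) \le Z^{\psi|_{y_1}}_{t-1}$ on the sub-gamble-space $(\Y,\hat\Z|_{y_1},\infty)$, which is again sequentially normalized and for which $P(\cdot \mid Y_1 = y_1)$ is sequentially consistent. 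The inductive hypothesis (at level $t-1$) then yields $h(y_1) := \E_P[Z \mid Y_1 = y_1] \le \psi(\emptystring)(y_1)$ for $P_1$-a.e.\ $y_1$. Crucially, $h$ is measurable, being a regular conditional expectation of the measurable $Z$, and after adjusting it on the $P_1$-null inconsistency set it is dominated by the root gamble $\psi(\emptystring) \in \hat\Z^{(\emptystring)}$; hence $h \in \mdcl(\hat\Z^{(\emptystring)})$. Since $P_1 \in \Delta_0(\hat\Z^{(\emptystring)})$ by sequential consistency, I conclude $\E_P Z = \E_{P_1} h \le 0$.

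For the infinite bounded-below family, the same one-step computation shows the increments $\psi(Y_{1..i-1})(Y_i)$ have nonpositive $P$-conditional expectation, so $\{Z^\psi_t\}_t$ is a $P$-supermartingale with $Z^\psi_0 = 0$; in particular $\E_P Z^\psi_t \le 0$ for each finite $t$ (optional stopping at a deterministic, hence bounded, time). The target obeys $\E_P Z \le \E_P[\liminf_t Z^\psi_t]$, and I would finish by invoking the optional-sampling/convergence theorem for bounded-below supermartingales to get $\E_P[\liminf_t Z^\psi_t] \le 0$.

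The hard part is precisely this last step. The naive route of pushing $\liminf$ through the expectation by Fatou's lemma requires the partial sums $Z^\psi_t$ to be uniformly bounded below by an integrable function, whereas the hypothesis $Z^\psi_\infty \in \bb(\Z_\infty)$ only bounds the \emph{liminf} pathwise. This gap is not cosmetic: it is exactly the doubling phenomenon behind Example~\ref{ex:seq-consistent-but-no-consistent}, where a pathwise lower bound on the eventual value coexists with partial sums that plunge arbitrarily low. The delicate point, therefore, is to exploit that the lower bound on the liminf holds \emph{everywhere} (worst-case), not merely $P$-almost surely, in order to control the negative excursions of the partial sums — for instance via a truncation or first-passage stopping argument that reduces matters to a genuinely bounded-below supermartingale before passing to the limit. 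This is the technical heart of why restricting to $\Z'$, rather than allowing all of $\Z_\infty$, is what makes the inclusion hold.
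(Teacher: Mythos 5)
Your finite-horizon argument is sound, and in fact more careful about measurability than the paper's own treatment (which disposes of $\bigcup_{t\in\N}\Z_t$ in one line by noting each increment has nonpositive conditional expectation under a sequentially consistent $P$). But for the $\bb(\Z_\infty)$ family there is a genuine gap, and it sits exactly where you flag it: flagging it is not the same as closing it. You never prove $\E_P[\liminf_t Z^\psi_t]\leq 0$. Invoking an ``optional-sampling/convergence theorem for bounded-below supermartingales'' is circular, since the hypothesis of any such theorem is that the process $\{Z^\psi_t\}_t$ itself is uniformly bounded below, which is precisely what is in question---you only have a pathwise bound on the $\liminf$. A first-passage stopping argument does not obviously rescue this either, as optional stopping at an unbounded stopping time again requires exactly the kind of lower-bound control on the stopped process that is missing.

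What you are missing is that no probabilistic or stopping-time argument is needed: sequential normalization closes the gap deterministically. Since each $\hat\Z^{(s)}$ is arbitrage-free, $\inf_{y\in\Y}\psi(s)(y)\leq 0$ for every situation $s$, so from any situation World can continue play so that the partial sums never rise more than an arbitrarily small $\epsilon$ above their current value; hence $\inf Z^\psi_\infty \leq \inf Z^\psi_t$ for every $t$. Writing $b := \inf Z^\psi_\infty > -\infty$ (the defining property of $\bb(\Z_\infty)$, which---as you correctly emphasize---is a worst-case bound holding everywhere), this yields $Z^\psi_t \geq b$ pointwise, uniformly in $t$, and then plain Fatou applies with the constant $b$ as integrable minorant. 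This also corrects your reading of Example~\ref{ex:seq-consistent-but-no-consistent}: the doubling-type strategy there has $\liminf_t Z^\psi_t = -\infty$ on the adversarial paths, so it is excluded from $\bb(\Z_\infty)$ outright; in an arbitrage-free sequential gamble space, a bounded-below $\liminf$ coexisting with unboundedly negative partial sums simply cannot occur. The ``technical heart'' you describe is thus a two-line worst-case continuation argument, not a truncation or stopping construction.
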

\begin{proof}
  Let $P\in\Delta_0^\infty(\hat\Z)$.
  First consider $Z^\psi = Z^\psi_k \in \Z_k$ for some $k \in \N$.
  As each term in the sum $Z^\psi_k = \sum_{t=1}^T \psi(Y_{1..t-1})(Y_t)$ has non-positive expectation, we have $\E_P[Z^\psi] \leq 0$.
  Now consider $Z^\psi \in \bb(\Z_\infty)$.
  By definition, $Z^\psi = \liminf_{t\to\infty} Z^\psi_t$.
  By definition of $\bb(\cdot)$, we have $b:= \inf Z^\psi > -\infty$.
  For any $t$, no arbitrage gives $\inf Z^\psi \leq \inf Z^\psi_t$.
  (To see this inequality, note that for all $s\in\Y^t$, we have $\inf_{y_{t+1}\in\Y} Z^\psi_{t+1}(s\oplus y_{t+1}) = Z^\psi_t(s) + \inf_{y_{t+1}\in\Y} \psi(s)(y_{t+1}) \leq Z^\psi_t(s)$ by no-arbitrage of $\hat\Z^{(s)}$; taking the infimum over $s$, the result now follows by induction.)
  
  Thus $Z^\psi_t \geq b$ for all $t$.
  By Fatou's Lemma, $\E_P[Z^\psi] = \E_P[\liminf_{T\to\infty} Z^\psi_t] \leq \liminf_{T\to\infty} \E_P[Z^\psi_t] \leq 0$ by the above.
  
\end{proof}

Given a sequential gamble space $(\Y,\hat\Z,T)$ for $T<\infty$, we can simply pad $\hat\Z^{(s)} = \{0\}$ for $|s|\geq T$ and the lemma applies for $\Z' = \Z_T$.

\subsection{Price definitions}
\label{sec:price-definitions}

We now define the prices (ii) and (i).
With $\mdcl(\Z)$ defined above, we can immediately define the price (ii) for a probabilistic World that must play first.

\begin{definition}[Measure-theoretic upper expectation]
  Let $(\Omega,\Z)$ be a gamble space, and $X:\Omega\to\extreals$ a measurable variable.
  Then we define
  \begin{align}
    \Epu X &:= \sup_{P\in\Delta(\Omega)} \inf_{Z\in\mdcl(\Z)} \E_P [X-Z]~,
  \end{align}
  and $\Epl X := -\Epu(-X)$.
  If $\E_P[X-Z]$ is not defined, we define it to be $\infty$.
\end{definition}

For the price (i) when World must play consistent probability measures, we break consistency into two cases: consistency and sequential consistency.

\begin{definition}[(Sequentially) consistent upper expectation]
  \label{def:sequential-consistency}
  For gamble space $(\Omega,\Z)$ and measurable variable $X:\Omega\to\extreals$, we define
  \begin{align}
    \Epucons X &:= \sup_{P\in\Delta_0(\Z)} \E_P X~,
  \end{align}
  and $\Epl^0 X := -\Epucons(-X)$.
  Similarly, for a sequential gamble space $(\Y,\hat\Z,T)$, we define
  \begin{align}
    \Epuseq X &:= \sup_{P\in\Delta_0^T(\hat\Z)} \E_P X~,
  \end{align}
  and $\Epl^* X := - \Epuseq(-X)$.
  In both cases, if $\E_P X$ is undefined, we define it to be $\infty$.
\end{definition}

\subsection{The chain of global price inequalities}
\label{sec:chain-price-inequalities-global}

We now turn to the inequalities stated above, beginning with the non-sequential version~\eqref{eq:intro-price-defs-chain}.

\begin{theorem}\label{thm:chain-of-price-inequalities}
  Let $(\Omega,\Z)$ be a gamble space and $X:\Omega\to\infreals$ a measurable variable.
  Then $\Egl X \leq \Epl X \leq \Epl^0 X$ and $\Epucons X \leq \Epu X \leq \Egu X$.
  If $\Delta_0(\Z) \neq \emptyset$, then
  \begin{align}\label{eq:price-inequality-chain}
    \Egl X
    \leq
    \Epl X
    \leq
    \Epl^0 X
    \leq 
    \Epucons X
    \leq
    \Epu X
    \leq
    \Egu X~.
  \end{align}
\end{theorem}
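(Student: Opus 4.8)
The plan is to split the chain into its ``upper'' half $\Epucons X \le \Epu X \le \Egu X$ and its ``lower'' half $\Egl X \le \Epl X \le \Epl^0 X$, and then to join them with a single elementary link that is the only place the hypothesis $\Delta_0(\Z)\neq\emptyset$ is used. Because the three lower operators are by definition the negations of the upper ones evaluated at $-X$ (for instance $\Epl X = -\Epu(-X)$ and $\Epl^0 X = -\Epucons(-X)$), I would prove the upper half first and then read off the lower half by applying it to $-X$ and negating, which reverses all inequalities. This reduces the work to the two upper inequalities and the middle link $\Epl^0 X \le \Epucons X$.

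For $\Epu X \le \Egu X$, the key input is Proposition~\ref{prop:mdcl}: since $X>-\infty$, it gives $\Egu_\Z X = \Egu_{\mdcl(\Z)} X$, so I may work entirely over the measurable gambles $\mdcl(\Z)$. Writing $u(P,Z) = \E_P[X-Z]$, I would first invoke the elementary weak max--min inequality $\sup_P \inf_Z u(P,Z) \le \inf_Z \sup_P u(P,Z)$ (for any $P_0,Z_0$ one has $\inf_Z u(P_0,Z)\le u(P_0,Z_0)\le \sup_P u(P,Z_0)$, then take $\sup_{P_0}$ and $\inf_{Z_0}$), and second the fact that point masses exhaust the supremum: for each measurable $Z$, $\sup_{P\in\Delta(\Omega)}\E_P[X-Z] = \sup_\omega (X-Z)(\omega)$, since $\E_{\delta_\omega}[X-Z]=(X-Z)(\omega)$ and every expectation is at most the pointwise supremum. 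Combining, $\Epu X \le \inf_{Z\in\mdcl(\Z)}\sup_\omega (X-Z)(\omega) = \Egu_{\mdcl(\Z)}X = \Egu X$. For $\Epucons X \le \Epu X$, I would use that each $P\in\Delta_0(\Z)$ satisfies $\E_P Z \le 0$ for every $Z\in\mdcl(\Z)$, whence $\E_P[X-Z]\ge \E_P X$ and so $\inf_{Z\in\mdcl(\Z)}\E_P[X-Z]\ge \E_P X$; taking $\sup_{P\in\Delta_0(\Z)}$ and using $\Delta_0(\Z)\subseteq\Delta(\Omega)$ gives $\Epucons X = \sup_{P\in\Delta_0(\Z)}\E_P X \le \sup_{P\in\Delta(\Omega)}\inf_{Z}\E_P[X-Z] = \Epu X$.

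The middle link is where nonemptiness enters. Unfolding definitions, $\Epl^0 X = -\Epucons(-X) = \inf_{P\in\Delta_0(\Z)}\E_P X$ while $\Epucons X = \sup_{P\in\Delta_0(\Z)}\E_P X$, and an infimum is at most a supremum over the same index set precisely when that set is nonempty; when $\Delta_0(\Z)=\emptyset$ the conventions $\sup_\emptyset=-\infty$, $\inf_\emptyset=+\infty$ reverse this, which is exactly why the hypothesis is needed. Concatenating the lower half, this link, and the upper half yields the full chain~\eqref{eq:price-inequality-chain}.

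I expect the main obstacle to be the bookkeeping around extended-real arithmetic and the convention that an undefined expectation equals $\infty$. Two points need care. First, the identities $\E_P[X-Z]=\E_P X-\E_P Z$ and $\sup_P\E_P[X-Z]=\sup_\omega(X-Z)$ must be verified to persist under this convention; this is benign because a point mass always yields a defined value, and an undefined $\E_P[X-Z]$ forces $\sup_\omega(X-Z)=\infty$, so each bound still holds. Second, and more delicate, the dual step $\Egl X\le\Epl X$ is the inequality $\Epu(-X)\le\Egu(-X)$ applied to $V=-X$, which may take the value $-\infty$ on $\{X=\infty\}$; Proposition~\ref{prop:mdcl} is stated for $\infreals$-valued variables, so the reduction $\Egu_{\mdcl(\Z)}V=\Egu_\Z V$ does not transfer verbatim. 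I would resolve this either by noting that $P(\{X=\infty\})>0$ forces the relevant suprema to $+\infty$ (so only measures making $\{X=\infty\}$ null constrain the infimum over $P$), or by adapting the downward-closure construction behind Proposition~\ref{prop:mdcl} (using Proposition~\ref{prop:gambles-not-neg-infty} to assume gambles are $>-\infty$) to produce a measurable dominating gamble off the set $\{V=-\infty\}$; the remaining lower inequality $\Epl X\le\Epl^0 X$, being the consistency inequality applied to $-X$, needs no such reduction and goes through unchanged.
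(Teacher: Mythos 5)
Your proposal follows essentially the same route as the paper's proof: reduce to measurable gambles via Proposition~\ref{prop:mdcl}, observe that point masses make World's randomization harmless so that the weak max--min inequality yields $\Epu X \leq \Egu X$, use the definition of $\Delta_0(\Z)$ for $\Epucons X \leq \Epu X$, obtain the lower half by the duality identities applied to $-X$, and invoke $\Delta_0(\Z)\neq\emptyset$ only for the middle link $\Epl^0 X \leq \Epucons X$. If anything, your extended-real bookkeeping is more careful than the paper's: the undefined-expectation cases are deferred there to \S~\ref{sec:proofs-undef-expect}, while the subtlety you flag about applying Proposition~\ref{prop:mdcl} to $-X$ (which may take the value $-\infty$ on $\{X=\infty\}$, outside that proposition's hypotheses) is passed over silently in the paper's proof, so your proposed patch is a genuine refinement rather than a deviation.
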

\begin{proof}
  We will assume here that all expectations are defined; see \S~\ref{sec:proofs-undef-expect} for the undefined cases.
  Proposition~\ref{prop:mdcl} gives $\Egu X = \Egu_{\mdcl(\Z)} X$.
  We first observe that randomization cannot hurt World.
  Letting $\delta_\omega \in \Delta(\Omega)$ be the point mass on $\omega$, we have
  $\sup_{\omega\in\Omega} X(\omega) - Z(\omega) = \sup_{\omega\in\Omega} \E_{\delta_\omega}[X - Z] \leq \sup_{P\in\Delta(\Omega)} \E_P[X - Z]$ for all measurable $Z$.
  As $\E_P[X - Z] \leq \sup_{\omega\in\Omega} X(\omega) - Z(\omega)$ for any $P\in\Delta(\Omega)$, we have
  \begin{align}
    \Egu X
    &= \inf_{Z\in\mdcl(\Z)} \sup_{\omega\in\Omega} X(\omega) - Z(\omega)
      \nonumber
    \\
    \label{eq:nature-can-randomize}
    &= \inf_{Z\in\mdcl(\Z)} \sup_{P\in\Delta(\Omega)} \E_P[X - Z]~.
  \end{align}
  We now apply a standard observation from game theory, that playing second is weakly better than playing first:
  for any $P' \in \Delta(\Omega)$ we have $\E_{P'}[X-Z] \leq \sup_{P\in\Delta(\Omega)} \E_P[X - Z]$ and thus
  \[\inf_{Z\in\mdcl(\Z)} \E_{P'}[X-Z] \leq \inf_{Z\in\mdcl(\Z)} \sup_{P\in\Delta(\Omega)} \E_P[X - Z]~.\]
  As $\Epu X$ corresponds to taking a supremum over all choices of $P'$, we have
  \begin{align*}
    \Epu X
    &= \sup_{P\in\Delta(\Omega)} \inf_{Z\in\mdcl(\Z)} \E_P[X - Z]
    \\
    &\leq
      \inf_{Z\in\mdcl(\Z)} \sup_{P\in\Delta(\Omega)} \E_P[X - Z] \leq \Egu X~.
  \end{align*}
  We also have $\Egl X \leq \Epl X$ by the identities $\Egl X = -\Egu(-X)$ and $\Epl X = -\Epu(-X)$.

  Moving to $\Epucons$, we have
  \begin{align*}
    \Epu X
    &=    \sup_{P\in\Delta(\Omega)} \inf_{Z\in\mdcl(\Z)} \E_P[X - Z] \\
    &\geq \sup_{P\in\Delta_0(\Z)} \inf_{Z\in\mdcl(\Z)} \E_P[X - Z] \\
    &\geq \sup_{P\in\Delta_0(\Z)} \E_P X \\
    &=    \Epucons X~,
  \end{align*}
  by definition of $\Delta_0(\Z)$ and linearity of expectation.
  As above, we also conclude $\Epl X \leq \Epl^0 X$ via the identities $\Epl X = -\Epu(-X)$ and $\Epl^0 X = -\Epucons(-X)$.
  Finally, when $\Delta_0(\Z)\neq \emptyset$, we have
  \begin{align*}
    \Epl^0 X = \inf_{P\in\Delta_0(\Z)} \E_P X \leq \sup_{P\in\Delta_0(\Z)} \E_P X = \Epucons X~,
  \end{align*}
  which completes the chain of inequalities.
\end{proof}

A key consequence of Theorem~\ref{thm:chain-of-price-inequalities} is a fact that bridges game-theoretic and measure-theoretic probability: under a probability measure $P$ sequentially consistent with the gambles, game-theoretic supermartingales are $P$-supermartingales.
\begin{proposition}\label{prop:supermartingale-to-measure}
  Let $(\Y,\hat\Z,T)$ be a sequential gamble space, and $\{X_t\}_{t\leq T}$ a game-theoretic (super)martingale.
  For any $P\in\Delta_0^T(\hat\Z)$, the sequence $\{X_t\}_{t\leq T}$ is a $P$-(super)martingale.
\end{proposition}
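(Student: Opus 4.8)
The plan is to reduce the global $P$-supermartingale inequality to a single-round application of the price inequality $\Epucons \leq \Egu$ from Theorem~\ref{thm:chain-of-price-inequalities}. Recall that $\{X_t\}$ being a $P$-supermartingale means $\E_P[X_{t+1}\mid Y_{1..t}] \leq X_t(Y_{1..t})$ holds $P$-a.s.\ for every $t<T$ (with equality for martingales), so it suffices to establish this conditional inequality one round at a time.

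Fix $t<T$. Since $P\in\Delta_0^T(\hat\Z)$, by Definition~\ref{def:seq-consistency} $P$ admits a regular conditional probability measure, and writing $Q_s := P(Y_{t+1}\mid Y_{1..t}=s)\in\Delta(\Y)$ for the one-step conditional, we have $Q_s\in\Delta_0(\hat\Z^{(s)})$ for $P$-almost every $s\in\Y^t$. On this $P$-full-measure set I would argue as follows. By disintegration, $\E_P[X_{t+1}\mid Y_{1..t}=s] = \E_{Q_s}[X_{t+1}(s\oplus\cdot)]$. Because $Q_s$ is a single consistent measure for the per-round gamble space $(\Y,\hat\Z^{(s)})$, it lies in $\Delta_0(\hat\Z^{(s)})$, so directly from the definition of $\Epucons$ we get $\E_{Q_s}[X_{t+1}(s\oplus\cdot)] \leq \Epucons_{\hat\Z^{(s)}}[X_{t+1}(s\oplus\cdot)]$. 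Theorem~\ref{thm:chain-of-price-inequalities} then gives $\Epucons_{\hat\Z^{(s)}}[X_{t+1}(s\oplus\cdot)] \leq \Egu_{\hat\Z^{(s)}}[X_{t+1}(s\oplus\cdot)] = \Egu[X_{t+1}\mid s]$, where the final equality is just eq.~\eqref{eq:conditional-expectation}. Finally the game-theoretic supermartingale property (Definition~\ref{def:supermartingale}) supplies $\Egu[X_{t+1}\mid s]\leq X_t(s)$. Chaining these yields $\E_P[X_{t+1}\mid Y_{1..t}=s]\leq X_t(s)$ for $P$-a.e.\ $s$, which is exactly the $P$-supermartingale inequality. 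For the martingale case I would run the identical argument with $-X_{t+1}$ in place of $X_{t+1}$: this gives $\E_P[X_{t+1}\mid Y_{1..t}=s]\geq \Egl[X_{t+1}\mid s] = X_t(s)$ via $\Egl = -\Egu(-\cdot)$ and the martingale identity $\Eg[X_{t+1}\mid s]=X_t(s)$, and combining with the upper bound forces equality.

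The steps are short because the real work is outsourced to Theorem~\ref{thm:chain-of-price-inequalities} and to the definition of sequential consistency, which conveniently bakes in the existence of the regular conditional probability measure (so no separate disintegrability hypothesis on the whole space is needed). The main obstacle I anticipate is purely the measure-theoretic bookkeeping around $\extreals$-valued variables: one must ensure $X_{t+1}$ is measurable so that the conditional expectation and the disintegration identity $\E_P[X_{t+1}\mid Y_{1..t}=s]=\E_{Q_s}[X_{t+1}(s\oplus\cdot)]$ are legitimate, and handle the cases where expectations may be infinite or undefined (using the paper's convention that an undefined $\E_P$ is $\infty$, consistent with how Theorem~\ref{thm:chain-of-price-inequalities} is proven). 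The ``$P$-a.s.'' quantifier also needs care: the inequality is only guaranteed on the full-measure set where $Q_s\in\Delta_0(\hat\Z^{(s)})$, but this is precisely what the $P$-supermartingale definition demands.
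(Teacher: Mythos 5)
Your proof is correct and follows essentially the same route as the paper's: both chain $\E_P[X_{t+1}\mid s] \leq \Epucons[X_{t+1}\mid s] \leq \Egu[X_{t+1}\mid s] \leq X_t(s)$ $P$-a.s.\ using sequential consistency of the one-step conditionals together with Theorem~\ref{thm:chain-of-price-inequalities}, and both handle the martingale case by applying the same argument to $\{-X_t\}_{t\leq T}$. Your additional remarks on disintegration and measurability bookkeeping are sound but not substantively different from what the paper leaves implicit.
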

\begin{proof}
  Let $P\in\Delta_0^T(\hat\Z)$, $t<T$, and $s\in\Y^t$.
  By definition, $P(Y_{t+1}\mid Y_{1..t}) \in \Delta_0(\hat\Z^{(Y_{1..t})})$ holds $P$-a.s.
  From Theorem~\ref{thm:chain-of-price-inequalities}, we then have
  \begin{align*}
    \E_P [X_{t+1} \mid s] \leq \Epucons [X_{t+1} \mid s] \leq \Egu[ X_{t+1} \mid s] \leq X_t(s) \quad P\text{-a.s.},
  \end{align*}
  so $\{X_t\}_{t\leq T}$ is a $P$-supermartingale.
  If $\{X_t\}_{t\leq T}$ is a game-theoretic martingale, then by the same theorem applied to $\{-X_t\}_{t\leq T}$, the original sequence is a $P$-martingale.
\end{proof}

\subsection{Price equality and minimax duality}
\label{sec:price-equality-minimax}

A natural question in light of Theorem~\ref{thm:chain-of-price-inequalities} is when price inequalities are actually equalities.
Let us first treat the non-sequential versions, $\Epucons X = \Epu X$ and $\Epu X = \Egu X$.
Our first result gives sufficient conditions for $\Epucons X = \Epu X$: when gambles are upward scalable, and $X$ is upper bounded.
The second shows why $\Epu X = \Egu X$ is a minimax theorem.
Finally, we give a simple minimax theorem for finitely-generated gamble spaces.

The price equality $\Epucons X = \Epu X$ essentially says that World need not consider inconsistent probability measures; they will never increase World's payoff.
Our sufficient condition for this equality is that gambles be upward scalable, which allows Gambler to infinitely penalize World for being inconsistent.

\begin{theorem}
  \label{thm:ep0-equals-ep}
  Let $(\Omega,\Z)$ be an upward-scalable gamble space such that $0\in\Z$.
  Let $X:\Omega\to\extreals$ be measurable and bounded above.
  Then $\Epucons X = \Epu X$.
\end{theorem}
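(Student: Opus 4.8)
The plan is to note that Theorem~\ref{thm:chain-of-price-inequalities} already gives the inequality $\Epucons X \leq \Epu X$ unconditionally, so it remains only to prove the reverse, $\Epu X \leq \Epucons X$. The strategy is to show that the outer supremum defining $\Epu X$ is effectively taken over $\Delta_0(\Z)$ rather than all of $\Delta(\Omega)$: measures inconsistent with the gambles contribute $-\infty$ to the inner infimum and hence can be discarded. Concretely, I would split
\[
  \Epu X = \sup_{P\in\Delta(\Omega)} \inf_{Z\in\mdcl(\Z)} \E_P[X-Z]
\]
according to whether $P\in\Delta_0(\Z)$ or not, and handle the two pieces separately. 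If $\Delta_0(\Z)=\emptyset$ then $\Epucons X = -\infty$ and, by the argument below, every inner infimum equals $-\infty$, so both sides are $-\infty$; thus assume $\Delta_0(\Z)\neq\emptyset$.

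For the inconsistent measures, fix $P\notin\Delta_0(\Z)$. By definition there is some $Z_0\in\mdcl(\Z)$ with $\E_P Z_0 > 0$. The key use of \emph{upward scalability} is that $\lambda Z_0\in\mdcl(\Z)$ for every $\lambda\geq 1$: indeed $Z_0$ is dominated by some $Z\in\Z$, so $\lambda Z_0 \leq \lambda Z\in\Z$ by upward scalability, and $\lambda Z_0$ is measurable and real-valued, whence $\lambda Z_0\in\mdcl(\Z)$. Since $X$ is bounded above, say $X\leq M$, we have $X-\lambda Z_0 \leq M - \lambda Z_0$ and therefore
\[
  \inf_{Z\in\mdcl(\Z)} \E_P[X-Z] \;\leq\; \inf_{\lambda\geq 1}\E_P[M-\lambda Z_0] \;=\; \inf_{\lambda\geq 1}\bigl(M-\lambda\,\E_P Z_0\bigr) \;=\; -\infty,
\]
using $\E_P Z_0 > 0$. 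Hence inconsistent $P$ do not affect the supremum.

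For the consistent measures, fix $P\in\Delta_0(\Z)$. Taking $Z=0$, which lies in $\mdcl(\Z)$ because $0\in\Z$, gives $\E_P[X-0]=\E_P X$, so $\inf_{Z}\E_P[X-Z]\leq \E_P X$. Conversely, for any $Z\in\mdcl(\Z)$ consistency gives $\E_P Z\leq 0$, and since $X$ is bounded above the splitting $\E_P[X-Z]=\E_P X-\E_P Z$ yields $\E_P[X-Z]\geq \E_P X$. Thus $\inf_{Z\in\mdcl(\Z)}\E_P[X-Z]=\E_P X$, and taking the supremum over $P\in\Delta_0(\Z)$ recovers exactly $\Epucons X$. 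Combining the two pieces gives $\Epu X=\sup_{P\in\Delta_0(\Z)}\E_P X=\Epucons X$.

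The main obstacle will be the careful bookkeeping of infinite and undefined expectations, where the convention that an undefined $\E_P[X-Z]$ equals $+\infty$ runs opposite to the direction we need. The boundedness of $X$ from above is exactly what rescues both steps: it forces the positive part of $X-\lambda Z_0$ (resp.\ $X-Z$) to have finite $P$-expectation, so the relevant integrals are genuinely defined in $[-\infty,\infty)$ and the dominating bound by $M-\lambda Z_0$, as well as the splitting $\E_P[X-Z]=\E_P X-\E_P Z$, is legitimate. I would isolate the subcases $\E_P X=-\infty$, $\E_P Z_0=+\infty$, and $\E_P Z=-\infty$, and verify that in each the inner infimum still evaluates as claimed, paralleling the treatment of undefined expectations deferred in the proof of Theorem~\ref{thm:chain-of-price-inequalities}.
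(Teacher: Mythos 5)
Your proposal is correct and takes essentially the same route as the paper's proof: discard inconsistent measures by upward-scaling a profitable gamble $Z_0\in\mdcl(\Z)$ against the bounded-above $X$ to drive the inner infimum to $-\infty$, then observe that for consistent $P$ the infimum is attained at $Z=0\in\mdcl(\Z)$, collapsing the supremum to $\Epucons X$. Your additional bookkeeping---verifying $\lambda Z_0\in\mdcl(\Z)$, the $\Delta_0(\Z)=\emptyset$ case, and the undefined-expectation subcases---only makes explicit details the paper's proof leaves implicit.
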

\begin{proof}
  Let $b := \sup X < \infty$.
  Take any $P\in\Delta(\Omega)$.
  If $P\notin\Delta_0(\Z)$, then there exists $Z^*\in\mdcl(\Z)$ such that $c := \E_P Z^* > 0$.
  By upward scaling, there exists $\alpha > 0$ arbitrarily large so that $\alpha Z^* \in \mdcl(\Z)$, and thus $\E_P \alpha Z^* = \alpha c$.
  We conclude $\inf_{Z\in\mdcl(\Z)} \E_P[X - Z] \leq \inf_{\alpha > 0} \E_P[X - \alpha Z^*] = b - \sup_{\alpha > 0} \alpha c = -\infty$.
  
  Thus,
  \begin{align*}
    \Epu X
    &= \sup_{P\in\Delta(\Omega)} \inf_{Z\in\mdcl(\Z)} \E_P[X - Z] \\
    &= \max\left(-\infty,\sup_{P\in\Delta_0(\Omega)} \inf_{Z\in\mdcl(\Z)} \E_P[X - Z] \right)\\
    &= \sup_{P\in\Delta_0(\Z)} \inf_{Z\in\mdcl(\Z)} \E_P[X - Z] \\
    &= \sup_{P\in\Delta_0(\Z)} \E_P X - \sup_{Z\in\mdcl(\Z)} \E_P[Z] \\
    &= \sup_{P\in\Delta_0(\Z)} \E_P X \\
    &= \Epucons X~,
  \end{align*}
  where the penultimate equality follows from the definition of $\Delta_0(\Z)$ and the fact that $0\in\Z$.
\end{proof}

Let us now consider the price equality $\Epu X = \Egu X$.
As alluded to in the proof of Theorem~\ref{thm:chain-of-price-inequalities}, this condition is equivalent to minimax duality.

\begin{proposition}\label{prop:price-equality-minimax}
  Let $(\Omega,\Z)$ be a gamble space, and $X:\Omega\to\infreals$ measurable.
  Then $\Epu X = \Egu X$ if and only if
  \begin{equation}
    \sup_{P\in\Delta(\Omega)} \inf_{Z\in\mdcl(\Z)} \E_P [X-Z] = \inf_{Z\in\mdcl(\Z)} \sup_{P\in\Delta(\Omega)} \E_P[X-Z]~.
  \end{equation}  
\end{proposition}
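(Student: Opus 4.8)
The plan is to observe that the claimed equivalence is almost immediate once one recognizes that both sides of the asserted minimax identity are precisely $\Epu X$ and $\Egu X$ with the definitions unfolded. Indeed, the left-hand side $\sup_{P\in\Delta(\Omega)}\inf_{Z\in\mdcl(\Z)}\E_P[X-Z]$ is by definition exactly $\Epu X$, so the only genuine content is to identify $\Egu X$ with the right-hand side $\inf_{Z\in\mdcl(\Z)}\sup_{P\in\Delta(\Omega)}\E_P[X-Z]$.

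First I would invoke Proposition~\ref{prop:mdcl}: since $X$ is measurable and $X>-\infty$ (as $X:\Omega\to\infreals$), we have $\Egu X = \Egu_{\mdcl(\Z)} X = \inf_{Z\in\mdcl(\Z)}\sup_{\omega\in\Omega} X(\omega)-Z(\omega)$. Next I would reuse the randomization observation already established as eq.~\eqref{eq:nature-can-randomize} inside the proof of Theorem~\ref{thm:chain-of-price-inequalities}: for each fixed measurable $Z$, replacing the supremum over outcomes $\omega$ by a supremum over point masses $\delta_\omega$ and then relaxing to all of $\Delta(\Omega)$ leaves the value unchanged, because $\E_P[X-Z]\le\sup_{\omega\in\Omega} X(\omega)-Z(\omega)$ for every $P\in\Delta(\Omega)$ while equality is attained along point masses. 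This yields $\Egu X = \inf_{Z\in\mdcl(\Z)}\sup_{P\in\Delta(\Omega)}\E_P[X-Z]$, which is precisely the right-hand side of the displayed identity.

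With both sides thus identified as $\Epu X$ and $\Egu X$ respectively, the stated ``if and only if'' becomes a tautology, and I would conclude by simply equating the two expressions. There is no substantive obstacle: the heart of the argument was already carried out within Theorem~\ref{thm:chain-of-price-inequalities}, and this proposition merely isolates it. The one point warranting minor care is the treatment of integrals or variables taking infinite values, where by convention an undefined $\E_P[X-Z]$ is set to $\infty$; I would defer to the analysis of these corner cases in \S~\ref{sec:proofs-undef-expect}, noting that this convention is applied identically in the definition of $\Epu$ and in the expression for $\Egu$, so that the two remain equal termwise even when infinities arise.
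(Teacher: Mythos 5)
Your proposal is correct and takes essentially the same route as the paper's own proof, which likewise invokes Proposition~\ref{prop:mdcl} to pass to $\mdcl(\Z)$ and then cites eq.~\eqref{eq:nature-can-randomize} to identify $\Egu X$ with the right-hand side, rendering the equivalence definitional. Your closing remark on undefined expectations matches the paper's handling as well, since the randomization step in Theorem~\ref{thm:chain-of-price-inequalities} defers those corner cases to \S~\ref{sec:proofs-undef-expect} in exactly the way you describe.
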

\begin{proof}
  Proposition~\ref{prop:mdcl} gives $\Egu X := \Egu_{\Z} X = \Egu_{\mdcl(\Z)} X$.
  The result now follows from eq.~\eqref{eq:nature-can-randomize}.
\end{proof}

As motivated in \S~\ref{sec:introduction}, we can use minimax duality, and scalable gambles, to ``lift'' measure-theoretic statements to game-theoretic ones.
There we had $u(P,\psi) = \E_P[X - Z^\psi]$, and discussed how the game-theoretic law of large numbers could be derived from the measure-theoretic one if we had minimax duality.
We detail this general approach in \S~\ref{sec:translating-mtp-gtp}.

\begin{remark}\label{remark:epu-convexify-intuition}
  Intuitively, by forcing World to play a probability measure, $\Epu$ behaves like $\Egu$ after convexifying the gambles.
  More precisely, for finite $\Omega$, we have $\Epu_\Z = \Egu_{\overline \conv(\Z)}$, where $\overline \conv(\Z)$ is the closed convex hull of $\Z$; see \S~\ref{sec:structure-epu-finite-omega}.
  Thus, leveraging Proposition~\ref{prop:dcl-wlog}, minimax duality holds if $\dcl(\Z)$ is already convex; for finite $\Omega$, closure comes for free.
  In fact, this statement is an if and only if: if $\dcl(\Z)$ is not convex, one can find an $X$ for which minimax duality fails (\S~\ref{sec:structure-epu-finite-omega}).
  These statements become more subtle with larger $\Omega$ and require careful treatment of the topology, thus necessitating the work in \S~\ref{sec:minimax-theorems}.
\end{remark}

\subsection{Sequential price inequalities}
\label{sec:chain-price-inequalities-sequential}

We are now in a position to prove a chain of sequential price inequalities analogous to Theorem~\ref{thm:chain-of-price-inequalities}, as foreshadowed in eq.~\eqref{eq:sequential-price-defs-chain}.

\begin{corollary}\label{cor:sequential-price-inequalities}
  Let $(\Y,\hat\Z,\infty)$ be a sequentially normalized gamble space, and let $X:\Y^T\to\infreals$ be measurable and bounded below.
  Then $\Egl X \leq \Epl_{\Z'} X \leq \Epl^* X$ and $\Epuseq X \leq \Epu_{\Z'} X \leq \Egu X$,
  where $\Z' = \bb(\Z_\infty) \cup \bigcup_{t\in\N} \Z_t$.
  If we further have $\Delta_0^\infty(\hat\Z) \neq \emptyset$, 
  then
  \begin{align}\label{eq:sequential-price-inequality-chain}
    \Egl X
    \leq
    \Epl_{\Z'} X
    \leq
    \Epl^* X
    \leq 
    \Epuseq X
    \leq
    \Epu_{\Z'} X
    \leq
    \Egu X~.
  \end{align}
\end{corollary}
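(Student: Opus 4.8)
The plan is to apply Theorem~\ref{thm:chain-of-price-inequalities} to the ordinary gamble space $(\Y^\infty,\Z')$ and then reconcile the resulting quantities with those named in the corollary. With no extra hypotheses, Theorem~\ref{thm:chain-of-price-inequalities} already yields the two half-chains $\Egl_{\Z'} X \leq \Epl_{\Z'} X \leq \Epl^0_{\Z'} X$ and $\Epucons_{\Z'} X \leq \Epu_{\Z'} X \leq \Egu_{\Z'} X$, where the consistent prices are over $\Delta_0(\Z')$. The work is therefore to (a) replace the innermost consistent prices over $\Delta_0(\Z')$ by the sequentially consistent prices $\Epl^* X,\Epuseq X$ over $\Delta_0^\infty(\hat\Z)$, and (b) replace the outermost game-theoretic prices $\Egl_{\Z'} X,\Egu_{\Z'} X$ by the full-space prices $\Egl X,\Egu X$ over $\Z_\infty$.

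For (a) I would invoke Proposition~\ref{prop:sequentially-consistent-is-consistent}, which gives $\Delta_0^\infty(\hat\Z)\subseteq\Delta_0(\Z')$. Monotonicity of a supremum (resp.\ infimum) under enlarging its index set then gives $\Epuseq X \leq \Epucons_{\Z'} X$ and $\Epl^0_{\Z'} X \leq \Epl^* X$, while the hypothesis $\Delta_0^\infty(\hat\Z)\neq\emptyset$ supplies both $\Delta_0(\Z')\neq\emptyset$ (so that the full chain of Theorem~\ref{thm:chain-of-price-inequalities} applies over $\Z'$) and the middle link $\Epl^* X \leq \Epuseq X$ (an infimum is at most a supremum over a nonempty set). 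Splicing these in yields the inner segment $\Epl_{\Z'} X \leq \Epl^0_{\Z'} X \leq \Epl^* X \leq \Epuseq X \leq \Epucons_{\Z'} X \leq \Epu_{\Z'} X$, which contains the corollary's chain; without $\Delta_0^\infty(\hat\Z)\neq\emptyset$ one still gets the two separate half-chains, since only the central link $\Epl^* X \leq \Epuseq X$ needs nonemptiness.

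The right endpoint of (b) is the easy half. Since $X$ is bounded below and $\bb(\Z_\infty)\subseteq\Z'\subseteq\Z_\infty$, Proposition~\ref{prop:bounded-X-gambles}(1) gives $\Egu_{\Z_\infty} X = \Egu_{\bb(\Z_\infty)} X$; as $\Egu$ is decreasing in the gamble set, this squeezes $\Egu_{\Z'} X$ between equal quantities, so $\Egu_{\Z'} X = \Egu X$, and combined with $\Epu_{\Z'} X \leq \Egu_{\Z'} X$ this gives $\Epu_{\Z'} X \leq \Egu X$. Intuitively, to \emph{sell} a bounded-below $X$ one only ever needs bounded-below gambles, all of which lie in $\Z'$.

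The left endpoint, $\Egl X \leq \Epl_{\Z'} X$, is the main obstacle, and it is genuinely asymmetric to the right. The tempting mirror --- proving $\Egl_{\Z'} X = \Egl X$ --- is \emph{false} in general: $\Egl X = -\Egu_{\Z_\infty}(-X)$ involves the bounded-\emph{above} variable $-X$, and to \emph{buy} a bounded-below $X$ (equivalently, replicate $-X$ from above) Gambler may need gambles that are unbounded below exactly where $X$ is large, and such infinite-horizon gambles lie in $\Z_\infty\setminus\Z'$; hence $\Egl_{\Z'} X$ can be strictly smaller than $\Egl X$, and the inclusion $\bb(\Z_\infty)\subseteq\Z'$ is of no help here. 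I would therefore prove $\Egl X \leq \Epl_{\Z'} X$ by crossing directly from the game-theoretic side over $\Z_\infty$ to the measure-theoretic side over $\Z'$. The decisive structural fact is the pessimistic convention $\infty-\infty=\infty$: any $W\in\Z_\infty$ witnessing a \emph{finite} value of $\Egu_{\Z_\infty}(-X)$ must satisfy $W>-\infty$ wherever $X=\infty$, so such $W$ cannot ``cheat'' on the sets where $X$ is infinite. Stopping $W$'s generating strategy the first time its capital reaches a threshold slightly below $\sup(-X)$ produces finite-horizon truncations lying in $\bigcup_t\Z_t\subseteq\Z'$ that dominate $-X$ in the limit; controlling their $Q$-expectations via Fatou's lemma and optional stopping (as in Proposition~\ref{prop:sequentially-consistent-is-consistent}) then yields $\E_Q[-X]\leq \Egu_{\Z_\infty}(-X)$ for every $Q\in\Delta_0(\Z')$, with inconsistent $Q$ absorbed by the penalty $\sup_{Z\in\mdcl(\Z')}\E_Q Z$. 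I expect this Fatou/domination step --- showing the stopped finite-horizon gambles are integrable enough to pass to the limit --- to be the hardest point, more delicate than the corresponding step in Proposition~\ref{prop:sequentially-consistent-is-consistent} precisely because the relevant gambles need not be bounded below.
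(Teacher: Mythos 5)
Most of your proposal coincides with the paper's own proof. The paper likewise works on the ordinary gamble space $(\Y^\infty,\Z')$: it gets $\Epuseq X \leq \Epucons_{\Z'} X$ from the inclusion $\Delta_0^\infty(\hat\Z)\subseteq\Delta_0(\Z')$ of Proposition~\ref{prop:sequentially-consistent-is-consistent}, splices in Theorem~\ref{thm:chain-of-price-inequalities} over $\Z'$, obtains $\Egu_{\Z'} X = \Egu X$ by exactly your sandwich $\bb(\Z_\infty)\subseteq\Z'\subseteq\Z_\infty$ together with the bounded-below reduction (your citation of Proposition~\ref{prop:bounded-X-gambles}(1) is in fact the correct one; the paper's pointer to Proposition~\ref{prop:gambles-not-neg-infty} at that step is evidently a slip), and invokes $\Delta_0^\infty(\hat\Z)\neq\emptyset$ only for the central link $\Epl^* X \leq \Epuseq X$. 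Your accounting of which links survive without nonemptiness also matches the statement.

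The divergence is the left endpoint, and here the comparison cuts both ways. You are right that the mirrored sandwich is unavailable: the paper's proof, read literally, only delivers $\Egl_{\Z'} X \leq \Epl_{\Z'} X \leq \Epl^* X$ via the duality identities inside Theorem~\ref{thm:chain-of-price-inequalities}, and since $\Z'\subseteq\Z_\infty$ gives $\Egl_{\Z'} X \leq \Egl X$, this does not by itself yield the stated $\Egl X \leq \Epl_{\Z'} X$ --- you have put your finger on an asymmetry the paper's proof glosses over. (Note, though, that when $X$ is additionally bounded above, Proposition~\ref{prop:bounded-X-gambles}(1) applied to the target $-X$ gives $\Egu_{\Z_\infty}(-X)=\Egu_{\bb(\Z_\infty)}(-X)$ and the sandwich closes on the dual side too; the issue only arises for unbounded-above $X$.) However, your proposed repair does not go through as sketched. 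The decisive step --- ``controlling their $Q$-expectations via Fatou's lemma and optional stopping (as in Proposition~\ref{prop:sequentially-consistent-is-consistent})'' --- is exactly what fails for the gambles you are forced to use: Fatou requires an integrable \emph{lower} bound on the finite-horizon truncations, and the stopped witnesses dominating $-X-\alpha$ are unbounded below, which is precisely the hypothesis under which Proposition~\ref{prop:sequentially-consistent-is-consistent} used Fatou. Concretely, Example~\ref{ex:seq-consistent-but-no-consistent} exhibits a stopped, unbounded-below gamble $Z^\tau$ (stop upon reaching capital $1$) with $\E_Q Z^\tau = 1 > 0$ for a sequentially consistent $Q\in\Delta_0(\Z')$, so no limit-passing argument of the kind you describe can establish $\E_Q Z^\tau \leq 0$. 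In that very instance (with $-X$ the stopped gamble of Remark~\ref{remark:self-replication-paradox}) the target inequality survives only because the $\infty-\infty=\infty$ convention forces $\Egu_{\Z_\infty}(-X)=1$, a mechanism your sketch names but does not actually exploit. So either a genuinely new argument leveraging that convention where $X=\infty$ is needed, or the left endpoint must be weakened to $\Egl_{\Z'} X$, which is all the paper's own proof delivers.
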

\begin{proof}
  Proposition~\ref{prop:sequentially-consistent-is-consistent} states $\Delta_0^\infty(\hat\Z) \subseteq \Delta_0(\Z')$, giving
  \[\Epuseq X := \sup_{P \in \Delta_0^\infty(\hat\Z)} \E_P X \leq \sup_{P \in \Delta_0(\Z')} \E_P X~.\]
  Theorem~\ref{thm:chain-of-price-inequalities} gives $\Epu_{\Delta_0(\Z')} X \leq \Epu_{\Z'} X \leq \Egu_{\Z'} X$.
  From Proposition~\ref{prop:gambles-not-neg-infty}, as $X$ is bounded below, we have $\Egu X := \Egu_{\Z_\infty} X = \Egu_{\bb(\Z_\infty)} X$.
  As $\bb(\Z_\infty) \subseteq \Z' \subseteq \Z_\infty$, we have $\Egu_{\Z_\infty} X \leq \Egu_{\Z'} X \leq \Egu_{\bb(\Z_\infty)}$ giving $\Egu X = \Egu_{\Z'} X$.
  When $\Delta_0^\infty(\hat\Z)\neq\emptyset$, we have
  \[\Epl^* X := \inf_{P\in\Delta_0^\infty(\hat\Z)} \E_P X \leq \sup_{P\in\Delta_0^\infty(\hat\Z)} \E_P X := \Epuseq X~,\]
  completing the chain.
\end{proof}

\begin{remark}
  \label{remark:self-replication-paradox}
  Let us return to Example~\ref{ex:seq-consistent-but-no-consistent}, where for $\Y = \{-1,1\}$ we had a particular gamble $Z^\psi \in \Z_\infty$ where $\psi$ bet \$1 on $y=1$ in each round but stopped when the total winnings reached 1.
  As we saw, letting $X = Z^\psi$, we have $\Epuseq X = 1$ but $\Epucons X = \Epu X = -\infty$, as the gamble space $(\Y^\infty,\Z_\infty)$ is upward scalable yet lacks consistent probability measures.

  Let us now compute $\Egu X$.
  It may seem self-evident that one can replicate $X = Z^\psi$ with zero initial capital: simply execute the same strategy $\psi$, and round for round, no matter the outcome, Gambler's capital will be the same as $Z^\psi$.
  Indeed, the ``pessimism'' in the choice $X = Z^\psi = \liminf_{t\to\infty} Z^\psi_t$ is actually the most favorable for Gambler.
  There is a subtlety, however: one cannot replicate gambling strategies, only variables.
  And for an outcome sequence where both $X$ and $Z^\psi$ plummet to $-\infty$, such as $y = -1$ in every round, our definition of the replition cost in \S~\ref{sec:basic-definitions} is $(-\infty) - (-\infty) = \infty$.
  To properly replicate this case, then, Gambler has no choice but to start with \$1 and refrain from gambling at all, giving $\Egu X = 1$.

  In summary, we have $\Epuseq X = \Egu X = 1$ but $\Epucons X = \Epu X = -\infty$ for this $X$.
  We can instead have $\Epucons X = \Epu X = 0$ by adding $0$ to $\Y$, i.e., for $\Y' = \{-1,0,1\}$.
  (Clearly $\Epucons X \leq 0$ by the point mass on the zero sequence, but as before if $\E_P X > 0$ then by definition $P$ is not consistent with $Z^\psi = X$.)
\end{remark}

\subsection{Sequential price equality and composite Ville}
\label{sec:seq-price-equal}

Turning to price equality, the sequential version of Theorem~\ref{thm:ep0-equals-ep}, from eq.~\eqref{eq:sequential-price-inequality-chain}, is $\Epuseq X = \Epu_{\Z'} X$ for some restriction $\Z'$ on $\Z_T$.
Intuitively, $\Z'$ must be large enough that Gambler can punish World for being sequentially inconsistent, but not so large that sequentially consistent probability measures can fail to be consistent.
Taking $\Z' = \bb(\Z_T)$ would suffice if $X$ was bounded below, but these statements do not always hold for settings of interest.
As discussed in Example~\ref{ex:seq-consistent-but-no-consistent}, a nice compromise is to allow all finite-horizon gambles, even those unboundedly negative, since one still has $\E_P Z \leq 0$ by sequential consistency.

\begin{corollary}\label{cor:ep0star-equals-ep}
  Let $(\Y,\hat\Z,\infty)$ be a disintegrable, upward-scalable, sequentially normalized gamble space, and $X:\Y^T\to\extreals$ measurable and bounded above.
  Then $\Epuseq X = \Epucons_{\Z'} X = \Epu_{\Z'} X$
  where $\Z' = \bb(\Z_\infty) \cup \bigcup_{t\in\N} \Z_t$.
\end{corollary}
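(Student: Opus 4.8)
The plan is to prove the two equalities separately, treating the chain $\Epuseq X = \Epucons_{\Z'} X = \Epu_{\Z'} X$ as an assembly of results already in hand. For the first equality I would show that the two index sets defining the suprema actually coincide, i.e.\ $\Delta_0^\infty(\hat\Z) = \Delta_0(\Z')$; the price equality is then immediate, since both $\Epuseq X$ and $\Epucons_{\Z'} X$ are $\sup_P \E_P X$ over the \emph{same} set of measures. For the second equality I would apply Theorem~\ref{thm:ep0-equals-ep} directly to the non-sequential gamble space $(\Y^\infty,\Z')$, after checking its hypotheses.

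For the set equality, one inclusion is exactly Proposition~\ref{prop:sequentially-consistent-is-consistent}, which gives $\Delta_0^\infty(\hat\Z) \subseteq \Delta_0(\Z')$ since the space is sequentially normalized. For the reverse inclusion, I would observe that $\bigcup_{t\in\N}\Z_t \subseteq \Z'$, so consistency with all of $\Z'$ implies consistency with the sub-collection, that is $\Delta_0(\Z') \subseteq \Delta_0(\bigcup_{t\in\N}\Z_t)$; and Proposition~\ref{prop:consistent-implies-seq-consistent} (using both disintegrability and sequential normalization, in its $T=\infty$ form) gives $\Delta_0(\bigcup_{t\in\N}\Z_t) \subseteq \Delta_0^\infty(\hat\Z)$. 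Chaining these yields $\Delta_0(\Z') \subseteq \Delta_0^\infty(\hat\Z)$, completing the identification $\Delta_0(\Z') = \Delta_0^\infty(\hat\Z)$.

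For the second equality I would verify that $(\Y^\infty,\Z')$ meets the hypotheses of Theorem~\ref{thm:ep0-equals-ep}: upward scalability and $0\in\Z'$, with $X$ measurable and bounded above (assumed). Since the space is sequentially normalized, $0\in\hat\Z^{(s)}$ for every $s$, so the all-zero strategy lies in each $\Z_t$ and in $\bb(\Z_\infty)$, whence $0\in\Z'$. For upward scalability I would check each piece: if $Z = Z^\psi_t \in \Z_t$ and $\alpha\geq 1$, then $\alpha\psi$ is a valid strategy by upward scalability of each $\hat\Z^{(s)}$, and $\alpha Z = Z^{\alpha\psi}_t \in \Z_t$; likewise if $Z \in \bb(\Z_\infty)$, the strategy $\alpha\psi$ gives $Z^{\alpha\psi}_\infty = \liminf_t \alpha Z^\psi_t = \alpha Z$ (as $\alpha > 0$) with $\inf(\alpha Z) = \alpha\inf Z > -\infty$, so $\alpha Z \in \bb(\Z_\infty)$. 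Hence $\Z'$ is upward scalable, and Theorem~\ref{thm:ep0-equals-ep} delivers $\Epucons_{\Z'} X = \Epu_{\Z'} X$.

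I do not expect a genuine obstacle here: the substantive work lives in Propositions~\ref{prop:consistent-implies-seq-consistent} and~\ref{prop:sequentially-consistent-is-consistent} and in Theorem~\ref{thm:ep0-equals-ep}, and this corollary is largely their combination. The only place requiring care is the mechanical verification that the two-piece set $\Z'$ inherits upward scalability and contains $0$; the conceptual point is simply that the two propositions together pin down $\Delta_0(\Z')$ exactly as the sequentially consistent measures, which collapses the first equality to a tautology and hands the second to the already-proven non-sequential theorem.
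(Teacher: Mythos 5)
Your proof is correct and follows essentially the same route as the paper: both establish $\Delta_0(\bigcup_{t\in\N}\Z_t) = \Delta_0^\infty(\hat\Z) = \Delta_0(\Z')$ by chaining Propositions~\ref{prop:consistent-implies-seq-consistent} and~\ref{prop:sequentially-consistent-is-consistent} with the trivial inclusion $\bigcup_{t\in\N}\Z_t \subseteq \Z'$, and then invoke Theorem~\ref{thm:ep0-equals-ep} for the second equality. Your explicit check that $\Z'$ inherits $0\in\Z'$ and upward scalability (scaling a strategy round-by-round and noting $\liminf_t \alpha Z^\psi_t = \alpha\liminf_t Z^\psi_t$ for $\alpha>0$) is a mechanical detail the paper leaves implicit, but it does not change the argument.
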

\begin{proof}
  Sequential normality implies $0\in\Z_\infty$, and thus $0\in\Z'$.
  Propositions~\ref{prop:consistent-implies-seq-consistent} and~\ref{prop:sequentially-consistent-is-consistent} give $\Delta_0(\bigcup_{t\in\N} \Z_t) \subseteq \Delta_0^\infty(\hat\Z) \subseteq \Delta_0(\Z')$.
  As $\bigcup_{t\in\N} \Z_t \subseteq \Z'$, we also have $\Delta_0(\bigcup_{t\in\N} \Z_t) \supseteq \Delta_0(\Z')$,
  giving $\Delta_0(\bigcup_{t\in\N} \Z_t) = \Delta_0^\infty(\hat\Z) = \Delta_0(\Z')$.
  We conclude $\Epuseq X = \Epucons_{\Z'} X$.
  Theorem~\ref{thm:ep0-equals-ep} gives $\Epucons_{\Z'} X = \Epu_{\Z'} X$.
\end{proof}

\begin{remark}
  We will rely on conditions for $\Delta_0^T = \Delta_0$ so that the chain of price inequalities holds.
  But from Remark~\ref{remark:self-replication-paradox}, clearly the price equality we often care about, $\Epuseq = \Egu$, can hold even when the chain breaks down.
  It is an interesting open question to prove more general price equality conditions that hold even in those cases.
  
\end{remark}

Finally, as discussed in \S~\ref{sec:intro-composite-ville}, we now discuss formally why a sequential minimax theorem yields a composite version of Ville's Theorem.

\begin{proposition}
  \label{prop:minimax-implies-composite-ville}
  Let $(\Y,\hat\Z,T)$ be a disintegrable sequential gamble space which is sequentially scalable and arbitrage free.
  Let $\P = \Delta_0^T(\hat\Z)$.
  If $\Epuseq X = \Egu X$ for all bounded measurable $X:\Y^T\to\reals$, or all indicators $X = \ones_A$ for measurable $A\subseteq \Y^T$, then we have
  \begin{align*}
    \sup_{P\in\P} P(A)
    &= \inf\left\{ \alpha > 0 : \{X_t \geq 0\}_t \; \P\text{-supermartingale}, X_0 = 1, X_T \geq \frac 1 \alpha \text{ on } A\right\}~.
  \end{align*}
\end{proposition}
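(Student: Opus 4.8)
The plan is to reduce the statement to two results already established: the game-theoretic Ville characterization of $\Pgu A$ (Corollary~\ref{cor:proto-ville}) and the fact that game-theoretic supermartingales are $\P$-supermartingales (Proposition~\ref{prop:supermartingale-to-measure}). Write $p := \sup_{P\in\P} P(A)$ for the left-hand side, and let $q_{GT}$ and $q_{\P}$ denote the right-hand infima taken over \emph{game-theoretic} supermartingales and over \emph{$\P$-}supermartingales, respectively; the goal is $p = q_{\P}$.

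First I would record that the hypotheses supply the conditions needed downstream: sequential scalability (with each $\hat\Z^{(s)}$ nonempty) forces $0\in\hat\Z^{(s)}$ through the $\alpha=0$ case of scalability, so together with arbitrage-freeness the space is sequentially normalized. Applying the price-equality hypothesis to $X=\ones_A$ and unwinding definitions gives
\[
  p = \sup_{P\in\P} P(A) = \sup_{P\in\Delta_0^T(\hat\Z)} \E_P \ones_A = \Epuseq \ones_A = \Egu \ones_A = \Pgu A .
\]
Because the space is sequentially scalable and sequentially normalized, the scalable form of Corollary~\ref{cor:proto-ville} applies to give $\Pgu A = q_{GT}$, whence $p = q_{GT}$.

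It then remains to exchange game-theoretic supermartingales for $\P$-supermartingales. For $q_{\P}\leq q_{GT}$ I would invoke Proposition~\ref{prop:supermartingale-to-measure}: since $\P = \Delta_0^T(\hat\Z)$, every game-theoretic supermartingale is a $\P$-supermartingale, so the feasible set defining $q_{GT}$ is contained in the one defining $q_{\P}$, and the infimum over the larger family is weakly smaller. For the reverse bound $p\leq q_{\P}$ I would run the standard maximal-inequality argument: given any nonnegative $\P$-supermartingale $\{X_t\}$ with $X_0=1$ and $X_T\geq 1/\alpha$ on $A$, fix $P\in\P$; the supermartingale relation gives $\E_P[X_t]\leq X_0 = 1$ for every finite $t$, and since $X_T\geq 0$ with $X_T\geq 1/\alpha$ on $A$,
\[
  \tfrac 1 \alpha P(A) \leq \E_P[X_T\,\ones_A] \leq \E_P[X_T] \leq 1 ,
\]
so $P(A)\leq\alpha$; taking the supremum over $P\in\P$ yields $p\leq\alpha$, and the infimum over feasible $\alpha$ gives $p\leq q_{\P}$. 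Combining, $p\leq q_{\P}\leq q_{GT}=p$, forcing equality throughout.

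The only nonroutine point is the terminal step of the maximal inequality when $T=\infty$, where $X_T = X_\infty = \liminf_{t\to\infty} X_t$ and the bound $\E_P[X_T]\leq X_0$ is not immediate. There I would combine the finite-time bound $\E_P[X_t]\leq 1$ with Fatou's lemma to get $\E_P[X_\infty]=\E_P[\liminf_t X_t]\leq\liminf_t \E_P[X_t]\leq 1$, which is exactly the inequality used above. I would also take care to confirm that nonnegativity, the initial value $X_0=1$, and the terminal condition on $A$ are imposed pointwise while only the supermartingale relation need hold $P$-a.s.\ for each $P\in\P$, so that the two feasible families are directly comparable.
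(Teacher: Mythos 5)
Your proof is correct and follows essentially the same route as the paper: price equality applied to $X=\ones_A$ plus the scalable form of Corollary~\ref{cor:proto-ville} identify $\sup_{P\in\P}P(A)$ with the infimum over game-theoretic supermartingales, Proposition~\ref{prop:supermartingale-to-measure} gives the inequality toward $\P$-supermartingales, and a supermartingale maximal bound closes the loop. The only deviation is cosmetic: where the paper cites Ville's inequality $P(\exists t,\, X_t \geq 1/\alpha) \leq \alpha$, you use a terminal-time Markov bound combined with Fatou's lemma, which is if anything slightly more careful at $T=\infty$, since $X_\infty = \liminf_t X_t \geq 1/\alpha$ on $A$ does not literally yield $\exists t,\, X_t \geq 1/\alpha$ without an $\epsilon$-adjustment.
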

\begin{proof}
  The definition of $\P$ gives $\sup_{P\in\P} P(A) = \Epuseq \ones_A = \Egu \ones_A = \Pgu A$.
  From Corollary~\ref{cor:proto-ville}, we have
  \begin{align*}
    \Pgu A
    &= \inf\left\{ \alpha > 0 : \{X_t \geq 0 \}_t \text{ game-theoretic supermartingale}, X_0 = 1, X_T \geq \frac 1 \alpha \text{ on } A\right\}
    \\
    &\geq \inf\left\{ \alpha > 0 : \{X_t \geq 0\}_t \;\P\text{-supermartingale}, X_0 = 1, X_T \geq \frac 1 \alpha \text{ on } A\right\}~.
  \end{align*}
  For the other inequality, let $\{X_t\}_t$ be a nonnegative $\P$-supermartingale with $X_0=1$ and $X_T \geq 1/\alpha$ on $A$, for some $\alpha > 0$.
  Then Ville's inequality~\citep{ville1939etude,durrett2019probability} gives $P(A) \leq P(\exists t, \, X_t \geq 1/\alpha) \leq \alpha$.
\end{proof}

\section{Minimax theorems}
\label{sec:minimax-theorems}

In \S~\ref{sec:introduction}, we saw that statements in game-theoretic probability can be viewed as minimax theorems.
In \S~\ref{sec:prices-probability}, we broke this claim down further: the statement that game-theoretic and measure-theoretic expectations agree can be thought of as the price equality $\Epuseq X = \Egu X$.
When gambles are scalable, we have $\Epuseq X = \Epu X$ (Gambler can force sequential consistency) so all that remains is showing $\Epu X = \Egu X$ (minimax duality).
In this section, we develop minimax theorems to establish this latter equality.
We will apply these results to game-theoretic probability in \S~\ref{sec:translating-mtp-gtp}.

After reviewing existing minimax theorems in the literature, we prove several new ones, each borrowing ideas from nearby disciplines.
The first, a simple application of Sion's theorem, holds only for continuous variables $X$ on a compact space $\Omega$.
The second and third give sufficient conditions for non-sequential minimax duality.
The fourth (Theorem~\ref{thm:finite-sequential-minimax}) is our main result of this section, which shows how to convert non-sequential minimax duality to finite-time sequential minimax duality, via a backward induction argument.
We conclude in \S~\ref{sec:axioms-fa} with a connection to finitely additive probability measures, along with several counterexamples illustrating the challenges in proving a more general minimax theorem.
Still, we believe such a theorem is possible (\S~\ref{sec:future-work}).

\subsection{Existing general minimax theorems in game-theoretic probability}
\label{sec:existing-minimax}

Aside from results for specific gamble spaces, there are several general minimax results in the literature on game-theoretic probability.
The first, a game-theoretic Ville's Theorem, is essentially the case where there is a unique consistent probability measure.

For a measurable space $\Y$, \citet[\S~9.1]{shafer2019game} define a \emph{probability forecasting system} to be an indexed set $\{P^{(s)}\in\Delta(\Y)\}_{s\in\Y^*}$ of probability measures such that the map $s \mapsto P^{(s)}(A)$ is measurable for all measurable $A\subseteq\Y$.
The Ionescu-Tulcea Theorem ensures that $\{P^{(s)}\in\Delta(\Y)\}_{s\in\Y^*}$ uniquely determines a probability measure $P$ on $\Y^\infty$.

For a set of probability measures $\P \subseteq \Delta(\Omega)$, define
\begin{align}
  \Z_0(\P) := \{ Z:\Omega\to\reals \text{ measurable} \mid \E_P Z \leq 0 \; \forall P\in\P\}
  \label{eq:consistent-gambles}
\end{align}
to be the set of gambles consistent with $\P$.

\begin{theorem}[{{\citep[Theorem 9.3]{shafer2019game}}}]
  Let $\Y$ be a measurable space, $\{P^{(s)}\}_{s\in\Y^*}$ a probability forecasting system, and $P\in\Delta(\Y^\infty)$ the unique probability measure it induces.
  Consider the sequential gamble space $(\Y,\hat\Z,\infty)$, where $\hat\Z^{(s)} = \Z_0(\{P^{(s)}\})$.
  Then we have $\E_P X = \Epseq X = \Eg X$ for all bounded measurable $X$.
\end{theorem}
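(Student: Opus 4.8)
The plan is to establish the two equalities separately: $\Epseq X = \E_P X$, which is bookkeeping about the consistent measures, and $\Eg X = \E_P X$, which is the substantive part and amounts to a pathwise version of martingale convergence.

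For $\Epseq X = \E_P X$, I would show the sequentially consistent measures are exactly $\{P\}$. The key local fact is $\Delta_0(\hat\Z^{(s)}) = \{P^{(s)}\}$ for every situation $s$. One inclusion is immediate from $\hat\Z^{(s)} = \Z_0(\{P^{(s)}\})$. For the converse, if $Q \neq P^{(s)}$ there is a measurable $A$ with $Q(A) \neq P^{(s)}(A)$; then one of the mean-zero, bounded gambles $\pm(\ones_A - P^{(s)}(A))$ lies in $\hat\Z^{(s)}$ yet has strictly positive $Q$-expectation, so $Q \notin \Delta_0(\hat\Z^{(s)})$. Since each $\Delta_0(\hat\Z^{(s)})$ is the singleton $\{P^{(s)}\}$, any $Q \in \Delta_0^\infty(\hat\Z)$ has regular conditionals equal to $P^{(s)}$ ($Q$-a.s.), so its finite-dimensional marginals coincide with those the forecasting system builds; by the uniqueness clause of Ionescu--Tulcea, $Q = P$. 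Thus $\Delta_0^\infty(\hat\Z) = \{P\}$ is a singleton, so $\Epuseq X = \Epl^* X = \E_P X$ and hence $\Epseq X = \E_P X$.

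For $\Eg X = \E_P X$, I would first record the local identity $\Egu_{\hat\Z^{(s)}} g = \E_{P^{(s)}} g$ for bounded measurable $g$: the gamble $Z = g - \E_{P^{(s)}} g$ is mean-zero, hence in $\hat\Z^{(s)}$, and replicates $g$ at cost $\E_{P^{(s)}} g$ (giving $\le$), while any admissible $Z$ has $\E_{P^{(s)}} Z \le 0$ and so forces $\alpha \ge \E_{P^{(s)}} g$ (giving $\ge$). The same computation shows a process is a game-theoretic (super)martingale here exactly when it is a $P$-(super)martingale. The sandwich $\Egl X \le \E_P X \le \Egu X$ is then immediate from the sequential price inequalities of Corollary~\ref{cor:sequential-price-inequalities} (valid since the space is sequentially normalized, $X$ is bounded, and $\Delta_0^\infty(\hat\Z) \neq \emptyset$) together with $\Epuseq X = \Epl^* X = \E_P X$. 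It remains to prove the reverse inequality $\Egu X \le \E_P X$; applying it to $-X$ then gives $\Egl X \ge \E_P X$ and closes the argument. For this I would use the Ville-type characterization of Theorem~\ref{thm:ville-expectation} with the Doob--Levy martingale $M_t = \E_P[X \mid Y_{1..t}]$, which is a game-theoretic martingale with $M_0 = \E_P X$ and, by martingale convergence, satisfies $M_t \to X$ $P$-almost surely.

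The main obstacle is that game-theoretic replication requires $\liminf_t M_t \ge X$ on \emph{every} outcome sequence, whereas convergence only holds off the $P$-null set $B = \{\liminf_t M_t < X\}$. I would repair this using the classical (measure-theoretic) Ville theorem~\citep{ville1939etude}: as $P(B)=0$, there is a nonnegative $P$-supermartingale $\{N_t\}$ with each $N_t$ real-valued, $N_0 = 1$, and $N_t \to \infty$ on $B$; by the local identity above, $\{N_t\}$ is automatically a game-theoretic supermartingale. For $\epsilon,\lambda > 0$, the process $X_t := M_t + \lambda N_t + \epsilon$ is a game-theoretic supermartingale (using positive linearity of each $\hat\Z^{(s)}$) with $X_0 = \E_P X + \lambda + \epsilon$. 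Since $M_t$ is bounded (by $\sup|X| < \infty$) and $\liminf$ is superadditive, one checks $\liminf_t X_t > X$ everywhere: on $B^c$ because $M_t \to X$ and $N_t \ge 0$, and on $B$ because $\lambda N_t \to \infty$ dominates the bounded $M_t$. Theorem~\ref{thm:ville-expectation} then yields $\Egu X \le \E_P X + \lambda + \epsilon$, and letting $\lambda,\epsilon \to 0$ gives $\Egu X \le \E_P X$. Combining, $\Egu X = \Egl X = \E_P X$, so $\Eg X = \E_P X = \Epseq X$, as claimed.
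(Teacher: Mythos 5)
You have the right architecture, and it is essentially the one used in the actual proof of this result --- note the paper itself does not prove this theorem (it is quoted from \citet[Theorem 9.3]{shafer2019game}), but it summarizes the key step of that proof in \S~\ref{sec:discussion-constructing-strategies} and \S~\ref{sec:future-work}. Your first half is correct: $\Delta_0(\hat\Z^{(s)}) = \{P^{(s)}\}$ for every $s$, hence $\Delta_0^\infty(\hat\Z) = \{P\}$ by Ionescu--Tulcea uniqueness (and $P$ itself is sequentially consistent because the forecasting system is a regular conditional probability for it), giving $\Epseq X = \E_P X$; the sandwich $\Egl X \le \E_P X \le \Egu X$ via Corollary~\ref{cor:sequential-price-inequalities} is also fine, since this gamble space is sequentially normalized and $X$ is bounded.

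The gap is the sentence ``by the local identity above, $\{N_t\}$ is automatically a game-theoretic supermartingale.'' It is not, and your earlier claim that a process is a game-theoretic supermartingale ``exactly when'' it is a $P$-supermartingale is false in one direction: a measure-theoretic $P$-supermartingale satisfies $\E_P[N_{t+1}\mid \F_t]\le N_t$ only $P$-almost surely, whereas Definition~\ref{def:supermartingale} demands $\E_{P^{(s)}}[N_{t+1}(s\oplus\,\cdot\,)]\le N_t(s)$ at \emph{every} situation $s$. World chooses paths adversarially and is free to enter the exceptional null set of situations, where Theorem~\ref{thm:ville-expectation} then gives you nothing; this almost-sure-versus-everywhere distinction is exactly what the paper flags in \S~\ref{sec:discussion-constructing-strategies}, and the ``hedging of null events'' it attributes to the proof of \citep[Theorem 9.3]{shafer2019game} is the missing construction. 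For $M_t = \E_P[X\mid Y_{1..t}]$ the repair is easy: take the kernel version $M_t(s) := \E_{P_s}X$, where $P_s$ is the Ionescu--Tulcea measure continuing from $s$, which satisfies the tower identity exactly at every $s$. For $N_t$ an actual construction is needed, and ``classical Ville'' cannot simply be invoked: since $P$ is the Carath\'eodory extension from the algebra of cylinder sets, cover the null set $B=\{\liminf_t M_t < X\}$ by a countable union of cylinders $U\supseteq B$ with $P(U)$ arbitrarily small, and set $N_t(s):=P_s(U)$. This is an exact (everywhere) game-theoretic martingale, and it converges to $1$ pointwise on $U$ because membership in a cylinder is decided by a finite prefix --- note that L\'evy's $0$--$1$ law would only give $P$-a.s.\ convergence, which is useless here. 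Taking $X_t := M_t + 2\sup|X|\cdot N_t + \epsilon$, your endgame then goes through: $\liminf_t X_t > X$ on $U$ because $N_t\to 1$ dominates $M_t \ge -\sup|X|$, and on $U^c\subseteq B^c$ because $\liminf_t M_t\ge X$ there, at initial cost $\E_P X + 2\sup|X|\,P(U) + \epsilon$. Finally, be careful with your ``$N_t\to\infty$ on $B$'' formulation: if you realize it as a weighted countable sum of such martingales, you must verify $N_t(s)<\infty$ at every $s$, since Theorem~\ref{thm:ville-expectation} requires real-valued supermartingales; the single bounded $N_t$ above sidesteps this entirely.
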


Composite results appear in the literature as well.
\citet{vovk2010prequential} give a minimax duality result for binary sequences, i.e., when $\Y=\{-1,1\}$ and $\P$ is now a set of probability measures.
This result is a special case of \citep[Theorem 9.7]{shafer2019game}, which establishes minimax duality when $\Y$ is a finite set and the gambles $\hat\Z^{(s)}$ satisfy a certain continuity property.

\subsection{Minimax duality for continuous variables}

Perhaps the best-known generalization of von Neumann's minimax theorem is due to Sion.

\begin{theorem}[Sion {\citep[Thm.~1]{sion1958general}}]\label{thm:sion}
  Let $\X$ be a convex subset of a linear topological space and $\Y$ a convex subset of a linear topological space, where at least one of $\X$ and $\Y$ is compact.  If $f:\X\times\Y\to\reals$ satisfies
  \begin{enumerate}
  \item $\forall x\in \X$, $f(x,\cdot)$ is upper semicontinuous and quasi-concave on $\Y$, and
  \item $\forall y\in \Y$, $f(\cdot,y)$ is lower semicontinuous and quasi-convex on $\X$,
  \end{enumerate}
  then $\min_{x\in \X}\sup_{y\in \Y} f(x,y)=\sup_{y\in \Y}\min_{x\in \X}f(x,y)$.
\end{theorem}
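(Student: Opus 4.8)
The plan is to prove the nontrivial inequality $\inf_{x\in\X}\sup_{y\in\Y} f(x,y) \le \sup_{y\in\Y}\inf_{x\in\X} f(x,y)$; the reverse (weak duality) holds for any $f$, since $\inf_{x} f(x,y') \le f(x',y') \le \sup_{y} f(x',y)$ for all $x',y'$, and taking $\sup_{y'}$ then $\inf_{x'}$ gives it. Replacing $f$ by $g(y,x) := -f(x,y)$ and interchanging the two arguments preserves the hypotheses and swaps the two cases of the compactness assumption, so I may assume without loss of generality that $\X$ is compact; then $x\mapsto f(x,y)$ is lower semicontinuous on a compact set, so each inner infimum over $\X$ is attained and is a minimum. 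Write $\alpha := \sup_{y}\inf_{x} f(x,y)$ and $\beta := \inf_{x}\sup_{y} f(x,y)$, suppose for contradiction that $\alpha < \beta$, and fix reals $\alpha < \tilde c < c < \beta$.

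Next I would pass to sublevel sets. For $y\in\Y$ put $A_y := \{x\in\X : f(x,y) \le c\}$. By lower semicontinuity of $f(\cdot,y)$ each $A_y$ is closed, hence compact, and by quasi-convexity each $A_y$ is convex. Since $c < \beta$, every $x$ satisfies $\sup_{y} f(x,y) > c$, so some $A_y$ omits $x$; thus $\bigcap_{y\in\Y} A_y = \emptyset$. Compactness of $\X$ and the finite intersection property then yield $y_1,\dots,y_n\in\Y$ with $\bigcap_{i=1}^n A_{y_i} = \emptyset$, equivalently $\inf_{x}\max_{1\le i\le n} f(x,y_i) \ge c > \tilde c$. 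The crux is the following claim, proved by induction on $n$: if $\tilde c < \inf_{x}\max_{1\le i\le n} f(x,y_i)$, then there is $\bar y \in \conv\{y_1,\dots,y_n\} \subseteq \Y$ with $\tilde c < \inf_{x} f(x,\bar y)$. Granting it, the resulting $\bar y$ obeys $\inf_{x} f(x,\bar y) \le \sup_{y}\inf_{x} f(x,y) = \alpha < \tilde c$, a contradiction that finishes the proof. The inductive step reduces to the base case $n=2$ by restricting the ambient space to the compact convex set $\{x : f(x,y_n) \le c'\}$ for a suitable $c'\in(\tilde c,\inf_{x}\max_i f)$: on this set the maximum over the first $n-1$ indices already exceeds $\tilde c$, so the inductive hypothesis merges $y_1,\dots,y_{n-1}$ into a single point, which the two-point case then merges with $y_n$.

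The genuine obstacle is therefore the two-point lemma ($n=2$): given $A_{y_0}\cap A_{y_1}=\emptyset$, produce $\bar y$ on the segment $[y_0,y_1]$ with $A_{\bar y}=\emptyset$. Its geometric heart is clean. Setting $y_t := (1-t)y_0 + t y_1$, suppose $A_{y_t}$ met both $A_{y_0}$ and $A_{y_1}$, say at $x_0$ and $x_1$. Then $[x_0,x_1]\subseteq A_{y_t}$ by convexity, while quasi-concavity of $f(x,\cdot)$ gives $\min(f(x,y_0),f(x,y_1)) \le f(x,y_t) \le c$ for each such $x$, forcing $[x_0,x_1] \subseteq A_{y_0}\cup A_{y_1}$; since $A_{y_0},A_{y_1}$ are disjoint and closed this contradicts connectedness of the segment. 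Hence for every $t$ the set $A_{y_t}$ misses at least one of $A_{y_0},A_{y_1}$.

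Turning this separation into the existence of a $t$ with $A_{y_t}$ empty is the delicate part, and I expect it to be the main obstacle. The only available regularity is separate semicontinuity, so to run a connectedness argument in the variable $t$ one must show the relevant subsets of $[0,1]$ are closed, yet upper semicontinuity of $f(x,\cdot)$ controls only strict sublevel sets in $y$. I plan to resolve this exactly as Sion does, by carrying the two thresholds $\tilde c < c$ simultaneously so that closed level sets at one threshold can be matched against open strict sublevel sets at the other; this threshold bookkeeping, rather than any single hard estimate, is where the real work lies. Finally, the case in which only $\Y$ is compact follows from the $\X$-compact case already established, applied to $g(y,x)=-f(x,y)$ as in the first paragraph.
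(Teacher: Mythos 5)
The paper does not prove this statement: Theorem~\ref{thm:sion} is imported verbatim from \citet{sion1958general} and used as a black box (in the proof of Theorem~\ref{thm:minimax-continuous-compact-Omega}), so there is no internal proof to compare against. Measured against the literature, your outline faithfully reproduces the classical argument (Sion's original proof, in the streamlined form of Komiya's elementary proof): weak duality, reduction to the case of compact $\X$ via $g(y,x)=-f(x,y)$, the finite-intersection-property extraction of $y_1,\dots,y_n$ from the compact convex sublevel sets $A_y$, the two-point connectedness lemma, and the induction that restricts to $\{x: f(x,y_n)\le c'\}$. The one step you defer --- showing that the sets $I=\{t: C_{y_t}\subseteq C_{y_0}\}$ and $J=\{t: C_{y_t}\subseteq C_{y_1}\}$ partition $[0,1]$ into two closed sets, which is where separate upper semicontinuity in $y$ only controls strict sublevel sets --- is precisely the point of the two-threshold device you name: nonemptiness of the $\tilde c$-sublevel set at the limit parameter, combined with upper semicontinuity of $t\mapsto f(x,y_t)$, places that set inside the nearby $c$-sublevel sets, and connectedness of $C_{y_t}$ then forces membership in $I$ or $J$ to persist in the limit. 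Since you identify this mechanism correctly and it is exactly how the known proof closes, your proposal has no gap in ideas, only unexecuted bookkeeping.
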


We will apply Sion's minimax theorem to show a minimax duality result for continuous variables.
Recall from Corollary~\ref{cor:continuous-gambles-wlog} that
$\Egu_\Z X = \Egu_{\dcl(\Z) \cap C(\Omega)} X$ when $X\in C(\Omega)$ and $\Egu_\Z X = \Egu_{\dcl(\Z) \cap C_b(\Omega)} X$ when $X \in C_b(\Omega)$, where $C(\Omega)$ and
$C_b(\Omega) \subseteq C(\Omega)$ are the sets of continuous and bounded continuous functions $\Omega\to\reals$, respectively.

\begin{lemma}\label{lem:convexity-dcl}
  If $\Z \subseteq (\Omega\to\extreals)$ is convex, then $\dcl(\Z)$ and $\mdcl(\Z)$ are convex.
\end{lemma}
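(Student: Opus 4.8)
The plan is to prove convexity of $\dcl(\Z)$ directly from the definition, and then obtain convexity of $\mdcl(\Z)$ as an immediate consequence. To show $\dcl(\Z)$ is convex, I would fix $Z_1', Z_2' \in \dcl(\Z)$ and $\lambda \in [0,1]$, aiming to show $\lambda Z_1' + (1-\lambda)Z_2' \in \dcl(\Z)$. By the definition of the downward closure, there exist $Z_1, Z_2 \in \Z$ with $Z_1' \leq Z_1$ and $Z_2' \leq Z_2$ pointwise, and by convexity of $\Z$ we have $Z := \lambda Z_1 + (1-\lambda) Z_2 \in \Z$. It then suffices to verify that $\lambda Z_1' + (1-\lambda) Z_2'$ is real-valued and dominated pointwise by $Z$.

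The real-valuedness is immediate since $Z_1', Z_2' \in \reals^\Omega$ and the coefficients are finite. The main point requiring care is the pointwise domination, where I would track the extended-real arithmetic. Because $Z_1'$ is real-valued and $Z_1' \leq Z_1$, we automatically have $Z_1 > -\infty$ everywhere, and likewise $Z_2 > -\infty$; thus $Z_1, Z_2$ take values in $(-\infty,\infty]$ and the sum defining $Z$ never produces the ambiguous form $\infty - \infty$. At each $\omega$, multiplying $Z_i'(\omega) \leq Z_i(\omega)$ by the nonnegative scalars $\lambda$ and $1-\lambda$ preserves the inequality (including the case $Z_i(\omega) = \infty$), and adding the two resulting inequalities is valid since both right-hand terms lie in $(-\infty,\infty]$. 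This yields $\lambda Z_1'(\omega) + (1-\lambda) Z_2'(\omega) \leq Z(\omega)$, so the convex combination lies in $\dcl(\Z)$.

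For $\mdcl(\Z) = \dcl(\Z) \cap \X$, I would simply note that $\mdcl(\Z)$ consists of real-valued measurable functions, and that a convex combination of such is again real-valued and measurable. Combined with the convexity of $\dcl(\Z)$ just established, this gives that any convex combination of elements of $\mdcl(\Z)$ lies in both $\dcl(\Z)$ and $\X$, hence in $\mdcl(\Z)$. No step here is a genuine obstacle; the only place demanding attention is the extended-real bookkeeping in the domination step, which the real-valuedness of the dominated functions conveniently tames, since it rules out $-\infty$ on the dominating side and hence any $\infty - \infty$ indeterminacy.
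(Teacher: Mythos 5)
Your proof is correct. It takes the same underlying route as the paper, but at a different level of abstraction: the paper disposes of the lemma in one line via the set identity $\dcl(\Z) = (\Z + (-\infty,0]^\Omega)\cap\reals^\Omega$, noting that Minkowski sums and intersections preserve convexity (and likewise $\mdcl(\Z) = \dcl(\Z)\cap\X$ with $\X$ convex), whereas you unpack exactly this at the element level---choose dominating gambles $Z_1,Z_2\in\Z$, invoke convexity of $\Z$, and verify pointwise domination of the convex combination. What your version buys is the explicit extended-real bookkeeping: your observation that $Z_i' \leq Z_i$ with $Z_i'$ real-valued forces $Z_i > -\infty$, so $\lambda Z_1 + (1-\lambda)Z_2$ never produces the form $\infty - \infty$, makes rigorous a point the paper's terse proof leaves implicit (the Minkowski-sum representation with real-valued perturbations is in fact slightly delicate at points where the dominating gamble takes the value $\infty$, a case your direct domination check handles cleanly). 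One pedantic remark: your multiplication step tacitly uses a $0\cdot\infty$ convention when $\lambda\in\{0,1\}$; it is cleaner to dispose of those two values as trivial (the combination is just $Z_2'$ or $Z_1'$) and run the argument for $\lambda\in(0,1)$, where multiplying by strictly positive scalars preserves inequalities in $(-\infty,\infty]$ without any convention. With that cosmetic fix the argument is airtight, and the $\mdcl$ step is exactly the paper's.
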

\begin{proof}
  We have $\dcl(\Z) = (\Z + (-\infty,0]^\Omega) \cap \reals^\Omega$, both operations preserving convexity.
  Similarly, $\mdcl(\Z) = \dcl(\Z) \cap \X$ where $\X$ is the set of measurable functions to the reals, a convex set.
\end{proof}

\begin{theorem}\label{thm:minimax-continuous-compact-Omega}
  Let $(\Omega,\Z)$ be a gamble space such that $\Omega$ is a compact Hausdorff topological space and $\Z$ is convex.
  Then $\Egu X = \Epu X$ for all $X\in C_b(\Omega)$.
  
\end{theorem}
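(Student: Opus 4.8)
The plan is to reduce the claim to a single application of Sion's minimax theorem (Theorem~\ref{thm:sion}). By Proposition~\ref{prop:price-equality-minimax}, the equality $\Epu X = \Egu X$ is equivalent to the saddle-point identity $\sup_{P\in\Delta(\Omega)}\inf_{Z\in\mdcl(\Z)}\E_P[X-Z] = \inf_{Z\in\mdcl(\Z)}\sup_{P\in\Delta(\Omega)}\E_P[X-Z]$, so it suffices to produce such an equality. Since $X\in C_b(\Omega)$, Corollary~\ref{cor:continuous-gambles-wlog} lets me replace $\Z$ by the continuous gamble set $\Z' := \dcl(\Z)\cap C_b(\Omega)$ when computing $\Egu X$, and Lemma~\ref{lem:convexity-dcl} together with the fact that $C_b(\Omega)$ is a linear subspace shows $\Z'$ is convex. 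This reduction is precisely what makes the topology usable: for $Z\in\Z'$ the integrand $X-Z$ is bounded and continuous.

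Next I would apply Sion's theorem to $f(Z,P) := \E_P[X-Z]$ on the product $\Z'\times\Delta(\Omega)$. Taking $\Delta(\Omega)$ (identified with the Radon probability measures, i.e.\ the norm-one positive functionals in $C(\Omega)^*$) with the weak-$*$ topology, it is a convex, weak-$*$ compact set by Banach--Alaoglu and the Riesz representation theorem; this supplies the compact factor Sion requires. The map $f$ is affine in each argument, hence quasi-convex in $Z$ and quasi-concave in $P$. For fixed $Z\in\Z'$, the map $P\mapsto\int(X-Z)\,dP$ is weak-$*$ continuous because $X-Z\in C(\Omega)$, so in particular upper semicontinuous; for fixed $P$, the map $Z\mapsto\E_P X - \E_P Z$ is continuous in the supremum norm on $C_b(\Omega)$, hence lower semicontinuous. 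Sion then yields $\inf_{Z\in\Z'}\sup_P\E_P[X-Z] = \sup_P\inf_{Z\in\Z'}\E_P[X-Z]$. The left-hand side equals $\inf_{Z\in\Z'}\sup_{\omega}\bigl(X(\omega)-Z(\omega)\bigr) = \Egu_{\Z'}X = \Egu X$, using that a linear functional over $\Delta(\Omega)$ is maximized at a point mass and Corollary~\ref{cor:continuous-gambles-wlog}.

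The remaining, genuinely delicate, step is to identify the right-hand side $\sup_P\inf_{Z\in\Z'}\E_P[X-Z]$ with $\Epu X = \sup_P\inf_{Z\in\mdcl(\Z)}\E_P[X-Z]$, whose inner infimum ranges over the larger class $\mdcl(\Z)\supseteq\Z'$ of all measurable dominated gambles. Because a larger gamble set can only lower the inner infimum, one direction, $\Epu X\le\Egu X$, is immediate (and already follows from Theorem~\ref{thm:chain-of-price-inequalities}); the content is the reverse inequality. Since $P\mapsto\inf_{Z\in\Z'}\E_P[X-Z]$ is an infimum of weak-$*$ continuous functions, it is weak-$*$ upper semicontinuous and hence attains its maximum at some $P^*\in\Delta(\Omega)$ with $\inf_{Z\in\Z'}\E_{P^*}[X-Z] = \Egu X$. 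It then suffices to show that at this optimal measure the non-continuous hedges buy World nothing, i.e.\ $\sup_{Z\in\mdcl(\Z)}\E_{P^*}Z = \sup_{Z\in\Z'}\E_{P^*}Z$, for then $\inf_{Z\in\mdcl(\Z)}\E_{P^*}[X-Z] = \Egu X$ and thus $\Epu X\ge\Egu X$.

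I expect this last equality to be the main obstacle, since in general a measurable gamble dominated by an element of $\Z$ can exploit atoms or other irregularities of a measure that no continuous minorant can match. The natural route is to argue that the minimax-optimal $P^*$ is forced to be consistent---morally $P^*\in\Delta_0(\Z)$, as anticipated in \S~\ref{sec:intro-composite-ville}---so that $\E_{P^*}Z\le 0$ for every measurable $Z\in\mdcl(\Z)$ and the two suprema collapse; failing a clean consistency argument, one can instead invoke inner regularity of the Radon measure $P^*$ (via Lusin's theorem) to approximate each measurable dominated gamble in $L^1(P^*)$ by continuous gambles still lying in $\dcl(\Z)$. Either way, the crux is controlling the gap between continuous and merely measurable hedges precisely at the optimizing measure, which is where compactness of $\Delta(\Omega)$ and convexity of $\Z$ must be combined most carefully.
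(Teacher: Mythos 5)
Your first two steps coincide with the paper's own proof: the paper likewise applies Sion's theorem (Theorem~\ref{thm:sion}) to $f(Z,P)=\E_P[X-Z]$ on exactly the pair $\Z'=\mdcl(\Z)\cap C_b(\Omega)$ and $\Delta(\Omega)$, gets convexity of $\Z'$ from Lemma~\ref{lem:convexity-dcl}, makes the same affineness and semicontinuity checks (sup-norm continuity in $Z$, weak continuity in $P$ via Portmanteau), and identifies $\inf_{Z\in\Z'}\sup_{P}\E_P[X-Z]=\Egu_{\Z'}X=\Egu X$ using point masses and Corollary~\ref{cor:continuous-gambles-wlog}. Your compactness argument (Riesz representation plus Banach--Alaoglu for the weak-$*$ topology on Radon probability measures) differs only in citation from the paper's appeal to Prokhorov, and is if anything better adapted to a compact Hausdorff $\Omega$ that need not be metrizable. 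Up through the identity $\Egu X=\sup_{P}\inf_{Z\in\Z'}\E_P[X-Z]$, your argument is sound and essentially identical to the paper's.

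The gap is your third step, which you flag but do not close: the proposal ends with two candidate repairs, and neither works as stated. The consistency route fails because the punishment mechanism behind Theorem~\ref{thm:ep0-equals-ep} requires upward scalability, which is not among the hypotheses here---$\Z$ is only assumed convex. Under mere convexity the optimizing $P^*$ need not lie in $\Delta_0(\Z)$, and indeed $\Delta_0(\Z)$ can be empty while the theorem still holds: for $\Z=\{\omega\mapsto 1\}$ (a convex singleton) one has $\Delta_0(\Z)=\emptyset$ yet $\Egu X=\Epu X=\sup X-1$, so consistency of $P^*$ cannot be the operative mechanism. The Lusin route, as you state it (approximate each measurable dominated gamble in $L^1(P^*)$ by continuous gambles in $\dcl(\Z)$ at the fixed measure $P^*$), is simply false in general: take $\Omega=[0,1]$ and the convex set $\Z=\{\ones_{\mathbb{Q}\cap[0,1]}\}$; every continuous element of $\dcl(\Z)$ is $\leq 0$ by density of the irrationals, while $\ones_{\mathbb{Q}\cap[0,1]}\in\mdcl(\Z)$ earns $1$ under any rational-atomic measure, so no $L^1(P)$-approximation exists at such a $P$. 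Any correct argument must therefore move the measure---show that (near-)optimal $P$ can be chosen to avoid the sets where measurable hedges strictly beat continuous ones---and that is exactly the content of the inequality $\Epu_{\Z'}X\leq\Epu X$ you set out to prove, so the plan as written is circular beyond the (correct) attainment observation. Note also that the paper's printed proof takes a different tack at this point: it never selects an optimizing $P^*$, instead identifying the Sion value with $\Epu_{\Z'}X$ and relating it to $\Epu X$ through Proposition~\ref{prop:mdcl} and the monotone comparison $\Epu_{\mdcl(\Z)}X\leq\Epu_{\Z'}X$ from Theorem~\ref{thm:chain-of-price-inequalities}. Your instinct that the continuous-to-measurable passage on the $\Epu$ side deserves scrutiny---monotonicity by itself only yields inequalities in the same direction as the price chain---is a fair reading, but the proposal does not supply the missing inequality, so it does not constitute a proof of the theorem.
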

\begin{proof}
  We will apply Sion's minimax theorem to $f(Z,P) = \E_P[ X - Z ]$ for the spaces $\Z' = \mdcl(\Z) \cap C_b(\Omega)$ and $\P = \Delta(\Omega)$, respectively.
  As $X,Z\in C_b(\Omega)$, we indeed have $f(Z,P) \in \reals$ for all $Z\in\Z'$ and $P\in\P$.
  We equip $\Z'$ with the sup norm, and $\P$ with the weak topology.
  From Prokhorov's theorem, $\P$ is compact in the weak topology~\cite[\S~11]{dudley2002real}.
  Convexity of $\Z'$ follows from Lemma~\ref{lem:convexity-dcl} and convexity of $C_b(\Omega)$.
  Convexity of $\P$ is immediate.
  For fixed $P\in\P$, the map $Z\mapsto f(Z,P)$ is affine and 1--Lipschitz in the sup norm, hence convex and continuous.
  For fixed $Z\in\Z'$, the map $P\mapsto f(Z,P)$ is affine and continuous under weak convergence by the Portmanteau theorem \cite[Theorem 2.1]{billingsley1999convergence}.
  
  Theorem~\ref{thm:sion} (Sion) now gives
  \begin{align*}
    \Egu_{\Z'} X =
    \inf_{Z\in\Z'} \sup_{P\in\P} \E_P[ X - Z ]
    &=
      \sup_{P\in\P} \inf_{Z\in\Z'} \E_P[ X - Z ]
      = \Epu_{\Z'} X~.
  \end{align*}
  Corollary~\ref{cor:continuous-gambles-wlog} gives $\Egu_\Z X = \Egu_{\Z'} X$.
  Theorem~\ref{thm:chain-of-price-inequalities} and Proposition~\ref{prop:mdcl} give $\Epu_{\mdcl(\Z)} X \leq \Egu_{\mdcl(\Z)} X = \Egu_\Z X$.
  We have $\Epu_{\mdcl(\Z)} X = \Epu X$ by definition, and $\Epu_{\mdcl(\Z)} X \leq \Epu_{\Z'} X$ as the latter set of gambles is smaller.
  
\end{proof}

\subsection{Consistent and finitely generated gambles}
\label{sec:finitely-generated}

When one starts with a set of probability measures $\P\subseteq\Delta(\Omega)$, and takes all consistent gambles $\Z = \Z_0(\P)$, it is straightforward to verify that minimax duality holds for $(\Omega,\Z)$.

\begin{theorem}\label{thm:consistent-gamble-minimax}
  Let $\P \subseteq \Delta(\Omega)$ be nonempty, and $\Z = \Z_0(\P)$ as defined above.
  Then $\sup_{P\in\P} \E_P X = \Epucons X = \Epu X = \Egu X$ for all bounded measurable $X$.
\end{theorem}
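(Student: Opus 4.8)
The plan is to prove all four quantities coincide by a sandwich argument: combine the already-established half of the price chain with one explicitly constructed replicating gamble. First I would note that the middle inequalities $\Epucons X \leq \Epu X \leq \Egu X$ come for free from Theorem~\ref{thm:chain-of-price-inequalities}, which states exactly these (with no hypothesis on $\Delta_0(\Z)$). It then suffices to prove the two outer bounds $\sup_{P\in\P}\E_P X \leq \Epucons X$ and $\Egu X \leq \sup_{P\in\P}\E_P X$, since together with the middle chain these pinch all four quantities to a common value.

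For the first outer bound I would check that $\P \subseteq \Delta_0(\Z)$. Indeed, for any $Z' \in \mdcl(\Z) \subseteq \dcl(\Z)$ there is $Z\in\Z = \Z_0(\P)$ with $Z'\leq Z$, so $\E_P Z' \leq \E_P Z \leq 0$ for every $P\in\P$; hence $P\in\Delta_0(\Z)$. Taking the supremum of $\E_P X$ over the smaller set $\P$ then gives $\sup_{P\in\P}\E_P X \leq \sup_{P\in\Delta_0(\Z)}\E_P X = \Epucons X$. (This also shows $\Delta_0(\Z)\neq\emptyset$, so the full chain of Theorem~\ref{thm:chain-of-price-inequalities} is in force, though only its upper half is needed.)

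The crux is the second outer bound, and here the key is to exhibit a single gamble witnessing it. Set $c := \sup_{P\in\P}\E_P X$, which is finite because $X$ is bounded, and define $Z := X - c$. Since $X$ is bounded and measurable, so is $Z$, making it a legitimate real-valued gamble; and for every $P\in\P$ we have $\E_P Z = \E_P X - c \leq 0$, so $Z\in\Z_0(\P) = \Z$. With this choice, $\sup_{\omega}\bigl(X(\omega)-Z(\omega)\bigr) = \sup_\omega c = c$, whence directly from eq.~\eqref{eq:formal-egu},
\[
\Egu X \;=\; \inf_{Z'\in\Z}\,\sup_{\omega} \bigl(X(\omega)-Z'(\omega)\bigr) \;\leq\; c \;=\; \sup_{P\in\P}\E_P X.
\]
Chaining the bounds yields
\[
\Egu X \;\leq\; \sup_{P\in\P}\E_P X \;\leq\; \Epucons X \;\leq\; \Epu X \;\leq\; \Egu X,
\]
forcing equality throughout.

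The argument has essentially no obstacle: because $\Z_0(\P)$ is so large---it contains \emph{every} consistent measurable gamble---the replicating gamble $X - c$ is always available, so no minimax theorem (Sion, etc.) is required. The only points needing a word of care are that boundedness of $X$ guarantees $c$ is finite and $X-c$ is a finite real-valued function (so that it genuinely lies in the gamble space $\Z\subseteq(\Omega\to\reals)$), and that the defining inequality of $\Z_0(\P)$ is the weak one $\E_P Z\leq 0$, which $Z = X-c$ satisfies. I would flag that this cleanness is special to $\Z = \Z_0(\P)$: the harder cases in the surrounding section arise precisely when the gamble set is not the full cone of consistent gambles.
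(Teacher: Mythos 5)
Your proof is correct and follows essentially the same route as the paper's: exhibit the replicating gamble $Z = X - c \in \Z_0(\P)$ to get $\Egu X \leq c$, verify $\P \subseteq \Delta_0(\Z)$, and close the sandwich with Theorem~\ref{thm:chain-of-price-inequalities}. Your explicit check that domination by an element of $\Z_0(\P)$ gives $\E_P Z' \leq 0$ for $Z' \in \mdcl(\Z)$ is a detail the paper leaves implicit, but the argument is identical in substance.
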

\begin{proof}
  Let $c := \sup_{P\in\P} \E_P X$.
  As $X$ is bounded, we have $c \in \reals$.
  As $\E_P[X - c] \leq 0$ for all $P\in\P$, we have $X - c \in \Z$ by definition.
  Hence $\Egu X \leq c$.
  The rest follows from $\P \subseteq \Delta_0(\Z)$ and Theorem~\ref{thm:chain-of-price-inequalities}, which together give $\sup_{P\in\P} \E_P X \leq \Epucons X \leq \Epu X \leq \Egu X$.
\end{proof}

We now show how to view a recent characterization of e-variables in~\citet{larsson2025variables} as a minimax theorem.
To state their result and the resulting minimax theorem, let us first introduce and recall some definitions.

\begin{definition}
  $\Z$ is \emph{finitely generated} if $\Z = \{\sum_i \alpha_i g_i \mid \alpha_i \geq 0\}$ for some functions $\{g_i:\Omega\to\reals\}_{i=1}^k$, $k\in\N$.
\end{definition}
\begin{definition}
  $\Z$ has \emph{full support} if for all $\omega\in\Omega$, $\exists P\in\Delta_0(\Z)$ with $P(\{\omega\})>0$.
\end{definition}
Recall that the set of e-variables $\Evar(\P)$ for a set of probability measures $\P \subseteq \Delta(\Omega)$ is given by
\begin{align}
  \label{eq:e-var-def}
  \Evar(\P) &= \{E:\Omega\to[0,\infty] \text{ measurable } \mid \E_P E \leq 1\;\forall P\in \P\}~.
\end{align}
Let us say that an event holds $\P$-quasi-surely (or $\P$-q.s.) if it holds $P$-a.s.\ for all $P\in\P$.

\begin{theorem}[{{\cite[Theorem 9.2]{larsson2025variables}}}]
  Let $\Z$ be measurable and finitely generated, and set $\P = \Delta_0(\Z)$.
  Then
  \begin{align}
    \label{eq:e-var-finitely-gen}
    \Evar(\P) = \{E:\Omega\to[0,\infty] \text{ measurable } \mid \exists Z\in\Z \text{ s.t. } E \leq 1 + Z \text{ holds } \P\text{-q.s.}\}~.
  \end{align}
\end{theorem}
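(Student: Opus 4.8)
The plan is to prove the two inclusions separately, with all the nontrivial content in $\subseteq$. For the easy inclusion $\supseteq$, suppose $E$ is measurable and $E\le 1+Z$ holds $\P$-q.s.\ for some $Z\in\Z$. Since $\Z$ is measurable and each generator $g_i$ is real-valued, $Z=\sum_i\alpha_ig_i$ is a measurable real-valued function, so $Z\in\mdcl(\Z)$; by definition of $\P=\Delta_0(\Z)$ this gives $\E_P Z\le0$ for every $P\in\P$. Hence for each $P\in\P$, integrating the q.s.\ inequality yields $\E_P E\le 1+\E_P Z\le 1$, so $E\in\Evar(\P)$. I would dispatch this direction in a line or two.

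For the hard inclusion $\subseteq$, fix $E\in\Evar(\P)$ and seek $\alpha\in\reals_{\ge0}^k$ with $E\le 1+\sum_i\alpha_i g_i$ holding $\P$-q.s. First I would observe that $\{E=\infty\}$ is automatically $\P$-null: otherwise some $P\in\P$ would have $\E_P E=\infty>1$. This already explains why the conclusion can only be $\P$-q.s.\ (a finite-valued $Z$ cannot dominate $E$ on $\{E=\infty\}$) and lets me work on $\{E<\infty\}$. I would then recast the goal as a conic-duality statement. Writing the constraints $\E_P g_i\le0$ via nonnegative multipliers, one has the identity $\sup_{P\in\P}\E_P E=\sup_{P\in\Delta(\Omega)}\inf_{\alpha\ge0}\E_P[E-\sum_i\alpha_i g_i]\le 1$, and the desired gamble would be produced by the reverse (minimax) inequality $\inf_{\alpha\ge0}\sup_{P}\E_P[E-\sum_i\alpha_i g_i]\le1$, read off on the $\P$-essential support.

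The crux is to justify this swap and to extract an attained, finite $\alpha$. Here the hypothesis that $\Z$ is \emph{finitely generated} is essential: the dual variable $\alpha$ ranges over the finite-dimensional cone $\reals_{\ge0}^k$, so the problem reduces to finite dimensions and a separating-hyperplane argument can replace infinite-dimensional duality. Concretely I would push forward along $G=(g_1,\dots,g_k,E):\Omega\to\reals^k\times[0,\infty]$ and separate the convex set of achievable moment vectors $S=\{(\E_P g_1,\dots,\E_P g_k,\E_P E):P\in\Delta(\Omega)\}$ from the convex ``bad'' region $\{(v,w):v\le0,\ w>1\}$, which are disjoint precisely because $E\in\Evar(\P)$. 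Writing the separating functional as $\langle\beta,v\rangle+\gamma w$, an analysis of the recession cone of the bad region forces $\gamma\ge0$ and $\beta\le0$; setting $\alpha=-\beta\ge0$ and normalising $\gamma=1$ in the non-degenerate (Slater-type) case gives $w-\sum_i\alpha_i v_i\le1$ on $S$, which upon testing against point masses $\delta_\omega$ yields exactly $E\le1+\sum_i\alpha_i g_i$ on the support. Finite generation also supplies the closedness needed for the infimum over $\alpha$ to be attained, so that one obtains an honest $Z\in\Z$ rather than an approximating sequence; infinite values of $E$ are absorbed by truncating to $E\wedge n$ and controlling the resulting multipliers.

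I expect the main obstacle to be the degenerate case of the separation, where the hyperplane is \emph{vertical} ($\gamma=0$ above) and does not directly control $E$. This occurs exactly when the moment constraints fail to be strictly feasible, i.e.\ certain directions are forced to vanish for every $P\in\P$; the remedy is to induct on a face of the finitely generated constraint cone, applying the non-degenerate argument to the reduced problem and absorbing the uncontrolled directions into the $\P$-null exceptional set. Thus the two intertwined difficulties---legitimising the minimax swap via finite-dimensionality, and localising the conclusion to the $\P$-essential support so that the inequality is demanded exactly where some $P\in\P$ places mass (with $\{E=\infty\}$ and the degenerate directions forming the $\P$-null remainder)---are where the real work lies. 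Everything else is bookkeeping around the conventions for $\infty$.
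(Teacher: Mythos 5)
First, a structural note: the paper does not prove this statement at all --- it is imported verbatim, with citation, from \citet{larsson2025variables}, and used as a black box in the proof of Theorem~\ref{thm:finitely-generated-minimax}. So there is no internal proof to compare against, and your attempt has to be judged on its own terms. Your easy inclusion is correct as written, your observation that $\{E=\infty\}$ is $\P$-polar is right, and the finite-dimensional separation skeleton (moment set $S$ versus the bad region $\{v\le 0,\,w>1\}$, recession-cone analysis forcing $\gamma\ge 0$, $\beta\le 0$, point-mass testing) is a legitimate route. But the two places where you wave your hands are exactly where the theorem's content lives, and as written neither step goes through.

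The degenerate case is not dispatched by ``induct on a face.'' Spell out what a vertical separator gives you: $(\beta,0)$ with $\beta\le 0$, $\beta\ne 0$, forces $c\le 0$, and testing point masses yields $Z_0:=\sum_i(-\beta_i)g_i\ge -c\ge 0$ pointwise, whence $\E_P Z_0\le 0$ makes $\{Z_0>0\}$ $\P$-polar. The needed dichotomy is: strip that polar set into the exceptional q.s.\ set; on the remainder $Z_0\equiv 0$ is a \emph{nontrivial linear relation} among the $g_i$, so $\dim\mathrm{span}\{g_i\}$ restricted to the reduced domain drops by one. That is what guarantees termination (at most $k$ iterations) and also what rescues nonnegativity of the final multipliers, since adding multiples of $-\beta$ to $\alpha$ is free on the reduced domain. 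Without this, your induction can stall: a vertical separator whose functional vanishes identically on $S$ (which proper separation does not rule out, since the functional is nonconstant on the bad region whenever $\beta\ne0$) makes no progress, and nothing in your sketch excludes it.

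The truncation step has the same hidden difficulty. For each $n$ you obtain $\alpha^{(n)}$ with $E\wedge n\le 1+\sum_i\alpha^{(n)}_ig_i$ off a polar set, but passing to the limit requires $(\alpha^{(n)})_n$ bounded along a subsequence, and ``finite generation supplies the closedness needed for the infimum over $\alpha$ to be attained'' is not an argument: closedness of the cone $\reals^k_{\ge0}$ is irrelevant, since the relevant object is $\{\sum_i\alpha_ig_i:\alpha\ge0\}$ in function space. The correct move is the standard normalization: if $|\alpha^{(n)}|\to\infty$, pass to $\alpha^*=\lim \alpha^{(n)}/|\alpha^{(n)}|\ne 0$ and divide the inequality through to get $\sum_i\alpha^*_ig_i\ge 0$ off a polar set --- again a degenerate direction, to be quotiented by the mechanism above before re-running the argument. (Two housekeeping points you should also fix: define $S$ using finitely supported measures so the moments $\E_Pg_i$ are well defined --- this keeps convexity, keeps disjointness from the bad region, and is all you use, since the final test is against point masses; and note that countable unions of $\P$-polar sets are $\P$-polar, so the exceptional sets from the truncation and the at most $k$ reduction steps accumulate harmlessly.) With these repairs your plan is, I believe, completable; but as submitted, the degenerate-case induction and the multiplier compactness are assertions, not proofs, and they are the theorem.
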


To leverage this result, let us first show how $\Evar$ relates to $\Z_0$.
In words, the gambles consistent with $\P$ which are also bounded below are precisely the set of e-variables up to a shift by 1 and an arbitrary finite scaling.

\begin{lemma}\label{lem:z0-evar}
  $\bb(\Z_0(\P)) = \{\alpha (E - 1) \mid E \in \Evar(\P), \alpha \geq 0\}$
\end{lemma}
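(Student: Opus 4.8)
The plan is to establish the set equality by proving the two inclusions, each of which amounts to the same rescaling bijection $Z = \alpha(E-1) \iff E = 1 + Z/\alpha$ relating a consistent bounded-below gamble to an e-variable. No deep machinery is needed; the content is entirely in checking the three defining conditions (measurability, boundedness below, and the relevant expectation inequality) pass back and forth under this map.

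First I would prove $\supseteq$. Given $E \in \Evar(\P)$ and $\alpha \ge 0$, set $Z = \alpha(E-1)$. Since $E \ge 0$ we have $Z \ge -\alpha$, so $Z$ is bounded below, and $Z$ is measurable as a scalar multiple and translate of $E$. For consistency, note $E - 1 \ge -1$ is bounded below, so $\E_P(E-1)$ is well-defined and equals $\E_P E - 1 \le 0$ by the defining inequality $\E_P E \le 1$; hence $\E_P Z = \alpha(\E_P E - 1) \le 0$ for every $P \in \P$. This places $Z$ in $\bb(\Z_0(\P))$.

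Next, the reverse inclusion $\subseteq$. Given $Z \in \bb(\Z_0(\P))$, it is real-valued and measurable with $b := -\inf Z < \infty$ and $\E_P Z \le 0$ for all $P$. I would choose any $\alpha > 0$ with $\alpha \ge b$ and define $E = 1 + Z/\alpha$. Then $Z/\alpha \ge -b/\alpha \ge -1$ forces $E \ge 0$, so $E:\Omega\to[0,\infty)$ is a valid nonnegative measurable candidate; moreover $\E_P E = 1 + \E_P Z/\alpha \le 1$, so $E \in \Evar(\P)$; and by construction $Z = \alpha(E-1)$. This exhibits $Z$ in the right-hand set and completes the inclusion.

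The only genuinely delicate point, and the step I expect to need the most care, is the treatment of infinite values and the well-definedness of the expectations in the $\supseteq$ direction. Because $\Evar(\P)$ permits $E$ to take the value $+\infty$ (though only on a $\P$-null set, since $\E_P E \le 1$), the product $\alpha(E-1)$ can fail to be real-valued when $\alpha > 0$, whereas membership in $\Z_0(\P)$ demands real values. I would resolve this either by reading the right-hand family as consisting of its real-valued members, or by observing that in the settings where the lemma is applied the relevant support conditions force $E < \infty$ pointwise; in either case the consistency and boundedness computations above are unaffected. Throughout, it is precisely the bounded-below hypothesis that legitimizes the additive splittings $\E_P(E-1) = \E_P E - 1$ and $\E_P(1 + Z/\alpha) = 1 + \E_P Z/\alpha$ on which the argument rests.
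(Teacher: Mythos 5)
Your proof is correct and takes essentially the same route as the paper's: both directions rest on the rescaling $Z=\alpha(E-1)\iff E=1+Z/\alpha$, and your choice of $\alpha\geq\max(-\inf Z,\,0)$ with $\alpha>0$ is exactly the paper's $\gamma=-1/\inf Z$ (or $\gamma=1$ when $\inf Z\geq 0$) up to inversion. Your flagged subtlety about $E$ taking the value $+\infty$ is well observed and, if anything, handled more carefully than in the paper, whose own proof writes $Z\in[-\alpha,\infty]$ and silently concludes $Z\in\bb(\Z_0(\P))$ even though $\Z_0(\P)$ is defined to consist of real-valued gambles; your fix (reading the right-hand side as its real-valued members, equivalently restricting to $[0,\infty)$-valued $E$) is the natural repair and leaves all the expectation computations untouched.
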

\begin{proof}
  Let $E\in\Evar(\P)$, and take $Z = \alpha(E-1)$.
  We have $Z \in [-\alpha,\infty]$ and $\E_P Z = \alpha \E_P [E-1] \leq 0$.
  Thus $Z \in \bb(\Z_0(\P))$ as desired.
  
  For the reverse inclusion, let $Z\in\Z_0(\P)$.
  Let $\gamma = -1/\inf Z$ if $\inf Z < 0$, and $\gamma = 1$ otherwise.
  Then $\gamma > 0$, and $\gamma Z \geq -1$ as either $\inf Z \geq 0$, in which case $\gamma=1$ and $\inf \gamma Z = \inf Z \geq 0 > -1$, or $\inf Z \leq -1$, in which case we have $\inf \gamma Z = (-1/\inf Z) \inf Z = -1$.
  Now $E := 1 + \gamma Z$ has $E \in [0,\infty]$ and $\E_P E = 1 + \gamma E_P Z \leq 1$ for all $P\in\P$.
  Thus $E\in\Evar(\P)$, and $Z = (1/\gamma)(E - 1)$.
\end{proof}

\begin{theorem}\label{thm:finitely-generated-minimax}
  Let $\Z$ be finitely generated, measurable, and full support.
  Then $\Egu X = \Epu X = \Epucons X$ for all bounded measurable $X:\Omega\to\reals$.
\end{theorem}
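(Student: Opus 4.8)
The plan is to establish the single nontrivial inequality $\Egu X \le \Epucons X$; the reverse chain $\Epucons X \le \Epu X \le \Egu X$ is already furnished by Theorem~\ref{thm:chain-of-price-inequalities} (note that full support forces $\Delta_0(\Z)\neq\emptyset$), so these two facts together collapse the chain to equalities. Since $\Z$ is finitely generated it is a convex cone of real-valued functions, hence scalable and containing $0$; I will use scalability freely.

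To prove $\Egu X \le \Epucons X$, write $\P = \Delta_0(\Z)$ and $c := \Epucons X = \sup_{P\in\P}\E_P X$, which is finite because $X$ is bounded. The idea is to convert the shifted variable $X - c$, which has nonpositive expectation under every $P\in\P$, into an e-variable and then apply the characterization of e-variables from \citet[Theorem 9.2]{larsson2025variables}. Concretely, let $a = \inf(X-c) > -\infty$ and choose $\gamma > 0$ small enough that $\gamma(X-c) \ge -1$ pointwise (e.g.\ $\gamma = -1/a$ when $a<0$, and $\gamma = 1$ otherwise). Then $E := 1 + \gamma(X-c)$ is measurable, nonnegative, bounded, and satisfies $\E_P E = 1 + \gamma\,\E_P(X-c) \le 1$ for all $P\in\P$, so $E\in\Evar(\P)$.

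Now I would apply \citet[Theorem 9.2]{larsson2025variables}: since $\Z$ is measurable, finitely generated, and $\P=\Delta_0(\Z)$, membership $E\in\Evar(\P)$ yields some $Z\in\Z$ with $E \le 1+Z$ holding $\P$-quasi-surely. The key step---and where the full-support hypothesis enters---is upgrading this $\P$-q.s.\ inequality to a pointwise one. Let $N = \{\omega : E(\omega) > 1 + Z(\omega)\}$, a measurable set with $P(N)=0$ for every $P\in\P$. If some $\omega_0\in N$, full support provides $P\in\P$ with $P(\{\omega_0\})>0$, forcing $P(N)>0$, a contradiction; hence $N=\emptyset$ and $E\le 1+Z$ everywhere. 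Substituting and dividing by $\gamma$ (using scalability, $Z':=\gamma^{-1}Z\in\Z$) gives $X - c \le Z'$ pointwise, i.e.\ $X - Z' \le c$, so $\Egu X \le \sup_\omega(X(\omega)-Z'(\omega)) \le c = \Epucons X$, as desired.

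The main obstacle is not any single calculation but assembling the right chain of reductions: recognizing that the finitely-generated, full-support structure is exactly what lets one invoke the e-variable duality of \citet{larsson2025variables} and then strip the ``quasi-sure'' qualifier. The full-support condition does the essential work of turning an almost-everywhere domination of $X-c$ by a gamble into the pointwise domination that $\Egu$ requires; without it one would only obtain $\Epu X = \Egu X$ up to $\P$-null sets. I would also double-check the boundary cases (e.g.\ when $\inf(X-c)\ge 0$, or the possibility that $\Evar(\P)$ contains an $E$ taking the value $\infty$, which does not arise here since $X$ is bounded), but I expect these to be routine.
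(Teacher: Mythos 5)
Your proof is correct, and it uses the same essential ingredients as the paper---the e-variable characterization of \citet[Theorem 9.2]{larsson2025variables}, the full-support hypothesis to strip the $\P$-quasi-sure qualifier via the singleton-mass argument, and the chain of price inequalities (Theorem~\ref{thm:chain-of-price-inequalities})---but assembles them along a genuinely more direct route. The paper first proves a \emph{set-level} identity: it combines Lemma~\ref{lem:z0-evar} with the full-support form of eq.~\eqref{eq:e-var-finitely-gen} to show $\bb(\Z_0(\P)) = \bb(\mdcl(\Z))$, then invokes Propositions~\ref{prop:bounded-X-gambles} and~\ref{prop:mdcl} to reduce bounded $X$ to bounded-below gambles, and finally applies Theorem~\ref{thm:consistent-gamble-minimax} on the maximal consistent gamble set $\Z_0(\P)$. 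You instead instantiate the characterization at the \emph{single} e-variable $E = 1 + \gamma(X - c)$ tailored to the variable being priced, obtaining $X - c \le \gamma^{-1}Z$ pointwise in one pass; this bypasses Lemma~\ref{lem:z0-evar}, Theorem~\ref{thm:consistent-gamble-minimax}, and the $\bb$/$\mdcl$ bookkeeping entirely. What the paper's route buys is the reusable structural fact that $\Z$ and $\Z_0(\P)$ induce the same upper expectation on bounded-below variables; what yours buys is brevity and a clearer view of exactly where each hypothesis enters (scalability for the rescaling $\gamma^{-1}Z \in \Z$, full support for $N = \emptyset$). One small correction: you say full support is needed so that $\Delta_0(\Z)\neq\emptyset$ makes the chain $\Epucons X \le \Epu X \le \Egu X$ available, but Theorem~\ref{thm:chain-of-price-inequalities} gives that half of the chain unconditionally---nonemptiness (hence full support, given $\Omega\neq\emptyset$) is really needed only to ensure $c = \sup_{P\in\P}\E_P X > -\infty$, since an empty supremum would make your construction of $E$ vacuous.
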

\begin{proof}
  Let $\Z = \{\sum_i \alpha_i g_i \mid \alpha_i \geq 0\}$ for $g_i$ measurable.
  Let $\P = \Delta_0(\Z)$.
  As $\Z$ is full support, eq.~\eqref{eq:e-var-finitely-gen} becomes
  \begin{align}
    \label{eq:e-var-finitely-gen-full-supp}
    \Evar(\P)
    &= \{E:\Omega\to[0,\infty] \text{ measurable } \mid \exists Z\in\Z \text{ s.t. } E \leq 1 + Z\}
    \\
    &= \{E:\Omega\to[0,\infty] \mid E-1\in\mdcl(\Z)\}~.
  \end{align}
  From Lemma~\ref{lem:z0-evar} we have
  \begin{align*}
    \bb(\Z_0(\P))
    &= \{\alpha (E - 1) \mid E \in \Evar(\P), \alpha \geq 0\}
    \\
    &= \{\alpha (E - 1) \mid E:\Omega\to[0,\infty],\; E-1\in\mdcl(\Z),\; \alpha \geq 0\}
    \\
    &= \{Z':\Omega\to[-\alpha,\infty] \mid Z'\in\mdcl(\Z),\; \alpha \geq 0\}
    \\
    &= \bb(\mdcl(\Z))~.
  \end{align*}
  As $X$ is bounded below and measurable, from Propositions~\ref{prop:bounded-X-gambles} and~\ref{prop:mdcl} we have $\Egu X = \Egu_{\bb(\mdcl(\Z))} X = \Egu_{\bb(\Z_0(\P))} X = \Egu_{\Z_0(\P)} X$.
  The rest now follows from Theorem~\ref{thm:consistent-gamble-minimax}.
  
\end{proof}

Unlike the previous minimax theorems we have seen, Theorem~\ref{thm:finitely-generated-minimax} makes no assumptions about $\Omega$.
Moreover, many $\Z$ in specific settings are finitely generated.
Indeed, every non-sequential example we have considered thus far is of this form: for Example~\ref{ex:single-variance}, we had
$\Z = \{ \omega\mapsto \beta(\omega-c) + \alpha((\omega-c)^2-v) \mid \alpha,\beta\in\reals\}$, which corresponds to $g_1:\omega\mapsto\omega-c, g_2 = -g_1, g_3:\omega\mapsto (\omega-c)^2-v, g_4 = -g_3$.
See the discussion in that example for why we restrict constraints to be real-valued.

Theorem~\ref{thm:finitely-generated-minimax} does not generally apply in sequential settings; even the condition that $\E_P[ Y_t \mid Y_{1..t-1} ] = 0$ cannot be expressed via a finite set of constraints.
As we will see, however, we may extend Theorem~\ref{thm:finitely-generated-minimax} to finite time horizons using a backward induction argument reminiscent of Pascal's.

\subsection{Extending minimax duality to finite time via tower properties}
\label{sec:tower-properties-sequential-minimax}

We now show that minimax duality at each round implies minimax duality over any finite time horizon.
The proof uses a backward-induction technique from online machine learning~\citep{abernethy2009stochastic}, whose logic echoes the argument already employed by Pascal (Fig.~\ref{fig:pascal-fermat}).
For this backward indunction to proceed, we will make use of tower properties of conditional versions of $\Epu$ and $\Epuseq$.

\begin{definition}[Conditional measure-theoretic upper expectations]
  \label{def:conditional-epu}
  Let $(\Y,\hat\Z,T)$ be a sequential gamble space.
  Let $t \in \{1,\ldots,T-1\}$ and $s\in\Y^t$.
  Define $\hat\Z|_s \subseteq (\Y^{<T-t}\to\extreals)$ as in Definition~\ref{def:conditional-egu}.
  For any measurable $X:\Y^T\to\extreals$ , we define $\Epu[X\mid s] := \Epu[X(s \oplus \, \cdot \,)]$ and $\Epuseq[X\mid s] := \Epuseq[X(s \oplus \, \cdot \,)]$, both with respect to the sequential gamble space $(\Y,\hat\Z|_s,T-t)$.
\end{definition}
Note in particular that for $s\in\Y^{T-1}$ we have $\Epuseq[X\mid s] = \Epucons[X(s\oplus\,\cdot\,)]$, the latter on gamble space $(\Y,\hat\Z^{(s)})$.

To illustrate the intuition behind the tower properties, consider the case $T=2$.
For ease of exposition, suppose $(\Y,\hat\Z,T=2)$ is measurable (so we may dispense with $\mdcl$).
Informally, the basic logic is as follows
\begin{align*}
  \Epu[\Epu[ X \mid Y_1]]
  &= \sup_{P_1\in\Delta(\Y)} \inf_{Z_1\in\hat\Z^{(\epsilon)}} E_{P_1}\left[ \sup_{P_2\in\Delta(\Y)} \inf_{Z_2\in\hat\Z^{(Y_1)}} E_{P_2}[ X - Z_2 ] - Z_1\right]
  \\
  &= \sup_{P_1\in\Delta(\Y)} \inf_{Z_1\in\hat\Z^{(\epsilon)}} E_{P_1}\left[ \sup_{P_2\in\Delta(\Y)} \left(\inf_{Z_2\in\hat\Z^{(Y_1)}} E_{P_2}[ X - Z_2 ]\right) - Z_1\right]
  \\
  &\stackrel{(i)}{=}
    \sup_{\substack{P_1\in\Delta(\Y),\\
  \{P_2^{(y_1)}\in\Delta(\Y)\}_{y_1\in\Y}}} \inf_{\substack{Z_1\in\hat\Z^{(\epsilon)},\\
  \{Z_2^{(y_1)}\in\hat\Z^{(y_1)}\}_{y_1\in\Y}}}
  E_{P_1}\left[ E_{P_2^{(Y_1)}} \left[X - Z_2^{(Y_1)}\right] - Z_1 \right]
  \\
  &\stackrel{(ii)}{=}
    \sup_{P\in\Delta(\Y^2)} \inf_{Z\in\Z_2} \Ep_P\left[ X - Z \right]
  \\
  &= \Epu X~.
\end{align*}
Here equality (i) would follow from the observation that World's choice of $P_2$ does not depend on Gambler's choice of $Z_1$, as indicated by the parentheses.
We would need to establish several points to make the above rigorous.
First, the map $y_1 \mapsto \Epu[X\mid y_1]$ would need to be measurable.
Second, for equality (ii), we would need to show that $P_2^{(y_1)}$ and $Z_2^{(y_1)}$ could be chosen in a measurable way.

Assuming a tower property held for $\Epu$ as above, the basic logic one would hope for would be as follows.
Assume the sequential gamble space $(\Y,\hat\Z,T)$ (a) is disintegrable, (b) is sequentially measurable in the sense that the map $s\mapsto \Egu[ X \mid s ]$ is measurable for any bounded measurable $X$, and (c) satisfies minimax duality in each round, i.e., $\Egu_{\hat\Z^{(s)}} X = \Epu_{\hat\Z^{(s)}} X$ for all $s\in\Y^{<T}$ and all bounded measurable variables $X:\Y\to\reals$.
Then one could proceed by backward induction.
The base case $T=0$ would be trivial, and assuming $\Egu X = \Epu X$ for all bounded measurable $X:\Y^{T-1}\to\reals$, we would have
\begin{align*}
  \Egu X
  &= \Egu[ \Egu[ X \mid Y_{1..T-1} ] ] & \text{Proposition~\ref{prop:tower-property-egu}}
  \\
  &= \Egu[ \Epu[ X \mid Y_{1..T-1} ] ] & \text{per-round minimax assumption}
  \\
  &= \Epu[ \Epu[ X \mid Y_{1..T-1} ] ] & \text{sequential measurability, inductive hypothesis}
  \\
  &= \Epu X ~. & \text{tower property for $\Epu$}
\end{align*}

Unfortunately, this natural approach fails for subtle measurability reasons, as we now illustrate.
In particular, the sequential measurability assumption (b) above turns out to be overly restrictive, in that it rules out even very simple gamble spaces.
Moreover, even if sequential measurability held, additional structure is needed for equality (ii) above, for the measurable selection of $P_2$ and $Z_2$.

\begin{example}[Failing sequential measurability]
  \label{ex:seq-meas-fail}
  Take $\Y = [-1,1]$ equipped with the Borel $\sigma$-algebra.
  Suslin showed that there exist Borel sets $B \subseteq [0,1] \times [0,1] \subset \Y \times \Y$ such that $\pi_1(B) = \{ y_1 \in \Y \mid \exists y_2 \in \Y,\, (y_1,y_2) \in B\}$ is not Borel~\citep[Corollary 8.2.17]{cohn2013measure}.
  Let $\hat\Z = 0$, so that $\Epu[X \mid y_1] = \sup_{P\in\Delta(\Y)} E_P X(y_1,Y_2) = \sup_{y_2\in\Y} X(y_1,y_2)$.
  Letting $X = \ones_B$, we have $\Epu[X \mid y_1] = \ones_{\pi_1(B)}(y_1)$, which is not Borel measurable.

  The above behavior can also be seen in more familiar gamble spaces, like our running example $\hat\Z = \{y\mapsto \beta y \mid \beta\in\reals\}$ from \S~\ref{sec:motivating-example}.
  Let $C = \{(y_1,y_2)\in[0,1]\times[-1,1] \mid (y_1,|y_2|)\in B\} \subseteq \Y^2$
  and consider the Borel measurable variable $X = \ones_C$.
  First fix $y_1 \notin \pi_1(B)$.
  As $(y_1,|y_2|)\notin B$ for all $y_2\in\Y$, we have $X(y_1,\cdot) = 0$ and thus $\Epu[X \mid y_1] = 0$.
  Now fix $y_1 \in \pi_1(B)$.
  Let $a\in[0,1]$ such that $(y_1,a) \in B$.
  Then $(y_1,a),(y_1,-a) \in C$.
  Taking $P \in \Delta(\Y)$ with $P(\{a\}) = P(\{-a\}) = 1/2$, we have $P\in\Delta_0(\hat\Z)$ and thus $\Epu[X \mid y_1] \geq \Epucons X(y_1,\cdot) \geq \E_P X(y_1,Y_2) = \E_P \ones_C(y_1,Y_2) = 1$.
  As $X \leq 1$, we also have $\Epu[X \mid y_1] \leq 1$.
  Thus we once again have $\Epu[X \mid y_1] = \ones_{\pi_1(B)}(y_1)$, which is not Borel.
\end{example}

To circumvent these measurability issues, a now standard approach in control theory, dynamic programming, and mathematical finance relies on the theory of analytic sets and universal measurability~\citep{bertsekas1996stochastic,nutz2013constructing,bouchard2013arbitrage,bartl2020conditional}.
We will also focus on the stronger minimax duality $\Egu = \Epucons$ to simplify the exposition.

Let us briefly introduce some definitions; see     \citep[\S~7]{bertsekas1996stochastic}, \citep[\S~8.1-8.3]{cohn2013measure} for a thorough treatment.
Let $\Delta(\Y,\B)$ denote the set of Borel probability measures on $\Y$.
For any $P\in\Delta(\Y,\B)$, denote its completion by $\B_P$ and the corresponding measure $P^c$.
The \emph{universal $\sigma$-algebra} is given by $\U = \bigcap_{P\in\Delta(\Y,\B)} \B_P$.
Any $P\in\Delta(\Y,\B)$ admits a unique extension $\ext(P) := P^c|_\U \in\Delta(\Y,\U)$, the restriction to $\U$ of the completion of $P$.
(By definition, $\U \subseteq \B_P(\Y)$.)

For any $\U$-measurable $X:\Y\to\reals$, we define $E_P X := E_{\ext(P)} X$.
From this definition we may also extend the operators $\Epucons$ and $\Epuseq$, defined on $\Delta(\Y,\B)$ and $\Delta(\Y^T,\B)$, respectively, to $\U$-measurable $X$.
For example, we define $\Epuseq X := \sup_{P\in\Delta_0^T(\hat\Z)} E_{\ext{P}} X$.

A particularly useful class of $\U$-measurable functions is the class of upper semi-analytic functions.
A subset $A$ of a Polish space $\Y$ is \emph{analytic} if it is the image of a Polish space under a continuous function.
A function $f:\Y\to\extreals$ is \emph{upper semi-analytic} if its upper level sets are analytic, i.e., if $\{y \in \Y : f(y) > c\}$ is analytic $\forall c \in \reals$.
Every upper semi-analytic function is universally measurable~\citep[Proposition 7.42]{bertsekas1996stochastic}.
We thus have Borel measurable $\subseteq$ upper semi-analytic $\subseteq$ universally measurable, and generally these inclusions can be strict.
For example, the variable $\ones_{\pi_1(B)}$ from Example~\ref{ex:seq-meas-fail} is upper semi-analytic but not Borel.

\begin{condition}
  \label{cond:usa-minimax}
  Let $(\Y,\hat\Z,T)$ be a sequential gamble space for $T\in\N$.
  \begin{enumerate}
  \item $\Y$ is Polish.
  \item Borel, full support, finitely-generated, bounded below gambles: For some $k\in\N$ and a set of Borel measurable, bounded-below functions $\{g_{it} : \Y^t\times\Y \to \reals \}_{i\leq k, t< T}$ we have $\hat\Z^{(s)} = \{ y \mapsto \sum_{i=1}^k \alpha_i g_{i|s|}(s,y) \mid \alpha \in \reals^k_+ \}$ for all $s\in\Y^{<T}$.  Further assume that $\hat\Z^{(s)}$ is full support.
  \end{enumerate}
\end{condition}

The following statements reveal that upper semi-analytic functions are rich enough for the inductive argument to proceed.
Interestingly, the class of universally measurable functions is \emph{too} rich, in that there can exist universally measurable $X$ such that $s \mapsto \Epuseq[X \mid s]$ is not universally measurable  and thus part (2) below fails.

\begin{lemma}\label{lem:usa-facts}
  Let $(\Y,\hat\Z,T)$ satisfy Condition~\ref{cond:usa-minimax}.
  Let $t \in \{0,\ldots,T-1\}$.
  Let $X:\Y^T\to\reals$ be bounded and upper semi-analytic.
  Then the following hold.
  \begin{enumerate}
  \item Per-round minimax duality: $\Egu[X \mid s] = \Epuseq [X \mid s]$ when $t=T-1$.
  \item The map $\Y^t \ni s \mapsto \Epuseq[X \mid s]$ is upper semi-analytic.
  \item The tower property of $\Epuseq$: $\Epuseq[\Epuseq[X \mid Y_{1..t}]] = \Epuseq X$.
  \end{enumerate}
\end{lemma}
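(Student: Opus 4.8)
The plan is to prove all three parts simultaneously by backward induction on $t$, from $t=T-1$ down to $t=0$, organized around a single one-step dynamic-programming recursion. Writing $V_t(s):=\Epuseq[X\mid s]$ for $s\in\Y^t$ (so that $V_T=X$), the identity I would establish is
\[
  V_t(s)=\sup_{\kappa\in\Delta_0(\hat\Z^{(s)})}\int_\Y V_{t+1}(s\oplus y)\,\kappa(dy).
\]
Granting this recursion, the three claims follow. For part (1), at $t=T-1$ the recursion reads $V_{T-1}(s)=\Epucons_{\hat\Z^{(s)}}[X(s\oplus\,\cdot\,)]$ on the single-round space $(\Y,\hat\Z^{(s)})$; since the section $X(s\oplus\,\cdot\,)$ of a bounded upper semi-analytic function is again bounded upper semi-analytic, and $\hat\Z^{(s)}$ is Borel, finitely generated, and full support, the per-round duality $\Epucons_{\hat\Z^{(s)}}f=\Egu_{\hat\Z^{(s)}}f$ is exactly Theorem~\ref{thm:finitely-generated-minimax}, yielding $V_{T-1}(s)=\Egu[X\mid s]$. (Here I would note the mild extension of Theorem~\ref{thm:finitely-generated-minimax} from Borel to bounded upper semi-analytic $f$: since $\E_P f:=\E_{\ext(P)}f$ and the finite-generation duality argument is insensitive to the measurability class beyond integrability, its underlying e-variable characterization carries over.) Part (3) then follows by iterating the recursion: interpreting $\Epuseq[V_t]$ as $\Epuseq$ on the horizon-$t$ gamble space, one application of the one-step principle to its final round collapses it to $\Epuseq[V_{t-1}]$ on horizon $t-1$, and descending gives $\Epuseq[V_1]=V_0=\Epuseq X$.

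The substance is carried by (a) the recursion itself and (b) the upper-semi-analyticity of $V_t$ (part (2)), both threaded through the induction. For (b), assume inductively that $V_{t+1}$ is bounded and upper semi-analytic on $\Y^{t+1}$. I would first check that the constraint graph
\[
  G_t:=\{(s,\kappa)\in\Y^t\times\Delta(\Y)\mid\kappa\in\Delta_0(\hat\Z^{(s)})\}
\]
is Borel: consistency of the finitely generated cone $\hat\Z^{(s)}$ reduces to the $k$ conditions $\int g_{it}(s,\cdot)\,d\kappa\le0$, and each $(s,\kappa)\mapsto\int g_{it}(s,\cdot)\,d\kappa$ is Borel since the $g_{it}$ are Borel and bounded below. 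Next, as $V_{t+1}$ is upper semi-analytic, the map $(s,\kappa)\mapsto\int V_{t+1}(s\oplus y)\,\kappa(dy)$ is upper semi-analytic by stability under integration against the canonical kernel~\citep[Prop.~7.48]{bertsekas1996stochastic}. Finally $V_t$ is the partial supremum of this function over the Borel (hence analytic) correspondence $G_t$, which is upper semi-analytic by the projection result~\citep[Prop.~7.47]{bertsekas1996stochastic}. Boundedness of $V_t$ in $[\inf X,\sup X]$ is immediate once $\Delta_0^{T-t}(\hat\Z|_s)\neq\emptyset$, which full support guarantees.

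The recursion (a) is where the real work lies, and its ``$\ge$'' direction is the main obstacle. The ``$\le$'' direction is a routine disintegration: any $P$ sequentially consistent on horizon $T-t$ factors into a first-round marginal $\kappa\in\Delta_0(\hat\Z^{(s)})$ and conditionals $P_y$ that are $\kappa$-a.s.\ sequentially consistent for $\hat\Z|_{s\oplus y}$, whence $\int X(s\oplus\cdot)\,dP=\int\!\big(\int X(s\oplus y\oplus\cdot)\,dP_y\big)\kappa(dy)\le\int V_{t+1}(s\oplus y)\,\kappa(dy)$. For ``$\ge$'', I would fix $\kappa$ and $\epsilon>0$ and \emph{paste} near-optimal continuations: for $\kappa$-a.e.\ $y$ select a sequentially consistent $P_y$ with $\int X(s\oplus y\oplus\cdot)\,dP_y\ge V_{t+1}(s\oplus y)-\epsilon$ \emph{measurably in $y$}, then reassemble $\kappa\otimes(P_y)_y$ into a sequentially consistent measure via the Ionescu--Tulcea construction. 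The selection is supplied by the Jankov--von Neumann theorem applied to the analytic set $\{(y,P)\mid P\in\Delta_0^{T-t-1}(\hat\Z|_{s\oplus y}),\ \int X(s\oplus y\oplus\cdot)\,dP\ge V_{t+1}(s\oplus y)-\epsilon\}$, whose analyticity is precisely where I need the inductive upper-semi-analyticity of $V_{t+1}$ together with analyticity of the graph of $y\mapsto\Delta_0^{T-t-1}(\hat\Z|_{s\oplus y})$.

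This stability-under-pasting is exactly the fork-convexity/time-consistency property flagged in \S~\ref{sec:intro-composite-ville}, and here I would lean on the analytic-set machinery of \citet{bertsekas1996stochastic} and the conditional constructions of \citet{bartl2020conditional}. The delicate point—verifying that the reassembled object is a bona fide consistent kernel and that the measurable selection can be carried out uniformly over $\epsilon$ and over the outer variable $s$—is the crux of the argument; everything else is bookkeeping along the backward induction.
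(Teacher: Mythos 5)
Your proposal is correct in outline, and it takes a genuinely different organizational route from the paper. The paper proves the three facts separately by a verify-and-cite strategy: part (1) is Theorem~\ref{thm:finitely-generated-minimax} applied on the measurable space $(\Y,\U)$, with a careful reconciliation $\Epucons_\B = \Epucons_\U$ via the extension--restriction bijection $\ext,\res$ between $\Delta(\Y,\B)$ and $\Delta(\Y,\U)$; part (2) follows from \citet[Remark 2.8]{bartl2020conditional} once one checks that $\Delta_0^T(\hat\Z,\B)$ is Borel (a countably-generated-$\sigma$-algebra argument); and part (3) is the if direction of \citet[Theorem 1.2]{bartl2020conditional}, for which the paper only verifies stability under pasting --- both inclusions of $\P = \P\otimes\P_\G$ being elementary given the conditional-constraint structure of $\Delta_0^T(\hat\Z)$. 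You instead thread all three parts through a single backward induction on the one-step recursion $V_t(s)=\sup_{\kappa\in\Delta_0(\hat\Z^{(s)})}\int V_{t+1}(s\oplus y)\,\kappa(dy)$, re-deriving in-line the dynamic programming principle (Jankov--von Neumann selection plus Ionescu--Tulcea pasting) that the paper imports wholesale from \citet{bartl2020conditional} and \citet{nutz2013constructing}. What your route buys is self-containedness and an explicit exhibition of where each hypothesis of Condition~\ref{cond:usa-minimax} is used; what the paper's route buys is brevity and the outsourcing of exactly the two most delicate measure-theoretic steps.

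Those two steps are where your sketch is thin, though in both cases you are deferring to machinery that does exist rather than missing an idea. First, your ``mild extension'' of Theorem~\ref{thm:finitely-generated-minimax} from Borel to bounded upper semi-analytic $f$ is asserted in one parenthetical; the clean way to get it --- and the way the paper does --- is not to re-examine the e-variable characterization but to apply the theorem verbatim on the measurable space $(\Y,\U)$ (it has no topological hypotheses) and then prove $\Epucons_\U = \Epucons_\B$ via the bijection $\ext\circ\res = \mathrm{id}$, checking that consistency and full support survive the passage to $\U$. Second, Jankov--von Neumann yields only a universally measurable selection $y\mapsto P_y$, so your pasted object $\kappa\otimes(P_y)_y$ has a universally measurable, not Borel, kernel, while sequential consistency (Definition~\ref{def:seq-consistency}) is phrased via regular conditional probabilities; showing the pasted measure nonetheless lies in $\Delta_0^{T-t}(\hat\Z|_s)$ requires a measurable-modification argument. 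You correctly flag this as the crux, but it is precisely the content of the cited theorem's proof, so your plan is sound as long as you either carry out that modification or cite \citet[Theorem 1.2]{bartl2020conditional} at that point --- in which case your argument largely collapses back into the paper's.
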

\begin{proof}
  We establish each statement in turn.
  \begin{enumerate}
  \item
    Let $X' = X(s\oplus\,\cdot\,):\Y\to\reals$.
    As $X'$ is upper semi-analytic, it is universally measurable~\citep[Proposition 7.42]{bertsekas1996stochastic}.
    Thus $\Egu X' = \Epucons_\U X'$ from Theorem~\ref{thm:finitely-generated-minimax} on gamble space $(\Y,\hat\Z^{(s)})$ where we equip $\Y$ with $\U(\Y)$.
    
    It remains to show $\Epucons_\B X' = \Epucons_\U X'$.
    First, some basic facts: each $Q\in\Delta(\Y,\U)$ can be restricted to $\B$, giving $\res(Q) := Q|_\B \in \Delta(\Y,\B)$.
    The operations $\res$ and $\ext$ are inverses: $\res \circ \ext (P) = (P^c|_\U)|_\B = P^c|_\B = P$, and as the completion of a probabily measure is uniquely determined~\citep{cohn2013measure}, we have $Q^c = (Q|_\B)^c$ and thus $\ext \circ \res (Q) = (Q|_\B)^c|_\U = Q^c|_\U = Q$.
    Thus, $\Delta(\Y,\U)$ and $\Delta(\Y,\B)$ are in bijection.
    Moreover, by definition of $\E_P X = \E_{\ext(P)} X$ for universally measurable $X$, this bijection preserves expectations of universally measurable $X$.
    We conclude $\Delta_0(\hat\Z^{(s)},\U) = \ext(\Delta_0(\hat\Z^{(s)},\B))$.
    Thus,
    \begin{align*}
      \Epucons_\U X'
      =
      \sup_{Q\in\Delta_0(\hat\Z^{(s)},\U)} \E_Q X'
      =
      \sup_{Q\in\ext(\Delta_0(\hat\Z^{(s)},\B))} \E_Q X'
      =
      \sup_{P\in\Delta_0(\hat\Z^{(s)},\B)} \E_{\ext(P)} X'
      =
      \Epucons_\B X'~.
    \end{align*}

  \item
    As detailed in \citet[Remark 2.8]{bartl2020conditional}, this statement follows if we can show that $\Delta_0^T(\hat\Z,\B)$ is an analytic subset of $\Delta(\Y^T,\B)$.
    In fact, it is Borel, as we briefly show.
    The constraint that $\E_P[g_{it}(Y_{1..t},Y_{t+1}) \mid Y_{1..t}] \leq 0$ hold $P$-a.s.\ can be equivalently written $\int_A g_{it}(y_{1..t+1}) dP(y) \leq 0$ for all $A\in\B(\Y^t)$, viewing each $A$ as a cylinder set $A \times \Y^{T-t} \subseteq \Y^T$.
    The map $f: P \mapsto \int_A g_{it}(y_{1..t+1}) dP(y)$ is Borel~\citep[Corollary 7.29.1]{bertsekas1996stochastic}, and thus the corresponding set $\P_{A,i,t} = \{P \in \Delta(\Y^T,\B) \mid \int_A g_{it}(y_{1..t+1}) dP(y) \leq 0\} = f^{-1}([-\infty,0])$ is Borel.
    As $\Y$ is Polish, each $\B(\Y^t)$ is countably generated, say by $\A_t \subset \B(\Y^t)$.
    Thus we may write $\Delta_0^T(\hat\Z,\B) = \bigcap_{t<T,i\leq k,A\in\A_t} \P_{A,i,t}$, which is Borel as the countable intersection of Borel sets.

  \item
    This statement is implied by the if direction of \citet[Theorem 1.2]{bartl2020conditional}, provided that we can show that, in their terminology, $\P := \Delta_0^T(\hat\Z,\B)$ is stable under pasting.
    (See also \citet[Theorem 2.3]{nutz2013constructing}.)

    Let $\Omega = \Y^T$ and $\G:=\sigma(Y_{1..t})$.
    For each $P\in\P$ let $P^{\G}(\omega)$ be a regular conditional probability given $\G$.
    Define the set-valued map
    \[
      \P_{\G}(\omega):=\{P^{\G}(\omega):\,P\in\P\}\subset\Delta(\Omega,\B).
    \]
    A $\G$-measurable kernel is a map
    \(
    R:\Omega\to \Delta(\Omega,\B)
    \)
    such that $\omega\mapsto R(\omega)(A)$ is $\G$-measurable for every $A\in\B$.
    For $Q\in\Delta(\Omega,\B)$ and $R$ a $\G$-measurable kernel, we say that a probability measure $P\in \Delta(\Omega,\B)$ equals $Q\otimes R$ if
    \begin{equation}\label{eq:otimes-def}
      \E_{\bar P}[X] = \E_Q\big[E_{R(\omega)}[X]\big]\qquad\text{for all upper semi-analytic } X\in\reals^\Omega~.
    \end{equation}
    
    To apply \citet[Theorem 1.2]{bartl2020conditional}, we must show
    \[
      \P = \P\otimes\P_{\G}
      :=\{Q\otimes R:\ Q\in\P,\ R(\cdot)\in\P_{\G}(\cdot)\ Q\text{-a.s.},\ R\ \G\text{-measurable}\}~.
    \]
    The inclusion $\P\subseteq\P\otimes\P_{\G}$ is immediate:
    given $P\in\P$, taking $R(\omega)=P^{\G}(\omega)\in\P_{\G}(\omega)$ gives $P=P\otimes R$.
    For the reverse inclusion $\P\otimes\P_{\G}\subseteq\P$, we use the fact that $\P = \Delta_0^T(\hat\Z)$ is defined by conditional constraints.
    Take $Q\in\P$ and a $\G$-measurable kernel $R$ with $R(\omega)\in\P_{\G}(\omega)$ $Q$-a.s., and let $P=Q\otimes R$.
    By definition, we have
    $Q(Y_{t'+1} \mid Y_{1..t'}) \in \Delta_0(\hat\Z^{(Y_{1..t'})})$ $Q$-a.s., and
    $R(Y_{1..t})(Y_{t'+1} \mid Y_{1..t'}) \in \Delta_0(\hat\Z^{(Y_{1..t'})})$ $R(Y_{1..t})$-a.s.\ for all $t' \geq t$.
    As $P(Y_{t'+1} \mid Y_{1..t'}) = Q(Y_{t'+1} \mid Y_{1..t'})$ for $t' < t$
    and $P(Y_{t'+1} \mid Y_{1..t'}) = R(Y_{1..t})(Y_{t'+1} \mid Y_{1..t'})$ for $t' \geq t$, both $P$-a.s.,
    we have $P \in \Delta_0^T(\hat\Z) = \P$ as well.

  \end{enumerate}
\end{proof}

\begin{theorem}\label{thm:finite-sequential-minimax}
  Let $(\Y,\hat\Z,T)$ be a sequential gamble space satisfying Condition~\ref{cond:usa-minimax}.
  Then we have $\Egu X = \Epuseq X$ for all bounded, Borel measurable $X:\Y^T\to\reals$.
\end{theorem}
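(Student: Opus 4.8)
The plan is to prove the stronger statement that $\Egu X = \Epuseq X$ holds for every bounded upper semi-analytic $X:\Y^T\to\reals$; the theorem then follows since every bounded Borel function is upper semi-analytic. I would argue by induction on the time horizon $T$. The base case $T=0$ is trivial, since $X$ is a constant on $\Y^0=\{\emptystring\}$ and both sides equal that constant (the case $T=1$ is exactly Lemma~\ref{lem:usa-facts}(1) with $t=0$). Throughout I would note that the truncated space $(\Y,\hat\Z,T-1)$ again satisfies Condition~\ref{cond:usa-minimax} (restrict the generators $g_{it}$ to $t<T-1$), so the inductive hypothesis is available for it.

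For the inductive step, fix a bounded upper semi-analytic $X:\Y^T\to\reals$ and define the variable $h:\Y^{T-1}\to\reals$ by $h(s):=\Egu[X\mid s]$. First I would record that $h$ is well-behaved. By Lemma~\ref{lem:usa-facts}(1) (per-round minimax duality, applicable since $s\in\Y^{T-1}$ corresponds to $t=T-1$) we have $h(s)=\Epuseq[X\mid s]$, and by Lemma~\ref{lem:usa-facts}(2) this map is upper semi-analytic. It is bounded, since $\Epuseq[X\mid s]$ is a supremum of expectations of the bounded function $X(s\oplus\,\cdot\,)$ over the nonempty set $\Delta_0(\hat\Z^{(s)})$ (nonempty because $\hat\Z^{(s)}$ is full support), hence lies in $[-\sup|X|,\sup|X|]$. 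So $h$ is a bounded upper semi-analytic variable on the terminal space of the $(T-1)$-round gamble space.

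Then I would chain four equalities. The gambles are real-valued (finite nonnegative combinations of the real-valued $g_{it}$), so Proposition~\ref{prop:tower-property-egu} applies and gives $\Egu X=\Egu[\Egu[X\mid\,\cdot\,]]=\Egu h$, where the outer $\Egu$ is taken on $(\Y,\hat\Z,T-1)$. Applying the inductive hypothesis to the bounded upper semi-analytic $h$ yields $\Egu h=\Epuseq h$. Substituting $h=\Epuseq[X\mid\,\cdot\,]$ and invoking the tower property of $\Epuseq$ from Lemma~\ref{lem:usa-facts}(3) (with $t=T-1$) gives $\Epuseq h=\Epuseq[\Epuseq[X\mid Y_{1..T-1}]]=\Epuseq X$. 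Combining,
\[
\Egu X=\Egu h=\Epuseq h=\Epuseq[\Epuseq[X\mid Y_{1..T-1}]]=\Epuseq X,
\]
which closes the induction.

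The conceptually hard part has already been absorbed into Lemma~\ref{lem:usa-facts}: the failure of Borel sequential measurability (Example~\ref{ex:seq-meas-fail}) forces the entire induction to run on the class of upper semi-analytic functions rather than Borel functions, and it is precisely the closure of this class under the conditional operator $s\mapsto\Epuseq[X\mid s]$ (Lemma~\ref{lem:usa-facts}(2)), together with the $\Epuseq$ tower property, that makes the inductive step legitimate. The main thing to be careful about in writing this proof is therefore bookkeeping: verifying at each step that $h$ remains bounded and upper semi-analytic so the inductive hypothesis genuinely applies, and checking that the $\Egu$- and $\Epuseq$-tower decompositions are taken over the same splitting of $\Y^T$ into its first $T-1$ coordinates and its last coordinate, so that the substitution $h=\Egu[X\mid\,\cdot\,]=\Epuseq[X\mid\,\cdot\,]$ is valid.
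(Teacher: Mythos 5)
Your proposal is correct and follows essentially the same route as the paper: induction on $T$ over the strengthened class of bounded upper semi-analytic variables, chaining the $\Egu$ tower property (Proposition~\ref{prop:tower-property-egu}), per-round duality (Lemma~\ref{lem:usa-facts}(1)), the inductive hypothesis via upper semi-analyticity of the conditional map (Lemma~\ref{lem:usa-facts}(2)), and the $\Epuseq$ tower property (Lemma~\ref{lem:usa-facts}(3)). The bookkeeping you make explicit---boundedness of $h$ via full support, the truncated space again satisfying Condition~\ref{cond:usa-minimax}---is left implicit in the paper but is a correct and welcome addition.
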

\begin{proof}
  
  We first prove by induction that
  \begin{align}
    \label{eq:finite-minimax-stronger-induction}
    \text{$\Egu X = \Epuseq X$ for all bounded upper semi-analytic $X:\Y^T\to\reals$}~,
  \end{align}
  from which the result follows.
  For the base case $T=0$, as $\Y^T = \{\emptystring\}$, we have
  $\Egu X = X(\emptystring) = \Epuseq X$.
  Now consider $T\geq 1$, and suppose the statement holds for $T-1$.
  We have
  \begin{align*}
    \Egu X
    &= \Egu[ \Egu[ X \mid Y_{1..T-1} ] ] & \text{Proposition~\ref{prop:tower-property-egu}}
    \\
    &= \Egu[ \Epuseq[ X \mid Y_{1..T-1} ] ] & \text{Lemma~\ref{lem:usa-facts}(1)}
    \\
    &= \Epuseq[ \Epuseq[ X \mid Y_{1..T-1} ] ] & \text{Lemma~\ref{lem:usa-facts}(2), inductive hypothesis}
    \\
    &= \Epuseq X & \text{Lemma~\ref{lem:usa-facts}(3)}
  \end{align*}
\end{proof}

\subsection{When minimax duality fails: axioms and finite additivity}
\label{sec:axioms-fa}

Before applying our minimax theorems, it is instructive to ask when minimax duality \emph{fails}, i.e, when measure-theoretic probability and game-theoretic probability disagree.
More specifically, when do we have $\Epucons X < \Egu X$ (or $\Epuseq X < \Egu X$)?
As a starting point, we can look at properties of $\Epucons$, and ask when $\Egu $ shares those properties.

Let us consider the five axioms of \citet[\S~6.1]{shafer2019game}.
Let $f:\extreals^\Omega\to\extreals$ be an operator on variables.
Let $X,Y\in\extreals^\Omega$.
\begin{enumerate}[label=\textbf{E\arabic*.}]
\item $f(X+Y) \leq f(X)+f(Y)$.
\item $f(c X) = c f(X)$ for $c\in(0,\infty)$.
\item $f(X) \leq f(Y)$ when $X \leq Y$.
\item $f(c) = c$ for all $c\in\reals$.
\item
  \everymath{\displaystyle}
  If $X_1 \leq X_2 \leq \cdots \in [0,\infty]^\Omega$, 
      then $f(X_\infty) = \lim_{n\to\infty} f(X_n)$ where $X_\infty = \lim_{n\to\infty} X_n$.
\end{enumerate}
When $f$ satisfies E1--E5, Shafer and Vovk call $f$ an \emph{upper expectation}, and a \emph{broad-sense upper expectation} when it satisfies only E1--E4.
One can check that if $f(X) = \sup_{P\in\P} \E_P X$ for some set of probability measures $\P$, then $f$ does satisfy E1--E5 when restricting to bounded-below measurable variables.
(E5 follows from the monotone convergence theorem.)

From \S~\ref{sec:formalism-basic-facts}, we see that for arbitrage-free, positive-linear gamble spaces, $\Egu$ will satisfy E1--E4.
This result is essentially the same as \citet[Proposition 6.10, \S~6.5]{shafer2019game} showing, in their terminology, the equivalence of broad sense offers and broad sense upper expectations.
\begin{proposition}\label{prop:pos-linear-axioms}
  Let $(\Omega,\Z)$ be a gamble space.
  Then $\Egu$ satisfies E1--E4 if and only if the operator closure $\tilde\Z = \{Z\in\reals^\Omega \mid \Egu Z \leq 0\}$ is positive linear and arbitrage-free.
  
\end{proposition}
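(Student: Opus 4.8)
The plan is to leverage the identity $\Egu_\Z = \Egu_{\tilde\Z}$ from Proposition~\ref{prop:dcl-wlog}, which lets me reason entirely in terms of the operator closure. Since $\Egu_\Z$ and $\Egu_{\tilde\Z}$ are literally the same operator on variables, the axioms E1--E4 hold for one presentation of the gamble space if and only if they hold for the other; thus it suffices to characterize when $\Egu$ of the gamble space $(\Omega,\tilde\Z)$ satisfies E1--E4 in terms of structural properties of $\tilde\Z$ itself. This is the crucial reframing: it replaces the arbitrary generating set $\Z$ (which need not be a convex cone even when $\Egu$ is well-behaved) with its canonical representative $\tilde\Z$, on which the structural conditions can be read off directly.

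For the ($\Leftarrow$) direction, I would assume $\tilde\Z$ is positive-linear and arbitrage-free and simply invoke the basic facts already established, applied to the gamble space $(\Omega,\tilde\Z)$. Monotonicity (E3) holds unconditionally by Proposition~\ref{prop:basic-facts-no-assumptions}. Positive linearity implies $0\in\tilde\Z$ and scalability, so Proposition~\ref{prop:basic-facts-zero} gives $\Egu\,0 = 0$ (arbitrage-freeness yields $\Egu\,0\ge 0$, containing zero yields $\Egu\,0\le 0$), and translation then gives $\Egu\,c = c$ for all $c\in\reals$ (E4). Proposition~\ref{prop:basic-facts-homogeneity} gives positive homogeneity for $c>0$ (E2) from scalability, and Proposition~\ref{prop:basic-facts-assumptions} gives subadditivity (E1) from arbitrage-free positive linearity. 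Each invocation is legitimate precisely because $\Egu_{\tilde\Z}=\Egu_\Z$, so the hypotheses, being conditions on the gamble set, are exactly what we have assumed for $\tilde\Z$.

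For the ($\Rightarrow$) direction, I would assume E1--E4 for $\Egu$ and derive the two structural properties of $\tilde\Z$ directly from the definition $\tilde\Z=\{Z\in\reals^\Omega\mid\Egu Z\le 0\}$. For arbitrage-freeness, take $Z\in\tilde\Z$ and suppose $\inf Z>0$; then $Z$ dominates the real constant $c:=\inf Z>0$ pointwise, so monotonicity (E3) and E4 give $\Egu Z\ge\Egu\,c=c>0$, contradicting $\Egu Z\le 0$, whence $\inf Z\le 0$. For positive linearity, take $Z_1,Z_2\in\tilde\Z$ and $\alpha_1,\alpha_2\ge 0$; subadditivity (E1) and positive homogeneity (E2) give $\Egu(\alpha_1 Z_1+\alpha_2 Z_2)\le\alpha_1\Egu Z_1+\alpha_2\Egu Z_2\le 0$, where any term with $\alpha_i=0$ is handled separately via $\Egu\,0=0$ (which follows from E4), since E2 covers only strictly positive scalars. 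As $Z_1,Z_2$ are real-valued and the $\alpha_i$ finite, the combination lies in $\reals^\Omega$, hence in $\tilde\Z$.

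I expect the main obstacle to be bookkeeping around infinite values and the precise scope of the axioms, rather than any deep difficulty. In particular, E2 is stated only for $c\in(0,\infty)$, so the $\alpha_i=0$ case in the positive-linearity argument must be patched using E4; and one must keep track that $\tilde\Z\subseteq\reals^\Omega$, so that the constructed combinations are genuinely elements of $\tilde\Z$ and not merely of the larger set $\overline\Z$. A secondary point worth verifying is that the basic-facts propositions, originally stated for a generic gamble space $(\Omega,\Z)$, apply verbatim to $(\Omega,\tilde\Z)$; this is immediate since their hypotheses are conditions on the gamble set alone.
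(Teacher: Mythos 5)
Your proof is correct and takes essentially the same approach as the paper's: both directions rest on the identity $\Egu_\Z = \Egu_{\tilde\Z}$ from Proposition~\ref{prop:dcl-wlog}, with the if direction reading E1--E4 off Propositions~\ref{prop:basic-facts-assumptions}, \ref{prop:basic-facts-homogeneity}, \ref{prop:basic-facts-no-assumptions}, and \ref{prop:basic-facts-zero} applied to $(\Omega,\tilde\Z)$, and the only-if direction obtaining positive linearity from E1--E2 exactly as the paper does. Your only (harmless) deviations are cosmetic: you prove arbitrage-freeness of $\tilde\Z$ directly from E3--E4 via the constant lower bound $c = \inf Z$, where the paper instead applies Proposition~\ref{prop:basic-facts-zero} to $\tilde\Z$ using $\Egu_{\tilde\Z}\,0 = \Egu\,0 = 0$, and your explicit patch for the $\alpha_i = 0$ case (which E2 does not cover) is a detail the paper leaves implicit.
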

\begin{proof}
  For the if direction, the axioms follow from
  Propositions~\ref{prop:basic-facts-assumptions}, \ref{prop:basic-facts-homogeneity}, \ref{prop:basic-facts-no-assumptions}, and \ref{prop:basic-facts-zero}, respectively.
  For the only if direction, we have $\Egu_\Z = \Egu_{\tilde\Z}$ by Proposition~\ref{prop:dcl-wlog}.
  As $\Egu_{\tilde\Z} 0 = \Egu 0 = 0$, the gambles $\tilde\Z$ are arbitrage free from Proposition~\ref{prop:basic-facts-zero}.
  For positive linearity, let $Z_1,Z_2\in\tilde\Z$, and let $\alpha_1,\alpha_2\geq 0$.
  Then $\Egu Z_i \leq 0$, so E2 gives $\Egu \alpha_i Z_i \leq 0$, and thus E1 gives $\Egu(\alpha_1 Z_1 + \alpha_2 Z_2) \leq 0$.
  We conclude $\alpha_1 Z_1 + \alpha_2 Z_2 \in \tilde\Z$.  
\end{proof}

One need not look hard for an example that fails E5 however.

\begin{example}\label{ex:minimax-fail-omega-01}
  Take $\Omega=[0,1]$, $\Z = \{y\mapsto \beta y \mid \beta\in\reals\}$.
  As $\Z$ is positive linear and arbitrage-free, it satisfies E1--E4.
  It fails E5 however: take $X_n = \ones_{[1/n,1]}$, so $X_\infty = \ones_{(0,1]}$.
  Then $\Egu X_n = 0$ for all $n$, but $\Egu X_\infty = 1$ as any $Z\in\Z$ we have $\sup X_\infty - Z = 1$.
  (See \citet[Exercise 6.13]{shafer2019game}.)

  Since $\Egu$ fails E5, but $\Epucons$ always satisfies E5, there must be some $X$ where the two disagree.
  Indeed, we can take $X = X_\infty$.
  We have $\Delta_0(\Z) = \{\delta_0\}$, since $\E_P \beta Y \leq 0$ for all $\beta$ implies $\E_P Y = 0$ but $Y\geq 0$.
  Thus $\Egu X = 1 > 0 = X(0) = \E_{\delta_0} X = \Epucons X$.
\end{example}

\begin{remark}
  As minimax duality fails in Example~\ref{ex:minimax-fail-omega-01}, we can ask what assumption of Sion's theorem is violated.
  If one takes the weak topology on $\Delta(\Omega)$, we do have compactness, but the map $P \mapsto \E_P X$ is not upper semicontinuous as $X$ is not upper semicontinuous.
  If instead we used a topology making $P \mapsto \E_P X$ upper semicontinuous, such as the total variation topology, then $\Delta(\Omega)$ would no longer be compact, an issue since $\Z$ is not compact.

  Similarly, it may seem that Theorem~\ref{thm:finitely-generated-minimax} should apply here, as $\Z$ is finitely-generated, but $\Z$ does not have full support.
  In particular, it is not true that $\ones_{(0,1]} \in \dcl(\Z)$, even though $\ones_{(0,1]}$ is consistent with $\Delta_0(\Z)$.
\end{remark}

It may not come as a surprise that E1--E4 are not sufficient for $\Egu$ to be have like a measure-theoretic upper expectation, as E5 is reminiscent of countable (sub)additivity.
Indeed, \citet[Proposition 6.4]{shafer2019game} establishes countable subadditivity of $\Egu$ if it satisfies E1--E5.

To better understand these axioms, we now show a kind of converse: Axioms E1--E4 do characterize \emph{finitely-additive} measure-theoretic upper expectations.
The reason is that \emph{minimax duality always holds for finitely additive measures}, following results in the study of robust representations of financial risk measures (\S~\ref{sec:financial-risk-measures}).
The proofs appear in \S~\ref{sec:finit-addit-theory}.

Let $\Delta_f(\Omega)$ be the set of finitely additive probability measures on $\Omega$.
Given $\Z \subseteq \extreals^\Omega$, define $\Delta_{0,f}(\Z) := \{Q\in\Delta_f(\Omega) \mid \E_Q Z \leq 0 \;\forall Z\in\mdcl(\Z)\}$ to be the set of finitely additive consistent measures.

\begin{theorem}\label{thm:fin-add-minimax}
  Let $(\Omega,\Z)$ be an upward-scalable gamble space such that $\dcl(\Z)$ is convex.
  For all bounded measurable $X:\Omega\to\reals$, we have
  \begin{equation}
    \sup_{Q\in\Delta_f(\Omega)} \inf_{Z\in\mdcl(\Z)} \E_Q[ X - Z]
    =
    \inf_{Z\in\mdcl(\Z)} \sup_{Q\in\Delta_f(\Omega)} \E_Q[ X - Z]
    =
    \Egu X~.
  \end{equation}
\end{theorem}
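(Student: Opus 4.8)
The plan is to recognize $\Egu$, restricted to bounded measurable variables, as the reflection of a convex risk measure on $L^\infty(\Omega,\Sigma)$, and to invoke the robust representation of convex risk measures over finitely additive measures~\citep{follmer2016stochastic}. First I would reduce to measurable gambles: Proposition~\ref{prop:mdcl} gives $\Egu X = \Egu_{\mdcl(\Z)} X = \inf_{Z\in\mdcl(\Z)} \sup_{\omega\in\Omega} X(\omega) - Z(\omega)$, and Lemma~\ref{lem:convexity-dcl} shows $\mdcl(\Z)$ is convex. The rightmost equality is then elementary: since every point mass $\delta_\omega$ lies in $\Delta_f(\Omega)$ and $\E_Q[\,\cdot\,]\le\sup(\cdot)$ for every finitely additive $Q$ (with the convention $\E_Q=\infty$ when undefined only helping the supremum), we get $\sup_{Q\in\Delta_f(\Omega)}\E_Q[X-Z]=\sup_{\omega}X(\omega)-Z(\omega)$ for each $Z\in\mdcl(\Z)$, whence $\inf_{Z}\sup_{Q}\E_Q[X-Z]=\Egu X$. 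Weak duality $\sup_Q\inf_Z\le\inf_Z\sup_Q$ is automatic, so the whole theorem reduces to the strong-duality inequality $\sup_{Q\in\Delta_f(\Omega)}\inf_{Z\in\mdcl(\Z)}\E_Q[X-Z]\ge\Egu X$.

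For strong duality I would argue as follows (the degenerate case $\Egu\equiv-\infty$ being immediate, as both sides are then $-\infty$). By Proposition~\ref{prop:basic-facts-no-assumptions}, $\Egu$ is monotone and translation invariant, hence $1$-Lipschitz in the sup norm on $L^\infty$; together with convexity of $\mdcl(\Z)$ this makes $X\mapsto\Egu(-X)$ a finite convex risk measure. Such a functional is norm-continuous, hence lower semicontinuous for the weak-$*$ topology $\sigma(L^\infty,\mathrm{ba})$ (for convex functions norm-closedness and weak-$*$ closedness coincide), so the Fenchel--Moreau theorem yields $\Egu X=\sup_{\mu}\left(\langle\mu,X\rangle-(\Egu)^*(\mu)\right)$ over the dual space $\mathrm{ba}(\Omega,\Sigma)$ of bounded finitely additive signed measures.

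I would then compute the conjugate $(\Egu)^*(\mu)=\sup_{Y\in L^\infty}\langle\mu,Y\rangle-\Egu Y$. Monotonicity forces $\mu\ge0$ (otherwise, if $\mu(A)<0$, feeding in $Y=-n\ones_A$ sends the difference to $+\infty$ since $\Egu(-n\ones_A)\le\Egu 0$), and translation invariance forces $\mu(\Omega)=1$ (otherwise shifting $Y$ by $c$ sends it to $+\infty$ as $c\to\pm\infty$), so the effective domain is contained in $\Delta_f(\Omega)$. On $\Delta_f(\Omega)$ the conjugate equals the minimal penalty $\alpha(Q)=\sup_{Y:\Egu Y\le0}\E_Q Y$, which I would identify with $\sup_{Z\in\mdcl(\Z)}\E_Q Z$ using that $\mdcl(\Z)\subseteq\{Y:\Egu Y\le0\}$ and that any $Y$ with $\Egu Y\le0$ is approached from below in $\E_Q$ by elements $Z\in\mdcl(\Z)$ with $Z\ge Y-\epsilon$. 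Substituting $\alpha(Q)=\sup_{Z}\E_Q Z$ turns the representation into $\Egu X=\sup_{Q\in\Delta_f(\Omega)}\left(\E_Q X-\sup_{Z}\E_Q Z\right)=\sup_{Q}\inf_{Z}\E_Q[X-Z]$, as desired; upward scalability guarantees $\alpha(Q)\in\{0,\infty\}$, concentrating the supremum on $\Delta_{0,f}(\Z)$ and aligning the statement with the finitely additive analogue of Theorem~\ref{thm:ep0-equals-ep}.

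The main obstacle I anticipate is the conjugate computation in the last step, rather than the abstract duality itself. Verifying weak-$*$ lower semicontinuity is routine, but identifying $(\Egu)^*$ with the $\mdcl(\Z)$-penalty is delicate when gambles in $\mdcl(\Z)$ are unbounded, since finitely additive integrals of such functions may be $-\infty$ or undefined, and one must show the acceptance set $\{Y:\Egu Y\le0\}$ generates the same penalty as $\mdcl(\Z)$ through careful truncation. Handling these extended-real and integrability subtleties is where the real work lies.
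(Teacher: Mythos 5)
Your proposal is correct and takes essentially the same route as the paper: the paper's entire proof of the nontrivial (strong-duality) equality is an appeal to \citet[Theorem 4.16]{follmer2016stochastic}, the robust representation of convex risk measures over finitely additive measures, which is precisely the Fenchel--Moreau argument on the $(L^\infty,\mathrm{ba})$ pairing that you reconstruct inline, with the rightmost equality (point masses plus $\E_Q \leq \sup$) and the reduction to $\mdcl(\Z)$ via Proposition~\ref{prop:mdcl} and Lemma~\ref{lem:convexity-dcl} handled identically. The one step you flag as ``the real work''---identifying the conjugate penalty $\sup\{\E_Q Y : Y \in L^\infty,\, \Egu Y \leq 0\}$ with $\sup_{Z\in\mdcl(\Z)}\E_Q Z$ when gambles are unbounded---is likewise left implicit in the paper, which absorbs it into the citation and the acceptance-set remark of \S~\ref{sec:financial-risk-measures} (where only the inclusion $\dcl(\Z)\cap\X \subseteq \{Y : \Egu Y \leq 0\}$, your easy direction via $Y-\epsilon\in\mdcl(\Z)$, is recorded), so your treatment matches the paper's level of rigor.
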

\begin{corollary}\label{cor:fin-add-price-equality}
  Let $(\Omega,\Z)$ be an arbitrage-free positive linear gamble space.
  Then
  \begin{equation}
    \label{eq:fin-add-equality}
    \Egu X
    =
    \Epu^{0,f} X := 
    \sup_{Q\in\Delta_{0,f}(\Z)} \E_Q X~,
  \end{equation}
  for all $X\in\X_b$.
\end{corollary}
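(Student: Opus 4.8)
The plan is to deduce this equality from the finitely additive minimax theorem (Theorem~\ref{thm:fin-add-minimax}), following the same two-step argument used to prove the countably additive analogue $\Epucons X = \Epu X$ in Theorem~\ref{thm:ep0-equals-ep}. First I would check that the hypotheses of Theorem~\ref{thm:fin-add-minimax} are met. Positive linearity means $\Z$ is a convex cone, so in particular it is upward scalable and contains $0$, and by Lemma~\ref{lem:convexity-dcl} convexity of $\Z$ passes to $\dcl(\Z)$. Theorem~\ref{thm:fin-add-minimax} therefore applies and gives
\begin{equation*}
  \Egu X = \sup_{Q\in\Delta_f(\Omega)} \inf_{Z\in\mdcl(\Z)} \E_Q[X - Z]~.
\end{equation*}

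It then remains to show the right-hand side equals $\sup_{Q\in\Delta_{0,f}(\Z)}\E_Q X$, and for this I would argue that finitely additive measures inconsistent with the gambles contribute nothing to the supremum. Fix $Q\in\Delta_f(\Omega)$ with $Q\notin\Delta_{0,f}(\Z)$; by definition there is some $Z^*\in\mdcl(\Z)$ with $c := \E_Q Z^* > 0$. Since $\Z$ is upward scalable and $Z^* \leq Z$ for some $Z\in\Z$, we have $\alpha Z^* \in \mdcl(\Z)$ for all $\alpha \geq 1$, with $\E_Q[\alpha Z^*] = \alpha c$. Writing $b := \sup X < \infty$,
\begin{equation*}
  \inf_{Z\in\mdcl(\Z)} \E_Q[X - Z] \leq \inf_{\alpha\geq 1} \E_Q[X - \alpha Z^*] \leq \inf_{\alpha \geq 1} (b - \alpha c) = -\infty~,
\end{equation*}
exactly as in Theorem~\ref{thm:ep0-equals-ep}, so the outer supremum is unchanged when restricted to $\Delta_{0,f}(\Z)$. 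For $Q\in\Delta_{0,f}(\Z)$, the fact that $0\in\mdcl(\Z)$ gives $\inf_{Z}\E_Q[X-Z] \leq \E_Q X$, while consistency ($\E_Q Z \leq 0$) together with linearity gives $\E_Q[X-Z] \geq \E_Q X$ for every $Z\in\mdcl(\Z)$; hence the inner infimum equals $\E_Q X$. Combining,
\begin{equation*}
  \Egu X = \sup_{Q\in\Delta_{0,f}(\Z)} \E_Q X = \Epu^{0,f} X~.
\end{equation*}

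The step I expect to require the most care is the manipulation of the finitely additive integral for gambles $Z\in\mdcl(\Z)$ that are real-valued but unbounded. Both the scaling identity $\E_Q[\alpha Z^*] = \alpha c$ and the linearity $\E_Q[X-Z] = \E_Q X - \E_Q Z$ invoked above must actually be justified for the finitely additive integral, not merely asserted: here $X$ is bounded by hypothesis, and $Z^*$ is $Q$-integrable precisely because $\E_Q Z^* > 0$ is assumed defined, so positive homogeneity applies, but the general additivity should be handled together with the paper's convention that undefined expectations are set to $+\infty$ (which only helps an infimum). I would also remark that arbitrage-freeness is not strictly needed for the identity itself---if $\Z$ admits arbitrage then both sides degenerate to $-\infty$---but it is the natural hypothesis under which $\Delta_{0,f}(\Z)$ behaves like a genuine set of consistent measures.
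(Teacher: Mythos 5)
Your proposal is correct and takes essentially the same route as the paper: the paper's own proof (via Theorem~\ref{thm:fin-add-price-chain}) likewise obtains the minimax step from the finitely additive duality (Theorem~\ref{thm:fin-add-minimax}, itself via the robust representation of risk measures) and then restricts the supremum from $\Delta_f(\Omega)$ to $\Delta_{0,f}(\Z)$ by rerunning the argument of Theorem~\ref{thm:ep0-equals-ep}, with the hypotheses checked exactly as you do via positive linearity and Lemma~\ref{lem:convexity-dcl}. Your closing remarks---that the finitely additive integral manipulations need justification for unbounded $Z$, and that arbitrage-freeness is not strictly needed since both sides degenerate to $-\infty$ under arbitrage---are accurate refinements of points the paper treats only implicitly.
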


In essence, then, to have $\Egu X = \Epucons X$, one needs to show that $\Epu^{0,f} X = \Epucons X$.
In other words, one needs to ensure that no finitely additive measures $Q$ are ``exposed'' by $X$, meaning $\E_Q X > \sup_{P\in\Delta_0(\Z)} \E_P X$.
One way to eliminate all such $Q$ is to take $\Z$ to be sufficiently rich.
As we saw in Theorem~\ref{thm:consistent-gamble-minimax}, it certainly suffices to take all consistent gambles $\Z=\Z_0(\P)$ with respect to some $\P \subseteq \Delta(\Omega)$; in some sense this is the largest possible set of gambles with $\P \subseteq \Delta_0(\Z)$.
In general, one can remove gambles from this maximal $\Z$, but if one removes too many, even while preserving $\Delta_0(\Z)$, the set $\Delta_{0,f}$ may now include a finitely-additive $Q$ which is ``exposed'' by $X$, and thus for which minimax duality fails.
Indeed, in Example~\ref{ex:minimax-fail-omega-01}, it is precisely the gamble $\ones_{(0,1]}$ that is missing; even though including it would not change $\Delta_0(\Z)$, its presence enforces that consistent finitely additive probability measures are countably additive.

Similarly, one can interpret results like Theorem~\ref{thm:minimax-continuous-compact-Omega} that hold only for continuous $X$ as essentially relying on the fact that continuous functions are too coarse to ``expose'' finitely-additive $Q\in\Delta_{0,f}(\Z)$, even when $\Z$ has ``holes'' relative to $\Z_0$.

In light of this discussion, it may be tempting to think that Axiom E5 is enough for the countably additive representation to hold, i.e., for minimax duality.
Curiously, this is not the case: the following example violates minimax duality despite satisfying the continuity axiom E5.

\begin{example}
  \citet[Example 4.8]{delbaen2002coherent} gives an example $\Egu$ on $\Omega = [0,1]$ satisfying Axioms E1--E5 from \citet{shafer2019game} but every probability measure in $\Delta_{0,f}(\Z)$ is purely finitely additive.
  Hence $\Epucons = -\infty$.
  As a corollary, it is not possible to relax the assumption that $\Y$ be finite in \citet[Theorem 9.7]{shafer2019game} (discussed in \S~\ref{sec:existing-minimax}), even if $\Y$ is assumed to be compact.
  (In their notation, take $\Theta = \{\theta\}$ and $\Egu_\theta = \Egu$ above, and consider $X=0$.)
\end{example}

\begin{example}
  Let us continue the discussion from Example~\ref{ex:seq-consistent-but-no-consistent}, about the requirement that gambles be bounded below.
  In that example, we had $\Delta_0(\Z) = \emptyset$, but $\Epucons = \Epu$, so $\Epu = \Epucons = -\infty$.
  But $\Egu X$ is still finite for many choices of $X$, e.g., $X=1$, violating minimax duality.
  We can also take $\Y = \{-1,0,1\}$ so that $\Delta_0(\Z) \neq \emptyset$, as it contains the point measure on $y=(0,0,\ldots)$.
  Every outcome must have only finitely many nonzero elements almost surely for every $P\in\Delta_0(\Z)$, however.
  Letting $X(y) = \ones_{\{-1,1\}^\infty}$, we have
  $\Egu X = 1 > 0 = \Epucons X$.
\end{example}

\begin{example}
  Consider the sequential version of Example~\ref{ex:minimax-fail-omega-01}, the simple repeated gamble space $(\Y,\hat\Z,\infty)$ with $\Y=[0,1]$ and $\hat\Z = \{y\mapsto \beta y \mid \beta\in\reals\}$.
  \citet[Proposition 1.2]{shafer2019game} show that $\Egu \ones_{(\ALLN)^c} = 0 = \Epu \ones_{(\ALLN)^c}$, despite the fact that one generally does not have $\Egu X = \Epu X$ for $X\in\X_b$; indeed, the choice $X(y) = \ones\{y_1 > 0\}$ recovers the counterexample in Example~\ref{ex:minimax-fail-omega-01}.
  This observation suggests that minimax theorems may hold for indicators of tail events but not for all bounded measurable $X$.
\end{example}

\section{Translating between game-theoretic and measure-theoretic}
\label{sec:translating-mtp-gtp}

As alluded to at the top of \S~\ref{sec:prices-probability}, the price inequalities and equalities we have developed are the key to relating measure-theoretic and game-theoretic statements.
In particular, we will now see that every game-theoretic result implies a measure-theoretic one, and the two are equivalent when minimax duality holds.

\subsection{Statements as bounds on upper expectations}

It will be convenient to phrase measure-theoretic results as upper bounds on the expected value of some random variable $X$.
More precisely, for some set $\P$ of probability measures generated by some constraints, we would like to phrase results in the form: $\E_P X \leq c$ for all $P\in\P$.
Let us see a few examples of how this can be done.

\begin{example}[Chebyshev]
  \label{ex:chebyshev-conversion}
  The classic statement of Chebyshev's inequality states that, for any measurable random variable $Y$ with $\Ep Y = b$ and $\Ep (Y-b)^2 \leq \sigma^2$, and any $\alpha > 0$, we have $\Pp(|Y-b |\geq \alpha\sigma )\leq {\frac {1}{\alpha^{2}}}$.
  As is typically the case, the set $\P$ of probability measures is present but unacknowledged: it is the set of probability measures with $\E_P Y = b$ and $\E_P (Y-b)^2 \leq \sigma^2$.
  Actually, we can loosen this requirement to $\E_P (Y-b)^2 \leq \sigma^2$ where $b\in\reals$ is some constant, the mean of $Y$ or not.
  Thus, letting $\P = \{P \in \Delta(\reals) \mid \E_P (Y-b)^2 \leq \sigma^2\}$ and $X = (Y-b)^2$, Chebyshev's inequality is the statement: $\E_P X \leq \sigma^2$ for all $P\in\P$.
\end{example}  

Let us now revisit the motivating example from \S~\ref{sec:motivating-example}.

\begin{example}[Bounded law of large numbers]
  \label{ex:bounded-lln-conversion}
  One version of the bounded (martingale) law of large numbers states that for a martingale difference sequence $\{Y_t\}_t$ such that $|Y_t|\leq 1$ for all $t$, we have $\lim_{t\to\infty} \frac 1 t \sum_{i=1}^t Y_i = 0$ almost surely.
  The set $\P$ lurking behind this statement is the set of $P\in\Delta(\Omega)$ for which $\E_P[Y_t \mid Y_{1..t-1}] = 0$ holds $P$-a.s.\ for all $t$, i.e., the set of martingale measures.
  Letting $\ALLN = \{y \in [-1,1]^\infty : \lim_{n\to\infty} \frac 1 n \sum_{t=1}^n y_t = 0\}$ and $X = \ones_{(\ALLN)^c}$, the theorem can therefore be restated as $\E_P X \leq 0$ for all $P\in\P$.
\end{example}

Once measure-theoretic statements are of this form, the key to relating them to game-theoretic versions is to choose the gambles $\Z$ such that $\P=\Delta_0(\Z)$ or $\P = \Delta_0^T(\Z)$ for sequential settings.

\subsection{Game-theoretic to measure-theoretic (always)}
\label{sec:gtp-to-mtp-always}

The following two corollaries show that game-theoretic statements imply measure-theoretic ones.
In particular, if $\Egu X \leq c$, then by our price inequalities, we have $\Epucons X \leq \Egu X \leq c$ as well.
This statement in turn can be phrased as: $\E_P X \leq c$ for all $P\in\Delta_0(\Z)$.
For sequential settings, we would have $\Epuseq X \leq c$, which can be phrased as: $\E_P X \leq c$ for all $P\in\Delta_0^T(\Z)$.

\begin{corollary}
  \label{cor:gtp-to-mtp-single-outcome}
  Let $(\Omega,\Z)$ be a gamble space, and $X:\Omega\to\infreals$ measurable.
  Then
  \begin{align*}
    \Egu X \leq c
    & \quad\implies\quad
      \Epucons X \leq c
      \quad\iff\quad
      \E_P X \leq c \text{ for all } P\in\Delta_0(\Z)~.
  \end{align*}
\end{corollary}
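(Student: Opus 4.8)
The plan is to split the statement into its two pieces---the rightmost biconditional and the leftmost implication---and observe that each is essentially immediate given the definitions and Theorem~\ref{thm:chain-of-price-inequalities}.

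First I would handle the equivalence $\Epucons X \leq c \iff \E_P X \leq c$ for all $P \in \Delta_0(\Z)$. By Definition~\ref{def:sequential-consistency} we have $\Epucons X := \sup_{P \in \Delta_0(\Z)} \E_P X$, so $\Epucons X \leq c$ is, verbatim, the assertion that $c$ is an upper bound for the set $\{\E_P X : P \in \Delta_0(\Z)\}$, which is exactly the condition that $\E_P X \leq c$ for every $P \in \Delta_0(\Z)$. This requires nothing beyond the definition of a supremum, and it remains valid in the degenerate case $\Delta_0(\Z) = \emptyset$, where $\Epucons X = \sup\emptyset = -\infty \leq c$ and the universally quantified statement holds vacuously.

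Next I would establish the implication $\Egu X \leq c \implies \Epucons X \leq c$ by invoking the unconditional half of Theorem~\ref{thm:chain-of-price-inequalities}, namely $\Epucons X \leq \Epu X \leq \Egu X$, which holds for any gamble space $(\Omega,\Z)$ and any measurable $X:\Omega\to\infreals$ with no nonemptiness hypothesis on $\Delta_0(\Z)$. Composing with the hypothesis yields $\Epucons X \leq \Egu X \leq c$ directly.

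There is no genuine obstacle here: the result is a corollary precisely because it repackages Theorem~\ref{thm:chain-of-price-inequalities} together with a definitional rewriting of $\Epucons$. The only point meriting a remark is the convention that an undefined $\E_P X$ is read as $\infty$, but this is already absorbed into the cited theorem (with the undefined cases deferred to \S~\ref{sec:proofs-undef-expect}), so I would simply note that the same convention is in force and let the chain of inequalities carry the argument.
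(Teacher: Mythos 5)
Your proposal is correct and follows essentially the same route as the paper: the paper's one-line proof likewise derives $\E_P X \leq \Egu X \leq c$ for every $P\in\Delta_0(\Z)$ from the unconditional half of Theorem~\ref{thm:chain-of-price-inequalities}, with the biconditional being the same definitional unpacking of the supremum in $\Epucons$. Your explicit treatment of the $\Delta_0(\Z)=\emptyset$ case and the undefined-expectation convention is a careful elaboration of details the paper leaves implicit, not a different argument.
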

\begin{proof}
  We have $\E_P X \leq \Egu X \leq c$ for any $P\in\Delta_0(\Z)$ by Theorem~\ref{thm:chain-of-price-inequalities}.
\end{proof}

\begin{corollary}
  \label{cor:gtp-to-mtp-sequential}
  Let $(\Y,\hat\Z,\infty)$ be a sequentially normalized gamble space, and let $X:\Y^T\to\infreals$ be measurable and bounded below.
  Then
  \begin{align*}
    \Egu X \leq c
    & \quad\implies\quad
      \Epuseq X \leq c
      \quad\iff\quad
      \E_P X \leq c \text{ for all } P\in\Delta_0^T(\hat\Z)~.
  \end{align*}
\end{corollary}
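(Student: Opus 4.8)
The plan is to mirror the proof of Corollary~\ref{cor:gtp-to-mtp-single-outcome}, replacing the global chain of price inequalities by its sequential counterpart. The only substantive ingredient I need is the single inequality $\Epuseq X \leq \Egu X$; once that is in hand, both the implication and the equivalence are bookkeeping.

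First I would invoke Corollary~\ref{cor:sequential-price-inequalities}, whose hypotheses are precisely those assumed here: $(\Y,\hat\Z,\infty)$ is sequentially normalized and $X:\Y^T\to\infreals$ is measurable and bounded below. That corollary delivers the chain $\Epuseq X \leq \Epu_{\Z'} X \leq \Egu X$ with $\Z' = \bb(\Z_\infty) \cup \bigcup_{t\in\N}\Z_t$, and crucially this upper half of the chain holds \emph{unconditionally}, i.e.\ without assuming $\Delta_0^\infty(\hat\Z)\neq\emptyset$ (that nonemptiness is only needed to splice in the lower prices and close the full six-term chain). Extracting the outermost terms gives $\Epuseq X \leq \Egu X$.

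The implication is then immediate: if $\Egu X \leq c$, then $\Epuseq X \leq \Egu X \leq c$. For the equivalence, I would simply unfold the definition $\Epuseq X = \sup_{P\in\Delta_0^T(\hat\Z)} \E_P X$: a supremum is at most $c$ exactly when every member of the indexing family is, so $\Epuseq X \leq c$ holds iff $\E_P X \leq c$ for all $P\in\Delta_0^T(\hat\Z)$. The convention that $\E_P X := \infty$ when undefined keeps the supremum well posed, and the degenerate case $\Delta_0^T(\hat\Z)=\emptyset$ is consistent, since then $\Epuseq X = -\infty \leq c$ and the universal claim is vacuously true.

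There is no real obstacle here; the content lives entirely in Corollary~\ref{cor:sequential-price-inequalities}, which in turn rests on Proposition~\ref{prop:sequentially-consistent-is-consistent} and Theorem~\ref{thm:chain-of-price-inequalities}. The one point requiring a moment's care is to confirm that I am using only the unconditional upper inequalities $\Epuseq X \leq \Egu X$, rather than the full chain, so that no extraneous hypothesis such as $\Delta_0^\infty(\hat\Z)\neq\emptyset$ slips into the statement.
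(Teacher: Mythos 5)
Your proposal is correct and follows essentially the same route as the paper, which likewise derives $\E_P X \leq \Epuseq X \leq \Egu X \leq c$ directly from Corollary~\ref{cor:sequential-price-inequalities}. Your observation that only the unconditional upper inequalities $\Epuseq X \leq \Epu_{\Z'} X \leq \Egu X$ are needed (so no nonemptiness hypothesis on $\Delta_0^\infty(\hat\Z)$ sneaks in) is a sound and worthwhile point of care, consistent with how the paper states and uses the result.
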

\begin{proof}
  We have $\E_P X \leq \Epuseq X \leq \Egu X \leq c$ for any $P\in\Delta_0^T(\hat\Z)$ by Corollary~\ref{cor:sequential-price-inequalities}.
\end{proof}

When applying Corollary~\ref{cor:gtp-to-mtp-sequential}, it will be useful to characterize $\Delta_0^T(\hat\Z)$ in terms of the per-round constraints given by the gambles, as follows.

\begin{lemma}\label{lem:per-round-constraints}
  Let $(\Y,\hat\Z,T)$ be a Borel sequential gamble space and let $t<T$.
  If $f:\Y^t\times\Y\to\reals$ is measurable and $f(s,\cdot)\in\hat\Z^{(s)}$ for all $s\in\Y^t$, then $\E_P[f(Y_{1..t},Y_{t+1}) \mid Y_{1..t}] \leq 0$ holds $P$-a.s.\ for all $P \in \Delta_0^T(\hat\Z)$.
\end{lemma}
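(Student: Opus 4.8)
The plan is to fix some $P\in\Delta_0^T(\hat\Z)$ and exploit the regular conditional probability that is guaranteed by sequential consistency itself (so no extra disintegrability hypothesis is needed beyond membership in $\Delta_0^T(\hat\Z)$). By Definition~\ref{def:seq-consistency}, $P$ admits a regular conditional probability $s\mapsto P(\cdot\mid s)\in\Delta(\Y)$ for the coordinate $Y_{t+1}$ given $Y_{1..t}$, and the condition $P(\cdot\mid s)\in\Delta_0(\hat\Z^{(s)})$ holds for $P$-almost every $s\in\Y^t$. The crux is that the hypothesis $f(s,\cdot)\in\hat\Z^{(s)}$, together with measurability, places each section of $f$ into $\mdcl(\hat\Z^{(s)})$, which is exactly the class against which the measures in $\Delta_0(\hat\Z^{(s)})$ are required to have nonpositive expectation.

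First I would verify this section membership. Fix $s\in\Y^t$. Since $f:\Y^t\times\Y\to\reals$ is measurable, its section $f(s,\cdot):\Y\to\reals$ is measurable, so $f(s,\cdot)\in\X$; it is real-valued and, by hypothesis, $f(s,\cdot)\in\hat\Z^{(s)}$. Taking $Z=f(s,\cdot)$ in Definition~\ref{def:dcl} (using $f(s,\cdot)\le f(s,\cdot)$) shows $f(s,\cdot)\in\dcl(\hat\Z^{(s)})$, and hence $f(s,\cdot)\in\dcl(\hat\Z^{(s)})\cap\X=\mdcl(\hat\Z^{(s)})$. Consequently, whenever $P(\cdot\mid s)\in\Delta_0(\hat\Z^{(s)})$, the defining property of $\Delta_0$ (Definition~\ref{def:consistency}) yields $\E_{P(\cdot\mid s)}[f(s,\cdot)]\le 0$. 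As the consistency condition holds for $P$-a.e.\ $s$, we obtain $\E_{P(\cdot\mid s)}[f(s,\cdot)]\le 0$ for $P$-a.e.\ $s$.

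It then remains to identify this quantity with the conditional expectation. Because $f(Y_{1..t},Y_{t+1})$ depends on the future only through $Y_{t+1}$, the standard disintegration identity for regular conditional probabilities gives
\[
  \E_P[f(Y_{1..t},Y_{t+1})\mid Y_{1..t}](s)=\int_\Y f(s,y)\,P(dy\mid s)=\E_{P(\cdot\mid s)}[f(s,\cdot)]\qquad P\text{-a.s.},
\]
so the bound from the previous step yields $\E_P[f(Y_{1..t},Y_{t+1})\mid Y_{1..t}]\le 0$ $P$-a.s., as claimed. The only genuinely technical point is this last identity for a jointly measurable, possibly unbounded integrand: one needs that $s\mapsto\int_\Y f(s,y)\,P(dy\mid s)$ is measurable and serves as a version of the conditional expectation, which follows from the measurability of the kernel $s\mapsto P(\cdot\mid s)$ (part of regularity) via a routine monotone-class argument using the Borel structure of $\Y$, with any integrability subtleties absorbed by reading the $\le 0$ statements in the extended sense. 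I expect no deeper difficulty, since sequential consistency was defined precisely so that the per-round constraints transfer to conditional expectations.
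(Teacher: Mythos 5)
Your proof is correct, and it reaches the conclusion by a slightly more direct route than the paper. Both arguments share the same skeleton: disintegrate $P$ via the regular conditional probabilities built into Definition~\ref{def:seq-consistency}, restrict to the $P$-full set of prefixes $s$ where $P(\cdot\mid s)\in\Delta_0(\hat\Z^{(s)})$, and bound the conditional expectation section by section. Where you differ is in how the sectionwise bound is obtained. The paper invokes Theorem~\ref{thm:chain-of-price-inequalities} on the conditional gamble space $(\Y,\hat\Z^{(s)})$ to get $\E_{P(\cdot\mid s)}[f(s,\cdot)]\le\Egu[f(s,\cdot)\mid s]$, and then kills the right-hand side using the observation (Example~\ref{ex:single-variance}) that any real-valued gamble $g\in\hat\Z^{(s)}$ satisfies $\Egu g\le\sup(g-g)=0$. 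You instead bypass the price-inequality machinery entirely: you check that the section $f(s,\cdot)$, being measurable, real-valued, and an element of $\hat\Z^{(s)}$, lies in $\mdcl(\hat\Z^{(s)})$, and then the bound $\E_{P(\cdot\mid s)}[f(s,\cdot)]\le 0$ is literally the defining property of $\Delta_0(\hat\Z^{(s)})$ in Definition~\ref{def:consistency}. This buys a shorter, more self-contained proof that needs only the definitions of $\dcl$, $\mdcl$, and $\Delta_0$, whereas the paper's version leans on the already-established price-inequality theorem it wants to showcase; the two are logically equivalent here, since the first inequality of Theorem~\ref{thm:chain-of-price-inequalities} is itself just the consistency definition in disguise. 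You are also right to flag, and the paper leaves implicit, the one genuinely technical step common to both arguments: identifying $\E_P[f(Y_{1..t},Y_{t+1})\mid Y_{1..t}](s)$ with $\int_\Y f(s,y)\,P(dy\mid s)$, where measurability in $s$ follows from regularity of the kernel and a monotone-class argument, and any integrability issues are absorbed by reading the inequality in the extended sense (under the paper's convention, an undefined expectation is $\infty$ and would violate consistency, so the sectionwise expectations are forced to be well-defined and nonpositive on the relevant full-measure set).
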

\begin{proof}
  Let $P \in \Delta_0^T(\hat\Z)$.
  Let $A_t = \{s\in\Y^t \mid P(\cdot \mid Y_{1..t}=s) \in \Delta_0(\hat\Z^{(s)})\}$.
  For all $s\in A_t$, we thus have $\Delta_0(\hat\Z^{(s)})\neq \emptyset$.
  Applying Theorem~\ref{thm:chain-of-price-inequalities} on $\hat\Z^{(s)}$ gives $\E_P[f(Y_{1..t},Y_{t+1}) \mid Y_{1..t}=s] \leq \Egu[ f(s,\cdot) \mid s]$.
  Further, $\Egu[ f(s,\cdot) \mid s] \leq 0$ as $f(s,\cdot)$ is real-valued. 
  (See Example~\ref{ex:single-variance}.)
  By assumption, $Y_{1..t} \in A_t$ holds $P$-a.s., completing the proof.
\end{proof}

Let us return to Example~\ref{ex:bounded-lln-conversion}.
Here we have a simple repeated gamble space $(\Y,\hat\Z,\infty)$ with $y \mapsto y, y\mapsto -y \in \hat\Z$, and Lemma~\ref{lem:per-round-constraints} thus implies that $\{X_t\}_t$, $X_t = \sum_{i=1}^t Y_i$, is a measure-theoretic martingale with respect to sequentially consistent $P$.
(We could also first observe that it is a game-theoretic martingale and then apply Proposition~\ref{prop:supermartingale-to-measure}.)
The classic result of \citet[Proposition 1.2]{shafer2019game} states that $\Egu \ones_{(\ALLN)^c} = 0$, so in particular $\Egu \ones_{(\ALLN)^c} \leq 0$.
Corollary~\ref{cor:gtp-to-mtp-sequential} thus gives $\E_P \ones_{(\ALLN)^c} \leq 0$ for all $P\in\Delta_0^\infty(\hat\Z) = \P$.
As discussed in Example~\ref{ex:bounded-lln-conversion}, this statement is precisely the measure-theoretic version:
$P(\ALLN) = 1$ for all martingale measures $P$.

\begin{remark}
  \label{remark:gtp-to-mtp}
  It is interesting to contrast the above approach with, e.g., \citet[Proposition 9.17]{shafer2019game}, where measurability needed to be explicitly checked.
  The main workhorse in our case is Proposition~\ref{prop:mdcl}, which allows us to conclude the existence of measurable gambling strategies automatically.
  On the other hand, this existence result is not constructive, whereas \citet[Proposition 9.17, Corollary 9.18]{shafer2019game} constructs an explicit measurable strategy.
\end{remark}

\subsection{Measure-theoretic to game-theoretic (via minimax duality)}

When working with scalable gamble spaces, from Corollary~\ref{cor:gtp-to-mtp-sequential}, minimax duality allows us to translate from measure-theoretic statements to game-theoretic ones.
In the single-round case, given a statement of the form ``$\E_P X \leq c \text{ for all } P\in\P$'', we (a) find scalable gambles $\Z$ with $\Delta_0(\Z)\subseteq\P$, so that all consistent probability measures will satisfy the antecedent of the statement, and (b) show minimax duality $\Egu X = \Epu X$, so we may conclude $\Egu X = \Epucons X \leq c$.
For the sequential case, we proceed similarly, ensuring (a) $\Delta_0^T(\hat\Z)\subseteq\P$, often established from single-round price inequalities (Theorem~\ref{thm:chain-of-price-inequalities}), and (b) showing $\Epuseq X = \Egu X$ through a sequential minimax theorem such as Theorem~\ref{thm:finite-sequential-minimax}.

Let us begin with the Azuma--Hoeffding inequality.
\begin{theorem}[Measure-theoretic Azuma--Hoeffding]
  \label{thm:azuma-mtp}
  Suppose for $T\in\N$ the sequence $\{X_t\}_{t=1}^T$ is a (measure-theoretic) supermartingale.
  If $|X_t-X_{t-1}|\leq c_t$ a.s.\ for all $t\leq T$ for constants $c_t\geq 0$,
  then for all $\epsilon > 0$ we have
  \begin{align}
    \label{eq:azuma}
\Pp[X_T-X_0\geq \epsilon] \leq \exp \left(- {\epsilon ^{2} \over 2\sum _{t=1}^{T}c_{t}^{2}}\right).
  \end{align}
\end{theorem}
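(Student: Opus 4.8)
The plan is to run the standard Chernoff-style argument, packaging the tail bound into a single exponential supermartingale and then optimizing a free scalar parameter. Write $D_t := X_t - X_{t-1}$ for the supermartingale increments and let $\{\F_t\}$ be the underlying filtration, so that the hypotheses read $\E_P[D_t \mid \F_{t-1}] \leq 0$ and $|D_t| \leq c_t$ $P$-a.s. The central estimate I would establish is the conditional moment-generating bound: for every $\lambda > 0$,
\[
\E_P\!\left[e^{\lambda D_t} \,\middle|\, \F_{t-1}\right] \leq \exp\!\left(\tfrac{\lambda^2 c_t^2}{2}\right) \qquad P\text{-a.s.}
\]
I would obtain this from Hoeffding's lemma. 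Setting $m_t := \E_P[D_t \mid \F_{t-1}] \leq 0$, the centered increment $D_t - m_t$ has conditional mean zero and takes values in an interval of length $2c_t$, so Hoeffding's lemma gives $\E_P[e^{\lambda(D_t - m_t)} \mid \F_{t-1}] \leq \exp(\lambda^2 c_t^2/2)$; since $m_t$ is $\F_{t-1}$-measurable, multiplying through by $e^{\lambda m_t} \leq 1$ (here using $m_t \leq 0$ and $\lambda > 0$, which is exactly where the supermartingale—rather than martingale—hypothesis is absorbed) yields the displayed bound.

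With this in hand I would define the exponential process
\[
M_t := \exp\!\left(\lambda(X_t - X_0) - \tfrac{\lambda^2}{2}\sum_{i=1}^{t} c_i^2\right),
\]
and check directly from the conditional bound that $\E_P[M_t \mid \F_{t-1}] \leq M_{t-1}$, so that $\{M_t\}$ is a nonnegative $P$-supermartingale with $M_0 = 1$ and hence $\E_P M_T \leq 1$. A Markov (Chernoff) step then gives, for any $\lambda > 0$,
\[
\Pp[X_T - X_0 \geq \epsilon] = \Pp\!\left[M_T \geq \exp\!\big(\lambda\epsilon - \tfrac{\lambda^2}{2}\textstyle\sum_{t=1}^{T} c_t^2\big)\right] \leq \exp\!\left(-\lambda\epsilon + \tfrac{\lambda^2}{2}\sum_{t=1}^{T} c_t^2\right).
\]
Finally I would optimize the exponent over $\lambda > 0$: the choice $\lambda = \epsilon / \sum_{t=1}^{T} c_t^2$ is the minimizer and produces exactly $-\epsilon^2 / (2\sum_{t=1}^{T} c_t^2)$, giving the claimed inequality (the degenerate case $\sum_t c_t^2 = 0$ being trivial, as then $X_T = X_0$ a.s.\ and both sides vanish for $\epsilon > 0$).

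The main obstacle is Hoeffding's lemma itself—the uniform control of the conditional moment-generating function of a bounded variable—since everything else, namely assembling the exponential supermartingale, the Markov step, and the one-dimensional optimization in $\lambda$, is routine. It is worth noting that the exponential supermartingale $\{M_t\}$ is precisely the kind of object the earlier sections construct game-theoretically, so that this measure-theoretic statement is already in the form $\E_P X \leq c$ amenable to the minimax-duality conversion developed in this section.
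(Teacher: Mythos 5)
Your proof is correct: the conditional Hoeffding-lemma bound (with the $e^{\lambda m_t}\le 1$ step correctly absorbing the supermartingale, rather than martingale, hypothesis), the exponential supermartingale $\{M_t\}$ with $\E_P M_T \le 1$, the Chernoff--Markov step, and the optimization $\lambda = \epsilon/\sum_{t=1}^T c_t^2$ are all the standard and valid route. Note that the paper itself states Theorem~\ref{thm:azuma-mtp} without proof---it is the imported classical measure-theoretic ingredient that Theorems~\ref{thm:simple-azuma-gtp} and~\ref{thm:azuma-gtp} then lift via minimax duality---so there is no in-paper proof to compare against; your closing observation is apt, since the exponential supermartingale you build is exactly of the form $X_t(y_{1..t}) = \prod_{i\le t}\exp(c\,y_i - 2c^2)$ that the paper cites from Shafer--Vovk as the explicit game-theoretic replication strategy for this bound.
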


As before, implicit in this theorem statement is a set of probability measures $\P$ under which $|X_t-X_{t-1}|\leq c_t$ holds $P$-a.s.\ for all $P\in\P$, and under which $\{X_t\}_{t\leq T}$ is a $\P$-supermartingale.
Let us begin with the simplest game-theoretic version of this statement.

\begin{theorem}[Specific game-theoretic Azuma--Hoeffding]
  \label{thm:simple-azuma-gtp}
  Let $(\Y,\hat\Z,T)$ be the simple repeated gamble space with $\Y = [-1,1]$, $\hat\Z = \{y\mapsto \beta y \mid \beta \geq 0\}$, and some $T\in\N$.
  Then for all $\epsilon > 0$ we have
  \begin{align}
    \label{eq:simple-azuma-gtp}
    \Pgu\left[\sum_{t=1}^T Y_t \geq \epsilon\right] \leq \exp \left(- {\epsilon ^{2} \over 2T}\right)~.
  \end{align}
\end{theorem}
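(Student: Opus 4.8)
The plan is to follow the measure-to-game recipe of this subsection: exhibit scalable gambles whose sequentially consistent measures are exactly the bounded supermartingale measures, use the finite-time minimax theorem to move from the measure-theoretic price to the game-theoretic one, and then quote the measure-theoretic Azuma--Hoeffding bound. Concretely, set $A = \{y \in [-1,1]^T : \sum_{t=1}^T y_t \geq \epsilon\}$, which is closed and hence Borel, so that $\ones_A$ is a bounded Borel variable and $\Pgu[\sum_t Y_t \geq \epsilon] = \Egu \ones_A$.

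First I would identify $\Delta_0^T(\hat\Z)$. In each situation $s$ the per-round consistent measures are $\Delta_0(\hat\Z^{(s)}) = \{Q \in \Delta([-1,1]) : \E_Q[\beta y] \leq 0 \ \forall \beta \geq 0\} = \{Q : \E_Q[y] \leq 0\}$. Applying Lemma~\ref{lem:per-round-constraints} with $f(s,y) = y \in \hat\Z^{(s)}$ shows that every $P \in \Delta_0^T(\hat\Z)$ satisfies $\E_P[Y_{t+1} \mid Y_{1..t}] \leq 0$ $P$-a.s., so the partial sums $X_t := \sum_{i\leq t} Y_i$ form a $P$-supermartingale with $X_0 = 0$ and increments $|X_t - X_{t-1}| = |Y_t| \leq 1$. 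Thus $\Delta_0^T(\hat\Z)$ lies inside the family $\P$ implicit in Theorem~\ref{thm:azuma-mtp} with $c_t = 1$, and that theorem gives $P(A) = P(X_T - X_0 \geq \epsilon) \leq \exp(-\epsilon^2/(2T))$ for each such $P$. Taking the supremum yields $\Epuseq \ones_A = \sup_{P \in \Delta_0^T(\hat\Z)} P(A) \leq \exp(-\epsilon^2/(2T))$.

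It remains to upgrade this to a bound on $\Egu \ones_A$, which is exactly where minimax duality enters. I would verify Condition~\ref{cond:usa-minimax} for $(\Y,\hat\Z,T)$: the space $\Y = [-1,1]$ is Polish; the gambles are generated by the single Borel, bounded-below function $g(s,y) = y$ (so $k=1$); and $\hat\Z^{(s)}$ has full support, since for $y \leq 0$ the point mass $\delta_y$ lies in $\Delta_0(\hat\Z^{(s)})$, while for $y \in (0,1]$ a two-point measure on $\{y,-1\}$ placing enough mass on $-1$ has nonpositive mean and positive mass at $y$. Theorem~\ref{thm:finite-sequential-minimax} then gives $\Egu \ones_A = \Epuseq \ones_A$, and combining with the previous paragraph completes the proof: $\Pgu[\sum_t Y_t \geq \epsilon] = \Egu \ones_A = \Epuseq \ones_A \leq \exp(-\epsilon^2/(2T))$. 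The main subtlety here is not any single inequality but the invocation of the minimax theorem: the argument is inherently nonconstructive, producing a replicating strategy only through Theorem~\ref{thm:finite-sequential-minimax}, whose hypotheses (Polishness, full support, and the Borel-selection machinery behind Lemma~\ref{lem:usa-facts}) must all be confirmed for this gamble space.

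Finally, I would point out a fully constructive alternative that bypasses minimax entirely. Fixing $\lambda > 0$, the process $M_t = \exp(\lambda \sum_{i\leq t} Y_i - t\lambda^2/2)$ is a nonnegative game-theoretic supermartingale with $M_0 = 1$: the per-round condition $\Egu[M_{t+1}\mid s] \leq M_t(s)$ reduces, after dividing by $M_t(s) > 0$, to dominating the convex map $y \mapsto \exp(\lambda y - \lambda^2/2)$ on $[-1,1]$ by the affine gamble $1 + \beta y$ with $\beta = e^{-\lambda^2/2}\sinh\lambda \geq 0$. This holds because a convex function lies below its chord through the endpoints $y = \pm 1$, and $\cosh\lambda \leq e^{\lambda^2/2}$ forces that chord's intercept $e^{-\lambda^2/2}\cosh\lambda$ to be at most $1$. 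Since $M_T \geq \exp(\lambda\epsilon - T\lambda^2/2)$ on $A$, the scalable form of Corollary~\ref{cor:proto-ville} gives $\Pgu A \leq \exp(T\lambda^2/2 - \lambda\epsilon)$, and optimizing at $\lambda = \epsilon/T$ recovers $\exp(-\epsilon^2/(2T))$. This second route is shorter and explicit, but the minimax route is the one that illustrates the general machinery and generalizes to settings where no such closed-form supermartingale is available.
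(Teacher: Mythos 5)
Your proposal is correct and follows essentially the same route as the paper's proof: show that every $P\in\Delta_0^T(\hat\Z)$ makes $X_t=\sum_{i\leq t}Y_i$ a $P$-supermartingale with increments bounded by $1$ (the paper does this via Propositions~\ref{prop:gambling-strategy-supermartingale} and~\ref{prop:supermartingale-to-measure} rather than Lemma~\ref{lem:per-round-constraints}, an immaterial difference), verify Condition~\ref{cond:usa-minimax}, and chain $\Pgu A = \Egu\ones_A = \Epuseq\ones_A \leq \exp(-\epsilon^2/(2T))$ via Theorem~\ref{thm:finite-sequential-minimax} and Theorem~\ref{thm:azuma-mtp}. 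Your constructive addendum with the exponential supermartingale is also sound and corresponds to the explicit strategy the paper attributes to \citet[Theorem 3.6, Corollary 3.8]{shafer2019game} in the remark following the theorem.
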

\begin{proof}
  We first establish that all the probability measures in $\Delta_0^T(\hat\Z)$ satisfy the conditions of Theorem~\ref{thm:azuma-mtp}.
  Letting $X_t = \sum_{i=1}^t Y_i$, we have $|X_t - X_{t-1}| = |Y_t| \leq 1$.
  Furthermore, the sequence $\{X_t\}_t$ is the capital of the gambling strategy that chooses $\beta=1$ in every round, and is thus a $P$-supermartingale for all $P\in\Delta_0^T(\hat\Z)$  by Propositions~\ref{prop:gambling-strategy-supermartingale} and~\ref{prop:supermartingale-to-measure}.

  It is straightforward to check that this gamble space satisfies Condition~\ref{cond:usa-minimax}: $\Y=[-1,1]$ is Polish, and $\hat\Z$ is finitely generated, full support, bounded below, and Borel measurable.
  Applying Theorem~\ref{thm:finite-sequential-minimax}, we have
  \begin{align*}
    \Pgu[X_T\geq \epsilon]
    &= \Egu \ones\{X_T\geq \epsilon\} \\
    &= \Epu_0^* \ones\{X_T\geq \epsilon\} & \text{Theorem~\ref{thm:finite-sequential-minimax}}\\
    &= \sup_{P\in\Delta_0^T(\hat\Z)} P(X_T\geq \epsilon) \\
    &\leq \exp \left(- {\epsilon ^{2} \over 2\sum _{t=1}^{T}c_{t}^{2}}\right)~, & \text{Theorem~\ref{thm:azuma-mtp}}
  \end{align*}
  as desired.
\end{proof}

As discussed above and in \S~\ref{sec:discussion-constructing-strategies}, one may wonder what \emph{strategy} $\psi$ Gambler can employ to achieve the guarantee~\eqref{eq:simple-azuma-gtp}.
As our results are nonconstructive, the strategy is not necessarily clear.
In this case, \citet[Theorem 3.6, Corollary 3.8]{shafer2019game} give an explicit multiplicative strategy: take $\psi(y_{1..t}) = y \mapsto \left((\alpha+Z^\psi_t(y_{1..t})) \frac{e^{\epsilon} - e^{-\epsilon}}{2e^{-2\epsilon^2}}\right) \, y$, where $\alpha = \exp(-\epsilon^2/2T)$ is the initial capital.
(Thus $\alpha+Z^\psi_t(y_{1..t})$ is the wealth at time $t$ of this strategy.)
Their proof first shows that $X_t(y_{1..t}) = \prod_{i=1}^t \exp(c y_t - 2 c^2)$ for all $c\geq 0$ is a game-theoretic supermartingale in this gamble space which replicates the (scaled) indicator variable of interest.

While Theorem~\ref{thm:simple-azuma-gtp} is elegant in its simplicity, one may wish to prove a more general theorem, for more general gamble spaces.
Moreover, one may wish to add the additional details and flexibility of the original theorem, like the constants $c_t$ and relaxing the boundedness condition to hold only almost surely.
Fortunately, the same approach goes through whenever one works with gamble spaces satisfying minimax duality.

\begin{theorem}[General game-theoretic Azuma--Hoeffding]
  \label{thm:azuma-gtp}
  Let $(\Y,\hat\Z,T)$ be a sequential gamble space satisfying Condition~\ref{cond:usa-minimax}.
  Let $\{X_t:\Y^T\to\extreals\}_{t\leq T}$ be a game-theoretic supermartingale such that $|X_t-X_{t-1}|\leq c_t$ g.t.a.s.\ for all $t\leq T$ for constants $c_t\geq 0$.
  Then for all $\epsilon > 0$ we have
  \begin{align}
    \label{eq:azuma-game-theoretic}
    \Pgu[X_T-X_0\geq \epsilon] \leq \exp \left(- {\epsilon ^{2} \over 2\sum _{t=1}^{T}c_{t}^{2}}\right)~.
  \end{align}
\end{theorem}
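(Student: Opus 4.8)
The plan is to reduce the game-theoretic bound to the measure-theoretic Azuma--Hoeffding inequality (Theorem~\ref{thm:azuma-mtp}) via the finite-time minimax duality of Theorem~\ref{thm:finite-sequential-minimax}, paralleling the proof of the specific version (Theorem~\ref{thm:simple-azuma-gtp}). Write $A = \{X_T - X_0 \ge \epsilon\}$, and note that $X_0$ is a constant since $\Y^0 = \{\emptystring\}$. Assuming, as we may, that $X_T$ is Borel measurable, $\ones_A$ is a bounded Borel variable, so Condition~\ref{cond:usa-minimax} (which holds by hypothesis) lets me invoke Theorem~\ref{thm:finite-sequential-minimax} to obtain the key chain
\begin{align*}
  \Pgu[X_T - X_0 \ge \epsilon]
  &= \Egu \ones_A
  = \Epuseq \ones_A
  = \sup_{P \in \Delta_0^T(\hat\Z)} P(X_T - X_0 \ge \epsilon)~.
\end{align*}
It then suffices to bound $P(X_T - X_0 \ge \epsilon)$ uniformly over $P \in \Delta_0^T(\hat\Z)$ by the right-hand side of eq.~\eqref{eq:azuma-game-theoretic}.

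First I would verify that every $P \in \Delta_0^T(\hat\Z)$ satisfies the hypotheses of Theorem~\ref{thm:azuma-mtp} with respect to the natural filtration $\sigma(Y_{1..t})$. The $P$-supermartingale property is immediate from Proposition~\ref{prop:supermartingale-to-measure}, since $\{X_t\}$ is a game-theoretic supermartingale. For the bounded-difference condition, I would transfer the g.t.a.s.\ hypothesis to a $P$-a.s.\ one using Corollary~\ref{cor:gtp-to-mtp-sequential}: for each $t \le T$ the event $B_t = \{|X_t - X_{t-1}| > c_t\}$ is Borel (again using measurability of the $X_t$) and satisfies $\Egu \ones_{B_t} = \Pgu B_t = 0$, so the corollary gives $P(B_t) = \E_P \ones_{B_t} \le 0$, i.e.\ $|X_t - X_{t-1}| \le c_t$ holds $P$-a.s., for every consistent $P$ and every $t$ simultaneously. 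These differences being bounded also makes each $X_t$ bounded $P$-a.s., hence integrable, so the supermartingale statement is well-posed.

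With both hypotheses in hand, Theorem~\ref{thm:azuma-mtp} applied to each $P \in \Delta_0^T(\hat\Z)$ yields $P(X_T - X_0 \ge \epsilon) \le \exp(-\epsilon^2/(2\sum_{t=1}^T c_t^2))$; taking the supremum over $P$ and substituting into the displayed chain completes the argument. Here $\Delta_0^T(\hat\Z)$ is nonempty, since the per-round full-support assumption in Condition~\ref{cond:usa-minimax} forces $\Delta_0(\hat\Z^{(s)}) \ne \emptyset$ for each $s$, whence a sequentially consistent measure exists by Ionescu--Tulcea.

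The conceptual content is entirely carried by the minimax theorem and the translation corollaries; what requires care is bookkeeping. I expect the main obstacle to be the measurability hygiene: one must ensure the $X_t$ (and hence $A$ and the $B_t$) are Borel so that both Theorem~\ref{thm:finite-sequential-minimax} and the price inequalities of Corollary~\ref{cor:gtp-to-mtp-sequential} are applicable, and one must be careful that the g.t.a.s.-to-$P$-a.s.\ transfer of the difference bounds holds uniformly across all consistent $P$ rather than for a single fixed measure. Once these points are pinned down, the remainder is a direct substitution.
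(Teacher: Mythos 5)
Your proof is correct and takes essentially the same route as the paper: use Theorem~\ref{thm:finite-sequential-minimax} (justified by Condition~\ref{cond:usa-minimax}) to write $\Pgu[X_T-X_0\geq\epsilon] = \sup_{P\in\Delta_0^T(\hat\Z)} P(X_T-X_0\geq\epsilon)$, transfer the supermartingale property to each consistent $P$ via Proposition~\ref{prop:supermartingale-to-measure} and the bounded-difference condition via the price inequalities, then apply Theorem~\ref{thm:azuma-mtp} measure by measure. The only (harmless) deviation is that you route the g.t.a.s.-to-$P$-a.s.\ transfer through Corollary~\ref{cor:gtp-to-mtp-sequential} rather than the paper's citation of Theorem~\ref{thm:chain-of-price-inequalities} with Proposition~\ref{prop:consistent-implies-seq-consistent}, and your explicit attention to Borel measurability of the $X_t$ and nonemptiness of $\Delta_0^T(\hat\Z)$ makes implicit hypotheses of the paper's argument visible.
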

\begin{proof}
  From Proposition~\ref{prop:supermartingale-to-measure}, for any $P\in\Delta_0^T(\hat\Z)$, the sequence $\{X_t\}_t$ is a $P$-supermartingale.
  Similarly, Theorem~\ref{thm:chain-of-price-inequalities} gives $P(|X_t - X_{t-1}| \leq c_t) = 1$ for all $P\in\Delta_0(\Z_T)$, and thus for all $P\in\Delta_0^T(\hat\Z)$ from Proposition~\ref{prop:consistent-implies-seq-consistent}.
  Theorem~\ref{thm:azuma-mtp} thus applies to each $P\in\Delta_0^T(\hat\Z)$, giving
  \begin{align*}
    \Pgu[X_T-X_0\geq \epsilon]
    &= \Egu \ones\{X_T-X_0\geq \epsilon\}
    \\
    &= \Epu_0^* \ones\{X_T-X_0\geq \epsilon\}
    & \text{Theorem~\ref{thm:finite-sequential-minimax}}
    \\
    &= \sup_{P\in\Delta_0^T(\hat\Z)} P(X_T-X_0\geq \epsilon)
    \\
    &\leq \exp \left(- {\epsilon ^{2} \over 2\sum _{t=1}^{T}c_{t}^{2}}\right)~.
    & \text{Theorem~\ref{thm:azuma-mtp}}
    \\[-10pt]
  \end{align*}
\end{proof}

\subsection{Central limit theorems}

A standard form of the martingale central limit theorem (CLT) is as follows.

\begin{theorem}[Lindeberg martingale CLT]
  \label{thm:mtp-lindeberg-clt}
  Let $\{X_n, \mathcal{F}_n\}_{n \geq 0}$ be a square-integrable martingale with $X_0 = 0$, and let $\{Y_n = X_n - X_{n-1}, \mathcal{F}_n\}_{n \geq 1}$ be the martingale differences. Let the cumulative variance be given by
  \(
  V_n = \sum_{i=1}^n \mathbb{E}[Y_i^2 | \mathcal{F}_{i-1}]~.
  \)
  Suppose $V_n \to \infty$ a.s., and
  for all $\delta > 0$,
  \begin{equation}
    \label{eq:measure-theoretic-lindeberg}
    \frac{1}{V_n} \sum_{i=1}^n \mathbb{E}\left[Y_i^2 \ones\{|Y_i| > \delta \sqrt{V_n}\} \, \middle| \, \mathcal{F}_{i-1}\right] \xrightarrow{P} 0 \quad \text{as} \ n \to \infty~.    
  \end{equation}
  Then
  \(
  \dfrac{X_n}{\sqrt{V_n}} \xrightarrow{D} N(0, 1)
  \).
\end{theorem}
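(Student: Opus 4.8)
The plan is to treat this as the classical Lindeberg martingale central limit theorem and reproduce the standard characteristic-function argument (in the spirit of McLeish and of Hall and Heyde). By L\'evy's continuity theorem it suffices to show, for each fixed $t\in\reals$, that $\E\!\left[\exp\!\left(it X_n/\sqrt{V_n}\right)\right] \to e^{-t^2/2}$. The organizing idea is to write $X_n/\sqrt{V_n} = \sum_{i=1}^n Y_{ni}$ with $Y_{ni} := Y_i/\sqrt{V_n}$, note that the normalized conditional variances $\sigma_{ni}^2 := \E[Y_i^2\mid\mathcal{F}_{i-1}]/V_n$ sum over $i\leq n$ to exactly $1$ by the very definition of $V_n$, and then argue that each conditional characteristic factor is close to $1-\tfrac{t^2}{2}\sigma_{ni}^2$, so that the martingale structure telescopes the product down to $\exp(-\tfrac{t^2}{2}\sum_i\sigma_{ni}^2) = e^{-t^2/2}$.

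Concretely I would proceed in four steps. First, reduce to an asymptotically deterministic normalization: since $V_n\to\infty$ a.s.\ and $V_n$ is $\mathcal{F}_{n-1}$-measurable, introduce the random time change $\tau_n := \inf\{k : V_{k+1} > n\}$ and work along the stopped sequence, so the terminal predictable variance is pinned near $n$ and the Lindeberg condition~\eqref{eq:measure-theoretic-lindeberg} transfers to the time-changed array. Second, truncate the increments at level $\delta\sqrt{V_n}$, using~\eqref{eq:measure-theoretic-lindeberg} to show the truncated and untruncated sums differ negligibly in probability and that $\max_i|Y_{ni}|\to 0$; this uniform asymptotic negligibility is precisely what makes the subsequent expansion valid. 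Third, expand each factor $\E[\exp(it Y_{ni})\mid\mathcal{F}_{i-1}]$ to second order and bound the remainder by a truncated third-moment term that the Lindeberg condition forces to vanish. Fourth, telescope the product of these factors via the tower property, so the accumulated exponent is $-\tfrac{t^2}{2}\sum_i\sigma_{ni}^2\to-\tfrac{t^2}{2}$, with the error controlled by the boundedness of $\sum_i\sigma_{ni}^2$ together with the vanishing Lindeberg remainder. A convenient packaging of the third and fourth steps is McLeish's lemma comparing $\prod_i(1+it Y_{ni})$ with $\exp\!\big(it\sum_i Y_{ni}\big)$, which converts the product approximation into a single martingale-array limit.

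The main obstacle is the random self-normalization. Because $V_n$ depends on the entire path, the increment $Y_{ni}=Y_i/\sqrt{V_n}$ is not $\mathcal{F}_{i-1}$-adapted in $i$, so the row of the triangular array is not a genuine martingale-difference sequence and the naive telescoping breaks down. The time change in the first step is exactly what circumvents this, at the cost of verifying that $\tau_n$ interacts correctly with the conditional Lindeberg condition and that $V_{\tau_n}/n\to 1$. Everything else---truncation, Taylor expansion, and the product bound---is routine once uniform asymptotic negligibility is in hand, which the conditional Lindeberg condition~\eqref{eq:measure-theoretic-lindeberg} supplies. Since this is a standard result, in the final paper I would most likely state it with a citation to the classical martingale limit theory rather than reproduce the full argument.
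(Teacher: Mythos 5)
The paper does not prove this statement at all: it is imported as a classical measure-theoretic result (and the paper's later machinery in fact relies on the quantitative version, Theorem 3.7 of Hall and Heyde, precisely because this qualitative form does not transfer uniformly over the composite class of consistent measures). Your sketch is the standard McLeish/Hall--Heyde characteristic-function argument, with the one genuine subtlety---the non-adaptedness of $Y_i/\sqrt{V_n}$ under random self-normalization---correctly identified and handled by the stopping-time change, so your closing plan to state the theorem with a citation to classical martingale limit theory is exactly what the paper does.
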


To translate Theorem~\ref{thm:mtp-lindeberg-clt} to a game-theoretic statement, we would first need suitable notions of convergence in distribution and probability.

\begin{definition}[Convergence in distribution (game-theoretic)]
  Let $(\Omega,\Z)$ be a gamble space and $\{X_n:\Omega\to\extreals\}_n$ a sequence of variables.
  We say the $X_n$ \emph{converge in distribution} to some $P\in\Delta(\reals)$ with CDF $F$, written $X_n \xrightarrow{D} P$, if for all $x\in\reals$ such that $F$ is continuous at $x$ we have $\lim_{n\to\infty} \Pgu[X_n \leq x] = \lim_{n\to\infty} \Pgl[X_n \leq x] = F(x)$.
\end{definition}

\begin{definition}[Convergence in probability (game-theoretic)]
  Let $(\Omega,\Z)$ be a gamble space and $\{X_n:\Omega\to\extreals\}_n$ a sequence of variables.
  We say the $X_n$ \emph{converge in probability} to some variable $X:\Omega\to\reals$, written $X_n \xrightarrow{P} X$, if for all $\epsilon>0$ we have
$\lim_{n\to\infty} \Pgu[|X_n-X|>\epsilon] =0$.
\end{definition}

One might hope to prove the following game-theoretic version of Theorem~\ref{thm:mtp-lindeberg-clt}.

\begin{nontheorem}[Game-theoretic CLT]
  \label{thm:potential-gtp-clt}
  Let $(\Y,\hat\Z,n)$  with $\Y\subseteq\reals$ satisfy Condition~\ref{cond:usa-minimax}.
  Let $V_n = \sum_{t=1}^n \Eg[Y_t^2 | Y_{1..t-1}]$ and $X_n = \sum_{t=1}^n Y_t$.
  Suppose that $\lim_{n\to\infty} V_n = \infty$ g.t.a.s.\ and
  for all $\delta > 0$,
  \begin{equation}
    \label{eq:game-theoretic-lindeberg}
    \frac{1}{V_n} \sum_{t=1}^n \Egu\left[Y_t^2 \ones\bigl\{|Y_t| > \delta \sqrt{V_n}\bigr\} \, \middle| \, Y_{1..t-1}\right] \xrightarrow{P} 0 \quad \text{as} \ n \to \infty~.
  \end{equation}
  Then
  \(
  \dfrac{X_n}{\sqrt{V_n}} \xrightarrow{D} N(0, 1)
  \).
\end{nontheorem}

It turns out that the game-theoretic conditions stated here do imply the measure-theoretic antecedent of Theorem~\ref{thm:mtp-lindeberg-clt} for any sequentially consistent $P$, the first key step of our approach.
Unfortunately, unlike in previous examples above, the measure-theoretic conclusion does not imply the game-theoretic conclusion.
The reason is that game-theoretic statements are \emph{composite}, describing an entire set of sequentially consistent probability measures rather than a single measure (\S~\ref{sec:intro-composite-ville}, \S~\ref{sec:chain-price-inequalities-sequential}), and the rate of convergence in the CLT is not uniform over this set.
In particular, one can take a family of examples
that illustrate the dependence of the convergence rate on the third absolute moment, as in the Berry--Esseen bound.
See \S~\ref{sec:app-example-non-uniform-clt} for a concrete example illustrating the failure of game-theoretic convergence in distribution.

To remedy the situation, we must impose further constraints, e.g.\ on the third absolute moment, so that the convergence rate is uniform.
At that point, we may as well introduce a quantitative version of the theorem.
Let $\Phi(x)$ denote the standard Normal CDF.

\begin{theorem}[\citetsafe{Theorem 3.7}{hall2014martingale}]
  \label{thm:hall-heyde-quantitative-clt}
  Let $X_t = \sum_{j=1}^t Y_j$, with $\mathcal{F}_t$ the $\sigma$-field generated by $Y_1, Y_2, \dots, Y_t$. Let 
  \(
  V_t = \sum_{j=1}^t \mathbb{E}[Y_j^2 \mid \mathcal{F}_{j-1}], 1 \leq t \leq n,
  \)
  and suppose that for constants $M$, $C$, and $D$ we have

  \begin{equation}
    \label{eq:quant-clt-bound-condition}
    \max_{t \leq n} |Y_t| \leq n^{-1/2} M \quad \text{a.s.},
  \end{equation}
  and
  \begin{equation}
    \label{eq:quant-clt-variance-condition}
    \mathbb{P}\left( \left| V_n - 1 \right| > 9M^2 D n^{-1/2} (\log n)^2 \right) \leq C n^{-1/4} / \log n~.
  \end{equation}
  Then for $n \geq 2$,
  \begin{equation}
    \label{eq:quant-clt-conclusion}
    \sup_{-\infty < x < \infty} \left| \mathbb{P}(X_n \leq x) - \Phi(x) \right| \leq (2 + C + 7 M D^{1/2}) n^{-1/4} \log n~.
  \end{equation}
\end{theorem}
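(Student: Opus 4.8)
The plan is to prove this Berry--Esseen-type martingale bound by the classical Fourier (characteristic-function) method, combined with Esseen's smoothing inequality to pass from control of the characteristic function to control of the uniform (Kolmogorov) distance. Write $\phi_n(t) = \E[\exp(itX_n)]$ and let $\sigma_j^2 = \E[Y_j^2 \mid \F_{j-1}]$, so that $V_n = \sum_{j=1}^n \sigma_j^2$. The goal is to show that $|\phi_n(t) - e^{-t^2/2}|$ is suitably small on a range $|t| \le T$, with $T$ chosen of order $n^{1/4}/\log n$, and then to convert this into the stated bound.

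First I would peel off one increment at a time. Conditioning on $\F_{n-1}$ gives $\phi_n(t) = \E[\exp(itX_{n-1})\, g_n(t)]$, where $g_n(t) := \E[\exp(itY_n) \mid \F_{n-1}]$. Since $\{X_t\}$ is a martingale, $\E[Y_n \mid \F_{n-1}] = 0$, so a third-order Taylor expansion yields $g_n(t) = 1 - \tfrac{t^2}{2}\sigma_n^2 + R_n(t)$ with $|R_n(t)| \le \tfrac{|t|^3}{6}\E[|Y_n|^3 \mid \F_{n-1}]$. The increment bound~\eqref{eq:quant-clt-bound-condition} then controls the remainder via $\E[|Y_n|^3 \mid \F_{n-1}] \le n^{-1/2}M\,\sigma_n^2$, so that $\sum_{j=1}^n |R_j(t)| \le \tfrac{|t|^3}{6}n^{-1/2}M\,V_n$.

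The heart of the argument is to compare $\phi_n(t)$ against $\E[\exp(-\tfrac{t^2}{2}V_n)]$ and then to replace $V_n$ by $1$. I would establish a bound $|\phi_n(t) - \E[\exp(-\tfrac{t^2}{2}V_n)]| \le $ (error terms in $t$, $M$, $n$) by a Lindeberg-style successive swapping argument, in which each conditional factor $g_j(t)$ is replaced by its Gaussian surrogate $\exp(-\tfrac{t^2}{2}\sigma_j^2)$, the errors propagating through the filtration via the tower property; here the elementary inequality $|\log(1-z)+z| \le |z|^2$ for small $|z|$, together with $\sum_j \sigma_j^4 \le n^{-1}M^2 V_n$ (again from~\eqref{eq:quant-clt-bound-condition}), keeps the accumulated error of order $t^4 n^{-1}M^2 V_n$. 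Next, the variance-concentration hypothesis~\eqref{eq:quant-clt-variance-condition} lets me replace $\E[\exp(-\tfrac{t^2}{2}V_n)]$ by $e^{-t^2/2}$: on the event $\{|V_n - 1| \le 9M^2 D n^{-1/2}(\log n)^2\}$ one uses $|e^{-t^2V_n/2} - e^{-t^2/2}| \le \tfrac{t^2}{2}|V_n - 1|$, and off this event (probability at most $Cn^{-1/4}/\log n$) one bounds crudely by $1$. Finally I would invoke Esseen's smoothing inequality, $\sup_x |\Pp(X_n \le x) - \Phi(x)| \le \tfrac{1}{\pi}\int_{-T}^{T} |t|^{-1}\,|\phi_n(t) - e^{-t^2/2}|\,dt + c/T$, and optimize $T$ to balance the Fourier-integral error against the $c/T$ term, collecting constants into $2 + C + 7MD^{1/2}$ and the rate into $n^{-1/4}\log n$.

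The main obstacle will be the telescoping/swapping step for a \emph{martingale} rather than a sum of independent increments: because each $g_j(t)$ is $\F_{j-1}$-measurable and random, one cannot simply multiply deterministic factors, and the remainders must be carried through the filtration while preserving the conditioning structure. Arranging the replacement so that the accumulated error stays of the correct order \emph{uniformly} in $|t| \le T$, and then extracting the precise constants and the $n^{-1/4}\log n$ rate through the optimal choice of truncation $T$, is the delicate bookkeeping that drives the whole estimate; this is exactly the content of the argument in \citet[Theorem 3.7]{hall2014martingale}, which I would follow.
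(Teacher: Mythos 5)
This statement is imported verbatim from \citet[Theorem 3.7]{hall2014martingale} --- the paper supplies no proof of its own, using the result as a measure-theoretic input to be lifted game-theoretically via Theorem~\ref{thm:finite-sequential-minimax} --- so there is no internal argument to compare against. Your sketch (third-order expansion of the conditional characteristic function with the remainder controlled by $\E[|Y_j|^3 \mid \F_{j-1}] \le n^{-1/2}M\,\sigma_j^2$, a Gaussian-surrogate swapping argument through the filtration, disposal of $V_n$ via the concentration hypothesis~\eqref{eq:quant-clt-variance-condition}, and Esseen smoothing with truncation $T$ of order $n^{1/4}/\log n$) is precisely the characteristic-function method used in that reference, so as a plan it is sound and matches the cited source's proof rather than anything in this paper.
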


\begin{theorem}[Quantitative game-theoretic CLT]
  \label{thm:gtp-quant-clt}
  Let $(\Y,\hat\Z,n)$
  with $\Y\subseteq\reals$ satisfy Condition~\ref{cond:usa-minimax}.
  Let $\overline V_n = \sum_{t=1}^n \Egu[Y_t^2 | Y_{1..t-1}]$ and
  $\underline V_n = \sum_{t=1}^n \Egl[Y_t^2 | Y_{1..t-1}]$ be real-valued, and let $X_n = \sum_{t=1}^n Y_t$.
  Suppose that for constants $M$, $C$, and $D$ we have

  \begin{equation}
    \label{eq:gtp-clt-bound-condition}
    \max_{t\leq n} |Y_t| \leq n^{-1/2} M \quad \text{g.t.a.s.}~,
  \end{equation}
  and
  \begin{equation}
    \label{eq:gtp-clt-variance-condition}
    \Pgu\left[ \max(1- \underline V_n, \overline V_n - 1) > 9M^2 D n^{-1/2} (\log n)^2 \right] \leq C n^{-1/4} / (\log n)~.
  \end{equation}
  Then for $n \geq 2$,
  \begin{equation}
    \label{eq:gtp-clt-conclusion}
    \sup_{-\infty < x < \infty} \max\left( \Pgu(X_n \leq x) - \Phi(x), \Phi(x) - \Pgl(X_n \leq x)\right) \leq (2 + C + 7 M D^{1/2}) n^{-1/4} \log n~.
  \end{equation}
\end{theorem}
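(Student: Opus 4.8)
The plan is to mirror the proof of Theorem~\ref{thm:azuma-gtp}: push every game-theoretic quantity down to the family $\Delta_0^T(\hat\Z)$ of sequentially consistent measures, check that each such $P$ satisfies the hypotheses of the measure-theoretic bound Theorem~\ref{thm:hall-heyde-quantitative-clt}, and then lift the resulting \emph{uniform} bound back up via the finite-time minimax theorem. Since $\ones\{X_n\leq x\}$ is bounded and Borel and Condition~\ref{cond:usa-minimax} holds, applying Theorem~\ref{thm:finite-sequential-minimax} to $\ones\{X_n\leq x\}$ and to $-\ones\{X_n\leq x\}$ yields
\begin{equation*}
  \Pgu(X_n\leq x) = \sup_{P\in\Delta_0^T(\hat\Z)} P(X_n\leq x), \qquad \Pgl(X_n\leq x) = \inf_{P\in\Delta_0^T(\hat\Z)} P(X_n\leq x).
\end{equation*}
Hence it suffices to bound $|P(X_n\leq x)-\Phi(x)|$ by $(2+C+7MD^{1/2})n^{-1/4}\log n$ uniformly in $x\in\reals$ and in $P\in\Delta_0^T(\hat\Z)$: taking $\sup_P$ of $P(X_n\leq x)-\Phi(x)$ controls the first argument of the max in \eqref{eq:gtp-clt-conclusion}, taking $\sup_P$ of $\Phi(x)-P(X_n\leq x) = \Phi(x)-\inf_P P(X_n\leq x)$ controls the second, and the bound being $x$-free lets me take $\sup_x$ at the end.

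Fixing $P\in\Delta_0^T(\hat\Z)$, I would verify the ingredients of Theorem~\ref{thm:hall-heyde-quantitative-clt}. The martingale structure follows as in Theorem~\ref{thm:azuma-gtp}: once the per-round gambles price each coordinate at zero, Lemma~\ref{lem:per-round-constraints} gives $\E_P[Y_t\mid\mathcal{F}_{t-1}]=0$ $P$-a.s., so $\{X_t\}$ is a $P$-martingale by Proposition~\ref{prop:supermartingale-to-measure}. For the increment bound \eqref{eq:quant-clt-bound-condition}, the g.t.a.s.\ hypothesis \eqref{eq:gtp-clt-bound-condition} means $\Pgu$ of the complementary event is $0$, and the price inequality $P(A)\leq\Pgu A$ (Corollary~\ref{cor:gtp-to-mtp-sequential}) upgrades this to $\max_{t\leq n}|Y_t|\leq n^{-1/2}M$ $P$-a.s.

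The crux is the variance condition \eqref{eq:quant-clt-variance-condition}. Here I would apply the one-round price inequalities of Theorem~\ref{thm:chain-of-price-inequalities} to the gamble space $(\Y,\hat\Z^{(s)})$ and the variable $y\mapsto y^2$: since $P(\cdot\mid Y_{1..t-1})\in\Delta_0(\hat\Z^{(Y_{1..t-1})})$ holds $P$-a.s., the chain $\Egl\leq\Epl^0\leq\E_Q\leq\Epucons\leq\Egu$ gives $\Egl[Y_t^2\mid Y_{1..t-1}]\leq\E_P[Y_t^2\mid\mathcal{F}_{t-1}]\leq\Egu[Y_t^2\mid Y_{1..t-1}]$ $P$-a.s. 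Summing over $t$ yields $\underline V_n\leq V_n\leq\overline V_n$ $P$-a.s. Writing $c=9M^2Dn^{-1/2}(\log n)^2$, this sandwich gives the $P$-a.s.\ event containment $\{|V_n-1|>c\}\subseteq\{\max(1-\underline V_n,\,\overline V_n-1)>c\}$ (if $V_n>1+c$ then $\overline V_n-1>c$; if $V_n<1-c$ then $1-\underline V_n>c$), so by monotonicity and $P(A)\leq\Pgu A$ again,
\begin{equation*}
  P(|V_n-1|>c) \leq P\!\left(\max(1-\underline V_n,\,\overline V_n-1)>c\right) \leq \Pgu\!\left[\max(1-\underline V_n,\,\overline V_n-1)>c\right] \leq \frac{C n^{-1/4}}{\log n},
\end{equation*}
which is exactly \eqref{eq:quant-clt-variance-condition}. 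Theorem~\ref{thm:hall-heyde-quantitative-clt} then gives the desired $x$- and $P$-uniform bound, completing the argument after the minimax lift of the first paragraph.

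I expect the variance translation to be the main obstacle, for two reasons. First, it requires the two-sided conditional sandwich $\underline V_n\leq V_n\leq\overline V_n$ to hold $P$-a.s.\ simultaneously across all rounds, which rests on the $P$-a.s.\ per-round consistency of sequentially consistent measures and the two-sided price inequalities; care is needed to combine the $P$-null exceptional sets over the finitely many rounds. Second, the theorem statement does not explicitly encode the mean-zero (martingale difference) structure that Theorem~\ref{thm:hall-heyde-quantitative-clt} presumes, so one must either assume the per-round gambles price each $Y_t$ at zero (as in Example~\ref{ex:single-interval}) or state this as an added hypothesis; absent it, $\{X_t\}$ need not be a $P$-martingale and the invocation of Theorem~\ref{thm:hall-heyde-quantitative-clt} would be unjustified.
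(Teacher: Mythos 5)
Your proposal is correct and follows essentially the same route as the paper's proof: fix $P\in\Delta_0^n(\hat\Z)$, use the per-round price inequalities (Theorem~\ref{thm:chain-of-price-inequalities}) to get the sandwich $\underline V_n \leq V_n \leq \overline V_n$ and Corollary~\ref{cor:sequential-price-inequalities} to transfer \eqref{eq:gtp-clt-bound-condition} and \eqref{eq:gtp-clt-variance-condition} into hypotheses \eqref{eq:quant-clt-bound-condition} and \eqref{eq:quant-clt-variance-condition} of Theorem~\ref{thm:hall-heyde-quantitative-clt}, then lift the resulting $P$- and $x$-uniform bound back through Theorem~\ref{thm:finite-sequential-minimax} applied to the bounded Borel indicators $\ones\{X_n\leq x\}$. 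Your closing caveat about the mean-zero (martingale-difference) structure is apt but applies equally to the paper, whose proof invokes Theorem~\ref{thm:hall-heyde-quantitative-clt} without verifying that $\{X_t\}$ is a $P$-martingale (the quoted Hall--Heyde statement leaves that hypothesis implicit), so this is a gap in the paper's statement rather than in your argument.
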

\begin{proof}
  As before, we first show that the game-theoretic andecedent implies the measure-theoretic one for each $P\in\Delta_0^n(\hat\Z)$.
  We then show that the measure-thearetic consequent, together with minimax duality, implies the game-theoretic consequent.

  Let $P\in\Delta_0^n(\hat\Z)$.
  Define $V_n = \sum_{t=1}^n \E_P[Y_t^2 | Y_{1..t-1}]$.
  Theorem~\ref{thm:chain-of-price-inequalities} per-round gives $\underline V_n \leq V_n \leq \overline V_n$ for all $n$.
  The g.t.a.s.\ statement in eq.~\eqref{eq:gtp-clt-bound-condition}, translated to $\Pgl[\cdots] = 1$, implies eq.~\eqref{eq:quant-clt-bound-condition} via Corollary~\ref{cor:sequential-price-inequalities}.
  Similarly, letting $\epsilon,\delta$ be the relevant quantities in eq.~\eqref{eq:gtp-clt-variance-condition}, Corollary~\ref{cor:sequential-price-inequalities} gives
  \begin{align*}
    \delta
    &\geq \Pgu\left( \max(1- \underline V_n, \overline V_n - 1) > \epsilon \right)
    \\
    &\geq P\left( \max(1- \underline V_n, \overline V_n - 1) > \epsilon \right)
    \\
    &\geq P\left( |1-V_n| > \epsilon \right)~,
  \end{align*}
  as $|1-V_n| \leq \max(|1- \underline V_n|, |\overline V_n - 1|) = \max(1- \underline V_n, \overline V_n - 1)$.
  We thus have condition~\eqref{eq:quant-clt-variance-condition} as well.

  Applying Theorem~\ref{thm:hall-heyde-quantitative-clt} to $X_n$ and $P$, we thus have
  \begin{align*}
    \sup_{-\infty < x < \infty} \left| P(X_n \leq x) - \Phi(x) \right| \leq \gamma~,
  \end{align*}
  where $\gamma = (2 + C + 7 M D^{1/2}) n^{-1/4} \log n$.
  As this statement holds for all $P\in\Delta_0^n(\hat\Z)$, we have 
  \begin{align*}
    \sup_{-\infty < x < \infty} \left| \Epuseq \ones\{X_n \leq x\} - \Phi(x) \right| \leq \gamma
  \end{align*}
  and the same for $\Epl^*$.
  The variable $\ones\{X_n\leq x\}$ is bounded and measurable for all $x$.
  Theorem~\ref{thm:finite-sequential-minimax} gives $\Egu \ones\{X_n \leq x\} = \Epuseq \ones\{X_n \leq x\}$ and $\Egl \ones\{X_n \leq x\} = \Epl^* \ones\{X_n \leq x\}$, from which the result follows.
\end{proof}

Similar, more condensed game-theoretic CLTs appear in the literature, such as \citet[Proposition 2.10, Theorem 7.9]{shafer2019game}, and \citet[Theorem 7.1]{shafer2001probability}.

\subsection{Matrix concentration}

Let $\mathrm{SA}_d \subseteq \reals^{d\times d}$ be the set of self-adjoint matrices in dimension $d$.

\begin{theorem}[Matrix Azuma--Hoeffding, {\citep[Thm.~7.1]{tropp2012user}}]
  \label{thm:matrix_azuma}
  Consider a finite adapted sequence $\{X_k\} \subset \mathrm{SA}_d$, and a fixed sequence $\{A_k\} \subset \mathrm{SA}_d$, that satisfy
  \begin{align*}
    \E[ X_k \mid \F_{k-1} ] = 0 \quad \text{and} \quad X_k^2 \preceq A_k^2 \quad \text{a.s.}
  \end{align*}
  Let \(\sigma^2 := \left\| \sum_k A_k^2 \right\|\).
  Then, for all $t \geq 0$,
  \begin{align*}
    \mathbb{P}\left[ \lambda_{\max} \left( \sum_k X_k \right) \geq t \right] \leq d \cdot e^{-t^2 / 8\sigma^2}~.
  \end{align*}
\end{theorem}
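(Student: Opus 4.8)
The plan is to prove the bound by the matrix Laplace transform method, which reduces the tail estimate for $\lambda_{\max}(\sum_k X_k)$ to controlling a single scalar quantity, the trace moment generating function $\E \operatorname{tr}\exp(\theta \sum_k X_k)$. First I would fix $\theta > 0$ and apply Markov's inequality to the monotone transform $s \mapsto e^{\theta s}$: since $\lambda_{\max}$ commutes with the matrix exponential and the largest eigenvalue of a positive matrix is dominated by its trace, one has $e^{\theta \lambda_{\max}(Y)} = \lambda_{\max}(e^{\theta Y}) \leq \operatorname{tr} e^{\theta Y}$ for any self-adjoint $Y$. Taking $Y = \sum_k X_k$ yields the master bound
\begin{equation*}
\Pp\Bigl[\lambda_{\max}\Bigl(\sum_k X_k\Bigr) \geq t\Bigr] \leq e^{-\theta t}\, \E \operatorname{tr}\exp\Bigl(\theta \sum_k X_k\Bigr)~,
\end{equation*}
so that everything reduces to an upper estimate of the trace mgf that is uniform enough to optimize over $\theta$ at the end.

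The central, and hardest, step is the subadditivity of the matrix cumulant generating function, which must replace the scalar identity $\E e^{\theta \sum_k X_k} = \prod_k \E e^{\theta X_k}$ that fails for noncommuting matrices. Here I would invoke Lieb's concavity theorem: for fixed self-adjoint $H$, the map $A \mapsto \operatorname{tr}\exp(H + \log A)$ is concave on the positive-definite cone. Peeling off the increments from the last one backward and using the tower property, I would apply the conditional Jensen inequality to this concave trace functional, pushing the innermost conditional expectation $\E_{n-1}[\,\cdot\,] := \E[\,\cdot \mid \F_{n-1}]$ inside the exponential to obtain
\begin{equation*}
\E \operatorname{tr}\exp\Bigl(\theta\sum_{k=1}^{n} X_k\Bigr) \leq \E \operatorname{tr}\exp\Bigl(\theta\sum_{k=1}^{n-1} X_k + \log \E_{n-1} e^{\theta X_n}\Bigr)~.
\end{equation*}
Iterating down to $k=1$ and using the semidefinite monotonicity of $\operatorname{tr}\exp$, I would replace each random conditional cgf $\log \E_{k-1} e^{\theta X_k}$ by a deterministic dominating matrix $\Xi_k(\theta)$ with $\log \E_{k-1} e^{\theta X_k} \preceq \Xi_k(\theta)$ almost surely, arriving at the deterministic bound $\E \operatorname{tr}\exp(\theta \sum_k X_k) \leq \operatorname{tr}\exp(\sum_k \Xi_k(\theta))$.

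It then remains to produce the Hoeffding-type control $\Xi_k(\theta)$. Using $\E_{k-1} X_k = 0$ and $X_k^2 \preceq A_k^2$, I would transfer a scalar cgf estimate to the operator level via the spectral transfer rule ($f(X)\preceq g(X)$ whenever $f\leq g$ on the spectrum): on the admissible eigenvalue range one has a chord bound of the form $e^{\theta x} \leq \cosh(\theta a) + a^{-1}\sinh(\theta a)\,x$, whose linear term is annihilated by $\E_{k-1}X_k = 0$, leaving $\E_{k-1} e^{\theta X_k} \preceq \exp\bigl(g(\theta) A_k^2\bigr)$ with $g(\theta)$ a quadratic multiple of $\theta^2$; thus $\Xi_k(\theta) = g(\theta)A_k^2$. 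Substituting, bounding $\operatorname{tr}\exp\bigl(g(\theta)\sum_k A_k^2\bigr) \leq d\,\lambda_{\max}\bigl(\exp(g(\theta)\sum_k A_k^2)\bigr) = d\,e^{g(\theta)\sigma^2}$ from $\sigma^2 = \|\sum_k A_k^2\|$, and minimizing $e^{-\theta t + g(\theta)\sigma^2}$ over $\theta > 0$ gives a bound of the form $d\,e^{-t^2/(4c\sigma^2)}$, where $c$ is the quadratic coefficient of $g$ furnished by the transfer step; the value obtained from the matrix Hoeffding lemma yields the stated constant, $d\,e^{-t^2/8\sigma^2}$. The main obstacle throughout is the matrix cgf subadditivity: Lieb's theorem is precisely what licenses peeling the martingale increments one at a time, and without it the noncommutativity of the $X_k$ obstructs any factorization of the trace mgf.
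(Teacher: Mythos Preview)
The paper does not prove this statement; Theorem~\ref{thm:matrix_azuma} is quoted from \citet[Thm.~7.1]{tropp2012user} and used as a black box to derive the game-theoretic version (Theorem~\ref{thm:matrix_azuma-gtp}). Your proposal is, in outline, exactly Tropp's proof: the matrix Laplace transform bound, Lieb's concavity to peel off increments via the tower property, and a matrix Hoeffding cgf bound to finish.

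There is one genuine imprecision in your Hoeffding step. The scalar chord bound $e^{\theta x}\le \cosh(\theta a)+a^{-1}\sinh(\theta a)\,x$ and the spectral transfer rule only let you compare functions of the \emph{same} matrix; they do not by themselves turn $X_k^2\preceq A_k^2$ into an inequality of the form $\E_{k-1}e^{\theta X_k}\preceq \exp\bigl(g(\theta)A_k^2\bigr)$ when $X_k$ and $A_k$ do not commute. If you push the chord bound through with the only scalar it licenses, namely $a=\|A_k\|$, you get $\E_{k-1}e^{\theta X_k}\preceq e^{(\theta^2/2)\|A_k\|^2}I$, and then the variance proxy becomes $\sum_k\|A_k\|^2$, which can be much larger than $\sigma^2=\bigl\|\sum_k A_k^2\bigr\|$. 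Tropp's actual route is a symmetrization: with an independent Rademacher $\xi$ and $\E_{k-1}X_k=0$, one obtains $\E_{k-1}e^{\theta X_k}\preceq \E_{k-1}\cosh(2\theta X_k)\preceq \E_{k-1}e^{2\theta^2 X_k^2}\preceq e^{2\theta^2 A_k^2}$, so $g(\theta)=2\theta^2$; optimizing $-\theta t+2\theta^2\sigma^2$ then gives the $8$ in the exponent. Your placeholder ``the matrix Hoeffding lemma yields the stated constant'' is correct in spirit, but the chord-bound derivation you sketch would not produce that lemma.
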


\begin{theorem}[Game-theoretic matrix Azuma--Hoeffding]
  \label{thm:matrix_azuma-gtp}
  Let $(\Y,\hat\Z,T)$ be a sequential gamble space, $T\in\N$, satisfying Condition~\ref{cond:usa-minimax}.
  Consider sequences $\{X_k:\Y^k\to \mathrm{SA}_d\}$ and $\{A_k\in\mathrm{SA}_d\}$ satisfying
  \begin{align*}
    \Eg[ X_k \mid s ] = 0 \text{ for all } s\in\Y^{k-1}, \quad \text{and} \quad X_k^2 \preceq A_k^2 \quad \text{g.t.a.s.}
  \end{align*}
  Let \(\sigma^2 := \left\| \sum_k A_k^2 \right\|\).
  Then, for all $t \geq 0$,
  \begin{align*}
    \Pgu\left[ \lambda_{\max} \left( \sum_k X_k \right) \geq t \right] \leq d \cdot e^{-t^2 / 8\sigma^2}~.
  \end{align*}
\end{theorem}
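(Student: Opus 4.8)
The plan is to follow the same template as the proofs of Theorems~\ref{thm:azuma-gtp} and~\ref{thm:gtp-quant-clt}: first show that every $P\in\Delta_0^T(\hat\Z)$ satisfies the hypotheses of the measure-theoretic matrix Azuma--Hoeffding inequality (Theorem~\ref{thm:matrix_azuma}), and then invoke the finite-time sequential minimax theorem (Theorem~\ref{thm:finite-sequential-minimax}) to transport the resulting deviation bound back to $\Pgu$. Because the reduction is structurally identical to the scalar case, the only genuinely new work is interpreting the matrix-valued hypotheses $\Eg[X_k\mid s]=0$ and $X_k^2\preceq A_k^2$ game-theoretically and transferring them to each sequentially consistent measure.

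First I would fix $P\in\Delta_0^T(\hat\Z)$ and verify the two hypotheses of Theorem~\ref{thm:matrix_azuma}. For the martingale-difference condition, I read $\Eg[X_k\mid s]=0$ entrywise: each coordinate $(X_k)_{ij}$ is a real-valued variable with $\Egu[(X_k)_{ij}\mid s]=\Egl[(X_k)_{ij}\mid s]=0$ for all $s\in\Y^{k-1}$. Using translation (Proposition~\ref{prop:basic-facts-no-assumptions}), the partial-sum process $S_k:=\sum_{j\le k}X_j$ is, entrywise, a game-theoretic martingale, so Proposition~\ref{prop:supermartingale-to-measure} applied to $(S_k)_{ij}$ and $-(S_k)_{ij}$ shows it is a $P$-martingale; that is, $\E_P[X_k\mid\F_{k-1}]=0$ holds $P$-a.s., with $\F_{k-1}=\sigma(Y_{1..k-1})$. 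For the variance condition, the Loewner inequality $X_k^2\preceq A_k^2$ holds g.t.a.s., i.e.\ $\Pgl$ of this Borel event equals $1$; Corollary~\ref{cor:sequential-price-inequalities} then forces $P(X_k^2\preceq A_k^2)=1$ for every $P\in\Delta_0^T(\hat\Z)$. The sequence $\{X_k\}$ is adapted by construction, so Theorem~\ref{thm:matrix_azuma} applies and yields $P[\lambda_{\max}(\sum_k X_k)\ge t]\le d\,e^{-t^2/8\sigma^2}$, where $\sigma^2=\|\sum_k A_k^2\|$ is the same deterministic constant for every $P$.

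Next I would take the supremum over $P\in\Delta_0^T(\hat\Z)$. The variable $\ones\{\lambda_{\max}(\sum_k X_k)\ge t\}$ is bounded and Borel measurable, since $\sum_k X_k$ is Borel measurable as a finite sum of (Borel) matrix-valued maps and $\lambda_{\max}$ is continuous on $\mathrm{SA}_d$. Hence $\Epuseq\ones\{\lambda_{\max}(\sum_k X_k)\ge t\}=\sup_{P\in\Delta_0^T(\hat\Z)}P[\lambda_{\max}(\sum_k X_k)\ge t]\le d\,e^{-t^2/8\sigma^2}$. Finally, Theorem~\ref{thm:finite-sequential-minimax}, whose hypotheses are exactly Condition~\ref{cond:usa-minimax}, gives $\Egu=\Epuseq$ on bounded Borel variables, so $\Pgu[\lambda_{\max}(\sum_k X_k)\ge t]=\Egu\ones\{\cdots\}=\Epuseq\ones\{\cdots\}\le d\,e^{-t^2/8\sigma^2}$, which is the claim.

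The hard part will be the bookkeeping around the matrix order rather than any deep new idea. In particular, one must confirm that reading $\Eg[X_k\mid s]=0$ componentwise (and $X_k^2\preceq A_k^2$ as a Loewner-order event) is the intended and correct notion, and that the entrywise application of Proposition~\ref{prop:supermartingale-to-measure} legitimately recovers the full matrix martingale-difference property $\E_P[X_k\mid\F_{k-1}]=0$. A secondary point to check is the (implicit) Borel measurability of the matrix-valued maps $X_k$, which is what licenses both the transfer of the g.t.a.s.\ variance bound and the final appeal to the minimax theorem. None of these presents a real obstacle given the machinery already in hand; the matrix case is a faithful instance of the general measure-theoretic-to-game-theoretic recipe, with Tropp's inequality as the only external black box.
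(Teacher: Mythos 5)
Your proof is correct and follows essentially the approach the paper intends: Theorem~\ref{thm:matrix_azuma-gtp} is stated as a direct instance of the same recipe used for Theorems~\ref{thm:azuma-gtp} and~\ref{thm:gtp-quant-clt} --- transfer the hypotheses to every $P\in\Delta_0^T(\hat\Z)$ (the martingale-difference condition via Proposition~\ref{prop:supermartingale-to-measure}, applied entrywise to the partial sums, and the g.t.a.s.\ Loewner bound via Corollary~\ref{cor:sequential-price-inequalities}), apply Tropp's Theorem~\ref{thm:matrix_azuma} as a black box, and conclude with the minimax equality $\Egu = \Epuseq$ of Theorem~\ref{thm:finite-sequential-minimax} on the bounded Borel indicator. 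Your entrywise reading of $\Eg[X_k \mid s] = 0$ and your flagging of the implicit Borel measurability of the $X_k$ are exactly the right bookkeeping points.
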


As the proof of Theorem~\ref{thm:matrix_azuma} does not obviously follow from the scalar version~\citep[\S~7.3]{tropp2012user}, it is not immediately clear what the corresponding strategy would be.
Other matrix-valued martingale results can be extended similarly, such as Freedman's inequality~\citep{tropp2011freedman}.

\section{Discussion and Future Work}
\label{sec:discussion}

We have presented a new framing of game-theoretic probability based on gamble spaces, with several fundamental results, minimax theorems, connections to finite additivity, and a way to convert measure-theoretic results to game-theoretic ones.
Here we address the nonconstructive nature of many of our results, and some exciting directions for futurue work.

\subsection{Constructing strategies for game-theoretic results}
\label{sec:discussion-constructing-strategies}

We have seen that many measure-theoretic statements imply their game-theoretic counterparts.
By definition of a game-theoretic upper expectation, that in turn implies the existence of a sequence of increasingly efficient replication strategies.
In the case of almost sure events, these strategies risk arbitrarily low initial capital and become arbitrarily rich when the event does not occur.
Yet our results are largely nonconstructive: they establish the existence of such a sequence of strategies, but do not construct them.
The existing explicit constructions in the literature on game-theoretic probability are there therefore still of great interest.

In some cases, the measure-theoretic probability literature gives explicit constructions of supermartingales, e.g.\ that diverge to infinity when an event does not occur.
A prime example is Doob's martingale convergence theorem.
When combined with a minimax theorem stating that the game-theoretic version of such a result also holds, it may be tempting to say that the nonnegative supermartingales constructed in measure-theoretic proofs are therefore gambling strategies in the game-theoretic sense.

While this statement is morally true, the distinction is that measure-theoretic supermartingales are only required to satisfy the supermartingale condition almost surely, whereas game-theoretic supermartingales must satisfy it always.
In the case of a fixed reference measure, one can simply hedge the the event that the supermartingale condition is violated; see the proof of \citet[Theorem 9.3]{shafer2019game}.
When the set of sequentially consistent measures is larger, however, it is no longer clear when one can perform such a hedge, as these null events where the condition is violated can change for each measure; see open direction \#3 below.

Nonetheless, modulo this distinction between almost sure and always for supermartingales, our results highlight the distinction between a game-theoretic result and a game-theoretic \emph{proof}.
The general minimax theorem presented here implies that many measure-theoretic statements are true in the game-theoretic sense.
But it is still of interest to construct a gambling strategy explicitly, whether in the measure-theoretic or game-theoretic world.

\subsection{Open directions}
\label{sec:future-work}

Several fundamental questions remain.

\begin{enumerate}
\item More general minimax theorems.

  Conspicuously absent from our minimax results are the limit theorems, even the bounded law of large numbers that motivated our study in \S~\ref{sec:introduction}.
  These are infinite-time results, ruling out Theorem~\ref{thm:finite-sequential-minimax}, and variables in these settings are highly discontinuous indicators of sets, such as $X = \ones_{(\ALLN)^c}$ for the bounded strong law, ruling out Theorem~\ref{thm:minimax-continuous-compact-Omega}.
  While there are infinite-time minimax theorems in the literature, namely \citep[Theorem 9.7]{shafer2019game}, those results require $\Y$ to be a finite set, unlike the more natural $\Y = [-1,1]$ in \S~\ref{sec:motivating-example} or $\Y=\reals$.
  It seems plausible that one could extend Theorem~\ref{thm:finite-sequential-minimax} to $T = \infty$, at least for indicators of tail events.
  The fact that we have game-theoretic versions of many limit theorems suggests that a general result of this form may be possible.
  Finally, it would be interesting to develop minimax theorems that also apply to unbounded measurable $X$.

\item Game-theoretic e-processes.

  \citet{ruf2023composite} show a composite measure-theoretic version of Ville's Theorem which is characterized by e-processes, a generalization of nonnegative $\P$-supermartingales.
  As we saw, the type of minimax theorems we have developed imply measure-theoretic composite versions of Ville's Theorem as well (Proposition~\ref{prop:minimax-implies-composite-ville}).
  In the terminology of \citet{ruf2023composite}, the composite versions we recover are for the \emph{maximum likelihood} measure $\mu(A) = \sup_{P\in\Delta_0(\Z)} P(A)$.
  They go onto to give several examples where this $\mu$ does not seem to capture the right notion of ``testable'' replication.
  It is therefore important to understand why $\mu$ suffices for our setting---surely a consequence of the fact that we restrict to $\P$ such that gambling strategies give rise to $\P$-supermartingales---and how to develop a game-theoretic theory of e-processes that matches their \emph{inverse capital measure}~$\nu$.
  A related question is the connection between sequentially consistent $\P$ and the condition of fork-convexity~\citet{ramdas2022testing}.

\item Converting composite measure-theoretic supermartingales to game-theoretic.

  A key step in the argument of \citet[Theorem 9.3]{shafer2019game}, a game-theoretic version of Ville's Theorem, is a construction to convert a $P$-supermartingale $\{X_t\}_t$ to a game-theoretic supermartingale $\{X_t'\}_t$, essentially by additionally hedging the $P$-null event that $\E_P[X_{t+1} \mid X_{1..t}] > X_t$.
  An important open direction is to develop a similar construction for the composite case.
  When $\P$ is sequentially consistent, for example, can we show that a $\P$-supermartingale $\{X_t\}_t$ can be strengthened to a game-theoretic supermartingale?
  Doing so would mathematically justify the word ``game'' in game-theoretic statistics: it would allow us to interpret game-theoretic tests as bonafide strategies in a game.
  The key barrier is that the event $\E_P[X_{t+1} \mid X_{1..t}] > X_t$ could be different for each $P$, giving potentially uncountably many null sets to hedge.
  One would hope that sequentially consistent $P$ are rich enough to pool these nulls sets into a single null set (or even eliminate them altogether), allowing essentially the same techniques as in the singleton $P$ case.

\item Connections to and implications for online machine learning.

  The connections between adversarial online learning and game-theoretic probability are perhaps well understood at an arm's length, but only a handful of works go closer.
  As discussed in \S~\ref{sec:online-learning}, \citet{orabona2016coin} show how to use game-theoretic betting strategies to develop online convex optimization algorithms.
  We suspect there is much more to say.
  Another line of work \citet{rakhlin2014statistical,rakhlin2017equivalence,foster2018online,cover1965behavior}
  shows the agreement, in some particular online learning problems, between the adversarial regret bounds and the stochastic versions.
  These results echo the minimax duality and price equality we study here, though interestingly their settings typically are not scalable, suggesting that the conditions for price equality can be relaxed.
  
\end{enumerate}

\appendix

\section{Connection to financial risk measures}
\label{sec:financial-risk-measures}

\begin{definition}
  Let $\X \subseteq \reals^\Omega$ be a linear space of bounded functions containing the constant functions.
  A \emph{(financial) risk measure} on $\Omega$ is a function $\rho:\X\to\reals$ satisfying
  \begin{enumerate}
  \item $\rho(X+c) = \rho(X) - c$ for all $X\in\X, c\in\reals$ (translation'),
  \item $X\leq Y \implies \rho(X) \leq \rho(X)$ for all $X,Y\in\X$ (monotonicity').
  \end{enumerate}
  The \emph{acceptance set} of $\rho$ is the set $\A_\rho := \{ X\in\X \mid \rho(X) \leq 0 \}$.
\end{definition}

In light of Proposition~\ref{prop:basic-facts-no-assumptions}, the reader may immediately see the connection to game-theoretic upper expectations, which also satisfy translation and monotonicity.
Specifically, if $\Egu$ is real-valued on $\X$, then $\rho(X) = \Egu(-X)$ is a risk measure.
Its acceptance set is given by
\begin{equation}
  \label{eq:egu-acceptance-set}
  \A_\rho = -\{X \in \X \mid \Egu X \leq 0\}~,
\end{equation}
which is closely related to $\dcl(\Z)$.
Specifically, for any $Z\in\dcl(\Z)$, we have $Z-Z = 0$ as $Z$ is real-valued, giving $\Egu Z \leq 0$ when $0\in\Z$.
In this case, $\dcl(\Z) \cap \X \subseteq -\A_\rho$.

\section{Online learning algorithms as game-theoretic supermartingales}
\label{sec:online-learning-supermartingales}

To further illustrate how online learning algorithms can be expressed in terms of gamble spaces, and derived using the tools of game-theoretic probability, let us recall the ``relax and randomize'' framework of
\citet{rakhlin2012relax}.
We will first introduce their setting in their notation (apart from changing $x$ to $y$ and suppressing $T$ in $\Rel_T$), and then show how to recast their framework in terms of game-theoretic supermartingales.

\subsection{Original setting and sample results}

Let $\F$ be the learner's action set and $\Y$ World's action set.
At each round $t=1,\ldots,T$, the learner picks $f_t\in\F$, World picks $y_t\in\Y$, and the loss $\ell(f_t,y_t)\in\reals$ is incurred.
The regret is
\[
\Reg_T(\Alg) \;\;:=\;\; \sum_{t=1}^T \ell(f_t,y_t)\;-\;\inf_{f\in\F}\sum_{t=1}^T \ell(f,y_t).
\]
We write $\Delta(\F)$ and $\Delta(\Y)$ for distributions over $\F$ and $\Y$, respectively.

The (distributional) minimax value of the game is
\begin{equation}
\label{eq:value}
V_T(\F)
=\inf_{q_1\in\Delta(\F)}\;\sup_{y_1\in\Y}\;\Eop_{f_1\sim q_1}\cdots
\inf_{q_T\in\Delta(\F)}\;\sup_{y_T\in\Y}\;\Eop_{f_T\sim q_T}
\Bigg[\sum_{t=1}^T \ell(f_t,y_t)\;-\;\inf_{f\in\F}\sum_{t=1}^T \ell(f,y_t)\Bigg].
\end{equation}
Define the conditional value (for a prefix $y_{1..t}$):
\[
V_T(\F\,|\,y_{1..t})
:=\inf_{q\in\Delta(\F)}\;\sup_{y\in\Y}\left\{\Eop_{f\sim q}[\ell(f,y)]\;+\;V_T(\F\,|\,y_{1..t},y)\right\},
\]
with base case $V_T(\F\,|\,y_{1..T})=-\inf_{f\in\F}\sum_{t=1}^T \ell(f,y_t)$.
Note that $V_T(\F)=V_T(\F\,|\,\emptyset)$.
The minimax-optimal strategy at round $t$ is therefore to choose
\begin{equation}
\label{eq:bellman}
q_t \in \argmin_{q\in\Delta(\F)}\;\sup_{y\in\Y}\left\{\Eop_{f\sim q}[\ell(f,y)]\;+\;V_T(\F\,|\,y_{1..t-1},y)\right\}.
\end{equation}

A \emph{relaxation} is a sequence of functions $\Rel(\F\,|\,y_{1..t})$ for $t=0,\ldots,T$.
It is \emph{admissible} if for every $t\le T-1$,
\begin{align}
\label{eq:admissibility-supermartingale}
\Rel(\F\,|\,y_{1..t})
  &\;\ge\; \inf_{q\in\Delta(\F)}\;\sup_{y\in\Y}\left\{\Eop_{f\sim q}[\ell(f,y)] + \Rel(\F\,|\,y_{1..t} \oplus y)\right\},
\\
\label{eq:admissibility-replicate}
  \Rel(\F\,|\,y_{1..T})
  &\;\ge\;-\inf_{f\in\F}\sum_{t=1}^T \ell(f,y_t)~.
\end{align}
Given an admissible relaxation $\Rel$, an \emph{admissible algorithm} with respect to $\Rel$ is one choosing $q_t$ such that eq.~\eqref{eq:admissibility-supermartingale} holds for $q=q_t$, i.e., such that
\begin{align}
\label{eq:admissible-algorithm}
\Rel(\F\,|\,y_{1..t})
  &\;\ge\; \sup_{y\in\Y}\left\{\Eop_{f\sim q_t}[\ell(f,y)] + \Rel(\F\,|\,y_{1..t} \oplus y)\right\}~.
\end{align}
In particular, the \emph{meta algorithm}, which chooses the optimal action with respect to $\Rel$,
\begin{equation}
\label{eq:meta}
q_t \in \argmin_{q\in\Delta(\F)}\;\sup_{y\in\Y}\left\{\Eop_{f\sim q}[\ell(f,y)] + \Rel(\F\,|\,y_{1..t-1} \oplus y)\right\}~,
\end{equation}
is admissible with respect to $\Rel$.

\begin{proposition}[{{\citet[Prop.~1]{rakhlin2012relax}}}]
\label{prop:master}
For any admissible relaxation $\Rel$ and admissible algorithm with respect to $\Rel$, we have
\begin{equation}
\sum_{t=1}^T \Eop_{f_t\sim q_t}\,\ell(f_t,y_t)\;-\;\inf_{f\in\F}\sum_{t=1}^T \ell(f,y_t)
\;\;\le\;\; \Rel(\F)~,
\end{equation}
so that $\Eop[\Reg_T]\le \Rel(\F)$.
\end{proposition}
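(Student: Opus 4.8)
The plan is to prove the bound pathwise in the adversary's sequence $y_{1..T}$ by reading the admissible relaxation $\{\Rel(\F\mid y_{1..t})\}_{t=0}^T$ as a discrete supermartingale whose decrements dominate the learner's expected per-round loss, exactly in the spirit of the supermartingale view of \S~\ref{sec:online-learning}, and then telescoping. The terminal admissibility condition will supply the benchmark bound at the end.

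First I would isolate the one-step inequality. Fix a realized outcome sequence $y_{1..T}$, and recall that the round-$t$ play $q_t$ is chosen as a function of $y_{1..t-1}$. Because the algorithm is admissible with respect to $\Rel$ (eqs.~\eqref{eq:admissible-algorithm},~\eqref{eq:meta}), the certifying inequality for $q_t$, specialized from the supremum over $y\in\Y$ down to the actually played move $y_t$, reads
\[
  \Rel(\F\mid y_{1..t-1}) \;\ge\; \Eop_{f\sim q_t}[\ell(f,y_t)] \;+\; \Rel(\F\mid y_{1..t})~,
\]
i.e. the relaxation drops by at least the expected instantaneous loss. This is the discrete analogue of the game-theoretic supermartingale condition, with $\Rel(\F\mid y_{1..t})$ playing the role of the capital process.

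Next I would sum this inequality over $t=1,\ldots,T$. Since $q_t$ depends only on $y_{1..t-1}$, the relaxation terms telescope, and using $\Rel(\F)=\Rel(\F\mid\emptyset)$ I obtain
\[
  \sum_{t=1}^T \Eop_{f_t\sim q_t}[\ell(f_t,y_t)] \;\le\; \Rel(\F) - \Rel(\F\mid y_{1..T})~.
\]
Finally I would invoke the terminal admissibility condition~\eqref{eq:admissibility-replicate}, namely $\Rel(\F\mid y_{1..T})\ge -\inf_{f\in\F}\sum_{t=1}^T\ell(f,y_t)$, so that $-\Rel(\F\mid y_{1..T})\le \inf_{f\in\F}\sum_{t=1}^T \ell(f,y_t)$; substituting and rearranging gives the claimed pathwise bound $\sum_{t=1}^T\Eop_{f_t\sim q_t}\ell(f_t,y_t)-\inf_{f\in\F}\sum_{t=1}^T\ell(f,y_t)\le\Rel(\F)$. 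Averaging over the learner's internal randomization then yields $\Eop[\Reg_T]\le\Rel(\F)$.

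The computation is a one-line telescope, so the work is almost entirely bookkeeping: I must keep the prefix lengths aligned (the play $q_t$ chosen after $y_{1..t-1}$ must be matched against the length-$(t-1)$ conditioning of $\Rel$), and I must be explicit that the per-round inequality comes from instantiating the worst-case $\sup_{y}$ at the realized $y_t$. The one genuinely delicate point---which I would flag as the main obstacle---is the passage from the randomization-averaged pathwise inequality to the expected-regret statement when the adversary is adaptive: then $y_t$ is itself a function of the learner's past draws, and one must re-run the telescope under conditional expectations in the natural filtration so that it survives the outer expectation. For an oblivious adversary this step is immediate, since $y_{1..T}$ is fixed.
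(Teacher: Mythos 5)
Your proof is correct and is essentially the argument the paper itself gives in \S~\ref{sec:online-learning-supermartingales}: there the per-round admissibility inequality~\eqref{eq:admissible-algorithm}, instantiated at the realized $y_t$, is rewritten as the supermartingale-increment comparison $X^\Alg_{t+1}-X^\Alg_t \ge X^\Rel_{t+1}-X^\Rel_t$, which telescopes and combines with the terminal condition~\eqref{eq:admissibility-replicate} exactly as in your one-line telescope. Your closing concern about adaptive adversaries is handled the standard way (the per-round inequality holds for every realized prefix, so the telescope survives conditioning on the learner's past draws and the tower rule), and does not affect the pathwise bound, which already holds for every sequence $y_{1..T}$.
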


\citet{rakhlin2012relax} show that several standard algorithms are instances of this meta algorithm, where the relaxation $\Rel$ is a convenient upper bound derived using the \emph{sequential Rademacher complexity} $\mathfrak{R}_T$.
They give several examples, including the following.

\begin{example}[Exponential weights]
  For a finite class $\F$ and bounded losses $|\ell(f,y)|\le 1$, a convenient relaxation is
  \begin{equation}
    \label{eq:ew-relax}
    \Rel(\F\,|\,y_{1..t})
    \;=\;
    \inf_{\lambda>0}\Bigg\{\frac{1}{\lambda}\log
    \Bigg(\sum_{f\in\F} \exp\!\Bigg(-\lambda\sum_{s=1}^t \ell(f,y_s)\Bigg)\Bigg)
    \;+\;2\lambda\,(T-t)\Bigg\},
  \end{equation}
  which yields a parameter-free exponential-weights update via \eqref{eq:meta}, and the regret bound
  $\E[\Reg_T]\lesssim 2\sqrt{2T\log|\F|}$ when $\lambda$ is tuned online.
\end{example}

Mirror descent is another example.

\subsection{Algorithms and relaxations as game-theoretic supermartingales}

We can write the setting above as a simple repeated gamble space
$(\Y,\hat\Z,T)$, where
\begin{align*}
  \Omega &= \Y^T
  \\
  \hat\Z &= \left\{ Z_q:y\mapsto -\Eop_{f\sim q} \ell(f,y) \;\middle|\; q \in \Delta(\F) \right\}
  \\
  X(y) &=  -\inf_{f\in\F}\sum_{t=1}^T \ell(f,y_t)~.
\end{align*}
As discussed in \S~\ref{sec:online-learning}, the algorithm $\Alg$, which chooses $q_t$ as a function of $y_{1..t-1}$, is in bijection with the strategy $\psi$ in this gamble space.

Suppose $\Rel$ is an admissible relaxation.
Consider the sequence $\{X^\Rel_t\}_t$ given by $X^\Rel_t(y_{1..t}) = \Rel(\F \mid y_{1..t})$.
Then from the definitions above, this sequence is a game-theoretic supermartingale that replicates $X$:
\begin{align*}
  \Egu[X^\Rel_{t+1} \mid y_{1..t}]
  & := \inf_{Z\in\hat\Z} \; \sup_{y\in\Y} \; \left\{ X^\Rel_{t+1}(y_{1..t} \oplus y) - Z(y) \right\}
  \\
  & = \inf_{q\in\Delta(\F)} \; \sup_{y\in\Y} \; \left\{ \Rel(\F \mid y_{1..t} \oplus y) - \left(-\Eop_{f\sim q} \ell(f,y)\right) \right\}
  \\
  & = \inf_{q\in\Delta(\F)} \; \sup_{y\in\Y} \; \left\{ \Eop_{f\sim q} \ell(f,y) + \Rel(\F \mid y_{1..t} \oplus y) \right\}
  \\
  & \leq \Rel(\F \mid y_{1..t})
  \\
  & = X^\Rel_t(y_{1..t})~,
\end{align*}
where the inequality is 
eq.~\eqref{eq:admissibility-supermartingale}.
The inequality $X^\Rel_T \geq X$ is eq.~\eqref{eq:admissibility-replicate}.

Now suppose $\Alg$ is an admissible algorithm with respect to $\Rel$.
Define $\{X^\Alg_t\}_t$ by
\begin{align}
  X^\Alg_t(y_{1..t}) = - \sum_{i=1}^t \Eop_{f\sim q_i}[\ell(f,y_i)]~.\footnote{Comparing to eq.~\eqref{eq:online-learning-supermartingale} we have dropped the regret term.}
\end{align}
Admissibility implies that $\{X^\Alg_t\}_t$ is at least as efficient as $\{X^\Rel_t\}_t$, in the sense that $X^\Alg_{t+1} - X^\Alg_t \geq X^\Rel_{t+1} - X^\Rel_t$: the gamble $\Alg$ chooses on round $t$ is weakly better than demanded by $\Rel$.
To see this statement, we simply translate eq.~\eqref{eq:admissible-algorithm}:
\begin{align*}
\Rel(\F\,|\,y_{1..t})
  &\;\ge\; \sup_{y\in\Y}\left\{\Eop_{f\sim q_t}[\ell(f,y)] + \Rel(\F\,|\,y_{1..t} \oplus y)\right\}~.
  \\
  X^\Rel_t(y_{1..t}) & \;\geq\; \sup_{y\in\Y} \; \left\{ X^\Rel_{t+1}(y_{1..t} \oplus y) - Z(y) \right\}
  \\
  X^\Rel_t(y_{1..t}) & \;\geq\; \sup_{y\in\Y} \; \left\{ X^\Rel_{t+1}(y_{1..t} \oplus y) - \left(X^\Alg_{t+1}(y_{1..t} \oplus y) - X^\Alg_t(y_{1..t})\right) \right\}
  \\
  0 & \;\geq\; \sup_{y\in\Y} \; \left\{ X^\Rel_{t+1}(y_{1..t} \oplus y) - X^\Rel_t(y_{1..t}) - \left(X^\Alg_{t+1}(y_{1..t} \oplus y) - X^\Alg_t(y_{1..t})\right) \right\}
  \\
  0 & \;\geq\; X^\Rel_{t+1} - X^\Rel_t - \left(X^\Alg_{t+1} - X^\Alg_t\right)
  \\
  X^\Alg_{t+1} - X^\Alg_t & \;\geq\; X^\Rel_{t+1} - X^\Rel_t~.
\end{align*}
We therefore have $X^\Alg_T = X^\Alg_T - X^\Alg_0 \geq X^\Rel_T - X^\Rel_0 \geq X - X^\Rel_0$.
The regret bound $X - X^\Alg_T \leq X^\Rel_0$ of Proposition~\ref{prop:master} now follows.

In summary, the potential functions $\Rel$ discussed in \citet{rakhlin2012relax}, which are the building blocks for the design and analysis of many online learning algorithms, can be viewed as game-theoretic supermartingales which replicate the benchmark $X$.
Any algorithm at least as efficient as the potential function will enjoy the regret bound $X^\Rel_0$, the ``initial capital'' the potential sets aside to replicate $X$.
See also \citet{foster2018online}.

\section{Conditions for Lower $\leq$ Upper Expectations}
\label{sec:conditions-upper-lower}

As discussed in Remark~\ref{rem:upper-lower-inequality}, it is usually though not always the case that upper game-theoretic expectations are higher than lower ones.
In this section, we explore conditions for $\Egl \leq \Egu$, as well as the corresponding inequalities for the other prices.

We may begin with the most trivial: $\Epl^0 X \leq \Epucons X$ exactly when there exists a consistent measure, i.e., $\Delta_0(\Z) \neq \emptyset$.
In fact, in light of Theorem~\ref{thm:chain-of-price-inequalities}, the existence of a consistent measure is also sufficient for $\Egl X \leq \Egu X$ and $\Epl X \leq \Epu X$.
This condition is not necessary, however, as we explore below.

Before discussing these inequalities further, let us first see a series of examples illustrating when they fail.
One simple example is the one from Remark~\ref{rem:upper-expectation-philosophy}: $\Z = \{\omega \mapsto 1\}$, where $\Epu 0 = \Egu 0 = -1 < 1 = \Egl 0 = \Epl 0$.
The gamble in this example is clearly not arbitrage-free, however, leading to the question of whether that could be a sufficient condition.
In fact, it is not, as the next examples show.

In these next examples we will have $\Omega = \{1,2\}$, and for brevity will represent variables and gambles as vectors, so that $Z \in \extreals^2$, where $Z_i := Z(i)$ for $i\in\{1,2\}$.

\begin{figure}[t]
  \centering
  \includegraphics[width=0.4\textwidth]{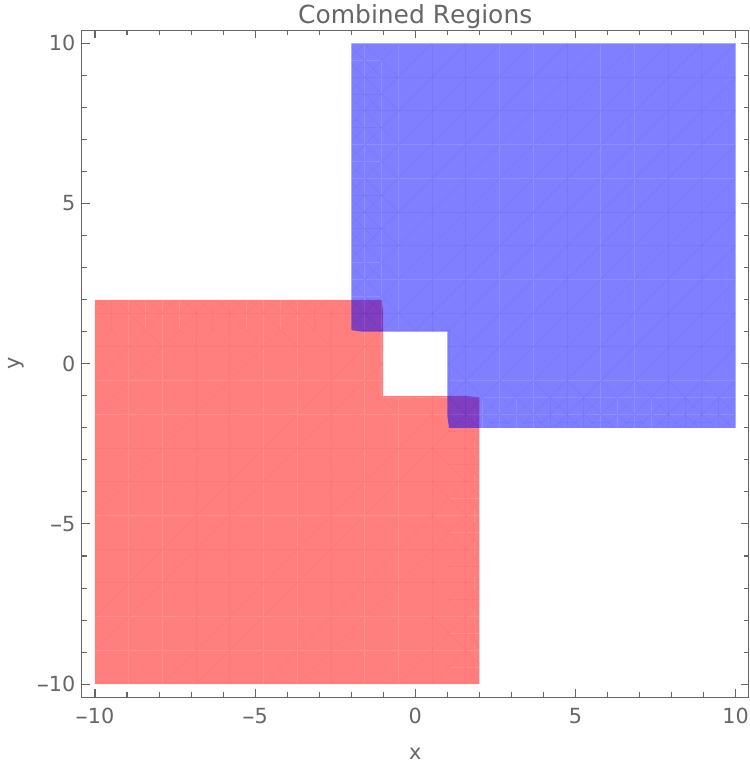}\hspace{0.05\textwidth}
  \includegraphics[width=0.4\textwidth]{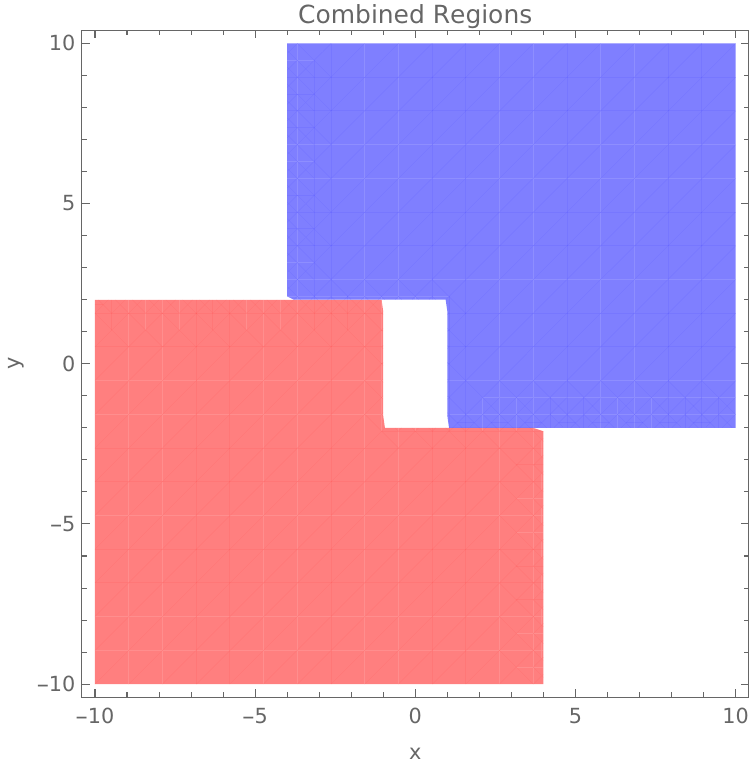}
  \caption{Visualizations of $\dcl(\Z)$ in red and $-\dcl(-\Z)$ in blue for the two gambles spaces in Examples~\ref{ex:egl-greater-egu} and~\ref{ex:egl-leq-egu-but-no-consistent}.  Overlap between these regions gives a variable $X$ for which $\Egl X > \Egu X$.  Consistent measures could be visualized as the normal cone to $\dcl(\Z)$, which is empty in both examples.}
  \label{fig:egl-egu-examples}
\end{figure}
\begin{example}[$\Egl > \Egu$]\label{ex:egl-greater-egu}
  Consider the gamble space $(\{1,2\},\Z)$ where $\Z = \{(2,-1),(-1,2)\}$.
  To clarify the notation once more, there are two gambles, $Z^{(1)}$ which awards $2$ upon outcome $1$ and $-1$ upon outcome $2$, and $Z^{(2)}$ with those values reversed.
  Clearly $\Z$ is arbitrage-free.
  Now $X = Z^{(1)} = (2,-1)$.
  We have $\Egu X = 0$, taking the first gamble $Z^{(1)}$.
  We also have $\Egu(-X) = -1$, taking the second gamble, as $(-X) - Z^{(2)} = (-2,1) - (-1,2) = (-1,-1)$.
  Thus $\Egl X = -\Egu(-X) = 1 > 0 = \Egu X$.
  See Fig.~\ref{fig:egl-egu-examples} for a visualization.
\end{example}

\begin{example}[$\Egl \leq \Egu$ but $\Delta_0(\Z) = \emptyset$]
  \label{ex:egl-leq-egu-but-no-consistent}
  Now consider the choice $\Z = \{(4,-2),(-1,2)\}$.
  One can check that here $\Egl X \leq \Egu X$ for all variables $X$.
  For example, $\Egl 0 = -1 \leq 1 = \Egu 0$.
  Observe that (like the previous example) there is no consistent measure, as $\E_P Z^{(1)} \leq 0 \implies P(1) \leq 1/3$ and $\E_P Z^{(2)} \leq 0 \implies P(2) \leq 1/3$.
  Now consider $\Epu 0 = \sup_P \min_{i\in\{1,2\}} \E_P[0 - Z^{(i)}]$.
  The optimal choice for World is $P = (\tfrac 4 9,\tfrac 5 9) \in \Delta(\Omega)$, as $\E_P Z^{(1)} = (16 - 10)/9 = \tfrac 2 3$ and $\E_P Z^{(2)} = (-4 + 10)/9 = \tfrac 2 3$.
  Thus $\Epu 0 = - \tfrac 2 3 < \tfrac 2 3 = \Epl 0$.
  In summary, we have $\Egl 0 < \Epu 0 < \Epl 0 < \Egu 0$.
\end{example}

Let us now turn to a characterization of when $\Egl \leq \Egu$.
The restriction that gambles not take on $-\infty$ is without loss of generality by Proposition~\ref{prop:eliminate-X-neg-infinity}.

\begin{proposition}\label{prop:egu-egl-inequality-char}
  Let $(\Omega,\Z)$ be a gamble space with $\Z \subseteq (\Omega\to\infreals)$.
  Then $\Egl X \leq \Egu X$ for all $X:\Omega\to\extreals$ if and only if $\Z + \Z$ is arbitrage-free, i.e., if $\inf (Z + Z') \leq 0$ for all $Z,Z' \in \Z$.
\end{proposition}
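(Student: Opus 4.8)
The plan is to reduce the ``for all $X$'' statement to a condition that can be checked one pair of gambles at a time, and then verify that per-pair condition is exactly no-arbitrage of $\Z+\Z$.

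First I would dispose of the variables $X$ that take infinite values. If $X(\omega)=\infty$ for some $\omega$, then $\Egu X=\infty$ by Proposition~\ref{prop:egu-x-infinite}, so $\Egl X\leq\Egu X$ holds trivially; and if $X(\omega)=-\infty$ for some $\omega$, then $-X$ takes the value $\infty$, so $\Egu(-X)=\infty$ and hence $\Egl X=-\Egu(-X)=-\infty\leq\Egu X$. Thus it suffices to prove the equivalence for real-valued $X:\Omega\to\reals$. For such $X$, unwinding $\Egl X=-\Egu(-X)$ gives $\Egl X=\sup_{Z'\in\Z}\inf_{\omega}\,(X(\omega)+Z'(\omega))$, while $\Egu X=\inf_{Z\in\Z}\sup_\omega\,(X(\omega)-Z(\omega))$; here a gamble value $Z'(\omega)=\infty$ contributes $+\infty$ to the inner infimum and $Z(\omega)=\infty$ contributes $-\infty$ to the inner supremum, so infinite gamble values never bind. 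Using the elementary fact that $\sup_{Z'} g(Z')\leq\inf_{Z} h(Z)$ holds iff $g(Z')\leq h(Z)$ for every pair, the statement ``$\Egl X\leq\Egu X$ for all real $X$'' is equivalent to: for every $Z,Z'\in\Z$ and every real $X$,
\[ \inf_\omega\,(X(\omega)+Z'(\omega)) \;\leq\; \sup_\omega\,(X(\omega)-Z(\omega))~. \]
It therefore remains to show, for each fixed pair $(Z,Z')$, that this holds for all $X$ iff $\inf_\omega(Z+Z')\leq 0$.

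For the direction assuming $\inf_\omega(Z+Z')\leq 0$, write $a=\inf_\omega(X+Z')$ and $b=\sup_\omega(X-Z)$. Since $X(\omega)+Z'(\omega)\geq a$ and $X(\omega)-Z(\omega)\leq b$ for every $\omega$, adding these (and checking the few cases where $Z$ or $Z'$ equals $\infty$, which only make the relevant bound trivial) yields $Z(\omega)+Z'(\omega)\geq a-b$ for all $\omega$, hence $a-b\leq\inf_\omega(Z+Z')\leq 0$ and $a\leq b$. For the converse I argue by contraposition: given a pair with $c:=\inf_\omega(Z+Z')>0$, I exhibit a real $X$ violating the per-pair inequality, and hence with $\Egl X>\Egu X$. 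The natural witness is $X=-Z'$ on the set where $Z'$ is finite, which makes $X+Z'\equiv 0$ there and $X-Z=-(Z+Z')\leq -c<0$ there; on the points where $Z'(\omega)=\infty$ one sets $X(\omega)$ to a sufficiently negative finite value so that $X+Z'=\infty$ still holds and $X-Z<0$, giving $\inf_\omega(X+Z')\geq 0>\sup_\omega(X-Z)$, and therefore $\Egl X\geq\inf_\omega(X+Z')>\sup_\omega(X-Z)\geq\Egu X$.

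The main obstacle is purely the extended-arithmetic bookkeeping: because gambles may take the value $+\infty$ and the convention $\infty-\infty=(-\infty)-(-\infty)=\infty$ is in force, one must check that neither the sup-inf reduction nor the pointwise addition step is corrupted at the infinite points. The key simplification, which I would foreground, is that restricting to real-valued $X$ (justified in the first paragraph) together with the observation that infinite gamble values always land on the ``harmless'' side of each optimization reduces every genuinely infinite interaction to a case that is either excluded by the hypothesis $\inf(Z+Z')\leq 0$ or trivially satisfied. Proposition~\ref{prop:eliminate-X-neg-infinity} guarantees the standing assumption $\Z\subseteq(\Omega\to\infreals)$ loses no generality, so no further care about $-\infty$-valued gambles is needed.
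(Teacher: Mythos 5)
Your proof is correct, and it takes a more self-contained route than the paper's. The paper first proves an auxiliary equivalence---$\Z+\Z$ is arbitrage-free iff $\dcl(\Z)+\dcl(\Z)$ is---so that, via Proposition~\ref{prop:dcl-wlog}, the entire argument can be run inside the real-valued downward closure: the forward direction goes by contradiction (translate so that $\Egl X > 0 > \Egu X$, extract $Z,Z'\in\dcl(\Z)$ with $\inf(X+Z)>0$ and $\inf(-X+Z')>0$, and add), and the converse uses the witness $X=Z$ for a violating pair $Z,Z'\in\dcl(\Z)$. You instead dispose of infinite-valued $X$ up front, reduce the quantified statement to the per-pair condition $\inf(X+Z')\leq\sup(X-Z)$ via the elementary sup--inf fact, and handle $\infty$-valued gambles directly by noting they never bind; your converse witness is $X=-Z'$ patched on $\{Z'=\infty\}$. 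The core algebra is identical---your $a-b\leq\inf(Z+Z')$ is exactly the paper's $\inf(X+Z')+\inf(Z-X)\leq\inf(Z+Z')$, since $\sup(X-Z)=-\inf(Z-X)$---but your packaging avoids the $\dcl$ machinery entirely, at the cost of doing the extended-arithmetic case-checks inline, which the paper instead localizes in its opening paragraph (whose patching construction for $\infty$ values is close kin to yours). What the paper's route buys is reuse of established infrastructure and a witness that is itself a (closed) gamble; what yours buys is a shorter, lemma-free argument that makes the quantifier structure explicit.

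Two small repairs. First, on $\{Z'=\infty\}$ you need $X-Z$ negative with a \emph{uniform} margin, e.g.\ $X=\min(0,Z)-1$ there, so that $\sup(X-Z)\leq\max(-c,-1)<0$; a merely pointwise choice of ``sufficiently negative'' values could let $\sup(X-Z)=0$ and destroy the strict inequality against $\inf(X+Z')=0$. Second, the correct reference for dropping $-\infty$-valued gambles is Proposition~\ref{prop:gambles-not-neg-infty} rather than Proposition~\ref{prop:eliminate-X-neg-infinity}---though since the proposition as stated already assumes $\Z\subseteq(\Omega\to\infreals)$, nothing in your argument actually depends on that remark.
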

\begin{proof}
  First, let us argue that $\Z+\Z$ is arbitrage-free if and only if $\dcl(\Z) + \dcl(\Z)$ is arbitrage-free.
  Clearly if $\inf (\hat Z + \hat Z') > 0$ for $\hat Z,\hat Z'\in\dcl(\Z)$, then we have $Z,Z'\in\Z$ with $\hat Z \leq Z$, $\hat Z' \leq Z'$ and thus $\inf (Z + Z') \geq \inf (\hat Z + \hat Z') > 0$.
  Conversely, let $Z,Z'\in\Z$ with $\inf (Z + Z') > 0$.
  If both are finite-valued, we are done.
  Otherwise, define $\hat Z(\omega) =
  \begin{cases}
    1 - \min(0,Z'(\omega)) & Z(\omega) = \infty
    \\
    Z(\omega) & \text{otherwise}
  \end{cases}$, and similarly for $\hat Z'$.
  Then $\hat Z,\hat Z'\in\dcl(\Z)$.
  Furthermore, by construction, if either $Z$ or $Z'$ is infinite at $\omega$, the sum of $\hat Z$ and $\hat Z'$ at $\omega$ is at least one.
  Letting $A = \{\omega \in \Omega \mid Z(\omega),Z'(\omega) \in \reals\}$, we thus have $\hat Z + \hat Z' \geq (Z+Z')\ones_A + \ones_{A^c}$.
  Thus $\inf (\hat Z + \hat Z') \geq \min(\inf (Z + Z'), 1) > 0$.

  Suppose we have $\Egl X > \Egu X$ for some $X$.
  Then we must have $\Egu X < \infty$, which in turn implies $X < \infty$ by Proposition~\ref{prop:egu-x-infinite}.
  Similarly,  $\Egl X > -\infty$ implies $\Egu(-X) < \infty$ and thus $X > -\infty$.  We conclude $X:\Omega\to\reals$.
  By translation we have $\Egl X > 0 > \Egu X$ without loss of generality; if not, letting $c = \tfrac 1 2 (\Egl X + \Egu X)$, replace $X$ by $X - c$.
  By Proposition~\ref{prop:dcl-wlog}, we have $\Egl_{\dcl(\Z)} X, -\Egu_{\dcl(\Z)} X > 0$.
  Thus $\sup_{Z\in\dcl(\Z)} \inf (X + Z) = \Egl X > 0$ and $\sup_{Z\in\dcl(\Z)} \inf (-X +Z) = -\Egu X > 0$.
  By definition of supremum, there exist $Z,Z'\in\dcl(\Z)$ such that $0 < \inf (X + Z)$ and $0 < \inf (-X + Z')$.
  We conclude $0 < \inf (X + Z) + \inf (-X + Z') \leq \inf (Z + Z')$.

  For the converse, suppose $\inf (Z + Z') > 0$ for some $Z,Z'\in\dcl(\Z)$.
  Note that $\sup(-Z-Z') = -\inf (Z + Z') < 0$.
  Consider the choice $X = Z$.
  We have $\Egu Z \leq \sup Z - Z = 0$ as $Z:\Omega\to\reals$, and $\Egu(-Z) \leq \sup(-Z-Z') < 0$.
  Thus $\Egl Z = -\Egu(-Z) > 0 \geq \Egu Z$.
\end{proof}

As $Z,Z'\in\Z \implies Z+Z'\in\Z$ for positive linear gamble spaces, we have the following useful implication.
\begin{corollary}
  Let $(\Omega,\Z)$ be a positive-linear, arbitrage-free gamble space.
  Then $\Egl X \leq \Egu X$ for all $X:\Omega\to\extreals$.
\end{corollary}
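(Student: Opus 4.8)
The plan is to derive this directly from the characterization in Proposition~\ref{prop:egu-egl-inequality-char}, which states that $\Egl X \leq \Egu X$ holds for all $X$ precisely when $\Z+\Z$ is arbitrage-free, i.e.\ $\inf(Z+Z')\leq 0$ for all $Z,Z'\in\Z$. So the only thing to check is that positive-linearity together with arbitrage-freeness forces $\Z+\Z$ to be arbitrage-free. First I would note that applying positive-linearity with $\alpha_1=\alpha_2=1$ gives $Z+Z'\in\Z$ for any $Z,Z'\in\Z$, so in fact $\Z+\Z\subseteq\Z$. Then arbitrage-freeness, applied to the element $Z+Z'\in\Z$, yields $\inf(Z+Z')\leq 0$, which is exactly the condition needed. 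Invoking Proposition~\ref{prop:egu-egl-inequality-char} then concludes.

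The one piece of bookkeeping is that Proposition~\ref{prop:egu-egl-inequality-char} is stated under the hypothesis $\Z\subseteq(\Omega\to\infreals)$, i.e.\ gambles never take the value $-\infty$, whereas the corollary allows $\extreals$-valued gambles and hence the possibility of $\infty-\infty$ when forming $Z+Z'$. I would dispatch this by first passing to $\Z' = \Z^{(-\infty,\infty]}=\{Z\in\Z : Z>-\infty\}$. Proposition~\ref{prop:gambles-not-neg-infty} gives $\Egu_\Z=\Egu_{\Z'}$, and applying the same proposition to $-X$ together with the identity $\Egl X = -\Egu(-X)$ gives $\Egl_\Z=\Egl_{\Z'}$ as well. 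For $Z,Z'\in\Z'$ the sum $Z+Z'$ is unambiguous (no $\infty-\infty$), lies in $\Z$ by positive-linearity, and therefore lies in $\Z'$ since it is $>-\infty$; arbitrage-freeness gives $\inf(Z+Z')\leq 0$. Thus $\Z'+\Z'$ is arbitrage-free and $\Z'\subseteq(\Omega\to\infreals)$, so Proposition~\ref{prop:egu-egl-inequality-char} applies to $\Z'$ and yields $\Egl_{\Z'} X\leq\Egu_{\Z'} X$ for all $X$; transferring through the two equalities above gives the claim for $\Z$.

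There is essentially no genuine obstacle here: the content is entirely in Proposition~\ref{prop:egu-egl-inequality-char}, and the corollary is a one-line specialization. The only care required is the $-\infty$ reduction just described. (It is worth observing that this conclusion also coincides with Proposition~\ref{prop:basic-facts-assumptions}(\ref{fact:lower-leq-upper}), established earlier via subadditivity; the present argument instead routes through the exact characterization of the inequality, which is the reason it is recorded here as a corollary of Proposition~\ref{prop:egu-egl-inequality-char}.)
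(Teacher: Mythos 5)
Your proposal is correct and takes essentially the same route as the paper: the corollary is derived from Proposition~\ref{prop:egu-egl-inequality-char} by the single observation that positive linearity gives $Z+Z'\in\Z$, so arbitrage-freeness of $\Z$ makes $\Z+\Z$ arbitrage-free. Your explicit reduction to $\Z^{(-\infty,\infty]}$ via Proposition~\ref{prop:gambles-not-neg-infty} is the same without-loss-of-generality step the paper invokes in the remark immediately preceding the proposition, just carried out in detail.
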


Intuitively, we can leverage this characterization to understand when $\Epl \leq \Epu$ as well, since as discussed in Remark~\ref{remark:epu-convexify-intuition}, the operator $\Epu_\Z$ essentially convexifies $\Z$.
One may expect then that $\Epl \leq \Epu$ when $\Z' + \Z'$ is arbitrage-free where $\Z' = \overline \conv \, \Z$ is the closed convex hull of $\Z$ in a suitable topology.
(See \S~\ref{sec:structure-epu-finite-omega}.)

It is possible that a more direct condition could be obtained.

We can however verify that $\Delta_0(\Z) \neq \emptyset$ is not necessary for $\Epl \leq \Epu$.
Intuitively, it is \emph{almost} necessary, as seen by considering $X=0$.
Here $\Epu 0 = \sup_{P\in\Delta(\Omega)} \inf_{Z\in\Z} \E_P[0 - Z] = -\inf_{P\in\Delta(\Omega)} \sup_{Z\in\Z} \E_P Z$.
If $\Delta_0(\Z) = \emptyset$, then every $P$ yields some $Z$ with $\E_P Z > 0$.
Thus the only way for $\Epl 0 \leq \Epu 0$ is for every probability measure to give Gambler some positive profit but one that World can make arbitrarily (and uniformly) small.
The following example does precisely that.

\begin{example}[$\Epl \leq \Epu$ yet $\Delta_0(\Z) = \emptyset$]
  Let $\Omega = \N = \{1,2,3,\ldots\}$ and define $\Z = \{Z:n \mapsto 1/n\}$.
  Then clearly $\Delta_0(\Z) = \emptyset$, as we have $Z>0$ and thus $\E_P Z > 0$ for all $P\in\Delta(\Omega)$.
  Now suppose for a contradiction that we had some $X:\Omega\to\extreals$ for which $\Epl X > \Epu X$.
  As above, without loss of generality we may assume $\Epl X > c$, $-c > \Epu X$ for some $c>0$.
  Thus, as there is only one gamble, we have
  \begin{align*}
    \Epu X
    &=
      \sup_{P\in\Delta(\Omega)} \E_P[X - Z] < -c
      \implies
      \forall n\in\N,\; X(n) - 1/n < -c~,
    \\
    \Epl X
    &=
      \inf_{P\in\Delta(\Omega)} \E_P [X + Z] > c
      \implies
      \forall n\in\N,\; X(n) + 1/n > c~.
  \end{align*}
  A contradiction arises for $n > 1/c$.
  We conclude $\Epl X \leq \Epu X$ for all $X$.
  Note that the proposed condition above, that $\overline\conv\Z+\overline\conv\Z = \{2Z\}$ be arbitrage-free, is satisfied as $\inf Z = 0$.
\end{example}

\section{More General Sequential Gamble Spaces}
\label{sec:general-seq-gambles}

One may define a more general sequential gamble space as before but now with per-round outcomes being restricted to some $\Y^{(s)} \subseteq \Y$, as follows.
As before, let $\Y$ be a set, subsets of which will form the per-round outcomes.
Let time horizon $T\in\N\cup\{\infty\}$ be given.
Let $\Omega \subseteq \Y^T$ be the set of \emph{outcomes}.
Given $y\in\Y^t$, we write $y_{1..i} := (y_1,\ldots,y_i)$ to be the first $i$ elements of $y$, where $y_{1..0} := \emptystring$ is the empty sequence.
Letting $\Sc_t := \{y_{1..t} \mid y\in\Omega\}$ for all $t \leq T$, define the set of \emph{situations}
$\Sc = \bigcup_{t < T} \Sc_t$.
The outcomes $\Omega = \Sc_T$ are not elements of $\Sc$, though they can be thought of as ``terminal'' situations.
Given $s\in\Sc$, we define the \emph{per-round outcomes} $\Y^{(s)} = \{y\in\Y \mid s\oplus y\in\Sc_{|s|+1}\}$, where $\oplus$ denotes sequence concatenation and $t=|s|$ is the length of $s$.

For each situation $s\in\Sc$ we are given a set of \emph{per-round gambles} $\hat \Z^{(s)} \subseteq (\Y^{(s)}\to\extreals)$ available in that situation.
We let $\Psi$ be the set of all gambling strategies $\psi$ which map a $s\in\Sc$ to a choice of per-round gamble $\hat Z \in \hat \Z^{(s)}$; formally $\Psi = \{ \psi: \Sc \to \bigcup_{s\in\Sc} \hat \Z^{(s)} \mid \psi(s) \in \hat\Z^{(s)}\, \forall s\in\Sc\}$.
We can equivalently represent the available per-round gambles via a single set $\hat\Z \subseteq (\Sc\to\extreals)$, where $\hat Z\in\hat\Z$ is given by $\hat Z(y_{1..t}) = \psi(y_{1..t-1})(y_t)$.
Then a gambling strategy is given by an element $\hat Z\in\hat\Z$.
Defining $\hat Z_t := \hat Z|_{\Sc_t}$, the strategy is equivalently represented by the sequence $\{\hat Z_t\}_t$.
For each situation $s\in\Sc$, $t = |s|-1$, we recover $\hat \Z^{(s)} := \{\hat Z_t(s\oplus \cdot):\Y^{(s)}\to\extreals \mid \hat Z\in\hat\Z\}$.
As before, we will often write $\hat\Z \subseteq (\Sc\to\extreals)$ as shorthand for the indexed set $\{\hat \Z^{(s)}\}_s$.

The cumulative gamble $Z^\psi_t$ is given as before, in eq.~\eqref{eq:cumulative-gamble-finite} and eq.~\eqref{eq:cumulative-gamble-infinite}.

\begin{definition}[Generalized sequential gamble space]
  \label{def:gen-sequential-gamble-space}
  Let per-round outcomes $\Y$, time horizon $T\in\N\cup\{\infty\}$, and outcomes $\Omega \subseteq \Y^T$ be given, and define the set of situations $\Sc$ as above.
  Let per-round gambles $\hat\Z \subseteq (\Sc\to\extreals)$ be given, from which we can define $\{\hat\Z^{(s)}\}_{s\in\Sc}$ as above.
  Then we define the \emph{generalized sequential gamble space} $(\Omega,\hat\Z,T)$ to be the gamble space $(\Omega,\Z_T)$, as defined following eq.~\eqref{eq:cumulative-gamble-infinite}.
\end{definition}

Similar to before, we have global upper expectations on the gamble space $(\Omega,\Z_T)$, and the per-round upper expectation on $(\Y^{(s)},\hat\Z^{(s)})$, which gives rise to the conditional upper expectations.

\begin{definition}[Conditional game-theoretic upper expectation]
  \label{def:gen-conditional-egu}
  Let $(\Omega,\hat\Z,T=2)$ be a generalized sequential gamble space.
  For any $X:\Omega\to\extreals$ and $y_1\in\Y^{(\emptystring)}$, we define $\Egu[X\mid y_1] := \Egu[X(y_1,\cdot)]$ with respect to $(\Y^{(y_1)},\hat\Z^{(y_1)})$.
\end{definition}

While defined only for two-round gamble spaces, Definition~\ref{def:conditional-egu} applies much more broadly, since $\hat\Z^{(\emptystring)}$ and $\hat\Z^{(y_1)}$ can themselves be generalized sequential gamble spaces.
In that case, $y_1$ represents a sequence of outcomes---those seen thus far---and $y_2$ the sequence still to come.

Formally, given a generalized sequential gamble space $(\Omega,\hat\Z,T)$, and any $t < T$,
we can define the two-round generalized sequential gamble space $(\Omega',\hat\Z',T'=2)$ by combining the first $t$ rounds as the new round 1, and the remaining rounds as the new round 2.
Formally, for $s\in\Sc_t$,
define the conditional situations $\Sc|_s := \{s'_{t+1..|s'|} \mid s'\in\Sc, s'_{1..t}=s\}$ and conditional outcomes $\Omega|_s := \{s'_{t+1..T} \mid s'\in\Omega, s'_{1..t}=s\}$ to be the possible completions of $s$ in future rounds.
Then we let $\Omega' = \{(s,s') \mid s\in\Sc_t,s'\in\Omega|_s\}$,
$(\Y^{\prime(\emptystring)},\hat\Z^{\prime(\emptystring)}) = (\Sc_t,\Z_{t+1})$, and
$(\Y^{\prime(y_1')},\hat\Z^{\prime(y_1')})$ is the generalized sequential gamble space $(\Omega|_s,\{\hat\Z^{(s\oplus \hat s)}\}_{\hat s\in\Sc|_s},T-t)$.
Again, $\Egu[ X \mid s ]$ is simply $\Egu[X(s\oplus \cdot)]$ with respect to this latter gamble space for the last $T-t$ rounds.

The conditional upper expectation on a single following round reduces as before to the upper expectation on gamble space $(\Y^{(s)},\hat\Z^{(s)})$.
In this case, if $X:\Sc_t\to\extreals$ and $s\in\Sc_{t-1}$ we have
\begin{align}
  \label{eq:gen-conditional-expectation}
  \Egu[ X \mid s] &= \Egu_{\hat\Z^{(s)}} X(s \oplus \, \cdot) = \inf_{Z \in \hat\Z^{(s)}} \sup_{y\in\Y^{(s)}} X(s\oplus y) - Z(y)~.
\end{align}

\begin{definition}[Game-theoretic supermartingale]
  \label{def:gen-supermartingale}
  Let $(\Omega,\hat\Z,T)$ be a generalized sequential gamble space.
  A sequence $\{X_t:\Sc_t\to\extreals\}_{t\leq T}$ is a \emph{game-theoretic supermartingale} if for all $t< T$ and situations $s\in\Sc_t$, we have 
  \begin{align}
    \label{eq:gen-supermartingale-def}
    \Egu[ X_{t+1} \mid s] \leq X_t(s)~,
  \end{align}
  and a \emph{game-theoretic martingale} if $\Eg[ X_{t+1} \mid s] = X_t(s)$.
\end{definition}

Most results in \S~\ref{sec:definitions} for sequential gamble spaces readily extend to the generalized setting.
To illustrate, let us prove the generalization of Proposition~\ref{prop:gambling-strategy-supermartingale}.
\begin{proposition}\label{prop:gen-gambling-strategy-supermartingale}
  Let $(\Omega,\hat\Z,T)$ be a generalized sequential gamble space and $Z^\psi\in\Z_T$.
  If $\psi(s)(y)\in\reals$ for all $s\in\Sc,y\in\Y^{(s)}$, then $\{Z^\psi_t\}_t$ is a game-theoretic supermartingale.
\end{proposition}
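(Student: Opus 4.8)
The plan is to mirror the proof of Proposition~\ref{prop:gambling-strategy-supermartingale} essentially verbatim, since nothing about that argument genuinely used the fact that the per-round outcome set was a fixed $\Y$ rather than a situation-dependent $\Y^{(s)}$. First I would fix $t < T$ and a situation $s\in\Sc_t$, and unfold the conditional game-theoretic upper expectation using eq.~\eqref{eq:gen-conditional-expectation}, writing
\[
  \Egu[Z^\psi_{t+1}\mid s] = \inf_{\hat Z\in\hat\Z^{(s)}} \sup_{y\in\Y^{(s)}} Z^\psi_{t+1}(s\oplus y) - \hat Z(y)~.
\]
The next step is to invoke the cumulative gamble decomposition from eq.~\eqref{eq:cumulative-gamble-finite}, namely $Z^\psi_{t+1}(s\oplus y) = Z^\psi_t(s) + \psi(s)(y)$, which holds in the generalized setting since the cumulative gamble is defined by the same telescoping sum. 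This rewrites the quantity as $\inf_{\hat Z\in\hat\Z^{(s)}} \sup_{y\in\Y^{(s)}} \bigl(Z^\psi_t(s) + \psi(s)(y)\bigr) - \hat Z(y)$.

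I would then bound the infimum over $\hat Z\in\hat\Z^{(s)}$ from above by plugging in the particular gamble $\hat Z = \psi(s)$, which is a valid choice precisely because $\psi$ is a gambling strategy and hence $\psi(s)\in\hat\Z^{(s)}$ by definition of $\Psi$. This yields
\[
  \Egu[Z^\psi_{t+1}\mid s] \leq \sup_{y\in\Y^{(s)}} \bigl(Z^\psi_t(s) + \psi(s)(y)\bigr) - \psi(s)(y) = Z^\psi_t(s)~,
\]
which is exactly the supermartingale inequality~\eqref{eq:gen-supermartingale-def}. Taking this over all $t<T$ and $s\in\Sc_t$ establishes that $\{Z^\psi_t\}_t$ is a game-theoretic supermartingale.

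The one step requiring care—and the only place the hypothesis $\psi(s)(y)\in\reals$ enters—is the final cancellation $\bigl(Z^\psi_t(s)+\psi(s)(y)\bigr) - \psi(s)(y) = Z^\psi_t(s)$. Because the paper's arithmetic convention sets $\infty - \infty = (-\infty)-(-\infty) = \infty$, this identity could fail if $\psi(s)(y)$ were infinite, in which case the right-hand side would blow up to $\infty$ and the bound would be vacuous. Real-valuedness of $\psi(s)(y)$ rules this out, so the subtraction behaves by ordinary arithmetic. (Note $Z^\psi_t(s)$ is likewise a finite sum of real-valued per-round payoffs, so no infinite values arise there either.) I expect this to be the only genuine obstacle; it is the direct analogue of the role real-valuedness plays in the original proposition, and no new difficulties are introduced by allowing the per-round outcome set to vary with $s$.
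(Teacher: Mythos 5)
Your proposal is correct and follows essentially the same argument as the paper's own proof: unfold the conditional upper expectation via eq.~\eqref{eq:gen-conditional-expectation}, decompose $Z^\psi_{t+1}(s\oplus y) = Z^\psi_t(s) + \psi(s)(y)$, and bound the infimum by the choice $\hat Z = \psi(s)\in\hat\Z^{(s)}$. Your added remark about where real-valuedness is needed---ensuring the cancellation $\bigl(Z^\psi_t(s)+\psi(s)(y)\bigr)-\psi(s)(y)=Z^\psi_t(s)$ is not broken by the convention $\infty-\infty=\infty$---is accurate and makes explicit a point the paper leaves implicit.
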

\begin{proof}
  Let $t< T$ and $s \in \Sc_t$ be given.
  \begin{align*}
    \Egu[ Z^\psi_{t+1} \mid s]
    &= \inf_{\hat Z \in \hat\Z^{(s)}} \sup_{y\in\Y^{(s)}} Z^\psi_{t+1}(s\oplus y) - \hat Z(y)
    \\
    &= \inf_{\hat Z \in \hat\Z^{(s)}} \sup_{y\in\Y^{(s)}} \left(Z^\psi_t(s) + \psi(s)(y)\right) - \hat Z(y)
    \\
    &\leq \sup_{y\in\Y^{(s)}} \left(Z^\psi_t(s) + \psi(s)(y)\right) - \psi(s)(y)
    \\
    &= Z^\psi_t(s)~.
  \end{align*}
\end{proof}

\section{Structure of Prices and Minimax for Finite $\Omega$}
\label{sec:structure-epu-finite-omega}

A fruitful way to understand the various operators $\Egu, \Epu, \Epucons$ is how they operate on the set of gambles.
That is, given $\Z$, we could ask what ``effective'' gambles $\Z',\Z''$ would satisfy $\Epu_{\Z} = \Egu_{\Z'}$ and $\Epucons_{\Z} = \Egu_{\Z''}$.
It may not be clear that such $\Z',\Z''$ exist, but if they do, by the price inequalities of Theorem~\ref{thm:chain-of-price-inequalities}, we could presumably take $\Z \subseteq \Z' \subseteq \Z''$.

In this section we will show how to interpret these prices as certain closure operations on $\Z$, in the case when $\Omega$ to be a finite set.
In fact, even $\Egu$ itself can be thought of as a closure operation (Proposition~\ref{prop:finite-omega-egu}).
When speaking of closures of subsets of finite-valued variables and gambles, we refer to the standard topology on $\reals^\Omega$.

\begin{proposition}\label{prop:finite-omega-egu}
  Let $(\Omega,\Z)$ be a gamble space for a finite set $\Omega$.
  Then $\Egu_\Z = \Egu_{\cdcl{\Z}}$.
  Furthermore, $\cdcl{\Z} = \{Z\in\reals^\Omega \mid \Egu Z \leq 0\}$.
\end{proposition}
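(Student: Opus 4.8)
The plan is to establish the two claimed equalities in Proposition~\ref{prop:finite-omega-egu} by first reducing to the real-valued downward closure and then exhibiting $\cdcl{\Z}$ as the operator closure $\tilde\Z = \{Z\in\reals^\Omega \mid \Egu Z \leq 0\}$ already appearing in Proposition~\ref{prop:dcl-wlog}. Recall that proposition gives $\Egu_\Z = \Egu_{\dcl(\Z)} = \Egu_{\tilde\Z}$ for \emph{any} gamble space, so the content here is purely that taking the topological closure of $\dcl(\Z)$ changes nothing when $\Omega$ is finite, and that this closure coincides with $\tilde\Z$.

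First I would prove $\Egu_\Z = \Egu_{\cdcl{\Z}}$. Since $\dcl(\Z) \subseteq \cdcl{\Z}$, monotonicity of the infimum over a larger gamble set (as in Proposition~\ref{prop:dcl-wlog}, eq.~\eqref{eq:dcl-inequality}) gives $\Egu_{\cdcl{\Z}} X \leq \Egu_{\dcl(\Z)} X = \Egu_\Z X$ for all $X$. For the reverse inequality, fix any $X:\Omega\to\extreals$ and any $Z'\in\cdcl{\Z}$ with $Z' + \alpha \geq X$ for some $\alpha\in\reals$; I would show $\Egu_\Z X \leq \alpha + \epsilon$ for every $\epsilon>0$. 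Because $Z'$ is a limit of points $Z_n\in\dcl(\Z)$ and $\Omega$ is finite, convergence in $\reals^\Omega$ is uniform: $\|Z_n - Z'\|_\infty \to 0$. Hence for large $n$ we have $Z_n \geq Z' - \epsilon$ pointwise, so $Z_n + (\alpha+\epsilon) \geq Z' + \alpha \geq X$, giving $\Egu_{\dcl(\Z)} X \leq \alpha+\epsilon$. Taking $\epsilon\to 0$ and then an infimum over valid $(Z',\alpha)$ yields $\Egu_\Z X = \Egu_{\dcl(\Z)} X \leq \Egu_{\cdcl{\Z}} X$, completing the first equality. (The finite-$\Omega$ hypothesis is exactly what converts pointwise closure into uniform approximation, making this step clean.)

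For the second claim, $\cdcl{\Z} = \tilde\Z$, I would argue both inclusions. For $\cdcl{\Z}\subseteq\tilde\Z$: any $Z'\in\dcl(\Z)$ is real-valued and dominated by some $Z\in\Z$, so $\Egu Z' \leq 0$ when $\Z$ is normalized in the relevant sense; more directly, from $\Egu_\Z = \Egu_{\cdcl{\Z}}$ just proved and the fact that $\Egu$ is continuous in its argument on $\reals^\Omega$ for finite $\Omega$ (it is $1$-Lipschitz in the sup norm, being an infimum of $1$-Lipschitz maps $Z'\mapsto \sup(Z'-Z)$), the condition $\Egu W \leq 0$ is closed, so it passes to limits; combined with $\dcl(\Z)\subseteq\tilde\Z$ this gives $\cdcl{\Z}\subseteq\tilde\Z$. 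For the reverse $\tilde\Z\subseteq\cdcl{\Z}$: take $W\in\reals^\Omega$ with $\Egu W \leq 0$. Then for every $\epsilon>0$ there is $Z\in\Z$ with $\sup(W-Z)<\epsilon$, i.e.\ $W - \epsilon \leq Z$ pointwise, so $W-\epsilon\in\dcl(\Z)$; letting $\epsilon\to 0$ shows $W\in\cdcl{\Z}$. Thus the two sets coincide.

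The main obstacle is the continuity/closedness argument underlying $\cdcl{\Z}\subseteq\tilde\Z$: one must verify that $\Egu$ is genuinely continuous on $\reals^\Omega$ so that $\{\,W : \Egu W \leq 0\,\}$ is closed and hence contains the closure of $\dcl(\Z)$. The cleanest route is to note that for each fixed $Z$ the map $W\mapsto \sup_{\omega}(W(\omega)-Z(\omega))$ is $1$-Lipschitz in $\|\cdot\|_\infty$, an infimum of $1$-Lipschitz functions is $1$-Lipschitz, and on finite $\Omega$ the sup norm induces the standard topology; so $\Egu$ is $1$-Lipschitz and $\tilde\Z$ is closed. I would also double-check the edge case where $\dcl(\Z)$ or $\tilde\Z$ is empty, where both equalities hold vacuously via Proposition~\ref{prop:dcl-wlog}. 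No genuinely hard estimate is required once finiteness of $\Omega$ is exploited to upgrade pointwise to uniform control.
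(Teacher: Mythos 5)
Your proof is correct, and its skeleton matches the paper's: both reduce via Proposition~\ref{prop:dcl-wlog} to showing $\cdcl{\Z} = \tilde\Z$. The difference is in how that identity is established. The paper's proof is essentially a citation: it invokes the standard risk-measure fact that the operator closure (acceptance set) is the sup-norm closure of $\dcl(\Z)$, per \citet[Proposition 4.7(d)]{follmer2016stochastic}. You instead supply a self-contained elementary argument, and both halves check out: $\Egu$ is $1$-Lipschitz in $\|\cdot\|_\infty$ (this is just translation plus monotonicity, $W_1 \leq W_2 + \|W_1-W_2\|_\infty \implies \Egu W_1 \leq \Egu W_2 + \|W_1-W_2\|_\infty$, valid even with extended-real values), so $\tilde\Z$ is closed and contains $\dcl(\Z)$, giving $\cdcl{\Z}\subseteq\tilde\Z$; conversely $\Egu W \leq 0$ yields $Z\in\Z$ with $\sup(W-Z)<\epsilon$, hence $W-\epsilon\in\dcl(\Z)$ and $W\in\cdcl{\Z}$. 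Two small remarks. First, your separate uniform-approximation proof of $\Egu_\Z = \Egu_{\cdcl{\Z}}$ is redundant: once $\cdcl{\Z}=\tilde\Z$ is in hand, that equality is immediate from Proposition~\ref{prop:dcl-wlog}, and your use of the weak-inequality replication form is justified since $\cdcl{\Z}\subseteq\reals^\Omega$ (Proposition~\ref{prop:egu-equiv-defs}). Second, the hedge ``when $\Z$ is normalized in the relevant sense'' is unnecessary: $\dcl(\Z)\subseteq\tilde\Z$ holds unconditionally, since $Z'\leq Z$ with $Z'$ real-valued forces $Z>-\infty$ and hence $\Egu Z' \leq \sup(Z'-Z)\leq 0$; your subsequent Lipschitz argument is the right one and needs no such hypothesis. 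What your route buys is transparency and the observation that finiteness of $\Omega$ is used only to identify the standard topology with the sup-norm topology; indeed your $\tilde\Z\subseteq\cdcl{\Z}$ direction works verbatim for the sup-norm closure on infinite $\Omega$, which is exactly the general statement the paper outsources to the literature.
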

\begin{proof}
  We already have $\Egu_\Z = \Egu_{\dcl(\Z)} = \Egu_{\tilde\Z}$ from Proposition~\ref{prop:dcl-wlog}.
  It thus remains only to show $\tilde\Z = \cdcl{\Z}$.
  This statement follows from standard results in financial risk measures, that $\tilde\Z$ is the closure of $\dcl(\Z)$ with respect to the supremum norm; see \S~\ref{sec:financial-risk-measures} and \citet[Proposition 4.7(d)]{follmer2016stochastic}.
\end{proof}

\begin{proposition}\label{prop:finite-omega-epu}
  Let $(\Omega,\Z)$ be a gamble space for a finite set $\Omega$.
  Then $\Epu_\Z = \Egu_{\overline\conv\, \dcl(\Z)}$.
\end{proposition}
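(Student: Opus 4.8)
The plan is to prove the identity pointwise: for each fixed variable $X$, show $\Epu_\Z X = \Egu_{\clconv\,\dcl(\Z)} X$. Write $\Z' := \clconv\,\dcl(\Z)$, and note two finite-$\Omega$ simplifications at the outset: every function is measurable, so $\mdcl(\Z) = \dcl(\Z)$ in the definition of $\Epu$, and both $\dcl(\Z)$ and $\Z'$ are subsets of the finite-dimensional space $\reals^\Omega$. First I would dispatch the cases where $X$ is not real-valued. If $X(\omega)=\infty$ for some $\omega$, then $\Egu_{\Z'}X = \infty$ by Proposition~\ref{prop:egu-x-infinite}, while testing $\Epu_\Z X$ against the point mass $\delta_\omega$ forces $\Epu_\Z X = \infty$ as well (every gamble is finite at $\omega$); the cases with $X(\omega)=-\infty$ reduce to a subproblem on the points where $X$ is real, using the arithmetic conventions of \S~\ref{sec:infinite-cases} as in Proposition~\ref{prop:eliminate-X-neg-infinity}. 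This leaves the substantive case of real-valued $X$, where $f(Z,P) := \E_P[X-Z]$ is a genuine real number for all $Z\in\Z'$ and $P\in\Delta(\Omega)$.

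The core of the argument is a single exchange of $\inf$ and $\sup$, carried out in two moves. First, an inner convexification: for each fixed $P$, the map $Z\mapsto \E_P[X-Z]$ is affine and continuous on $\reals^\Omega$, so its infimum over $\dcl(\Z)$ is unchanged upon passing to the convex hull and then to the closure, giving $\inf_{Z\in\dcl(\Z)}\E_P[X-Z] = \inf_{Z\in\Z'}\E_P[X-Z]$; taking $\sup_P$ yields $\Epu_\Z X = \sup_{P\in\Delta(\Omega)}\inf_{Z\in\Z'}\E_P[X-Z]$. Second, I would apply Sion's minimax theorem (Theorem~\ref{thm:sion}) to $f$ on $\Z'\times\Delta(\Omega)$: the set $\Z'$ is convex (convexity of $\dcl(\Z)$ under the hypotheses of Lemma~\ref{lem:convexity-dcl} is not even needed, since convexity of $\Z'$ is immediate from $\clconv$), $\Delta(\Omega)$ is convex and compact because $\Omega$ is finite, and $f$ is affine and continuous in each argument separately, hence lower semicontinuous/quasiconvex in $Z$ and upper semicontinuous/quasiconcave in $P$. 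Sion then gives $\sup_P\inf_{Z\in\Z'}f = \inf_{Z\in\Z'}\sup_P f$. Finally, since the supremum of the affine functional $P\mapsto\E_P[X-Z]$ over the simplex $\Delta(\Omega)$ is attained at a vertex, $\sup_{P}\E_P[X-Z] = \sup_{\omega}\bigl(X(\omega)-Z(\omega)\bigr)$, so $\inf_{Z\in\Z'}\sup_P f = \inf_{Z\in\Z'}\sup_\omega (X-Z) = \Egu_{\Z'}X$, closing the chain.

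The main obstacle I anticipate is the careful application of Sion's theorem to the possibly unbounded convex set $\Z'$. The theorem as stated produces a $\min$ over $\Z'$, whereas $\Z'$ need not be compact; since I only need the equality of the two iterated \emph{values} $\inf_Z\sup_P$ and $\sup_P\inf_Z$, I would invoke the value form (which requires only the compactness of $\Delta(\Omega)$ that is available here) and phrase everything with $\inf$ rather than $\min$. A secondary point worth checking is that $\Z'$ is downward closed, so that no discrepancy arises between $\Egu_{\Z'}$ and the operator on its own downward closure: this holds because $\dcl(\Z)$ is downward closed and both convex combination and closure preserve downward closedness (given $W'\le W=\sum_i\lambda_i Z_i$, write $W'=\sum_i\lambda_i\bigl(Z_i+(W'-W)\bigr)$). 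This observation also permits, if preferred, replacing the final elementary step by a direct appeal to eq.~\eqref{eq:nature-can-randomize} applied to $\Z'$.
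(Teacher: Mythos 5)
Your proof is correct, but it takes a genuinely different route from the paper's. The paper argues via convex conjugate duality: it writes $\Epu_\Z X = \sup_{P\in\Delta(\Omega)} \E_P X - G(P) = G^*(X)$ for the support-type function $G(P) = \sup_{Z\in\mdcl(\Z)}\E_P Z$, observes that $G^*$ is monotone and translation-equivariant, and identifies $\Egu_{\Z'} = G^*$ for the acceptance set $\Z' = \{Z\in\reals^\Omega \mid G^*(Z)\leq 0\}$ --- a Fenchel--Moreau-style argument in which the identification of $\{G^*\leq 0\}$ with $\clconv\,\dcl(\Z)$ is left implicit (via biconjugation). You instead prove the identity pointwise in $X$ by (a) collapsing the inner infimum from $\dcl(\Z)$ to $\clconv\,\dcl(\Z)$, legitimate since $Z\mapsto\E_P[X-Z]$ is affine and continuous on the finite-dimensional space $\reals^\Omega$, and (b) invoking Sion (Theorem~\ref{thm:sion}) in its value form with the compactness carried entirely by $\Delta(\Omega)$ --- the same reading the paper itself uses in Theorem~\ref{thm:minimax-continuous-compact-Omega}, of which your argument is essentially the finite-$\Omega$ specialization, with the vertex evaluation $\sup_P \E_P[X-Z] = \sup_\omega (X(\omega)-Z(\omega))$ (i.e.\ eq.~\eqref{eq:nature-can-randomize}) closing the chain. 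Your treatment is also more careful than the paper's on two points it glosses over: you dispatch extended-real-valued $X$ explicitly (via Proposition~\ref{prop:egu-x-infinite}, point masses, and the reduction of Proposition~\ref{prop:eliminate-X-neg-infinity}), and you verify that $\clconv\,\dcl(\Z)$ is downward closed. What the paper's duality route buys in exchange is a structural description of the effective gamble set as a sublevel set of $G^*$, which is the form that generalizes to the polar-cone characterization of $\Epucons$ in Proposition~\ref{prop:finite-omega-epu0} and connects to the acceptance-set view of risk measures (\S~\ref{sec:financial-risk-measures}); your route is more self-contained and proves the stated identity directly, but would need the biconjugation step anyway to recover that structural picture.
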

\begin{proof}
  Define a convex function $G:\Delta(\Omega)\to\extreals$ by $G(P) = \sup_{Z\in\mdcl(\Z)} \E_P Z$.
  We have
  \begin{align*}
    \Epu_\Z X
    &= \sup_{P\in\Delta(\Omega)} \inf_{Z\in\mdcl(\Z)} \E_P[X - Z]
    \\
    &= \sup_{P\in\Delta(\Omega)} \E_P X - G(P)
    \\
    &= G^*(X)~.
  \end{align*}
  As a dual of a convex function with domain $\Delta(\Omega)$, $G^*$ satisfies monotonicity and translation.
  Let $\Z' = \{Z\in\reals^\Omega \mid G^*(Z) \leq 0\}$.
  Then $\Egu_{\Z'} = G^*$.

  As $\Omega$ is a finite set, we have $\mdcl(\Z) = \dcl(\Z)$.
\end{proof}

\begin{corollary}
  Let $(\Omega,\Z)$ be a gamble space for a finite set $\Omega$.
  Then $\Epu X = \Egu X$ for all $X:\Omega\to\extreals$ if and only if $\cdcl{\Z}$ is convex.
\end{corollary}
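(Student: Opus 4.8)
The plan is to reduce the claim to a comparison of two explicit sets of gambles, and then to analyze the one nontrivial closure operation. By Proposition~\ref{prop:finite-omega-egu} we have $\Egu_\Z = \Egu_{\cdcl{\Z}}$, and by Proposition~\ref{prop:finite-omega-epu} we have $\Epu_\Z = \Egu_{\overline\conv\,\dcl(\Z)}$. Thus the desired identity $\Epu X = \Egu X$ for all $X$ is precisely the statement that the operators $\Egu_{\cdcl{\Z}}$ and $\Egu_{\overline\conv\,\dcl(\Z)}$ coincide. Both $\cdcl{\Z}$ and $\overline\conv\,\dcl(\Z)$ are subsets of $\reals^\Omega$, and $\cdcl{\Z} \subseteq \overline\conv\,\dcl(\Z)$ always holds, since the closed convex hull of $\dcl(\Z)$ contains its closure. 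So everything hinges on when this inclusion is an equality.

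For the \emph{if} direction, suppose $\cdcl{\Z}$ is convex. Then $\cdcl{\Z}$ is a closed convex set containing $\dcl(\Z)$, hence it contains the smallest such set: $\overline\conv\,\dcl(\Z) \subseteq \cdcl{\Z}$. Combined with the reverse inclusion noted above, the two sets are equal, so $\Egu_{\cdcl{\Z}} = \Egu_{\overline\conv\,\dcl(\Z)}$, i.e. $\Egu = \Epu$, as desired.

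For the \emph{only if} direction, suppose $\Epu = \Egu$. The key tool is that Proposition~\ref{prop:finite-omega-egu}, applied to an arbitrary gamble space over a finite $\Omega$, recovers the gamble set from its operator: for any $\W \subseteq \reals^\Omega$ one has $\{Z \in \reals^\Omega \mid \Egu_\W Z \leq 0\} = \cdcl{\W}$. Now $\Egu_{\cdcl{\Z}} = \Egu_\Z = \Epu_\Z = \Egu_{\overline\conv\,\dcl(\Z)}$ are equal as operators, so writing $\W := \overline\conv\,\dcl(\Z)$ and applying this recovery to both sides gives $\cdcl{\Z} = \cdcl{\W}$. It then suffices to show $\cdcl{\W} = \W$, since $\W$ is convex and this would make $\cdcl{\Z}$ convex. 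For this I would verify that $\W$ is \emph{downward closed}, meaning $Z \in \W$ and $Z' \leq Z$ with $Z' \in \reals^\Omega$ imply $Z' \in \W$. A downward-closed subset $S$ of $\reals^\Omega$ satisfies $\dcl(S) = S$, and if it is also closed then $\cdcl{S} = \overline{\dcl(S)} = S$; applying this to $S = \W$ finishes the argument.

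The main obstacle is exactly this last verification: that the closed convex hull $\W = \overline\conv\,\dcl(\Z)$ of the (already downward-closed) set $\dcl(\Z)$ is itself downward closed. I expect to handle it by checking that both operations preserve downward-closedness: given a convex combination $Z = \sum_i \lambda_i Z_i$ of points of $\dcl(\Z)$ and $Z' = Z - d$ with $d \geq 0$, the identity $Z' = \sum_i \lambda_i (Z_i - d)$ exhibits $Z'$ as a convex combination of the dominated points $Z_i - d \in \dcl(\Z)$, so the convex hull is downward closed; and a parallel sequential argument shows the closure of a downward-closed set is downward closed. These are short but are where the finiteness-free linear structure of $\reals^\Omega$ is actually used, and they are the crux that makes the idempotence $\cdcl{\W} = \W$ hold.
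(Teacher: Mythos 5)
Your proof is correct and follows exactly the route the paper intends: the corollary appears there without an explicit proof, as an immediate consequence of Propositions~\ref{prop:finite-omega-egu} and~\ref{prop:finite-omega-epu}, reducing the operator identity $\Epu = \Egu$ to the comparison of the effective gamble sets $\cdcl{\Z}$ and $\overline\conv\,\dcl(\Z)$. The two verifications you supply---recovering the gamble set from its operator via the second claim of Proposition~\ref{prop:finite-omega-egu} (applied to $\W = \overline\conv\,\dcl(\Z)$), and the downward-closedness, hence $\cdcl$-idempotence, of $\overline\conv\,\dcl(\Z)$---are precisely the details the paper leaves implicit, and both check out.
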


Given a set $S \subseteq \reals^n$, we define its \emph{polar cone} to be the set $S^\circ := \left\{x\in \reals^n \mid y \cdot x \leq 0\;\; \forall y\in S\right\}$.

\begin{proposition}\label{prop:finite-omega-epu0}
  Let $(\Omega,\Z)$ be a gamble space for a finite set $\Omega$.
  Then $\Epucons = \Egu_{\dcl(\Z)^{\circ\circ}}$.
\end{proposition}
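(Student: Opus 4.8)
The plan is to recognize $\mathcal{K} := \dcl(\Z)^{\circ\circ}$ as the smallest closed convex cone containing $\dcl(\Z)$, to show that $\Epucons$ is unchanged when $\Z$ is replaced by $\mathcal{K}$, and then to read off minimax duality for the cone $\mathcal{K}$ from the already-established Proposition~\ref{prop:finite-omega-epu}. Throughout I identify a variable or gamble on the finite set $\Omega$ with a vector in $\reals^\Omega$, so that $\E_P Z = \inprod{P}{Z}$, and I use that $\mdcl(\Z) = \dcl(\Z)$ since every function on a finite set is measurable.

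First I would record the polar-cone description of consistency: since $\Delta_0(\Z) = \{P\in\Delta(\Omega) \mid \inprod{P}{Z}\leq 0\ \forall Z\in\dcl(\Z)\}$, we have $\Delta_0(\Z) = \Delta(\Omega)\cap\dcl(\Z)^\circ$. The triple-polar identity $\dcl(\Z)^{\circ\circ\circ} = \dcl(\Z)^\circ$ then gives $\mathcal{K}^\circ = \dcl(\Z)^\circ$, whence $\Delta_0(\mathcal{K}) = \Delta(\Omega)\cap\mathcal{K}^\circ = \Delta_0(\Z)$ and therefore $\Epucons_\Z = \Epucons_{\mathcal{K}}$. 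Next I would verify the structural facts about $\mathcal{K}$ needed to invoke the earlier results: $\mathcal{K}$ is a closed convex cone, it is real-valued, and it is \emph{downward closed}. The last point follows because $\dcl(\Z)^\circ\subseteq\reals_+^\Omega$ (using that $\dcl(\Z)$ contains $Z - t e_i$ for every $Z\in\dcl(\Z)$, $t\geq 0$, and basis vector $e_i$), so taking polars yields $\mathcal{K}\supseteq(\reals_+^\Omega)^\circ = -\reals_+^\Omega$; as $\mathcal{K}$ is a convex cone it absorbs these directions, giving $Z\in\mathcal{K},\,Z'\leq Z\implies Z'\in\mathcal{K}$. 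Consequently $\dcl(\mathcal{K}) = \mathcal{K}$ and $\clconv\,\mathcal{K} = \mathcal{K}$.

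With these facts in hand the duality is short. Applying Proposition~\ref{prop:finite-omega-epu} to the gamble space $(\Omega,\mathcal{K})$ gives $\Epu_{\mathcal{K}} = \Egu_{\clconv\,\dcl(\mathcal{K})} = \Egu_{\mathcal{K}}$. It then remains to compute $\Epu_{\mathcal{K}} = \Epucons_{\mathcal{K}}$ directly from the cone structure: for fixed $P$ we have $\inf_{Z\in\mathcal{K}}\inprod{P}{X - Z} = \inprod{P}{X} - \sup_{Z\in\mathcal{K}}\inprod{P}{Z}$, and since $\mathcal{K}$ is a cone the support term equals $0$ when $P\in\mathcal{K}^\circ$ and $+\infty$ otherwise; taking the supremum over $P\in\Delta(\Omega)$ leaves exactly $\sup_{P\in\Delta(\Omega)\cap\mathcal{K}^\circ}\inprod{P}{X} = \Epucons_{\mathcal{K}} X$. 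Chaining, $\Egu_{\dcl(\Z)^{\circ\circ}} = \Egu_{\mathcal{K}} = \Epu_{\mathcal{K}} = \Epucons_{\mathcal{K}} = \Epucons_\Z$. (The inequality $\Epucons_{\mathcal{K}}\leq\Egu_{\mathcal{K}}$ is in any case immediate from Theorem~\ref{thm:chain-of-price-inequalities}, giving an independent check on one direction.)

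The main obstacle I anticipate is not the convex-analytic core but the treatment of variables taking the value $+\infty$. By Proposition~\ref{prop:egu-x-infinite}, $\Egu_{\mathcal{K}} X = \infty$ whenever $X(\omega) = \infty$ for some $\omega$, whereas $\Epucons X$ can stay finite if every consistent $P$ assigns that $\omega$ zero mass (e.g.\ $\Omega = \{1,2\}$, $\Z = \{(\beta,0):\beta\in\reals\}$, $X = (\infty,0)$, where $\Epucons X = 0$ but $\Egu_{\mathcal{K}} X = \infty$). The equality of operators should therefore be understood for real-valued $X$ (in particular bounded $X$), where the support-function computation above is valid with all quantities finite; I would either restrict the statement accordingly or isolate the finite-valued reduction explicitly, mirroring how Proposition~\ref{prop:egu-x-infinite} is used in the companion results. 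The remaining routine checks — the bipolar and triple-polar facts and the $\{0,+\infty\}$ dichotomy for the conic support function — I would relegate to standard convex analysis.
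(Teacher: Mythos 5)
The paper states this proposition \emph{without proof} (nothing follows it in the source), so there is no argument of the paper's to compare against; judged on its own, your derivation is correct and is surely the intended one, matching the conjugate-duality style of the neighboring Propositions~\ref{prop:finite-omega-egu} and~\ref{prop:finite-omega-epu}. Each step checks out: $\mathcal{K}=\dcl(\Z)^{\circ\circ}$ is the smallest closed convex cone containing $\dcl(\Z)$; the triple-polar identity gives $\Delta_0(\mathcal{K})=\Delta(\Omega)\cap\dcl(\Z)^\circ=\Delta_0(\Z)$ (using $\mdcl=\dcl$ on finite $\Omega$); downward closedness of $\mathcal{K}$ yields $\clconv\,\dcl(\mathcal{K})=\mathcal{K}$, so Proposition~\ref{prop:finite-omega-epu} gives $\Epu_{\mathcal{K}}=\Egu_{\mathcal{K}}$; and the $\{0,+\infty\}$ dichotomy of the conic support function collapses $\Epu_{\mathcal{K}}$ to $\Epucons_{\mathcal{K}}$ (equivalently, you could cite Theorem~\ref{thm:ep0-equals-ep}, since $\mathcal{K}$ is a cone containing $0$). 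Most importantly, your counterexample is genuine: with $\Omega=\{1,2\}$, $\Z=\{(\beta,0):\beta\in\reals\}$, and $X=(\infty,0)$, one has $\Delta_0(\Z)=\{\delta_2\}$ so $\Epucons X=0$, while Proposition~\ref{prop:egu-x-infinite} forces $\Egu_{\dcl(\Z)^{\circ\circ}}X=\infty$. So the proposition as literally stated---an identity of operators on all of $\extreals^\Omega$---is false, and your restriction to real-valued (equivalently bounded, as $\Omega$ is finite) variables is the correct fix; you have in effect caught an imprecision in the paper's statement. Note the failure is specific to $\Epucons$: the analogous Proposition~\ref{prop:finite-omega-epu} survives infinite values because $\Epu$ ranges over all of $\Delta(\Omega)$, including measures charging the coordinate where $X=\infty$.

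One small lacuna: your proof that $\mathcal{K}$ is downward closed presumes $\dcl(\Z)\neq\emptyset$, since it uses elements $Z-te_i\in\dcl(\Z)$. If $\dcl(\Z)=\emptyset$ (which occurs when $\Z$ is empty or every gamble takes the value $-\infty$ somewhere), then $\mathcal{K}=\emptyset^{\circ\circ}=\{0\}$, which is \emph{not} downward closed, and $\dcl(\mathcal{K})=-\reals_+^\Omega\neq\mathcal{K}$, so that step of your argument breaks. The claimed identity nevertheless holds in this degenerate case by direct computation: $\mdcl(\Z)=\emptyset$ gives $\Delta_0(\Z)=\Delta(\Omega)$, hence $\Epucons X=\max_\omega X(\omega)$ for real-valued $X$, while $\Egu_{\{0\}}X=\sup X$ agrees. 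A sentence disposing of this case would make the proof complete.
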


\begin{figure}[t]
  \centering
  \includegraphics[width=0.4\textwidth]{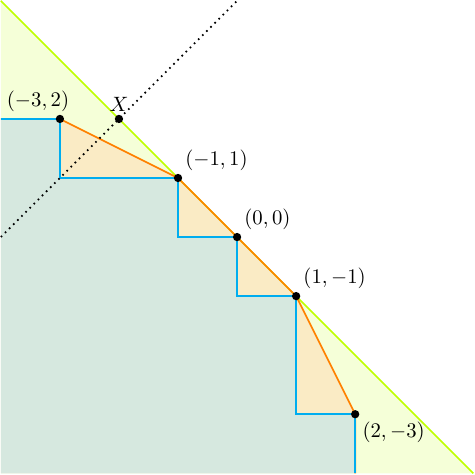}
  \caption{The effective gambles for $\Epucons,\Epu,\Egu$ for $\Z = \{(-3,2), (-1,1), (0,0), (1,-1), (2,-3)\}$, where $|\Omega|=2$ and we represent a gamble $Z:\Omega\to\extreals$ as an element of $\extreals^2$.  The depicted $X = (-2,2)$ has strictly increasing prices, as seen by the distance in the $(1,1)$ direction needed to travel to reach the effective gamble sets.
  }
  \label{fig:strict-price-inequalities}
\end{figure}

\section{Proofs for Undefined Expectations}
\label{sec:proofs-undef-expect}

Throughout, let $X^+ := \max(0,X)$ and $X^- := \max(-X,0)$ for any random variable $X$.
We define $\E X = \E X^+ - \E X^-$ when at most one is infinite, and say $\E X$ is undefined if $\E X^+ = \E X^- = \infty$.

To begin, let us consider a standard example of when expectations are undefined: the Cauchy distribution.
Here we will see that our definitions give $\Epucons X = \Egu X$ for a Cauchy random varible, validating our choice of defining undefined expectations to be infinite.
\begin{example}[Cauchy gambles]
  Let $\Omega = \reals$ and $X:\omega\mapsto\omega$.
  Let $Q$ be the Cauchy distribution, or indeed any probability measure where $\E_Q X$ is undefined.
  Define $\Z = \Z_0(\{Q\}) = \{Z \in \X \mid \E_Q Z \leq 0\}$ to be all gambles consistent with $Q$.
  Clearly $\Delta_0(\Z) = \{Q\}$, and thus $\Epucons X = \E_Q X = \infty$, as $\E_Q X$ is undefined and thus defined to be $\infty$ in the context of $\Epucons$.
  Let us verify that $\Egu X = \infty$ as well.
  Suppose for a contradiction that $\Egu X < c$ for some $c \geq 0$.
  By definition, for some $Z \in \Z$ we have $Z + c \geq X$.
  Since $\E_Q X^+ = \infty$ and $Z^+ \geq (X-c)^+ \geq X^+ - c$, we have $\E_Q Z^+ \geq \infty - c = \infty$ as well.
  We conclude that $\E_Q Z$ is either undefined or infinite, contradicting $\E_Q Z \leq 0$.
  Thus, $\Epu X = \infty = \Epucons X$.
\end{example}

We now turn to specific statements, and fill in omitted details involving undefined expectations.

In Theorem~\ref{thm:chain-of-price-inequalities}, we must revisit two statements.
First, the statement $\E_P[X - Z] \leq \sup_{\omega\in\Omega} X(\omega) - Z(\omega)$ for any $P\in\Delta(\Omega)$ continues to hold even when $\E_P[X-Z]$ is undefined, since in that case we must have $\E_P (X-Z)^+ = \infty$, which implies $\sup X-Z = \sup (X-Z)^+ = \infty$.
Second, we need to show
\begin{align*}
  \sup_{P\in\Delta_0(\Z)} \inf_{Z\in\mdcl(\Z)} \E_P[X - Z]
  &\geq \sup_{P\in\Delta_0(\Z)} \E_P X~,
\end{align*}
even when $\E_P[X-Z]$ or $\E_P X$ may be undefined (and thus defined to be $\infty$).
It suffices to show that if $\E_P X^+ = \infty$ for any $P\in\Delta_0(\Z)$, then $\E_P (X-Z)^+ = \infty$ for all $Z\in\Z$.
We have $(X-Z)^+ \geq (X-Z)^+\ones_{X\geq 0} \geq X^+ - Z\ones_{X\geq 0} \geq X^+ - Z^+\ones_{X\geq 0} \geq X^+ - Z^+$.
As $P\in\Delta_0(\Z)$, we have $0 \geq \E_P Z = \E_P Z^+ - \E_P Z^-$ and thus $\E_P Z^+ =: c < \infty$.
We conclude $\E_P(X-Z)^+ \geq \E_P[X^+ - Z^+] = \E_P X^+ - \E_P Z^+ = \infty - c = \infty$.

\section{Finitely additive theory}
\label{sec:finit-addit-theory}

This section contains proofs from \S~\ref{sec:axioms-fa} and further discussion from \S~\ref{sec:translating-mtp-gtp}.

\subsection{Finitely additive prices}
\label{sec:fa-prices}

Given a measurable space $(\Omega,\Sigma)$, let $\Delta_f(\Omega)$ be the set of finitely-additive probability measures.
Given $\Z \subseteq \extreals^\Omega$, define $\Delta_{0,f}(\Z) := \{Q\in\Delta_f(\Omega) \mid \E_Q Z \leq 0 \;\forall Z\in\mdcl(\Z)\}$.

\begin{theorem}\label{thm:fin-add-price-chain}
  Let $(\Omega,\Z)$ be a gamble space.
  For all measurable $X:\Omega\to\extreals$, we have
  \begin{equation}
    \label{eq:fin-add-minimax-price-chain}
    \sup_{Q\in\Delta_{0,f}(\Z)} \E_Q X
    \leq
    \sup_{Q\in\Delta_f(\Omega)} \inf_{Z\in\mdcl(\Z)} \E_Q[ X - Z]
    \leq
    \inf_{Z\in\mdcl(\Z)} \sup_{Q\in\Delta_f(\Omega)} \E_Q[ X - Z]
    =
    \Egu X~.
  \end{equation}
  If $X$ is additionally bounded, the first inequality is an equality when  $\Z$ is upward-scalable and contains zero, and the second when $\dcl(\Z)$ is convex.
\end{theorem}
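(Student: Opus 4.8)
The plan is to prove the chain of inequalities and then establish the two equality conditions separately, leveraging the machinery already in place. First I would observe that the rightmost equality, $\inf_{Z\in\mdcl(\Z)} \sup_{Q\in\Delta_f(\Omega)} \E_Q[X-Z] = \Egu X$, follows by essentially the same argument as eq.~\eqref{eq:nature-can-randomize} in Theorem~\ref{thm:chain-of-price-inequalities}: point masses $\delta_\omega$ are finitely additive, so $\sup_{Q\in\Delta_f(\Omega)} \E_Q[X-Z] \geq \sup_{\omega} (X-Z)(\omega)$, and since any $Q\in\Delta_f(\Omega)$ satisfies $\E_Q[X-Z] \leq \sup(X-Z)$ (extending the undefined-expectation argument of \S~\ref{sec:proofs-undef-expect} to the finitely additive case), we get equality inside the infimum, and Proposition~\ref{prop:mdcl} handles the passage from $\Z$ to $\mdcl(\Z)$. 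The middle inequality is the standard ``playing second is weakly better'' observation for the payoff $f(Z,Q) = \E_Q[X-Z]$, valid without any topological or convexity hypotheses. The first inequality is immediate from $\Delta_{0,f}(\Z) \subseteq \Delta_f(\Omega)$ together with the fact that $\E_Q[X-Z] \geq \E_Q X$ for consistent $Q$ and $Z\in\mdcl(\Z)$ (by linearity of the finitely additive integral, using $\E_Q Z \leq 0$).

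Next I would establish the first equality under upward scalability and $0\in\Z$. This is the finitely additive analog of Theorem~\ref{thm:ep0-equals-ep}, and I would mirror its proof: for bounded $X$ with $b:=\sup X$, any $Q\in\Delta_f(\Omega)\setminus\Delta_{0,f}(\Z)$ admits some $Z^*\in\mdcl(\Z)$ with $c:=\E_Q Z^* > 0$, and upward scaling gives $\alpha Z^*\in\mdcl(\Z)$ for arbitrarily large $\alpha$, forcing $\inf_{Z} \E_Q[X-Z] \leq b - \alpha c \to -\infty$. Hence inconsistent $Q$ never attain the supremum, which reduces the sup over $\Delta_f(\Omega)$ to one over $\Delta_{0,f}(\Z)$; the remaining steps (using $0\in\Z$ so $\sup_{Z}\E_Q Z = 0$ on consistent measures) are identical to the countably additive case. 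The only care needed is that the finitely additive integral is still linear and monotone, which it is.

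The heart of the theorem, and the main obstacle, is the second equality: that $\dcl(\Z)$ convex implies the minimax swap
\begin{equation}
  \sup_{Q\in\Delta_f(\Omega)} \inf_{Z\in\mdcl(\Z)} \E_Q[X-Z] = \inf_{Z\in\mdcl(\Z)} \sup_{Q\in\Delta_f(\Omega)} \E_Q[X-Z]~.
\end{equation}
This is where I would invoke the financial-risk-measure connection flagged in \S~\ref{sec:financial-risk-measures} and \S~\ref{sec:axioms-fa}. The strategy is to view $X\mapsto\Egu X$ as a convex risk measure on the bounded measurable functions $\X_b$ (translation and monotonicity come from Proposition~\ref{prop:basic-facts-no-assumptions}, and convexity of the acceptance set from convexity of $\dcl(\Z)$ via Lemma~\ref{lem:convexity-dcl}). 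The robust representation theorem for convex risk measures on $\X_b$ equipped with the sup norm—e.g.\ the Fenchel--Moreau / bidual argument underlying \citet[Proposition 4.7]{follmer2016stochastic}—represents such a risk measure as a supremum of expectations against \emph{finitely additive} probability measures, precisely because the dual of $L^\infty$ is the space of finitely additive measures. Concretely, $\Egu X = \sup_{Q\in\Delta_f(\Omega)} (\E_Q X - \beta(Q))$ where $\beta$ is the conjugate penalty, and convexity of $\dcl(\Z)$ makes $\beta$ the indicator of the closed convex acceptance set, collapsing the penalty to the consistency constraint. The delicate points are: ensuring the representation is valid without a continuity-from-above hypothesis (finite additivity is exactly what one obtains in its absence, which is the whole thrust of \S~\ref{sec:axioms-fa}); and verifying that the supremum in the representation genuinely matches $\sup_Q \inf_Z \E_Q[X-Z]$ rather than some relaxation, which amounts to checking that the penalty function $\beta(Q)$ equals $\sup_{Z\in\mdcl(\Z)}\E_Q Z$ (the support function of $\dcl(\Z)$). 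I would carry out this identification carefully, as the interplay between the sup-norm closure, the downward closure $\dcl(\Z)$, and the measurable restriction $\mdcl(\Z)$ is where sign errors and topological gaps are most likely to hide.
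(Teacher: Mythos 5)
Your proposal is correct and follows essentially the same route as the paper's proof: the chain of inequalities by the argument of Theorem~\ref{thm:chain-of-price-inequalities}, the first equality by mirroring Theorem~\ref{thm:ep0-equals-ep}, and the second equality via the robust representation of convex risk measures over finitely additive measures---where the paper simply cites \citet[Theorem 4.16]{follmer2016stochastic}, you unpack the underlying Fenchel--Moreau/support-function argument, correctly flagging the identification of the penalty $\beta(Q)$ with $\sup_{Z\in\mdcl(\Z)}\E_Q Z$ as the delicate step.
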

\begin{proof}
  The chain of inequalities follows by the same argument as Theorem~\ref{thm:chain-of-price-inequalities}.
  Similarly, the first equality follows from the same argument as Theorem~\ref{thm:ep0-equals-ep}.
  For the second equality, we appeal to \citet[Theorem 4.16]{follmer2016stochastic}; see \S~\ref{sec:financial-risk-measures}.
  
\end{proof}

\begin{corollary}\label{cor:fin-add-coherent-risk}
  Let $(\Omega,\Z)$ be an upward-scalable gamble space with $0\in\Z$ and $\dcl(\Z)$ convex.
  Then
  \begin{equation}
    \label{eq:fin-add-equality-2}
    \Egu X
    =
    \sup_{Q\in\Delta_{0,f}(\Z)} \E_Q X~,
  \end{equation}
  for all $X\in\X_b$.
\end{corollary}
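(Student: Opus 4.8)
The plan is to read this corollary off directly from Theorem~\ref{thm:fin-add-price-chain}, whose finitely additive price chain already does all of the work. That theorem states, for any measurable $X:\Omega\to\extreals$,
\[
  \sup_{Q\in\Delta_{0,f}(\Z)} \E_Q X
  \leq
  \sup_{Q\in\Delta_f(\Omega)} \inf_{Z\in\mdcl(\Z)} \E_Q[ X-Z]
  \leq
  \inf_{Z\in\mdcl(\Z)} \sup_{Q\in\Delta_f(\Omega)} \E_Q[ X-Z]
  = \Egu X,
\]
and records precisely which extra hypotheses promote each inequality to an equality. My first step is therefore just to observe that the three hypotheses of the corollary are exactly those two sets of conditions combined, and that $X\in\X_b$ supplies the boundedness (and measurability) needed to invoke both conditional clauses.

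Concretely, the hypothesis-matching goes as follows. Upward scalability together with $0\in\Z$ turns the first inequality into an equality: this is the finitely additive analogue of Theorem~\ref{thm:ep0-equals-ep}, where upward scaling lets Gambler drive the penalty $\inf_{Z\in\mdcl(\Z)}\E_Q[X-Z]$ to $-\infty$ against any inconsistent $Q$, so the outer supremum over $\Delta_f(\Omega)$ may be restricted to $\Delta_{0,f}(\Z)$. Convexity of $\dcl(\Z)$ turns the second (minimax) inequality into an equality. With both equalities and the terminal identity $=\Egu X$ in hand, the entire chain collapses to
\[
  \sup_{Q\in\Delta_{0,f}(\Z)} \E_Q X = \Egu X,
\]
which is exactly the claim.

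Since everything is inherited from the theorem, there is essentially no obstacle at the level of the corollary itself; the real content lives one level up, in the second equality of Theorem~\ref{thm:fin-add-price-chain}. That equality is the finitely additive minimax statement, and its proof rests on the robust representation of convex/coherent risk measures over finitely additive measures \citep[Theorem 4.16]{follmer2016stochastic}, mediated by the identification of $\Egu$ with a risk measure and of $\dcl(\Z)$ (or its closure) with the associated acceptance set (\S~\ref{sec:financial-risk-measures}). If I were forced to prove the corollary without citing the theorem, that duality step---requiring convexity of $\dcl(\Z)$ and the passage to finitely additive measures to regain compactness in a suitable weak-$*$ topology---would be the hard part; the first equality, by contrast, is an elementary scaling argument.
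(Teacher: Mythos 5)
Your proposal is correct and matches the paper exactly: the corollary is stated as an immediate consequence of Theorem~\ref{thm:fin-add-price-chain}, with no separate argument given, since the hypotheses (upward-scalable, $0\in\Z$, $\dcl(\Z)$ convex, $X\in\X_b$) are precisely the conditions under which both inequalities in the chain become equalities. Your hypothesis-matching, including the attribution of the first equality to the scaling argument of Theorem~\ref{thm:ep0-equals-ep} and the second to the risk-measure duality of \citet[Theorem 4.16]{follmer2016stochastic}, is exactly how the paper's proof of the theorem distributes the work.
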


\subsection{Proving finitely additive results}
\label{sec:proving-fa-results}

Every result of the form $\Egu X \leq c$ gives a result for all finitely-additive probability measures $\Delta_f(\Omega)$, namely:
\begin{align}
  \label{eq:finitely-additive-generic-result}
  \Egu X \leq c
  & \quad\implies\quad
    \E_Q X \leq c \text{ for all } Q\in\Delta_{0,f}(\Z)~.
\end{align}
In other words, $\Egu X \leq c$ implies the statement: If $Q\in\Delta_f(\Omega)$ satisfies $\E_Q Z \leq 0$ for all $Z\in\mdcl(\Z)$, then $\E_Q X \leq c$.
In fact, when $\Z$ is upward-scalable, contains zero, and $\dcl(\Z)$ is convex, this statement is \emph{equivalent} to $\Egu X \leq c$.

\begin{remark}\label{rem:fa-seq-consistent-fails}
  In many settings of interest, one would like a sequential version of eq.~\eqref{eq:finitely-additive-generic-result}, such as
  \begin{align*}
    \Egu X \leq c
    & \quad\centernot\implies\quad
      \E_Q X \leq c \text{ for all } Q\in\Delta_{0,f}^T(\hat\Z)~,
  \end{align*}
  for some suitable notion $\Delta_{0,f}^T$ of sequentially consistent finitely additive measures.
  Unfortunately, this conversion is not always possible, and indeed a suitable definition of $\Delta_{0,f}^T$ not always available.
  One way to see the challenge is that one would need a proof that $\Delta_{0,f}^T(\hat\Z) \subseteq \Delta_0(\Z')$, which appears to rely on Fatou's lemma, which is not true in general for finitely-additive measures.

  As a simple example of what can go wrong, consider the simple repeated gamble space $(\Y,\hat\Z,\infty)$ with $\Y=\{-1,1\}$, $\hat\Z = \{y\mapsto \beta y \mid \beta\in\reals\}$ the usual bets on $\Y$.
  Then there is a $Q\in\Delta_{0,f}^T(\hat\Z)$ which violates the LLN.
  So letting $A = \{y \mid \lim_{t\to\infty} \frac 1 t \sum_{i=1}^t y_i = 0\}$, we have $\Egu \ones_{A^c} = 0$, yet $\E_Q \ones_{A^c} > 0$.
\end{remark}

\begin{example}[Non-trivial finitely additive result]
  Remark~\ref{rem:fa-seq-consistent-fails} notwithstanding, there are game-theoretic results which imply nontrivial statemenst about finitely additive measures.
  For example, \citet[Proposition 1.2]{shafer2019game} with $m_n = -1$ essentially shows a game-theoretic LLN in the sequential version of Example~\ref{ex:minimax-fail-omega-01}.
  Despite the fact that minimax fails in general, e.g.\ for $X(y) = \ones\{y_1 > 0\}$, the counterexample from Example~\ref{ex:minimax-fail-omega-01}, minimax duality does hold for $X = \ones_{(\ALLN)^c}$.
  Hence eq.~\eqref{eq:finitely-additive-generic-result} implies a LLN result for e.g.\ the i.i.d.\ $Q\in\Delta_f([0,1])$ which has $Q([0,a]) = 1$ for all $a > 0$ but $Q(\{0\}) = 0$.  
\end{example}

\section{Example illustrating non-uniform convergence in the Lindeberg CLT}
\label{sec:app-example-non-uniform-clt}

For all $k \in \N$, let $P_k\in\Delta(\Y^\infty)$ be such that the $Y_t$ are independent, with distributions for all $t$ given by $P_k(Y_t = -1) = 1/(k+1)$, $P_k(Y_t = k) = 1/(k(k+1))$, and $P_k(Y_t = 0)$ having the remaining mass.
One can check that $\E_{P_k} Y_t = 0$, $\E_{P_k} Y_t^2 = 1$ (giving $V_n = n$), and the Lindeberg condition is trivially satisfied as $P_k[ |Y_t| > \delta \sqrt{n} ] = 0$ for $n > (k/\delta)^2$.
Let us consider the resulting CDFs at $x=0$.
For $P_k$, we have $P_k[ \tfrac 1 {\sqrt{n}} X_n \leq 0 ] \geq (1 - P_k(Y_1 = k))^n = (1 - 1/(k(k+1)))^n$.
As this expression limits to $1$ as $k\to\infty$, for all $n \in \N$, we can find $k$ such that $P_k[ \tfrac 1 {\sqrt{n}} X_n \leq 0 ] > 0.9$.
Yet $\Phi(0) = 1/2$.

By construction, for all $k$ we have $P_k \in \Delta_0^\infty(\{\hat\Z_t\})$ for the sequential gamble space in Potential Theorem~\ref{thm:potential-gtp-clt}.
From Corollary~\ref{cor:sequential-price-inequalities}, we thus have $\Pgu[\tfrac 1 {\sqrt{n}} X_n \leq 0] \geq \Epuseq \ones\{\tfrac 1 {\sqrt{n}} X_n \leq 0\} = \sup_{P\in\Delta_0^\infty(\{\hat\Z_t\})} P(\tfrac 1 {\sqrt{n}} X_n \leq 0) \geq 0.9$.
As the above holds for all $n$, game-theoretic convergence in distribution fails.

\bibliographystyle{plainnat}
\bibliography{refs}

\end{document}